\documentclass{amsart}
\usepackage{amsfonts,amsmath,tikz,xcolor,mathtools,extarrows, amsthm,graphicx,stackrel,amssymb,leftindex}
\usepackage[normalem]{ulem}
\useunder{\uline}{\ul}{}

\usepackage[margin=1in]{geometry}

\usepackage[colorlinks=true,linkcolor=red!70!black,citecolor=green!70!black,urlcolor=magenta!70!black,backref]{hyperref}
\usepackage{colortbl}
\usepackage{hhline}
\usepackage{dsfont}
\usepackage{standalone}
\usepackage{comment}
\usepackage{wasysym} 
\usepackage{bbm} 
\usepackage{stmaryrd} 
\usepackage{enumitem}
\usepackage{caption}
\usepackage{subcaption}
\usepackage[T1]{fontenc} 
\usepackage{siunitx}
\usepackage{makecell}

\usepackage{tikz-cd} 

\usepackage{xargs}
\colorlet{darkblue}{blue!70!black}
\colorlet{darkred}{red!70!black}
\colorlet{darkgreen}{green!70!black}
\colorlet{darkwhite}{white!65!black}
\colorlet{darkorange}{orange!90!black}

\colorlet{darkmagenta}{magenta!70!black}
\colorlet{pink}{green!20!magenta}

\colorlet{lightcyan}{cyan!15!white}
\colorlet{lightyellow}{yellow!30!white}
\colorlet{lightcyanb}{lightcyan!50!black}

\definecolor{purple}{rgb}{0.63, 0.36, 0.94} 

\usetikzlibrary{decorations.markings,decorations.pathreplacing, decorations.pathmorphing, calligraphy,positioning,calc,arrows.meta,cd}

\newenvironment{tric}
    {\begin{tikzpicture}[scale= 0.5, semithick,draw=darkblue,double distance=0.25mm,
        baseline={([yshift=-.8ex]current bounding box.center)}] }
    {\end{tikzpicture}}

\newenvironment{tricF}
    {\begin{tikzpicture}[scale=.5, yscale=.5, semithick,double distance=0.25mm,
        baseline={([yshift=-.8ex]current bounding box.center)}] }
    {\end{tikzpicture}}

\newenvironment{tricpo}
    {\begin{tikzpicture}[scale= 0.35,semithick,draw=darkblue,double distance=1.1,
        baseline={([yshift=-.8ex]current bounding box.center)}] }
    {\end{tikzpicture}}

\newenvironment{tricpob}
    {\begin{tikzpicture}[scale= 0.3,semithick,draw=darkblue,double distance=1.1,
        baseline={([yshift=-.8ex]current bounding box.center)}] }
    {\end{tikzpicture}}

\newenvironment{tricpobs}
    {\begin{tikzpicture}[scale= 0.25,semithick,draw=darkblue,double distance=1.1,
        baseline={([yshift=-.8ex]current bounding box.center)}] }
    {\end{tikzpicture}}

\newenvironment{tricindex}
    {\begin{tikzpicture}[scale= 0.1,semithick,draw=darkblue,double distance=1.1,
        baseline={([yshift=-.8ex]current bounding box.center)}] }
    {\end{tikzpicture}}

\newtheorem{thm}{Theorem}[section]
\newtheorem{lemma}[thm]{Lemma}
\newtheorem{proposition}[thm]{Proposition}
\newtheorem{corollary}[thm]{Corollary}

\newtheorem{conjecture}[thm]{Conjecture}

\newtheorem{notation}[thm]{Notation}
\newtheorem{example}[thm]{Example}

\theoremstyle{definition}
\newtheorem{definition}[thm]{Definition}
\newtheorem{defn}[thm]{Definition}

\theoremstyle{remark}
\newtheorem{remark}[thm]{Remark}

\newcommand{\directedtripleline}[1]{
    \draw[darkblue,line width=2.2pt] #1;
    \draw[double, white] #1;
    \draw[darkblue, postaction={decorate}] #1;
}
\newcommand{\directeddoubleline}[1]{
    \draw[darkblue,double, postaction={decorate}] #1;
}
\newcommand{\directedsingleline}[1]{
    \draw[darkblue, postaction={decorate}] #1;
}

\DeclareMathOperator{\RR}{\mathbb{R}}
\newcommand{\vara }{X_1}
\newcommand{\varb }{X_2}
\newcommand{\varc }{X_3}
\newcommand{\vard }{X_4}
\newcommand{\vare }{X_5}
\newcommand{\vari }{X_i}
\newcommand{\varj }{X_j}
\newcommand{\vark }{X_k}
\newcommand{\varl }{X_l}
\newcommand{\varu}[1]{X_{#1}}

\newcommand{\emptylist}{\varepsilon_\emptyset}
\newcommand{\emptyweb}{\Gamma_\emptyset}
\newcommand{\emptyfoam}{F_\emptyset}

\newcommand{\colora}{\mathit{1}}
\newcommand{\colorb}{\mathit{2}}
\newcommand{\colorc}{\mathit{3}}
\newcommand{\colord}{\mathit{4}}
\newcommand{\colorn}{\mathit{N}}

\newcommand{\brak}[1]{\ensuremath{\left\langle #1\right\rangle}}

\newcommand{\fvara }{X_1}
\newcommand{\fvarb }{X_2}
\newcommand{\Seq}{\mathsf{Seq}}

\newcommand{\Obja }{A_1}
\newcommand{\Objb }{A_2}

\newcommand{\Symf}{\mathrm{Sym}_N} 

\newcommand{\Web}{\mathbf{W}_N}

\newcommand{\Foam}{\mathbf{F}_N}
\newcommand{\FoamFour}{\mathbf{F}_4}
\newcommand{\FoamFive}{\mathbf{F}_5}

\newcommandx{\basicF}[2][1 = 0, 2 = 0]{F^{(#1\text{})}_{#2}}
\newcommandx{\flapcolor}[6][1 = i, 2 = i, 3 = i, 4 = i, 5 = i, 6 =i]{$#1$-$#2$-$#3$-$#4$-$#5$-$#6$}
\newcommandx{\kempe}[2][1 = i, 2 = j]{$#1$-$#2$}

\newcommand{\kupc}[1]{\langle #1 \rangle}
\newcommand{\GL}{\mathfrak{gl}} 
\newcommand{\SL}{\mathfrak{sl}}
\newcommand{\fraksl}{\mathfrak{sl}} 
\newcommand{\foamdeg}{\deg_N} 
\newcommand{\boundarydeg}{\deg_N} 

\newcommand{\DefectThick}{1.7}

\newcommand{\Z}{\mathbb{Z}}
\newcommand{\C}{\mathbb{C}}
\newcommand{\R}{\mathbb{R}}
\newcommand{\Q}{\mathbb{Q}}
\newcommand{\kk}{\mathbf{k}}

\newcommand{\Hom}{\mathrm{Hom}}
\newcommand{\Fund}{\mathbf{Fund}}
\newcommand{\Rep}{\mathbf{Rep}}
\newcommand{\Ob}{\mathrm{Ob}}

\newcommand{\Wzero}{\mathcal{W}_0}
\newcommand{\Wzeroa}{\mathcal{W}'_0}
\newcommand{\Wone}{\mathcal{W}_1}
\newcommand{\Wtwo}{\mathcal{W}_2}
\newcommand{\Wthree}{\mathcal{W}_3}
\newcommand{\Wfour}{\mathcal{W}_4}
\newcommand{\Wfive}{\mathcal{W}_5}
\newcommand{\Wsix}{\mathcal{W}_6}

\DeclareMathOperator{\idweb}{\mathsf{id}}
\DeclareMathOperator{\idfoam}{\mathit{id}}
\DeclareMathOperator{\HexaBound}{\varepsilon_6}

\newcommand{\glfoam}{\mathbf{{\mathfrak{gl}Foam}_4}} 
\newcommand{\slfoam}{\mathfrak{sl}\mathbf{Foam}_4} 
\newcommand{\glfoamhex}{\mathbf{{\mathfrak{gl}Foam}_4}^*} 
\newcommand{\slfoamhex}{\mathfrak{sl}\mathbf{Foam}_4^*} 
\newcommand{\facets}{\mathcal{F}} 
\newcommand{\labeling}{\mathbf{\ell}} 
\newcommand{\powerset}{\mathcal{P}}
\newcommand{\palette}{P}
\newcommand{\Inv}{\mathrm{Inv}} 
\newcommand{\webcat}{\mathbf{Web}} 
\newcommand{\foamcat}{\mathbf{Foam}} 
\newcommand{\fundcat}{\mathbf{Fund}} 
\newcommand{\repcat}{\mathbf{Rep}} 
\newcommand{\Sym}{\mathrm{Sym}} 

\title{$6$-valent vertex in the $\GL_N$ web category and its categorification}

\author[J.~Grlj]{Jernej Grlj}
\address{Department of Mathematics\\
 University of Southern California \\
  Los Angeles, California 90089, USA}
  \email{grlj@usc.edu}
  
\author[M.~Khovanov]{Mikhail Khovanov}
\address{Department of Mathematics \\
Johns Hopkins University \\
Baltimore, Maryland 21218, USA}
  \email{khovanov@jhu.edu}
  
 \author[H.~Wu]{Haihan Wu}
\address{Department of Mathematics \\
Johns Hopkins University \\
Baltimore, Maryland 21218, USA}
  \email{hwu125@jhu.edu}
  
\author[M.~Zhang]{Melissa Zhang}
\address{Department of Mathematics \\
University of California, Davis\\
Davis, California 95616, USA}
  \email{mlzhang@ucdavis.edu}
  
\date{August 21, 2026}

\begin{document}

\begin{abstract}
    We define a $\frac{2\pi}{3}$-rotationally invariant $6$-valent vertex in the $\GL_N$ web category.   When $N=4,5$, we provide a categorification of the $6$-valent vertex using $\GL_N$ foams and decompose the hexagon web into a direct sum of indecomposables. A similar decomposition is conjectured for $N \geq 6$.
\end{abstract}

\maketitle

\tableofcontents

\section{Introduction}

\subsection{Background on webs}
The discovery of the Jones polynomial \cite{JonesPolynomial} triggered developments in low-dimensional topology and representation theory. Reshetikhin and Turaev \cite{RTinv} discovered that the Jones polynomial can be defined by using the braiding structure in a ribbon category, and generalized the Jones polynomial to quantum invariants associated to each simple Lie algebra $\mathfrak{g}$. The ribbon category related to the Jones polynomial can be presented as the Temperley--Lieb category \cite{TemLie}, which is monoidally equivalent to the fundamental representation category of the quantum group $U_q(\mathfrak{sl}_2)$ generated by the $q$-analogue of the vector representation. 

To a rank-2 simple Lie algebra $\mathfrak{g}$, Kuperberg assigned a braided monoidal category, generated by suitable trivalent vertices, which computes link invariants for the rank-2 quantum group $U_q(\mathfrak{g})$ \cite{Kupe-first-G2}.
He later proved that the web category of $\mathfrak{g}$ is equivalent to the fundamental representation category of $U_q(\mathfrak{g})$ \cite{Kuperberg1996}. 
Beyond rank 2, the equivalence between the web category and the fundamental representation category of the corresponding quantum group was proven by Cautis, Kamnitzer, and Morrison for type A in~\cite{Cautis2014}; type C by Bodish, Elias, Rose, and Tatham  in~\cite{bodish2021type}; the quantum orthogonal group by Bodish and the third author in \cite{BW23}; and more recently type B by Bodish, Elias, and Rose in \cite{typeBwebs}.

A morphism in the web category is given by a linear combination of trivalent graphs modulo the defining skein relations. The equivalence between the web category $\webcat_q(\mathfrak{g})$ and its corresponding fundamental representation category $\Fund(U_q(\mathfrak{g}))$ allows us to give a diagrammatic description of a basis for the invariant space. For $\mathfrak{g} = \fraksl_2$, crossingless matching diagrams provide a basis for the invariant space. 
For $\fraksl_3$  (and, more generally, for any rank 2 simple Lie algebra), a basis for the invariant space is provided by non-elliptic webs in \cite{Kuperberg1996}. A set of defining skein relations was conjectured for $\fraksl_4$ in~\cite{DKim} and for $\fraksl_N$ in~\cite{morrison2007diagrammaticcategoryrepresentationtheory}, and later proven in~\cite{Cautis2014}. $\fraksl_N$ web bases were constrcuted, for example in~\cite[Sections 2.6-2.8]{elias2015lightladdersclaspconjectures}. 
However, a rotation-invariant extension of Kuperberg’s $\fraksl_3$ web basis to higher ranks had been missing until recent progress in the $\fraksl_4$ case by Gaetz, Pechenik, Pfannerer, Striker, and Swanson \cite{SL4Basis}.

\subsection{Background on link homology}
The second author's categorification of the Jones polynomial~\cite{Khovanov2000} started the study of link homology theories. Other early results include a categorification of the $\fraksl_3$ knot invariant, see~\cite{Khovanov2004,Morrison2006OnKC,robert2013thesis, Mackaay2007}, and other related papers; for more details, we refer to the survey \cite{khovanov2025lecturessl3foamslink}. 
A model setup for link homology theories comes from a categorification of the Reshetikhin--Turaev link invariants for the quantum group $U_q(\fraksl_N)$ and coloring of link components by miniscule representations $\Lambda^k_q V$. Here $V$ is the $N$-dimensional fundamental representation and $\Lambda^k_q V$ the $q$-deformed $k$-th exterior power of $V$, for $k=1,\dots, N-1$. 

For these representations, Murakami--Ohtsuki--Yamada (MOY) defined a diagrammatical calculus of intertwiners~\cite{1998HOMFLYPV} which takes positive integral values when evaluated on closed planar diagrams (closed $\SL_N$ webs). 
Link invariants are then obtained by expanding each crossing into an integral linear combination of webs with boundaries.

Originally, a categorification of the MOY graph invariant and associated link homology was achieved by Rozansky and the second author via matrix factorizations 
for the fundamental representation $V$~\cite{KhRo2008}, and extended  by H.~Wu~\cite{MatrixFactorWu} and Y.~Yonezawa~\cite{MatrixFactorYonezawa, Yonezawa2011} to arbitrary exterior powers  $\Lambda^k_q V$. Nowadays, a combinatorial construction of these homology theories uses the Robert--Wagner evaluation \cite{RW-eval-foams} and a TQFT for $\GL_N$
foams as an intermediate step in categorifying the MOY invariants of planar webs. See \cite{khovanov2025lecturessl3foamslink} for more references and details.

When categorified, the MOY integral- and positive-valued  invariants of closed webs (taking values in $\mathbb{Z}_+[q,q^{-1}]$) become state spaces of webs, 
which are free graded modules over the  ground ring of symmetric functions $\Symf$ in $N$ variables. 
The graded rank is the corresponding quantum invariant. These state spaces are then arranged into complexes~\cite{KhRo} whose homology groups categorify the corresponding Reshetikhin--Turaev~\cite{RTinv} and MOY link invariants~\cite{1998HOMFLYPV}. 

It can be shown that the dual canonical basis corresponds to the indecomposable projective objects in a suitable categorification of invariant spaces.
For $\SL_2$ webs, the standard basis of crossingless matchings is the dual canonical basis. Upon categorification, crossingless matching diagrams represent indecomposable projective objects in the self-dual part of the maximal parabolic blocks of $\mathcal{O}$ categorification of the invariant spaces and in the combinatorial categorification of the invariant spaces via arc algebras introduced by the second author~\cite{arcalgebra}.

\def\Benzeneweb
{\begin{tricpob}
\draw [->](-1,1.7)--(0.1,1.7); 
\draw (0,1.7)--(1,1.7);
\draw [->](2,0)--(1.4,-1.02); 
\draw (1.5,-0.85)--(1,-1.7);
\draw [->](-1,-1.7)--(-1.6,-0.68); 
\draw (-1.5,-0.85)--(-2,0); 
\draw[double,decoration={markings,mark=at position 0.65 with {\arrow{>}}},postaction={decorate}]  (-2,0)--(-1,1.7);
\draw[double,decoration={markings,mark=at position 0.75 with {\arrow{>}}},postaction={decorate}] (1,-1.7)--(-1,-1.7);
\draw[double, decoration={markings,mark=at position 0.65 with {\arrow{>}}},postaction={decorate}] (1,1.7)--(2,0);

\draw [->](2,0)--(3.1,0); 
\draw (3,0)--(4,0);
\draw [->](-4,0)--(-2.9,0); 
\draw (-3,0)--(-2,0);
\draw [->](-1,1.7)--(-1.6,2.72);
\draw (-1.5,2.55)--(-2,3.4);
\draw [->](2,-3.4)--(1.4,-2.38);
\draw (1.5,-2.55)--(1,-1.7);
\draw [->](-1,-1.7)--(-1.6,-2.72);
\draw (-1.5,-2.55)--(-2,-3.4);
\draw [->](2,3.4)--(1.4,2.38);
\draw (1.5,2.55)--(1,1.7);
\end{tricpob}
}

\def\BenzenewebM
{\begin{tric}
\draw [decoration={markings,mark=at position 0.55 with {\arrow{>}}},postaction={decorate}] (180:2)..controls(175:1)and(125:1)..(120:2);
\draw [decoration={markings,mark=at position 0.55 with {\arrow{>}}},postaction={decorate}] (60:2)..controls(55:1)and(5:1)..(0:2);
\draw [decoration={markings,mark=at position 0.55 with {\arrow{>}}},postaction={decorate}] (-60:2)..controls(-65:1)and(-115:1)..(-120:2);
\end{tric}
}

\def\Benzeneweba
{\begin{tricpob}
\draw [->](-1,-1.7)--(0.1,-1.7); 
\draw (0,-1.7)--(1,-1.7);
\draw [->](2,0)--(1.4,1.02); 
\draw (1.5,0.85)--(1,1.7);
\draw [->](-1,1.7)--(-1.6,0.68); 
\draw (-1.5,0.85)--(-2,0); 
\draw [double,decoration={markings,mark=at position 0.65 with {\arrow{>}}},postaction={decorate}] (-2,0)--(-1,-1.7);
\draw [double,decoration={markings,mark=at position 0.75 with {\arrow{>}}},postaction={decorate}]  (1,1.7)--(-1,1.7);
\draw [double,decoration={markings,mark=at position 0.65 with {\arrow{>}}},postaction={decorate}] (1,-1.7)--(2,0);

\draw [->](2,0)--(3.1,0); 
\draw (3,0)--(4,0);
\draw [->](-4,0)--(-2.9,0); 
\draw (-3,0)--(-2,0);
\draw [->](-1,1.7)--(-1.6,2.72);
\draw (-1.5,2.55)--(-2,3.4);
\draw [->](2,-3.4)--(1.4,-2.38);
\draw (1.5,-2.55)--(1,-1.7);
\draw [->](-1,-1.7)--(-1.6,-2.72);
\draw (-1.5,-2.55)--(-2,-3.4);
\draw [->](2,3.4)--(1.4,2.38);
\draw (1.5,2.55)--(1,1.7);
\end{tricpob}
}

\def\BenzenewebaM
{\begin{tric}
\draw [decoration={markings,mark=at position 0.55 with {\arrow{>}}},postaction={decorate}] (-180:2)..controls(-175:1)and(-125:1)..(-120:2);
\draw [decoration={markings,mark=at position 0.55 with {\arrow{>}}},postaction={decorate}] (-60:2)..controls(-55:1)and(-5:1)..(0:2);
\draw [decoration={markings,mark=at position 0.55 with {\arrow{>}}},postaction={decorate}] (60:2)..controls(65:1)and(115:1)..(120:2);
\end{tric}
}

\def\BenzenewebMC
{\begin{tric}
\draw [decoration={markings,mark=at position 0.55 with {\arrow{>}}},postaction={decorate}] (180:2)..controls(175:1)and(125:1)..(120:2);
\draw [decoration={markings,mark=at position 0.55 with {\arrow{>}}},postaction={decorate}] (-60:2)..controls(-55:1)and(-5:1)..(0:2);
\draw [decoration={markings,mark=at position 0.55 with {\arrow{>}}},postaction={decorate}] (60:2)--(-120:2);
\end{tric}
}

\def\BenzenewebMB
{\begin{tric}
\draw [decoration={markings,mark=at position 0.55 with {\arrow{>}}},postaction={decorate}] (180:2)..controls(185:1)and(-125:1)..(-120:2);
\draw [decoration={markings,mark=at position 0.55 with {\arrow{>}}},postaction={decorate}] (60:2)..controls(55:1)and(5:1)..(0:2);
\draw [decoration={markings,mark=at position 0.55 with {\arrow{>}}},postaction={decorate}] (-60:2)--(120:2);
\end{tric}
}

\def\BenzenewebMA
{\begin{tric}
\draw [decoration={markings,mark=at position 0.55 with {\arrow{>}}},postaction={decorate}] (60:2)..controls(65:1)and(115:1)..(120:2);
\draw [decoration={markings,mark=at position 0.55 with {\arrow{>}}},postaction={decorate}] (180:2)--(0:2);
\draw [decoration={markings,mark=at position 0.55 with {\arrow{>}}},postaction={decorate}] (-60:2)..controls(-65:1)and(-115:1)..(-120:2);
\end{tric}
}

\def\SixVertex
{\begin{tric}
\draw [decoration={markings,mark=at position 0.6 with {\arrow{>}}},postaction={decorate}] (0,0)--(2,0);
\draw [decoration={markings,mark=at position 0.6 with {\arrow{>}}},postaction={decorate}](-2,0)--(0,0);
\draw [decoration={markings,mark=at position 0.6 with {\arrow{>}}},postaction={decorate}] (1,1.7)--(0,0);
\draw [decoration={markings,mark=at position 0.6 with {\arrow{>}}},postaction={decorate}] (0,0)--(-1,-1.7);
\draw [decoration={markings,mark=at position 0.6 with {\arrow{>}}},postaction={decorate}] (1,-1.7)--(0,0);
\draw [decoration={markings,mark=at position 0.6 with {\arrow{>}}},postaction={decorate}] (0,0)--(-1,1.7);
\end{tric}
}

\subsection{Summary of the results}
In this paper, we use Robert--Wagner's $\GL_N$ foams \cite{RW-eval-foams} for $N \ge 4$ to study the simplest example of a $\GL_N$ basis web that is not indecomposable. In the fundamental representation category of $U_q(\mathfrak{gl}_N)$, $\dim(\Inv((V \otimes V^*)^{\otimes 3}))=6$ where $V$ is the defining representation. However, there are $7$ fully reduced $\GL_N$ webs in the corresponding invariant space, 
shown in Figure \ref{fig:web-names}.
\begin{figure}
    \centering
    \begin{align*}
        & \Wzero=\Benzeneweb
            & \Wzeroa= \Benzeneweba 
            & \\
        & \Wone= \BenzenewebM  
            & \Wtwo= \BenzenewebaM
            & \\
        & \Wthree=\BenzenewebMB
            & \Wfour=\BenzenewebMC
            & \qquad \Wfive=\BenzenewebMA
    \end{align*}
    \caption{Our notation for the 7 fully reduced $\mathfrak{gl}_N$ webs in $\Inv((V \otimes V^*)^{\otimes 3})$.}
    \label{fig:web-names}
\end{figure}
Therefore, there is a linear relation, shown in Equation~\eqref{GLNBenzene}, among a subset of these $7$ webs, due to the equivalence between the web category and the corresponding representation category.   
\begin{equation}\label{GLNBenzene} 
      \Benzeneweb \  -  [N-3] \ \BenzenewebM
        \  =  \ \Benzeneweba  \ -  [N-3] \  \BenzenewebaM. 
\end{equation}   
This relation poses difficulties in constructing a rotationally invariant $\mathfrak{sl}_4$ web basis \cite{HagemeyerSpiders,SL4Basis}, which indicates that the webs $\Wzero$ and $\Wzeroa$ in Figure \ref{fig:web-names} are not indecomposable objects in the foam category.

In this paper, we introduce a $\frac{2\pi}{3}$-rotationally invariant $6$-valent vertex in the $\GL_N$ web category motivated by  Equation \eqref{GLNBenzene}: 
\begin{equation} \label{GLN6Vertex}
    \Wsix = \ \SixVertex \ :=  \ \Benzeneweb \  - [N-3]\ \BenzenewebM .
\end{equation}

For $N=4,5$, we provide a direct sum decomposition of the hexagon web using $\GL_N$ foams and categorify the $6$-valent vertex $\Wsix$ (Theorems \ref{GL4Decompo} and \ref{GL5Decompo}). 
We also prove that the object corresponding to the $6$-valent vertex in the $\GL_N$ foam category is indecomposable in Theorem \ref{GLNIndecompo} and Theorem \ref{GL5Indecompo}, and give a diagrammatic description of a unique basis for the invariant space $\Inv((V \otimes V^*)^{\otimes 3})$, which consists of indecomposable objects in the foam category. 
We expect our categorification of $\Wsix$ to extend to the $\GL_N$ foam category for $N\ge 6$, see Conjecture \ref{gl_n sixvalent seam}. This would lead to a decomposition of the hexagon web when $N \ge 6$ using constructions similar to those presented in this paper.

\subsection{Acknowledgments} J.G. was partially supported by NSF grants DMS-2200419 and DMS-2601214, and Simons Foundation Collaboration Grant on New Structures in Low-Dimensional Topology MPS-LDT-00920756 and SFI-MPS-LDT-00014760-01. M.K. was partially supported by NSF grant DMS-2204033
and by the Simons Foundation grant on New Structures in Low-Dimensional Topology. H.W. was supported by NSF grant CCF-2317280 and Simons Foundation grant on New Structures in Low-Dimensional Topology. We would like to thank Elijah Bodish, Ben Elias, Matt Hogancamp, Joel Kamnitzer, Greg Kuperberg, Aaron Lauda, Ziyi Lei, You Qi, Hoel Queffelec, Louis-Hadrien Robert, David Rose, Joshua Swanson, Emmanuel Wagner, and Paul Wedrich for helpful discussions. Some of these discussions took place at the Diagrammatic Categorification workshop at ICERM, which was supported by the NSF Grant No. DMS-2424556.

\section{\texorpdfstring{$\GL_N$}{GL_N} webs and foams }

We first review the $\GL_N$ web category, defining its generating morphisms, admissible boundary sequences, and fundamental skein relations. Following this, we transition to the topological framework of closed $\GL_N$ foams. We recall the Robert--Wagner evaluation formula and formalize the construction of state spaces for webs with boundaries. Lastly, we recall some facts about Karoubi completions of the relevant categories, which are used to state our results.

\def\SingleEdge
{\begin{tric}
\draw [decoration={markings,mark=at position 0.7 with {\arrow{>}}},postaction={decorate}] (0,0)--(0,1) ;
\end{tric}
}

\def\KSingleEdge
{\begin{tric}
\draw [decoration={markings,mark=at position 0.7 with {\arrow{>}}},postaction={decorate}] (0,0)--(0,1.5) ;
\draw (0,0.75) node[right,black,scale=0.7]{$k$};
\end{tric}
}

\def\DoubleEdge
{\begin{tric}
\draw [double,decoration={markings,mark=at position 0.7 with {\arrow{>}}},postaction={decorate}] (0,0)--(0,1) ;
\end{tric}
}

\def\IHweb
{\begin{tricpo}
\draw[double,thin]
      (0,1)--(0,-1);
\draw[->]  (2,2)--(0.8,1.4) ;
\draw (1,1.5)--(0,1);
\draw[->]  (-2,2)--(-0.8,1.4) ;
\draw (-1,1.5)--(0,1);
\draw[->]  (2,-2)--(0.8,-1.4) ;
\draw (1,-1.5)--(0,-1);
\draw[->]  (-2,-2)--(-0.8,-1.4) ;
\draw (-1,-1.5)--(0,-1);
\end{tricpo}
}

\def\IHweba
{\begin{tricpo}
\draw[double,thin]
      (1,0)--(-1,0);
\draw[->]  (2,2)--(1.4,0.8) ;
\draw (1.5,1)--(1,0);
\draw[->]  (2,-2)--(1.4,-0.8) ;
\draw (1.5,-1)--(1,0);
\draw[->]  (-2,2)--(-1.4,0.8) ;
\draw (-1.5,1)--(-1,0);
\draw[->]  (-2,-2)--(-1.4,-0.8) ;
\draw (-1.5,-1)--(-1,0);
\end{tricpo}
}

\def\ASMweb
{\begin{tricpo}
\draw [->](1,1)--(-0.1,1); 
\draw (0,1)--(-1,1);
\draw [->](1,1)--(1,-0.1); 
\draw (1,0)--(1,-1);
\draw [->](-1,-1)--(-1,0.1); 
\draw (-1,0)--(-1,1); 
\draw [->](-1,-1)--(0.1,-1); 
\draw (0,-1)--(1,-1); 
\draw[double,thin]
      (1,1)--(2,2) (-1,1)--(-2,2) (1,-1)--(2,-2) (-1,-1)--(-2,-2);
\end{tricpo}
}

\def\ASMweba
{\begin{tricpo}
\draw [->](-1,1)--(0.1,1); 
\draw (0,1)--(1,1);
\draw [->](-1,1)--(-1,-0.1); 
\draw (-1,0)--(-1,-1);
\draw [->](1,-1)--(1,0.1); 
\draw (1,0)--(1,1); 
\draw [->](1,-1)--(-0.1,-1); 
\draw (0,-1)--(-1,-1); 
\draw[double,thin]
      (1,1)--(2,2) (-1,1)--(-2,2) (1,-1)--(2,-2) (-1,-1)--(-2,-2);
\end{tricpo}
}

\def\BenzenewebCupA
{\begin{tricpob}
\draw [->](-1,1.7)--(0.1,1.7); 
\draw (0,1.7)--(1,1.7);
\draw [->](2,0)--(1.4,-1.02); 
\draw (1.5,-0.85)--(1,-1.7);
\draw [->](-1,-1.7)--(-1.6,-0.68); 
\draw (-1.5,-0.85)--(-2,0); 
\draw[double,decoration={markings,mark=at position 0.65 with {\arrow{>}}},postaction={decorate}]  (-2,0)--(-1,1.7);
\draw[double,decoration={markings,mark=at position 0.75 with {\arrow{>}}},postaction={decorate}] (1,-1.7)--(-1,-1.7);
\draw[double, decoration={markings,mark=at position 0.65 with {\arrow{>}}},postaction={decorate}] (1,1.7)--(2,0);

\draw [->](2,0)--(3.1,0); 
\draw (3,0)--(4,0);
\draw [->](-4,0)--(-2.9,0); 
\draw (-3,0)--(-2,0);
\draw [->](2,-3.4)--(1.4,-2.38);
\draw (1.5,-2.55)--(1,-1.7);
\draw [->](-1,-1.7)--(-1.6,-2.72);
\draw (-1.5,-2.55)--(-2,-3.4);

\draw [decoration={markings,mark=at position 0.55 with {\arrow{>}}},postaction={decorate}]  (-1,1.7)..controls(-1,3)and(1,3)..(1,1.7);
\end{tricpob}
}

\def\BenzenewebCupB
{\begin{tricpob}
\draw [->](-1,1.7)--(0.1,1.7); 
\draw (0,1.7)--(1,1.7);
\draw [->](2,0)--(1.4,-1.02); 
\draw (1.5,-0.85)--(1,-1.7);
\draw [->](-1,-1.7)--(-1.6,-0.68); 
\draw (-1.5,-0.85)--(-2,0); 
\draw[double,decoration={markings,mark=at position 0.65 with {\arrow{>}}},postaction={decorate}]  (-2,0)--(-1,1.7);
\draw[double,decoration={markings,mark=at position 0.75 with {\arrow{>}}},postaction={decorate}] (1,-1.7)--(-1,-1.7);
\draw[double, decoration={markings,mark=at position 0.65 with {\arrow{>}}},postaction={decorate}] (1,1.7)--(2,0);

\draw [->](-4,0)--(-2.9,0); 
\draw (-3,0)--(-2,0);
\draw [->](-1,1.7)--(-1.6,2.72);
\draw (-1.5,2.55)--(-2,3.4);
\draw [->](2,-3.4)--(1.4,-2.38);
\draw (1.5,-2.55)--(1,-1.7);
\draw [->](-1,-1.7)--(-1.6,-2.72);
\draw (-1.5,-2.55)--(-2,-3.4);

\draw[decoration={markings,mark=at position 0.7 with {\arrow{>}}},postaction={decorate}] (2,0)..controls(3,2)..(1,1.7);
\end{tricpob}
}

\def\BenzenewebClasp
{\begin{tricpob}
\draw [->](-1,1.7)--(0.1,1.7); 
\draw (0,1.7)--(1,1.7);
\draw [->](2,0)--(1.4,-1.02); 
\draw (1.5,-0.85)--(1,-1.7);
\draw [->](-1,-1.7)--(-1.6,-0.68); 
\draw (-1.5,-0.85)--(-2,0); 
\draw[double,decoration={markings,mark=at position 0.65 with {\arrow{>}}},postaction={decorate}]  (-2,0)--(-1,1.7);
\draw[double,decoration={markings,mark=at position 0.75 with {\arrow{>}}},postaction={decorate}] (1,-1.7)--(-1,-1.7);
\draw[double, decoration={markings,mark=at position 0.65 with {\arrow{>}}},postaction={decorate}] (1,1.7)--(2,0);

\draw (-3,0)--(-2,0);
\draw [->](-1,1.7)--(-1.6,2.72);
\draw (-1.5,2.55)--(-2,3.4);
\draw [->](-1,-1.7)--(-1.6,-2.72);
\draw (-1.5,-2.55)--(-2,-3.4);

\draw [decoration={markings,mark=at position 0.65 with {\arrow{>}}},postaction={decorate}](2,0)--(3.5,0); 
\draw [->](-4,0)--(-2.9,0); 
\draw [decoration={markings,mark=at position 0.6 with {\arrow{>}}},postaction={decorate}](3.5,2.6)..controls(3,2.6)and(1.5,2.5)..(1,1.7);
\draw [decoration={markings,mark=at position 0.6 with {\arrow{>}}},postaction={decorate}](3.5,-2.6)..controls(3,-2.6)and(1.5,-2.5)..(1,-1.7);

\draw[darkred, thick] (3.5,3.4)--(3.5,-3.4)--(4,-3.4)--(4,3.4)--cycle; 
\end{tricpob}
}

\def\Benzenewebsmall
{\begin{tricindex}
\draw [->](-1,1.7)--(0.1,1.7); 
\draw (0,1.7)--(1,1.7);
\draw [->](2,0)--(1.4,-1.02); 
\draw (1.5,-0.85)--(1,-1.7);
\draw [->](-1,-1.7)--(-1.6,-0.68); 
\draw (-1.5,-0.85)--(-2,0); 
\draw[double,decoration={markings,mark=at position 0.65 with {\arrow{>}}},postaction={decorate}]  (-2,0)--(-1,1.7);
\draw[double,decoration={markings,mark=at position 0.75 with {\arrow{>}}},postaction={decorate}] (1,-1.7)--(-1,-1.7);
\draw[double, decoration={markings,mark=at position 0.65 with {\arrow{>}}},postaction={decorate}] (1,1.7)--(2,0);

\draw [->](2,0)--(3.1,0); 
\draw (3,0)--(4,0);
\draw [->](-4,0)--(-2.9,0); 
\draw (-3,0)--(-2,0);
\draw [->](-1,1.7)--(-1.6,2.72);
\draw (-1.5,2.55)--(-2,3.4);
\draw [->](2,-3.4)--(1.4,-2.38);
\draw (1.5,-2.55)--(1,-1.7);
\draw [->](-1,-1.7)--(-1.6,-2.72);
\draw (-1.5,-2.55)--(-2,-3.4);
\draw [->](2,3.4)--(1.4,2.38);
\draw (1.5,2.55)--(1,1.7);
\end{tricindex}
}

\def\BenzenewebM
{\begin{tric}
\draw [decoration={markings,mark=at position 0.55 with {\arrow{>}}},postaction={decorate}] (180:2)..controls(175:1)and(125:1)..(120:2);
\draw [decoration={markings,mark=at position 0.55 with {\arrow{>}}},postaction={decorate}] (60:2)..controls(55:1)and(5:1)..(0:2);
\draw [decoration={markings,mark=at position 0.55 with {\arrow{>}}},postaction={decorate}] (-60:2)..controls(-65:1)and(-115:1)..(-120:2);
\end{tric}
}

\def\BenzenewebMCupA
{\begin{tric}
\draw [decoration={markings,mark=at position 0.53 with {\arrow{>}}},postaction={decorate}] (180:2)..controls(-1,0)and(-1,1)..(0,1) ..controls(1,1)and(1,0)..(0:2);
\draw [decoration={markings,mark=at position 0.55 with {\arrow{>}}},postaction={decorate}] (-60:2)..controls(-65:1)and(-115:1)..(-120:2);
\end{tric}
}

\def\BenzenewebMCupB
{\begin{tric}
\draw [decoration={markings,mark=at position 0.55 with {\arrow{>}}},postaction={decorate}] (180:2)..controls(175:1)and(125:1)..(120:2);
\draw [decoration={markings,mark=at position 0.55 with {\arrow{>}}},postaction={decorate}] (-60:2)..controls(-65:1)and(-115:1)..(-120:2);

\draw [decoration={markings,mark=at position 0.55 with {\arrow{>}}},postaction={decorate}] 
(1.8,1) arc (0:360:0.8);
\end{tric}
}

\def\BenzenewebMClasp
{\begin{tric}
\draw [decoration={markings,mark=at position 0.55 with {\arrow{>}}},postaction={decorate}] (180:2)..controls(175:1)and(125:1)..(120:2);
\draw [decoration={markings,mark=at position 0.55 with {\arrow{>}}},postaction={decorate}] (1.7,1.5)..controls(0.5,1.5)and(0.5,-0.2)..(1.7,-0.2);
\draw [decoration={markings,mark=at position 0.55 with {\arrow{>}}},postaction={decorate}] (1.7,-1)..controls(1,-1)and(-115:1.2)..(-120:2);

\draw[darkred, thick] (2,2.05)--(2,-2.05)--(1.7,-2.05)--(1.7,2.05)--cycle; 
\end{tric}
}

\def\BenzenewebMAClasp
{\begin{tric}
\draw [decoration={markings,mark=at position 0.55 with {\arrow{>}}},postaction={decorate}] (180:2)--(0:1.7);
\draw [decoration={markings,mark=at position 0.55 with {\arrow{>}}},postaction={decorate}] (1.7,-1)..controls(1,-1)and(-115:1.2)..(-120:2);
\draw [decoration={markings,mark=at position 0.55 with {\arrow{>}}},postaction={decorate}] (1.7,1)..controls(1,1)and(115:1.2)..(120:2);

\draw[darkred, thick] (2,2.05)--(2,-2.05)--(1.7,-2.05)--(1.7,2.05)--cycle; 
\end{tric}
}

\def\BenzenewebMBClasp
{\begin{tric}
\draw [decoration={markings,mark=at position 0.55 with {\arrow{>}}},postaction={decorate}] (180:2)..controls(185:1)and(-125:1)..(-120:2);
\draw [decoration={markings,mark=at position 0.55 with {\arrow{>}}},postaction={decorate}] (1.7,-1)..controls(0,-1)and(-0.7,1)..(120:2);
\draw [decoration={markings,mark=at position 0.55 with {\arrow{>}}},postaction={decorate}] (1.7,1.5)..controls(0.5,1.5)and(0.5,-0.2)..(1.7,-0.2);

\draw[darkred, thick] (2,2.05)--(2,-2.05)--(1.7,-2.05)--(1.7,2.05)--cycle; 
\end{tric}
}

\def\BenzenewebMC
{\begin{tric}
\draw [decoration={markings,mark=at position 0.55 with {\arrow{>}}},postaction={decorate}] (180:2)..controls(175:1)and(125:1)..(120:2);
\draw [decoration={markings,mark=at position 0.55 with {\arrow{>}}},postaction={decorate}] (-60:2)..controls(-55:1)and(-5:1)..(0:2);
\draw [decoration={markings,mark=at position 0.55 with {\arrow{>}}},postaction={decorate}] (60:2)--(-120:2);
\end{tric}
}

\def\BenzenewebMCClasp
{\begin{tric}
\draw [decoration={markings,mark=at position 0.55 with {\arrow{>}}},postaction={decorate}] (180:2)..controls(175:1)and(125:1)..(120:2);
\draw [decoration={markings,mark=at position 0.55 with {\arrow{>}}},postaction={decorate}] (1.7,1)..controls(0,1)and(-0.7,-1)..(-120:2);
\draw [decoration={markings,mark=at position 0.55 with {\arrow{>}}},postaction={decorate}] (1.7,-1.5)..controls(0.5,-1.5)and(0.5,0.2)..(1.7,0.2);

\draw[darkred, thick] (2,2.05)--(2,-2.05)--(1.7,-2.05)--(1.7,2.05)--cycle; 
\end{tric}
}

\def\BenzenewebaCupA
{\begin{tricpob}
\draw [->](-1,-1.7)--(0.1,-1.7); 
\draw (0,-1.7)--(1,-1.7);
\draw [->](2,0)--(1.4,1.02); 
\draw (1.5,0.85)--(1,1.7);
\draw [->](-1,1.7)--(-1.6,0.68); 
\draw (-1.5,0.85)--(-2,0); 
\draw [double,decoration={markings,mark=at position 0.65 with {\arrow{>}}},postaction={decorate}] (-2,0)--(-1,-1.7);
\draw [double,decoration={markings,mark=at position 0.75 with {\arrow{>}}},postaction={decorate}]  (1,1.7)--(-1,1.7);
\draw [double,decoration={markings,mark=at position 0.65 with {\arrow{>}}},postaction={decorate}] (1,-1.7)--(2,0);

\draw [->](2,0)--(3.1,0); 
\draw (3,0)--(4,0);
\draw [->](-4,0)--(-2.9,0); 
\draw (-3,0)--(-2,0);
\draw [->](2,-3.4)--(1.4,-2.38);
\draw (1.5,-2.55)--(1,-1.7);
\draw [->](-1,-1.7)--(-1.6,-2.72);
\draw (-1.5,-2.55)--(-2,-3.4);

\draw [decoration={markings,mark=at position 0.55 with {\arrow{>}}},postaction={decorate}]  (-1,1.7)..controls(-0.8,3)and(0.8,3)..(1,1.7);
\end{tricpob}
}

\def\BenzenewebaCupB
{\begin{tricpob}
\draw [->](-1,-1.7)--(0.1,-1.7); 
\draw (0,-1.7)--(1,-1.7);
\draw [->](2,0)--(1.4,1.02); 
\draw (1.5,0.85)--(1,1.7);
\draw [->](-1,1.7)--(-1.6,0.68); 
\draw (-1.5,0.85)--(-2,0); 
\draw [double,decoration={markings,mark=at position 0.65 with {\arrow{>}}},postaction={decorate}] (-2,0)--(-1,-1.7);
\draw [double,decoration={markings,mark=at position 0.75 with {\arrow{>}}},postaction={decorate}]  (1,1.7)--(-1,1.7);
\draw [double,decoration={markings,mark=at position 0.65 with {\arrow{>}}},postaction={decorate}] (1,-1.7)--(2,0);

\draw [->](-4,0)--(-2.9,0); 
\draw (-3,0)--(-2,0);
\draw [->](-1,1.7)--(-1.6,2.72);
\draw (-1.5,2.55)--(-2,3.4);
\draw [->](2,-3.4)--(1.4,-2.38);
\draw (1.5,-2.55)--(1,-1.7);
\draw [->](-1,-1.7)--(-1.6,-2.72);
\draw (-1.5,-2.55)--(-2,-3.4);

\draw[decoration={markings,mark=at position 0.75 with {\arrow{>}}},postaction={decorate}] (2,0)..controls(3,2)..(1,1.7);
\end{tricpob}
}

\def\BenzenewebaClasp
{\begin{tricpob}
\draw [->](-1,-1.7)--(0.1,-1.7); 
\draw (0,-1.7)--(1,-1.7);
\draw [->](2,0)--(1.4,1.02); 
\draw (1.5,0.85)--(1,1.7);
\draw [->](-1,1.7)--(-1.6,0.68); 
\draw (-1.5,0.85)--(-2,0); 
\draw [double,decoration={markings,mark=at position 0.65 with {\arrow{>}}},postaction={decorate}] (-2,0)--(-1,-1.7);
\draw [double,decoration={markings,mark=at position 0.75 with {\arrow{>}}},postaction={decorate}]  (1,1.7)--(-1,1.7);
\draw [double,decoration={markings,mark=at position 0.65 with {\arrow{>}}},postaction={decorate}] (1,-1.7)--(2,0);

\draw [->](-4,0)--(-2.9,0); 
\draw (-3,0)--(-2,0);
\draw [->](-1,1.7)--(-1.6,2.72);
\draw (-1.5,2.55)--(-2,3.4);
\draw [->](-1,-1.7)--(-1.6,-2.72);
\draw (-1.5,-2.55)--(-2,-3.4);

\draw [decoration={markings,mark=at position 0.65 with {\arrow{>}}},postaction={decorate}](2,0)--(3.5,0); 
\draw [->](-4,0)--(-2.9,0); 
\draw [decoration={markings,mark=at position 0.6 with {\arrow{>}}},postaction={decorate}](3.5,2.6)..controls(3,2.6)and(1.5,2.5)..(1,1.7);
\draw [decoration={markings,mark=at position 0.6 with {\arrow{>}}},postaction={decorate}](3.5,-2.6)..controls(3,-2.6)and(1.5,-2.5)..(1,-1.7);

\draw[darkred, thick] (3.5,3.4)--(3.5,-3.4)--(4,-3.4)--(4,3.4)--cycle; 
\end{tricpob}
}

\def\BenzenewebaM
{\begin{tric}
\draw [decoration={markings,mark=at position 0.55 with {\arrow{>}}},postaction={decorate}] (-180:2)..controls(-175:1)and(-125:1)..(-120:2);
\draw [decoration={markings,mark=at position 0.55 with {\arrow{>}}},postaction={decorate}] (-60:2)..controls(-55:1)and(-5:1)..(0:2);
\draw [decoration={markings,mark=at position 0.55 with {\arrow{>}}},postaction={decorate}] (60:2)..controls(65:1)and(115:1)..(120:2);
\end{tric}
}

\def\BenzenewebaMCupA
{\begin{tric}
\draw [decoration={markings,mark=at position 0.55 with {\arrow{>}}},postaction={decorate}] (-180:2)..controls(-175:1)and(-125:1)..(-120:2);
\draw [decoration={markings,mark=at position 0.55 with {\arrow{>}}},postaction={decorate}] (-60:2)..controls(-55:1)and(-5:1)..(0:2);
\draw [decoration={markings,mark=at position 0.55 with {\arrow{>}}},postaction={decorate}] 
(0.8,0.5) arc (360:0:0.8);
\end{tric}
}

\def\BenzenewebaMCupB
{\begin{tric}
\draw [decoration={markings,mark=at position 0.55 with {\arrow{>}}},postaction={decorate}] (-180:2)..controls(-175:1)and(-125:1)..(-120:2);
\draw [decoration={markings,mark=at position 0.55 with {\arrow{>}}},postaction={decorate}] (-60:2)..controls(0:1.5)and(60:1.5)..(120:2);
\end{tric}
}

\def\BenzenewebaMClasp
{\begin{tric}
\draw [decoration={markings,mark=at position 0.55 with {\arrow{>}}},postaction={decorate}] (-180:2)..controls(-175:1)and(-125:1)..(-120:2);
\draw [decoration={markings,mark=at position 0.55 with {\arrow{>}}},postaction={decorate}] (1.7,-1.5)..controls(0.5,-1.5)and(0.5,0.2)..(1.7,0.2);
\draw [decoration={markings,mark=at position 0.55 with {\arrow{>}}},postaction={decorate}] (1.7,1)..controls(1,1)and(115:1.2)..(120:2);

\draw[darkred, thick] (2,2.05)--(2,-2.05)--(1.7,-2.05)--(1.7,2.05)--cycle; 
\end{tric}
}

\def\Skeina
{\begin{tric}
\draw [decoration={markings,mark=at position 0.52 with {\arrow{>}}},postaction={decorate}] (0.85,0) circle (1.2);
\end{tric}
}

\def\SkeinaGL
{\begin{tric}
\draw [decoration={markings,mark=at position 0.52 with {\arrow{>}}},postaction={decorate}] (0.85,0) circle (1.2);
\draw (-0.4,0) node[left, black, scale=0.7] {$k$};
\end{tric}
}

\def\Skeinb
{\begin{tric}
\draw [decoration={markings,mark=at position 0.52 with {\arrow{>}}},postaction={decorate},double] (0,0) circle (1.2);
\end{tric}
}

\def\Skeinc
{\begin{tric}
\draw [double,decoration={markings,mark=at position 0.65 with {\arrow{>}}},postaction={decorate}](0,0.5)--(0,1.5); 
  \draw [double,decoration={markings,mark=at position 0.65 with {\arrow{>}}},postaction={decorate}]  (0,-1.5)--(0,-0.5);
  
\draw[decoration={markings,mark=at position 0.55 with {\arrow{>}}},postaction={decorate}]  (0,-0.5)..controls(-0.5,-0.5)and(-0.5,0.5)..(0,0.5) ;
\draw [decoration={markings,mark=at position 0.55 with {\arrow{>}}},postaction={decorate}](0,-0.5)..controls(0.5,-0.5)and(0.5,0.5)..(0,0.5);
\end{tric}
}

\def\SkeinBiGL
{\begin{tric}
\draw [decoration={markings,mark=at position 0.65 with {\arrow{>}}},postaction={decorate}](0,0.5)--(0,1.5); 
  \draw [decoration={markings,mark=at position 0.65 with {\arrow{>}}},postaction={decorate}]  (0,-1.5)--(0,-0.5);
  
\draw[decoration={markings,mark=at position 0.55 with {\arrow{>}}},postaction={decorate}]  (0,-0.5)..controls(-0.5,-0.5)and(-0.5,0.5)..(0,0.5) ;
\draw [decoration={markings,mark=at position 0.55 with {\arrow{>}}},postaction={decorate}](0,-0.5)..controls(0.5,-0.5)and(0.5,0.5)..(0,0.5);

\draw (-0.1,1)node[left,black,scale=0.7]{$k+l$};
\draw (-0.1,-1)node[left,black,scale=0.7]{$k+l$};
\draw (-0.5,0)node[left,black,scale=0.7]{$k$};
\draw (0.5,0)node[right,black,scale=0.7]{$l$};
\end{tric}
}

\def\SkeinBiGLC
{\begin{tric}
\draw [decoration={markings,mark=at position 0.65 with {\arrow{>}}},postaction={decorate}](0,0.5)--(0,1.5); 
  \draw [decoration={markings,mark=at position 0.65 with {\arrow{>}}},postaction={decorate}]  (0,-1.5)--(0,-0.5);
  
\draw[decoration={markings,mark=at position 0.55 with {\arrow{>}}},postaction={decorate}]  (0,0.5)..controls(-0.5,0.5)and(-0.5,-0.5)..(0,-0.5) ;
\draw [decoration={markings,mark=at position 0.55 with {\arrow{>}}},postaction={decorate}](0,-0.5)..controls(0.5,-0.5)and(0.5,0.5)..(0,0.5);

\draw (0,1)node[left,black,scale=0.7]{$k$};
\draw (0,-1)node[left,black,scale=0.7]{$k$}; 
\draw (-0.5,0)node[left,black,scale=0.7]{$l$};
\draw (0.5,0)node[right,black,scale=0.7]{$k+l$};
\end{tric}
}

\def\Skeinca
{\begin{tric}
\draw[double,decoration={markings,mark=at position 0.55 with {\arrow{>}}},postaction={decorate}] (0,-1.5)--(0,1.5);
\end{tric}
}

\def\Skeindot
{\begin{tric}
\draw [double,decoration={markings,mark=at position 0.65 with {\arrow{>}}},postaction={decorate}](0,0.5)--(0,1.5); 
  \draw [double,decoration={markings,mark=at position 0.65 with {\arrow{>}}},postaction={decorate}]  (0,-1.5)--(0,-0.5);
   \draw [double,decoration={markings,mark=at position 0.65 with {\arrow{>}}},postaction={decorate}]  (0,0.5)--(0,-0.5);
\filldraw[darkred] (0,0.5)circle (3pt);
\filldraw[darkred] (0,-0.5)circle (3pt);
\end{tric}
}

\def\SkeincB
{\begin{tric}
\draw [decoration={markings,mark=at position 0.6 with {\arrow{>}}},postaction={decorate}](0,0.7)--(0,1.5); 
  \draw [decoration={markings,mark=at position 0.6 with {\arrow{>}}},postaction={decorate}]  (0,-1.5)--(0,-0.7);
\draw[decoration={markings,mark=at position 0.55 with {\arrow{>}}},postaction={decorate}]  (0,0.7)..controls(-0.5,0.7)and(-0.5,-0.7)..(0,-0.7) ;
\draw [double,decoration={markings,mark=at position 0.65 with {\arrow{>}}},postaction={decorate}](0,-0.7)..controls(0.5,-0.7)and(0.5,0.7)..(0,0.7);
\end{tric}
}

\def\SkeincaB
{\begin{tric}
\draw[decoration={markings,mark=at position 0.55 with {\arrow{>}}},postaction={decorate}] (0,-1.5)--(0,1.5);
\end{tric}
}

\def\SkeinBiGLB
{\begin{tric}
\draw[decoration={markings,mark=at position 0.55 with {\arrow{>}}},postaction={decorate}] (0,-1.5)--(0,1.5);
\draw (-0.1,0) node[left,black,scale=0.7] {$k+l$};
\end{tric}
}

\def\SkeinBiGLD
{\begin{tric}
\draw[decoration={markings,mark=at position 0.55 with {\arrow{>}}},postaction={decorate}] (0,-1.5)--(0,1.5);
\draw (-0.1,0) node[left,black,scale=0.7] {$k$};
\end{tric}
}

\def\Skeine
{\begin{tric}
\draw [scale=0.7,decoration={markings,mark=at position 0.6 with {\arrow{>}}},postaction={decorate}] (-1,1)--(-1,-1);
\draw  [scale=0.7,decoration={markings,mark=at position 0.6 with {\arrow{>}}},postaction={decorate}] (1,-1)--(1,1);
\draw  [scale=0.7,decoration={markings,mark=at position 0.65 with {\arrow{>}}},postaction={decorate}] (-2,-2)--(-1,-1);
\draw [scale=0.7,decoration={markings,mark=at position 0.65 with {\arrow{>}}},postaction={decorate}] (-1,1)--(-2,2);
\draw  [scale=0.7,decoration={markings,mark=at position 0.65 with {\arrow{>}}},postaction={decorate}] (2,2)--(1,1);
\draw  [scale=0.7,decoration={markings,mark=at position 0.65 with {\arrow{>}}},postaction={decorate}] (1,-1)--(2,-2);

\draw [double,scale=0.7,decoration={markings,mark=at position 0.65 with {\arrow{>}}},postaction={decorate}] (1,1)--(-1,1);
\draw [double,scale=0.7,decoration={markings,mark=at position 0.65 with {\arrow{>}}},postaction={decorate}] (-1,-1)--(1,-1);
\end{tric}
}

\def\Skeinea
{\begin{tric}
\draw[scale=0.5] (-2,-1.732)--(-1,0)--(-2,1.732) (2,1.732)--(1,0)--(2,-1.732) (-1,0)--(1,0);
\end{tric}
}

\def\Skeinec
{\begin{tric}
\draw  [scale=0.6,decoration={markings,mark=at position 0.55 with {\arrow{>}}},postaction={decorate}]
(-2,-2)..controls(-1,-1)and(-1,1)..(-2,2) ;
\draw  [scale=0.6,decoration={markings,mark=at position 0.55 with {\arrow{>}}},postaction={decorate}]
(2,2)..controls(1,1)and(1,-1)..(2,-2);
\end{tric}
}

\def\Skeined
{\begin{tric}
\draw[scale=0.6,decoration={markings,mark=at position 0.55 with {\arrow{>}}},postaction={decorate}] 
(2,2)..controls(1,1)and(-1,1)..(-2,2);
\draw [scale=0.6,decoration={markings,mark=at position 0.55 with {\arrow{>}}},postaction={decorate}]
(-2,-2)..controls(-1,-1)and(1,-1)..(2,-2) ;
\end{tric}
}

\def\SkeinSquare
{\begin{tric}
\draw [scale=0.7,decoration={markings,mark=at position 0.6 with {\arrow{>}}},postaction={decorate}] (-1,-1)--(-1,1);
\draw  [scale=0.7,decoration={markings,mark=at position 0.6 with {\arrow{>}}},postaction={decorate}] (1,1)--(1,-1);
\draw [scale=0.7,decoration={markings,mark=at position 0.65 with {\arrow{>}}},postaction={decorate}] (1,1)--(-1,1);
\draw [scale=0.7,decoration={markings,mark=at position 0.65 with {\arrow{>}}},postaction={decorate}] (-1,-1)--(1,-1);

\draw  [double,scale=0.7,decoration={markings,mark=at position 0.75 with {\arrow{>}}},postaction={decorate}] (-2,-2)--(-1,-1);
\draw [double,scale=0.7,decoration={markings,mark=at position 0.75 with {\arrow{>}}},postaction={decorate}] (-1,1)--(-2,2);
\draw  [double,scale=0.7,decoration={markings,mark=at position 0.75 with {\arrow{>}}},postaction={decorate}] (2,2)--(1,1);
\draw  [double,scale=0.7,decoration={markings,mark=at position 0.75 with {\arrow{>}}},postaction={decorate}] (1,-1)--(2,-2);
\end{tric}
}

\def\SkeinSquareA
{\begin{tric}
\draw [scale=0.7,decoration={markings,mark=at position 0.6 with {\arrow{>}}},postaction={decorate}] (-1,1)--(-1,-1);
\draw  [scale=0.7,decoration={markings,mark=at position 0.6 with {\arrow{>}}},postaction={decorate}] (1,-1)--(1,1);
\draw [scale=0.7,decoration={markings,mark=at position 0.65 with {\arrow{>}}},postaction={decorate}] (1,-1)--(-1,-1);
\draw [scale=0.7,decoration={markings,mark=at position 0.65 with {\arrow{>}}},postaction={decorate}] (-1,1)--(1,1);

\draw  [double,scale=0.7,decoration={markings,mark=at position 0.75 with {\arrow{>}}},postaction={decorate}] (-2,2)--(-1,1);
\draw [double,scale=0.7,decoration={markings,mark=at position 0.75 with {\arrow{>}}},postaction={decorate}] (-1,-1)--(-2,-2);
\draw  [double,scale=0.7,decoration={markings,mark=at position 0.75 with {\arrow{>}}},postaction={decorate}] (2,-2)--(1,-1);
\draw  [double,scale=0.7,decoration={markings,mark=at position 0.75 with {\arrow{>}}},postaction={decorate}] (1,1)--(2,2);

\draw  [double,scale=0.7,decoration={markings,mark=at position 0.75 with {\arrow{>}}},postaction={decorate}] (-2,2)--(-3,3);
\draw [double,scale=0.7,decoration={markings,mark=at position 0.75 with {\arrow{>}}},postaction={decorate}] (-3,-3)--(-2,-2);
\draw  [double,scale=0.7,decoration={markings,mark=at position 0.75 with {\arrow{>}}},postaction={decorate}] (2,-2)--(3,-3);
\draw  [double,scale=0.7,decoration={markings,mark=at position 0.75 with {\arrow{>}}},postaction={decorate}] (3,3)--(2,2);

\filldraw[darkred,scale=0.7] (2,-2)circle (4pt);
\filldraw[darkred,scale=0.7] (2,2)circle (4pt);
\filldraw[darkred,scale=0.7] (-2,-2)circle (4pt);
\filldraw[darkred,scale=0.7] (-2,2)circle (4pt);
\end{tric}
}

\def\SkeinIHout
{\begin{tric}
\draw [double,decoration={markings,mark=at position 0.75 with {\arrow{>}}},postaction={decorate}](0,0.5)--(0,1.5); 
\draw [double,decoration={markings,mark=at position 0.75 with {\arrow{>}}},postaction={decorate}]  (0,0.5)--(0,-0.5);

\draw [decoration={markings,mark=at position 0.65 with {\arrow{>}}},postaction={decorate}]  (0,-0.5)--(1,-1);
\draw [decoration={markings,mark=at position 0.65 with {\arrow{>}}},postaction={decorate}]  (0,-0.5)--(-1,-1);

\draw [decoration={markings,mark=at position 0.65 with {\arrow{>}}},postaction={decorate}]  (0,1.5)--(1,2);
\draw [decoration={markings,mark=at position 0.65 with {\arrow{>}}},postaction={decorate}]  (0,1.5)--(-1,2);
\filldraw[darkred] (0,0.5)circle (3pt);
\end{tric}
}

\def\SkeinIHouta
{\begin{tric}
\draw [double,decoration={markings,mark=at position 0.75 with {\arrow{>}}},postaction={decorate}](0.5,0)--(1.5,0); 
\draw [double,decoration={markings,mark=at position 0.75 with {\arrow{>}}},postaction={decorate}]  (0.5,0)--(-0.5,0);

\draw [decoration={markings,mark=at position 0.65 with {\arrow{>}}},postaction={decorate}]  (-0.5,0)--(-1,1);
\draw [decoration={markings,mark=at position 0.65 with {\arrow{>}}},postaction={decorate}]  (-0.5,0)--(-1,-1);

\draw [decoration={markings,mark=at position 0.65 with {\arrow{>}}},postaction={decorate}]  (1.5,0)--(2,1);
\draw [decoration={markings,mark=at position 0.65 with {\arrow{>}}},postaction={decorate}]  (1.5,0)--(2,-1);
\filldraw[darkred] (0.5,0)circle (3pt);
\end{tric}
}

\def\SkeinIHin
{\begin{tric}
\draw [double,decoration={markings,mark=at position 0.75 with {\arrow{>}}},postaction={decorate}](0,1.5)--(0,0.5); 
\draw [double,decoration={markings,mark=at position 0.75 with {\arrow{>}}},postaction={decorate}] (0,-0.5)--(0,0.5);

\draw [decoration={markings,mark=at position 0.65 with {\arrow{>}}},postaction={decorate}]  (1,-1)--(0,-0.5);
\draw [decoration={markings,mark=at position 0.65 with {\arrow{>}}},postaction={decorate}]  (-1,-1)--(0,-0.5);

\draw [decoration={markings,mark=at position 0.65 with {\arrow{>}}},postaction={decorate}]  (1,2)--(0,1.5);
\draw [decoration={markings,mark=at position 0.65 with {\arrow{>}}},postaction={decorate}]  (-1,2)--(0,1.5);
\filldraw[darkred] (0,0.5)circle (3pt);
\end{tric}
}

\def\SkeinIHina
{\begin{tric}
\draw [double,decoration={markings,mark=at position 0.75 with {\arrow{>}}},postaction={decorate}](1.5,0)--(0.5,0); 
\draw [double,decoration={markings,mark=at position 0.75 with {\arrow{>}}},postaction={decorate}] (-0.5,0)--(0.5,0);

\draw [decoration={markings,mark=at position 0.65 with {\arrow{>}}},postaction={decorate}]  (-1,1)--(-0.5,0);
\draw [decoration={markings,mark=at position 0.65 with {\arrow{>}}},postaction={decorate}]  (-1,-1)--(-0.5,0);

\draw [decoration={markings,mark=at position 0.65 with {\arrow{>}}},postaction={decorate}]  (2,1)--(1.5,0);
\draw [decoration={markings,mark=at position 0.65 with {\arrow{>}}},postaction={decorate}]  (2,-1)--(1.5,0);
\filldraw[darkred] (0.5,0)circle (3pt);
\end{tric}
}

\def\Skeinf
{\begin{tric}
\draw [scale=0.7,decoration={markings,mark=at position 0.6 with {\arrow{>}}},postaction={decorate}] (-1,-1)--(-1,1);
\draw  [scale=0.7,decoration={markings,mark=at position 0.6 with {\arrow{>}}},postaction={decorate}] (1,1)--(1,-1);
\draw [scale=0.7,decoration={markings,mark=at position 0.65 with {\arrow{>}}},postaction={decorate}] (1,1)--(-1,1);

\draw [double,scale=0.7,decoration={markings,mark=at position 0.65 with {\arrow{>}}},postaction={decorate}] (1,-1)--(-1,-1);

\draw  [scale=0.7,decoration={markings,mark=at position 0.75 with {\arrow{>}}},postaction={decorate}] (-1,-1)--(-2,-2);
\draw  [scale=0.7,decoration={markings,mark=at position 0.75 with {\arrow{>}}},postaction={decorate}] (2,-2)--(1,-1);

\draw [double,scale=0.7,decoration={markings,mark=at position 0.75 with {\arrow{>}}},postaction={decorate}] (-1,1)--(-2,2);
\draw  [double,scale=0.7,decoration={markings,mark=at position 0.75 with {\arrow{>}}},postaction={decorate}] (2,2)--(1,1);
\end{tric}
}

\def\Skeinfa
{\begin{tric}
\draw [scale=0.8,decoration={markings,mark=at position 0.55 with {\arrow{>}}},postaction={decorate}](-0.5,0)--(1.5,0); 

\draw [scale=0.8,decoration={markings,mark=at position 0.65 with {\arrow{>}}},postaction={decorate}] (-0.5,0)--(-1.25,-1.25);
\draw [scale=0.8,decoration={markings,mark=at position 0.65 with {\arrow{>}}},postaction={decorate}]  (2.25,-1.25)--(1.5,0);

\draw [scale=0.8,double,decoration={markings,mark=at position 0.7 with {\arrow{>}}},postaction={decorate}]  (-1,1)--(-0.5,0);
\draw [scale=0.8,double,decoration={markings,mark=at position 0.65 with {\arrow{>}}},postaction={decorate}]  (1.5,0)--(2,1);
\draw [scale=0.8,double,decoration={markings,mark=at position 0.7 with {\arrow{>}}},postaction={decorate}]  (-1,1)--(-1.5,2);
\draw [scale=0.8,double,decoration={markings,mark=at position 0.65 with {\arrow{>}}},postaction={decorate}]  (2.5,2)--(2,1);

\filldraw[scale=0.8,darkred] (2,1)circle (3.5pt)  (-1,1)circle (3.5pt);
\end{tric}
}

\def\Skeinfb
{\begin{tric}
\draw[double,scale=0.6,decoration={markings,mark=at position 0.55 with {\arrow{>}}},postaction={decorate}] 
(2,2)..controls(1,1)and(-1,1)..(-2,2);
\draw [scale=0.6,decoration={markings,mark=at position 0.55 with {\arrow{>}}},postaction={decorate}]
(2,-2)..controls(1,-1)and(-1,-1)..(-2,-2);
\end{tric}
}

\subsection{\texorpdfstring{$\GL_N$}{GL_N} webs}

While much of the literature deals with $\SL_N$ webs, we work with $\GL_N$ webs in order to use the foam framework from~\cite{RW-eval-foams} when categorifying the webs. While evaluations for $\SL_2$ and $\SL_3$ foams are straightforward, it is unclear how to modify the Robert--Wagner evaluation formula from $\GL_N$ webs to $\SL_N$ webs when $N\ge 4$. 
Most statements about $\SL_N$ webs can be modified to the $\GL_N$ setting, see \cite{Lauda2019, Queffelec2016, Cautis2014}. We will now introduce the notation and gather some of the main properties of webs.

Recall the quantum integer $[n]:= \dfrac{q^n-q^{-n}}{q-q^{-1}}$. 
We write  $ [n] ! :=  [1] [2] [3] ... [n] $ and 
$ \begin{bmatrix}
n \\
k 
\end{bmatrix} := \cfrac{[n]!}{[k]![n-k]!} $.


\def\TypeAVert
{\begin{tric}
\draw [scale=1.2,decoration={markings,mark=at position 0.7 with {\arrow{>}}},postaction={decorate}] (0,0)--(90:1) ;
\draw [scale=1.2,decoration={markings,mark=at position 0.7 with {\arrow{>}}},postaction={decorate}] (210:1)--(0,0) ;
\draw [scale=1.2,decoration={markings,mark=at position 0.7 with {\arrow{>}}},postaction={decorate}] (330:1)--(0,0);

\draw [scale=1.2](0.05,0.5)node[black,right,scale=0.7]{$k+l$}
      (210:0.5)node[black,anchor=north,scale=0.7]{$k$}
      (330:0.5)node[black,anchor=north,scale=0.7]{$l$}; 
\end{tric}
}

\def\TypeAVertA
{\begin{tric}
\draw [scale=1.2,decoration={markings,mark=at position 0.7 with {\arrow{>}}},postaction={decorate}] (-90:1)--(0,0) ;
\draw [scale=1.2, decoration={markings,mark=at position 0.7 with {\arrow{>}}},postaction={decorate}] (0,0)--(-210:1) ;
\draw [scale=1.2, decoration={markings,mark=at position 0.7 with {\arrow{>}}},postaction={decorate}] (0,0)--(-330:1);
\draw [scale=1.2](0.05,-0.5)node[black,right,scale=0.7]{$k+l$}
      (-215:0.6)node[black,above,scale=0.7]{$k$}
      (-325:0.6)node[black,above,scale=0.7]{$l$}; 
\end{tric}
}

\def\TypeATag
{\begin{tric}
\draw [decoration={markings,mark=at position 0.7 with {\arrow{>}}},postaction={decorate}](-0.5,-1)--(0,0);
\draw [decoration={markings,mark=at position 0.7 with {\arrow{>}}},postaction={decorate}](0.5,1)--(0,0);
\draw (0,0)--(-0.4,0.2);
\draw (-0.2,-0.6)node[black,right,scale=0.7]{$k$}; 
\draw (0.3,0.5)node[black,right,scale=0.7]{$n-k$}; 
\end{tric}
}

\def\TypeATagA
{\begin{tric}
\draw [decoration={markings,mark=at position 0.7 with {\arrow{>}}},postaction={decorate}](0,0)--(-0.5,-1);
\draw [decoration={markings,mark=at position 0.7 with {\arrow{>}}},postaction={decorate}](0,0)--(0.5,1);
\draw (0,0)--(-0.4,0.2);
\draw (-0.2,-0.6)node[black,right,scale=0.7]{$k$}; 
\draw (0.3,0.5)node[black,right,scale=0.7]{$n-k$}; 
\end{tric}
}

\def\TypeAIH
{\begin{tric}
\draw[scale=0.7,decoration={markings,mark=at position 0.7 with {\arrow{>}}},postaction={decorate}](-1.732,-2)--(0,-1);

\draw[scale=0.7,decoration={markings,mark=at position 0.7 with {\arrow{>}}},postaction={decorate}](1.732,-2)--(0,-1);

\draw[scale=0.7,decoration={markings,mark=at position 0.7 with {\arrow{>}}},postaction={decorate}](0,1)--(-1.732,2) ;

\draw[scale=0.7,decoration={markings,mark=at position 0.7 with {\arrow{>}}},postaction={decorate}](0,1)--(1.732,2);

\draw[scale=0.7,decoration={markings,mark=at position 0.7 with {\arrow{>}}},postaction={decorate}](0,-1)-- (0,1);

\draw[scale=0.7](0.5,0) node[right,midway,black,scale=0.7]{$2$};
\end{tric}
}

\def\TypeACircleNeed
{\begin{tric}
\draw[scale=0.7,decoration={markings,mark=at position 0.5 with {\arrow{>}}},postaction={decorate}] (-1,0) circle (1.5);
\end{tric}
}

\def\TypeACircleK
{\begin{tric}
\draw[scale=0.7,decoration={markings,mark=at position 0.5 with {\arrow{>}}},postaction={decorate}] (-1,0) circle (1.5);
\draw (-1.9,0) node[scale=0.7,left,black]{$k$};
\end{tric}
}

\def\TypeACircleKRe
{\begin{tric}
\draw[scale=0.7,decoration={markings,mark=at position 0.5 with {\arrow{<}}},postaction={decorate}] (-1,0) circle (1.5);
\draw (-1.9,0) node[scale=0.7,left,black]{$k$};
\end{tric}
}

\def\TypeABigonNeedA
{\begin{tric}
\draw[double, scale= 0.8,decoration={markings,mark=at position 0.65 with {\arrow{>}}},postaction={decorate}] (0,0.7)--(0,2) ;

\draw[double, scale= 0.8,decoration={markings,mark=at position 0.65 with {\arrow{>}}},postaction={decorate}](0,-2)--(0,-0.7); 
 
\draw[scale= 0.8,decoration={markings,mark=at position 0.6 with {\arrow{>}}},postaction={decorate}]     (0,-0.7)..controls(-0.5,-0.7)and(-0.5,0.7)..(0,0.7) ;

\draw[scale= 0.8,decoration={markings,mark=at position 0.6 with {\arrow{>}}},postaction={decorate}](0,-0.7)..controls(0.5,-0.7)and(0.5,0.7)..(0,0.7);
\end{tric}
}

\def\TypeABigonDefA
{\begin{tric}
\draw[scale= 0.8,decoration={markings,mark=at position 0.65 with {\arrow{>}}},postaction={decorate}] (0,0.7)--(0,2); 
\draw (0.1,1.1) node[black,right,scale=0.7]{$k+l$}; 

\draw[scale= 0.8,decoration={markings,mark=at position 0.65 with {\arrow{>}}},postaction={decorate}](0,-2)--(0,-0.7);
\draw (0.1,-1.2) node[black,right,scale=0.7]{$k+l$} ; 
 
\draw[scale= 0.8,decoration={markings,mark=at position 0.6 with {\arrow{>}}},postaction={decorate}]     (0,-0.7)..controls(-0.5,-0.7)and(-0.5,0.7)..(0,0.7) ;
\draw (-0.45,0) node[black,left,scale=0.7]{$k$} ;

\draw[scale= 0.8,decoration={markings,mark=at position 0.6 with {\arrow{>}}},postaction={decorate}](0,-0.7)..controls(0.5,-0.7)and(0.5,0.7)..(0,0.7) ;
\draw (0.45,0) node[black,right,scale=0.7]{$l$} ;
\end{tric}
}

\def\TypeABigonDefB
{\begin{tric}
\draw[scale= 0.8,decoration={markings,mark=at position 0.65 with {\arrow{>}}},postaction={decorate}] (0,0.7)--(0,2); 
\draw (0.1,1.1) node[black,right,scale=0.7]{$k$}; 

\draw[scale= 0.8,decoration={markings,mark=at position 0.65 with {\arrow{>}}},postaction={decorate}](0,-2)--(0,-0.7);
\draw (0.1,-1.2) node[black,right,scale=0.7]{$k$} ; 
 
\draw[scale= 0.8,decoration={markings,mark=at position 0.6 with {\arrow{>}}},postaction={decorate}]     (0,-0.7)..controls(-0.5,-0.7)and(-0.5,0.7)..(0,0.7) ;
\draw (-0.45,0) node[black,left,scale=0.7]{$k+l$} ;

\draw[scale= 0.8,decoration={markings,mark=at position 0.6 with {\arrow{<}}},postaction={decorate}](0,-0.7)..controls(0.5,-0.7)and(0.5,0.7)..(0,0.7) ;
\draw (0.45,0) node[black,right,scale=0.7]{$l$} ;
\end{tric}
}

\def\TypeABigonNeedB
{\begin{tric}
\draw[scale= 0.8,decoration={markings,mark=at position 0.65 with {\arrow{>}}},postaction={decorate}] (0,0.7)--(0,2) ;

\draw[scale= 0.8,decoration={markings,mark=at position 0.65 with {\arrow{>}}},postaction={decorate}](0,-2)--(0,-0.7); 
 
\draw[scale= 0.8,decoration={markings,mark=at position 0.6 with {\arrow{>}}},postaction={decorate}]     (0,0.7)..controls(-0.5,0.7)and(-0.5,-0.7)..(0,-0.7) ;

\draw[double,scale= 0.8,decoration={markings,mark=at position 0.6 with {\arrow{>}}},postaction={decorate}](0,-0.7)..controls(0.5,-0.7)and(0.5,0.7)..(0,0.7);
\end{tric}
}

\def\TypeADoubleID
{\begin{tric}
\draw[double, scale= 0.8,decoration={markings,mark=at position 0.65 with {\arrow{>}}},postaction={decorate}] (0,-2)--(0,2);
\end{tric}
}

\def\TypeAkplID
{\begin{tric}
\draw[scale= 0.8,decoration={markings,mark=at position 0.6 with {\arrow{>}}},postaction={decorate}] (0,-2)--(0,2);
\draw (0.1,0) node[black,right,scale=0.7]{$k+l$};
\end{tric}
}

\def\TypeAkID
{\begin{tric}
\draw[scale= 0.8,decoration={markings,mark=at position 0.6 with {\arrow{>}}},postaction={decorate}] (0,-2)--(0,2);
\draw (0.1,0) node[black,right,scale=0.7]{$k$};
\end{tric}
}

\def\TypeASingleID
{\begin{tric}
\draw[scale= 0.8,decoration={markings,mark=at position 0.65 with {\arrow{>}}},postaction={decorate}] (0,-2)--(0,2);
\end{tric}
}

\def\Flowassoci
{\begin{tric}
\draw[scale= 0.8,decoration={markings,mark=at position 0.65 with {\arrow{>}}},postaction={decorate}] (0,0)..controls(0,0.5)and(-0.2,0.7)..(-0.5,1);
\draw[scale= 0.8,decoration={markings,mark=at position 0.65 with {\arrow{>}}},postaction={decorate}] (-1,0)..controls(-1,0.5)and(-0.8,0.7)..(-0.5,1);
\draw[scale= 0.8,decoration={markings,mark=at position 0.65 with {\arrow{>}}},postaction={decorate}] 
      (-0.5,1)..controls(-0.5,1.5)and(-0.3,1.7)..(0,2) ;  
 \draw[scale= 0.8,decoration={markings,mark=at position 0.65 with {\arrow{>}}},postaction={decorate}]     (1,0)..controls(1,1)and(0.6,1.5)..(0,2);
  \draw[scale= 0.8,decoration={markings,mark=at position 0.65 with {\arrow{>}}},postaction={decorate}]     (0,2)--(0,3);
     \draw[scale= 0.8]  (0,0)node[below,black,scale=0.7]{$l$}
      (-1,0)node[below,black,scale=0.7]{$k$}
      (1,0)node[below,black,scale=0.7]{$m$}
     (-0.5,1.5)node[left,black,scale=0.7]{$k+l$}
      (0,3)node[above,black,scale=0.7]{$k+l+m$};
\end{tric}
}

\def\Flowassocia
{\begin{tric}
\draw[scale= 0.8,decoration={markings,mark=at position 0.65 with {\arrow{>}}},postaction={decorate}] (0,0)..controls(0,0.5)and(0.2,0.7)..(0.5,1);
 \draw[scale= 0.8,decoration={markings,mark=at position 0.65 with {\arrow{>}}},postaction={decorate}]      (1,0)..controls(1,0.5)and(0.8,0.7)..(0.5,1);
 \draw[scale= 0.8,decoration={markings,mark=at position 0.65 with {\arrow{>}}},postaction={decorate}] 
      (0.5,1)..controls(0.5,1.5)and(0.3,1.7)..(0,2) ;
              
  \draw[scale= 0.8,decoration={markings,mark=at position 0.65 with {\arrow{>}}},postaction={decorate}]     (-1,0)..controls(-1,1)and(-0.6,1.5)..(0,2);
 \draw[scale= 0.8,decoration={markings,mark=at position 0.65 with {\arrow{>}}},postaction={decorate}]      (0,2)--(0,3);
\draw [scale=0.8]      (0,0)node[below,black,scale=0.7]{$l$}
      (1,0)node[below,black,scale=0.7]{$m$}
      (-1,0)node[below,black,scale=0.7]{$k$}
     (0.5,1.5) node[right,black,scale=0.7]{$l+m$}
      (0,3)node[above,black,scale=0.7]{$k+l+m$};
\end{tric}
}

\def\TypeARungSwap{
\begin{tric}
\draw[scale=1.2,decoration={markings,mark=at position 0.5 with {\arrow{>}}},postaction={decorate}]  (0,0)--(0,1); 
\draw [scale=1.2](-0.1,0.4) node[left,scale=0.7,black]{$k$} ;
\draw [scale=1.2,decoration={markings,mark=at position 0.45 with {\arrow{>}}},postaction={decorate}] 
(1,0)--(1,1) ;
\draw [scale=1.2](1.1,0.4) node[right,scale=0.7,black]{$l$};
\draw [scale=1.2,decoration={markings,mark=at position 0.55 with {\arrow{>}}},postaction={decorate}] 
       (0,1)--(0,2);
\draw  [scale=1.2](-0.1, 1.5) node[left,scale=0.7,black]{$k-b$};     
\draw [scale=1.2,decoration={markings,mark=at position 0.6 with {\arrow{>}}},postaction={decorate}]      (1,1)--(1,2) ;
\draw  [scale=1.2] (1.1,1.45) node[right,scale=0.7,black]{$l+b$};
\draw [scale=1.2,decoration={markings,mark=at position 0.7 with {\arrow{>}}},postaction={decorate}]      
       (0,2)--(0,3);
\draw[scale=1.2] (-0.1,2.6)  node[left,scale=0.7,black]{$k-b+a$};
\draw [scale=1.2, decoration={markings,mark=at position 0.7 with {\arrow{>}}},postaction={decorate}]  
       (1,2)--(1,3) ;
\draw [scale=1.2](1.1,2.6)  node[right,scale=0.7,black]{$l+b-a$};
\draw [scale=1.2,decoration={markings,mark=at position 0.55 with {\arrow{<}}},postaction={decorate}]  (0,2.2)--(1,1.8) ;
\draw [scale=1.2](0.5,2.05)node[above,scale=0.7,black]{$a$}; 
\draw [scale=1.2,decoration={markings,mark=at position 0.55 with {\arrow{>}}},postaction={decorate}] (0,0.8)--(1,1.2) ;
\draw [scale=1.2](0.5,0.9) node[below,scale=0.7,black]{$b$}; 
\end{tric}
}

\def\TypeARungSwapA{
\begin{tric}
\draw[xscale=1.4, scale=1.2,decoration={markings,mark=at position 0.5 with {\arrow{>}}},postaction={decorate}]  (0,0)--(0,1); 
\draw [xscale=1.4,scale=1.2](-0.1,0.4) node[left,scale=0.7,black]{$k$} ;
\draw [xscale=1.4,scale=1.2,decoration={markings,mark=at position 0.45 with {\arrow{>}}},postaction={decorate}] 
(1,0)--(1,1) ;
\draw [xscale=1.4,scale=1.2](1.1,0.4) node[right,scale=0.7,black]{$l$};
\draw [xscale=1.4,scale=1.2,decoration={markings,mark=at position 0.55 with {\arrow{>}}},postaction={decorate}] 
       (0,1)--(0,2);
\draw  [xscale=1.4,scale=1.2](-0.1, 1.5) node[left,scale=0.7,black]{$k+a-t$};     
\draw [xscale=1.4,scale=1.2,decoration={markings,mark=at position 0.6 with {\arrow{>}}},postaction={decorate}]      (1,1)--(1,2) ;
\draw  [xscale=1.4,scale=1.2] (1.1,1.45) node[right,scale=0.7,black]{$l-a+t$};
\draw [xscale=1.4,scale=1.2,decoration={markings,mark=at position 0.7 with {\arrow{>}}},postaction={decorate}]      
       (0,2)--(0,3);
\draw[xscale=1.4,scale=1.2] (-0.1,2.6)  node[left,scale=0.7,black]{$k-b+a$};
\draw [xscale=1.4,scale=1.2, decoration={markings,mark=at position 0.7 with {\arrow{>}}},postaction={decorate}]  
       (1,2)--(1,3) ;
\draw [xscale=1.4,scale=1.2](1.1,2.6)  node[right,scale=0.7,black]{$l+b-a$};
\draw [xscale=1.4,scale=1.2,decoration={markings,mark=at position 0.55 with {\arrow{<}}},postaction={decorate}]  (1,2.2)--(0,1.8) ;
\draw [xscale=1.4,scale=1.2](0.5,2.05)node[above,scale=0.7,black]{$b-t$}; 
\draw [xscale=1.4,scale=1.2,decoration={markings,mark=at position 0.55 with {\arrow{>}}},postaction={decorate}] (1,0.8)--(0,1.2) ;
\draw [xscale=1.4,scale=1.2](0.5,0.9) node[below,scale=0.7,black]{$a-t$}; 
\end{tric}
}

\def\TypeACCSwapA{
\begin{tric}
\draw [xscale=1.4,scale=1.2,decoration={markings,mark=at position 0.55 with {\arrow{>}}},postaction={decorate}] (0,0)--(0,1)node[shift={(-0.1,0)},midway,left,scale=0.7,black]{$k$}; 
\draw [xscale=1.4,scale=1.2,decoration={markings,mark=at position 0.55 with {\arrow{<}}},postaction={decorate}]      (1,0)--(1,1) node[shift={(0.1,0)},midway,right,scale=0.7,black]{$l$} ;
\draw [xscale=1.4,scale=1.2,decoration={markings,mark=at position 0.55 with {\arrow{>}}},postaction={decorate}]
       (0,1)--(0,2) node[shift={(-0.1,0)},midway,left,scale=0.7,black]{$k-1$};
\draw [xscale=1.4,scale=1.2,decoration={markings,mark=at position 0.55 with {\arrow{<}}},postaction={decorate}]       
       (1,1)--(1,2) node[shift={(0.1,0)},midway,right,scale=0.7,black]{$l-1$};
\draw [xscale=1.4,scale=1.2,decoration={markings,mark=at position 0.55 with {\arrow{>}}},postaction={decorate}]   (0,2)--(0,3) node[shift={(-0.1,0)},midway,left,scale=0.7,black]{$k$};
\draw [xscale=1.4,scale=1.2,decoration={markings,mark=at position 0.55 with {\arrow{<}}},postaction={decorate}]   (1,2)--(1,3) node[shift={(0.1,0)},midway,right,scale=0.7,black]{$l$};
\draw [xscale=1.4,scale=1.2,decoration={markings,mark=at position 0.55 with {\arrow{>}}},postaction={decorate}] (0,0.85)..controls(0.3,1.35)and(0.7,1.35)..  
       (1,0.85)node[shift={(0,-0.1)},midway,below,scale=0.7,black]{$1$};
\draw [xscale=1.4,scale=1.2,decoration={markings,mark=at position 0.55 with {\arrow{<}}},postaction={decorate}]       (0,2.15)..controls(0.3,1.65)and(0.7,1.65)..(1,2.15)node[shift={(0,0.1)},midway,above,scale=0.7,black]{$1$}; 
\end{tric}
}

\def\TypeACCSwap{
\begin{tric}
\draw  [xscale=1.4,scale=1.2,decoration={markings,mark=at position 0.55 with {\arrow{>}}},postaction={decorate}] (0,0)--(0,1)node[shift={(-0.1,0)},midway,left,scale=0.7,black]{$k$};
\draw  [xscale=1.4,scale=1.2,decoration={markings,mark=at position 0.55 with {\arrow{<}}},postaction={decorate}] 
       (1,0)--(1,1) node[shift={(0.1,0)},midway,right,scale=0.7,black]{$l$};
\draw  [xscale=1.4,scale=1.2,decoration={markings,mark=at position 0.55 with {\arrow{>}}},postaction={decorate}]        
       (0,1)--(0,2) node[shift={(-0.1,0)},midway,left,scale=0.7,black]{$k+1$};
\draw  [xscale=1.4,scale=1.2,decoration={markings,mark=at position 0.55 with {\arrow{<}}},postaction={decorate}]        
       (1,1)--(1,2) node[shift={(0.1,0)},midway,right,scale=0.7,black]{$l+1$};
\draw  [xscale=1.4,scale=1.2,decoration={markings,mark=at position 0.55 with {\arrow{>}}},postaction={decorate}]        
       (0,2)--(0,3) node[shift={(-0.1,0)},midway,left,scale=0.7,black]{$k$};
\draw  [xscale=1.4,scale=1.2,decoration={markings,mark=at position 0.55 with {\arrow{<}}},postaction={decorate}]        
       (1,2)--(1,3) node[shift={(0.1,0)},midway,right,scale=0.7,black]{$l$};
\draw  [xscale=1.4,scale=1.2,decoration={markings,mark=at position 0.55 with {\arrow{<}}},postaction={decorate}]  (0,1)..controls(0.3,0.5)and(0.7,0.5)..  
       (1,1)node[shift={(0,-0.1)},midway,below,scale=0.7,black]{$1$};
\draw  [xscale=1.4,scale=1.2,decoration={markings,mark=at position 0.55 with {\arrow{>}}},postaction={decorate}]        
       (0,2)..controls(0.3,2.5)and(0.7,2.5)..(1,2)node[shift={(0,0.1)},midway,above,scale=0.7,black]{$1$}; 
\end{tric}
}

\def\TypeACCSwapB{
\begin{tric}
\draw [scale=1.2] (0,0)--(0,1) 
       (1,0)--(1,1) 
       (0,2)--(0,3) 
       (1,2)--(1,3) ;
\draw  [scale=1.2,decoration={markings,mark=at position 0.55 with {\arrow{>}}},postaction={decorate}]  (0,1)--(0,2) node[shift={(-0.1,0)},midway,left,scale=0.7,black]{$k$};
\draw [scale=1.2,decoration={markings,mark=at position 0.55 with {\arrow{<}}},postaction={decorate}]       (1,1)--(1,2) node[shift={(0.1,0)},midway,right,scale=0.7,black]{$l$} ;  
\end{tric}
}

\begin{defn} [{\cite[Section 2.2]{Cautis2014}}] \label{TypeAWebs}
 Define the $\GL_N$ web category, denoted by 
 $\Web=\webcat_q(\GL_{N})$, to be the pivotal $\mathbb{C}(q)$-linear category monoidally generated by the objects $k^{\pm}$, for $k \in \{1,2,...,N\}$,
 where $k^-$ is understood as the dual of $k=k^{+}$. Its morphisms are generated by the following two types of vertices:
 \begin{align*}
     \TypeAVert \in {\Hom}_{\Web}(k\otimes l, k+l) ,  \quad 
     \TypeAVertA \in {\Hom}_{\Web}(k+l, k \otimes l) , \\ 
 \end{align*}
for $l \in \mathbb{Z}_{\ge 0}$ and with all labels on the edges of a diagram at most $N$.

Morphisms are modded out by the tensor ideal generated by the following skein relations:
\begin{align} \label{DefSkeinGLN}
&\TypeACircleK=\TypeACircleKRe= \begin{bmatrix} N \\ k \end{bmatrix} , \quad \TypeABigonDefA= \begin{bmatrix} k+l \\ k \end{bmatrix} \ \TypeAkplID \ , \quad \TypeABigonDefB=  \begin{bmatrix} N-k \\ l \end{bmatrix} \ \TypeAkID ,\\
&\Flowassoci=\Flowassocia, \TypeARungSwap = \sum_{t} \begin{bmatrix} k-l+a-b \\ t \end{bmatrix} \TypeARungSwapA, \notag \\
& \TypeACCSwap=\TypeACCSwapA + [N-k-l]\TypeACCSwapB \notag 
\end{align}
\end{defn}

\def\SingleEdgeLabel
{\begin{tric}
\draw [scale=1.2,decoration={markings,mark=at position 0.7 with {\arrow{>}}},postaction={decorate}] (0,0)--(90:1) ;

\draw [scale=1.2](0.05,0.5)node[black,right,scale=0.7]{$1$}; 
\end{tric}
}

\def\SingleEdgeNoLabel
{\begin{tric}
\draw [scale=1.2,decoration={markings,mark=at position 0.7 with {\arrow{>}}},postaction={decorate}] (0,0)--(90:1) ;
\end{tric}
}

\def\DoubleEdgeLabel
{\begin{tric}
\draw [scale=1.2,decoration={markings,mark=at position 0.7 with {\arrow{>}}},postaction={decorate}] (0,0)--(90:1) ;

\draw [scale=1.2](0.06,0.5)node[black,right,scale=0.7]{$2$};
\end{tric}
}

\def\DoubleEdgeNoLabel
{\begin{tikzpicture}[scale=0.7, decoration={
    markings,
    mark=at position 0.6 with {\arrow{>}}}, baseline={([yshift=-.8ex]current bounding box.center)}]
\directeddoubleline{(0,0)--(90:0.8)};
\end{tikzpicture}
}

\def\TripleEdgeLabel
{\begin{tric}
\draw [scale=1.2,decoration={markings,mark=at position 0.7 with {\arrow{>}}},postaction={decorate}] (0,0)--(90:1) ;

\draw [scale=1.2](0.06,0.5)node[black,right,scale=0.7]{$3$};
\end{tric}
}

\def\TripleEdgeNoLabel
{\begin{tikzpicture}[scale=0.7, decoration={
    markings,
    mark=at position 0.6 with {\arrow{>}}}, baseline={([yshift=-.8ex]current bounding box.center)}]
\directedtripleline{(0,0)--(90:0.8)};
\end{tikzpicture}
}

\begin{notation}
        In this paper, we denote \ $\SingleEdgeLabel$ by \ $\SingleEdgeNoLabel$\ ,  \ $\DoubleEdgeLabel$ by \ $\DoubleEdgeNoLabel$, and \ $\TripleEdgeLabel$ by \ $\TripleEdgeNoLabel$. 
\end{notation}

We call a planar graph with trivalent vertices $\TypeAVert$ and  $\TypeAVertA$ a \emph{$\GL_{N}$ web}, or simply a \emph{web}. 
Denote by $\Seq_{N}$ the set of finite sequences of elements of $\{1^{\pm},2^{\pm},...,N^{\pm}\}$. Any element $\varepsilon \in \Seq_{N}$ can be written as $\varepsilon = a_1^{\epsilon_1} a_2^{\epsilon_2} \cdots a_N^{\epsilon_N}$, we will use this notation throughout the paper. We identify these sequences with objects of $\Web$. 
The empty sequence is included in $\Seq_{N}$ and is denoted by $\emptylist$. 

Given two sequences $\varepsilon_0,\varepsilon_1\in \Seq_{N}$, 
a $\GL_{N}$ web from $\varepsilon_0$ to $\varepsilon_1$ is a planar graph $W$ as above in $\R\times [0,1]$  with $ W \cap (\R \times \{i\}) = \varepsilon_i \times \{i\}$ for $i=0,1$. 
Such graphs are considered up to planar isotopies rel boundary. 
A web from $\varepsilon_0$ to $\varepsilon_1$ defines a morphism in ${\Hom}_{\Web}(\varepsilon_0,\varepsilon_1)$. 
Denote by $\idweb_{\varepsilon_0}$ the web from $\varepsilon_0$ to $\varepsilon_0$ given by vertical strands, the identity morphism in ${\Hom}_{\Web}(\varepsilon_0,\varepsilon_0)$.

Given a web $W_0$ from $\varepsilon_0$ to $\varepsilon_1$ and a web $W_1$ from $\varepsilon_1$ to $\varepsilon_2$, denote by $W_1W_0$ the web from $\varepsilon_0$ to $\varepsilon_2$ by stacking $W_1$ on top of $W_0$. The morphism defined by $W_1W_0$ in ${\Hom}_{\Web}(\varepsilon_0,\varepsilon_2)$ agrees with the composition of the morphisms defined by $W_1$ and $W_0$. 

We call a boundary sequence $\varepsilon \in \Seq_N$ \emph{admissible} when the net flow of labels is balanced, ensuring that the total sum of labels carrying a positive sign equals the total sum of labels carrying a negative sign. A sequence $\varepsilon \in \Seq_N$ is a boundary of a $\GL_N$ web from $\emptylist$ to $\varepsilon$ if and only if it is admissible.

A \emph{closed web} $\Gamma$ is a web from $\emptylist$ to $\emptylist$, which defines a morphism in ${\Hom}_{\Web}(\emptylist,\emptylist)$, denoted by $\langle \Gamma \rangle \idweb_{\emptylist}$.

Given a web $W$ from $\varepsilon_0$ to $\varepsilon_1$, denote by $\overline{W}$ the web from $\varepsilon_1$ to $\varepsilon_0$ obtained by taking the reflection of $W$ about a horizontal line and reversing the orientation of each edge.

\def\BenzenewebDisk
{\begin{tricpob}
\begin{scope}[yscale=-1]
\draw [->](-1,1.7)--(0.1,1.7); 
\draw (0,1.7)--(1,1.7);
\draw [->](2,0)--(1.4,-1.02); 
\draw (1.5,-0.85)--(1,-1.7);
\draw [->](-1,-1.7)--(-1.6,-0.68); 
\draw (-1.5,-0.85)--(-2,0); 
\draw[double,decoration={markings,mark=at position 0.65 with {\arrow{>}}},postaction={decorate}]  (-2,0)--(-1,1.7);
\draw[double,decoration={markings,mark=at position 0.75 with {\arrow{>}}},postaction={decorate}] (1,-1.7)--(-1,-1.7);
\draw[double, decoration={markings,mark=at position 0.65 with {\arrow{>}}},postaction={decorate}] (1,1.7)--(2,0);

\draw [->](2,0)--(3.1,0); 
\draw (3,0)--(4,0);
\draw [->](-4,0)--(-2.9,0); 
\draw (-3,0)--(-2,0);
\draw [->](-1,1.7)--(-1.6,2.72);
\draw (-1.5,2.55)--(-2,3.4);
\draw [->](2,-3.4)--(1.4,-2.38);
\draw (1.5,-2.55)--(1,-1.7);
\draw [->](-1,-1.7)--(-1.6,-2.72);
\draw (-1.5,-2.55)--(-2,-3.4);
\draw [->](2,3.4)--(1.4,2.38);
\draw (1.5,2.55)--(1,1.7);

\draw [black, dashed] (0,0) circle (4); 
\draw (0,4) node[scale=1.5]{$\mathbf{\ast}$};
\end{scope}
\end{tricpob}
}

\def\BenzenewebBent
{\begin{tricpob}
\begin{scope}[yscale=-1]
\draw [->](-1,1.7)--(0.1,1.7); 
\draw (0,1.7)--(1,1.7);
\draw [->](2,0)--(1.4,-1.02); 
\draw (1.5,-0.85)--(1,-1.7);
\draw [->](-1,-1.7)--(-1.6,-0.68); 
\draw (-1.5,-0.85)--(-2,0); 
\draw[double,decoration={markings,mark=at position 0.65 with {\arrow{>}}},postaction={decorate}]  (-2,0)--(-1,1.7);
\draw[double,decoration={markings,mark=at position 0.75 with {\arrow{>}}},postaction={decorate}] (1,-1.7)--(-1,-1.7);
\draw[double, decoration={markings,mark=at position 0.65 with {\arrow{>}}},postaction={decorate}] (1,1.7)--(2,0);

\draw [decoration={markings,mark=at position 0.65 with {\arrow{>}}},postaction={decorate}] (2,0)..controls(3,0)and(4,-2.5)..(4,-3.4); 
\draw [decoration={markings,mark=at position 0.65 with {\arrow{>}}},postaction={decorate}] (-4,-3.4)..controls(-4,-2.5)and(-3,0)..(-2,0); 

\draw [decoration={markings,mark=at position 0.65 with {\arrow{>}}},postaction={decorate}] (-1,1.7)..controls(-1.6,2.72)and(-5.5,0)..(-5.5,-3.4);
\draw [decoration={markings,mark=at position 0.65 with {\arrow{>}}},postaction={decorate}] (5.5,-3.4)..controls(5.5,0)and(1.6,2.72)..(1,1.7);

\draw [->](2,-3.4)--(1.4,-2.38);
\draw (1.5,-2.55)--(1,-1.7);
\draw [->](-1,-1.7)--(-1.6,-2.72);
\draw (-1.5,-2.55)--(-2,-3.4);

\draw [black, dashed] (-6.5,-3.4)--(6.5,-3.4); 
\end{scope}
\end{tricpob}
}

\def\BenzeneDual
{\begin{tricpob}
\draw [->](-1,1.7)--(0.1,1.7); 
\draw (0,1.7)--(1,1.7);
\draw [->](2,0)--(1.4,-1.02); 
\draw (1.5,-0.85)--(1,-1.7);
\draw [->](-1,-1.7)--(-1.6,-0.68); 
\draw (-1.5,-0.85)--(-2,0); 
\draw[double,decoration={markings,mark=at position 0.65 with {\arrow{>}}},postaction={decorate}]  (-2,0)--(-1,1.7);
\draw[double,decoration={markings,mark=at position 0.75 with {\arrow{>}}},postaction={decorate}] (1,-1.7)--(-1,-1.7);
\draw[double, decoration={markings,mark=at position 0.65 with {\arrow{>}}},postaction={decorate}] (1,1.7)--(2,0);

\draw [decoration={markings,mark=at position 0.65 with {\arrow{>}}},postaction={decorate}] (2,0)..controls(3,0)and(4,-2.5)..(4,-3.4); 
\draw [decoration={markings,mark=at position 0.65 with {\arrow{>}}},postaction={decorate}] (-4,-3.4)..controls(-4,-2.5)and(-3,0)..(-2,0); 

\draw [decoration={markings,mark=at position 0.65 with {\arrow{>}}},postaction={decorate}] (-1,1.7)..controls(-1.6,2.72)and(-5.5,0)..(-5.5,-3.4);
\draw [decoration={markings,mark=at position 0.65 with {\arrow{>}}},postaction={decorate}] (5.5,-3.4)..controls(5.5,0)and(1.6,2.72)..(1,1.7);

\draw [->](2,-3.4)--(1.4,-2.38);
\draw (1.5,-2.55)--(1,-1.7);
\draw [->](-1,-1.7)--(-1.6,-2.72);
\draw (-1.5,-2.55)--(-2,-3.4);

\draw [black, dashed] (-6.5,-3.4)--(6.5,-3.4); 
\end{tricpob}
}

\def\BenzeneDualColor
{\begin{tricpob}
\draw [->](-1,1.7)--(0.1,1.7); 
\draw (-0.2,1.9) node[above,black,scale=0.7]{$e_a$}; 
\draw (0,1.7)--(1,1.7);

\draw [->](2,0)--(1.4,-1.02); 
\draw (1.5,-1.2) node[right,black,scale=0.7]{$e_a$}; 
\draw (1.5,-0.85)--(1,-1.7);

\draw [->](-1,-1.7)--(-1.6,-0.68);
\draw (-1.4,-1.2) node[left,black,scale=0.7]{$e_a$}; 
\draw (-1.5,-0.85)--(-2,0); 

\draw[double,decoration={markings,mark=at position 0.65 with {\arrow{>}}},postaction={decorate}]  (-2,0)--(-1,1.7);
\draw[double,decoration={markings,mark=at position 0.75 with {\arrow{>}}},postaction={decorate}] (1,-1.7)--(-1,-1.7);
\draw[double, decoration={markings,mark=at position 0.65 with {\arrow{>}}},postaction={decorate}] (1,1.7)--(2,0);

\draw [decoration={markings,mark=at position 0.65 with {\arrow{>}}},postaction={decorate}] (2,0)..controls(3,0)and(4,-2.5)..(4,-3.4); 
\draw [decoration={markings,mark=at position 0.65 with {\arrow{>}}},postaction={decorate}] (-4,-3.4)..controls(-4,-2.5)and(-3,0)..(-2,0); 

\draw [decoration={markings,mark=at position 0.65 with {\arrow{>}}},postaction={decorate}] (-1,1.7)..controls(-1.6,2.72)and(-5.5,0)..(-5.5,-3.4);
\draw [decoration={markings,mark=at position 0.65 with {\arrow{>}}},postaction={decorate}] (5.5,-3.4)..controls(5.5,0)and(1.6,2.72)..(1,1.7);

\draw [->](2,-3.4)--(1.4,-2.38);
\draw (1.5,-2.55)--(1,-1.7);
\draw [->](-1,-1.7)--(-1.6,-2.72);
\draw (-1.5,-2.55)--(-2,-3.4);

\draw (-2,-3.4) node[below,black,scale=0.7]{$e_i$}; 
\draw  (2,-3.4) node[below,black,scale=0.7]{$e_i$};
\draw (-4,-3.4) node[below,black,scale=0.7]{$e_i$};
\draw  (4,-3.4) node[below,black,scale=0.7]{$e_i$};
\draw  (-6,-3.4) node[below,black,scale=0.7]{$e_i$};
\draw  (6,-3.4) node[below,black,scale=0.7]{$e_i$};
\end{tricpob}
}

Given  $\varepsilon\in \Seq_{N}$, we call a web from $\emptylist$ to $\varepsilon$ a \emph{web with boundary $\varepsilon$}. 
A web $W$ with boundary $\varepsilon$ can alternatively be seen as a web in the $2$-disk $\mathbb{D}^2$, where the boundary $\varepsilon$ of $W$ is given by a finite set of points on a marked circle 
$\mathbb{S}^1=(\partial\mathbb{D}^2, *)$ equipped with labels and signs agreeing with $\varepsilon$.
(By convention, $*$ does not meet $\partial W$.)
We may depict this equivalence as follows:
\[
\scalebox{0.8}{\BenzenewebDisk \qquad \textit{\Large$\sim$} \qquad \BenzenewebBent}
\]

\begin{proposition}[{\cite[Theorem 3.3.1]{Cautis2014}}]
\label{WebRep}
There is a monoidal functor
\[
\Phi:\webcat_q(\GL_{N}) \rightarrow \Fund (U_q(\GL_{N}))
\]
sending $k$ to $V_{\omega_k}$, which is an equivalence of $\mathbb{C}(q)$-linear pivotal categories.
\end{proposition}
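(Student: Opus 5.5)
The proposition is the Cautis--Kamnitzer--Morrison equivalence, so I would follow their skew Howe duality strategy rather than attempt a direct diagrammatic argument. The plan has three stages.

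\textbf{Step 1: construct $\Phi$.} Send the object $k^{+}$ to $V_{\omega_k}=\Lambda^k_q V$ and $k^-$ to its dual. Send the merge vertex $k\otimes l\to k+l$ to the quantum wedge multiplication $\Lambda^k_qV\otimes\Lambda^l_qV\to\Lambda^{k+l}_qV$, and the split vertex to the corresponding (suitably normalized) comultiplication. Caps and cups go to the pivotal evaluation and coevaluation maps. Then check that each relation in \eqref{DefSkeinGLN} holds in $\Fund(U_q(\GL_N))$. Circles and bigons reduce to dimension computations and to the identity (comultiplication followed by multiplication) $=\begin{bmatrix} k+l\\k\end{bmatrix}$. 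Associativity is associativity of the quantum exterior algebra. The two square-switch relations are checked most efficiently through the next step, so I would postpone them.

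\textbf{Step 2: skew Howe duality.} Quantum skew Howe duality gives commuting actions of $U_q(\GL_m)$ and $U_q(\GL_N)$ on $\Lambda_q(\mathbb{C}^m\otimes\mathbb{C}^N)=\bigoplus_{\underline{k}}\Lambda^{k_1}_qV\otimes\cdots\otimes\Lambda^{k_m}_qV$. Under this action, $E_i^{(a)}$ and $F_i^{(a)}$ act by the ladder webs, namely rungs of label $a$ between adjacent uprights. The rung-swap relations are then exactly the $\mathfrak{gl}_m$ relation $[E_i,F_i]$ on divided powers, and their higher analogues $E^{(a)}F^{(b)}=\sum_t\begin{bmatrix}\cdot\\t\end{bmatrix}F^{(b-t)}E^{(a-t)}$. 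Therefore these relations hold in the image. The same mechanism yields a full functor $\dot U_q(\mathfrak{gl}_m)\to \Fund$ by skew Howe duality, and a functor $\dot U_q(\mathfrak{gl}_m)\to$ ladder webs. Ladders are obtained from upward webs by isotopy, and every web becomes a ladder after bending all strands upward using duals, where an $N$-labelled edge trades $k^-$ for $(N-k)^+$.

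\textbf{Step 3: full faithfulness and equivalence.} Fullness follows from skew Howe duality: $\Hom_{U_q(\GL_N)}$ between tensor products of fundamentals is the image of $\dot U_q(\mathfrak{gl}_m)$, which factors through ladders. Essential surjectivity onto $\Fund$ is immediate, since $\Fund$ is by definition generated by the $V_{\omega_k}$ and their duals.

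\textbf{Main obstacle: faithfulness.} Here I would show that the kernel of $\dot U_q(\mathfrak{gl}_m)\to \Hom_{U_q(\GL_N)}$ is generated by the idempotents $1_\lambda$ with some $\lambda_i\notin[0,N]$. This is the key step of \cite{Cautis2014}, proved via the theory of $\dot U$ modulo weights outside a Weyl-orbit box, using Lusztig's canonical basis. I would then check that these idempotents already vanish in $\Web$, because labels exceeding $N$ are disallowed. Finally, I would verify compatibility with the pivotal structure, so that bending strands does not create new relations; this amounts to checking that the cup and cap images satisfy the zigzag relations and the $k^-\leftrightarrow(N-k)^+$ tag relations.
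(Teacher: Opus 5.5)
Your outline is the Cautis--Kamnitzer--Morrison skew Howe argument, and that argument is all the paper relies on. The paper gives no proof of its own, only the citation of \cite[Theorem 3.3.1]{Cautis2014}, which is stated there for $U_q(\SL_N)$, plus references such as \cite{MixedQHowe,SuperQHowe} for the $\GL_N$ version. Your overall structure is right: ladders, fullness from skew Howe duality, and faithfulness from the kernel of $\dot U_q(\mathfrak{gl}_m)$ being generated by the $1_\lambda$ with $\lambda\notin\{0,\dots,N\}^m$. However, two steps do not go through for $\GL_N$ as you wrote them.

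First, your orientation reduction is the $\SL_N$ one.
\begin{itemize}
\item In $\Fund(U_q(\GL_N))$ the objects $k^-$ and $(N-k)^+$ are not isomorphic: their weights have total degree $-k$ and $N-k$ respectively. So there are no ``$k^-\leftrightarrow(N-k)^+$ tag relations'' to check.
\item The correct statement is $(\Lambda^k_qV)^*\cong\Lambda^{N-k}_qV\otimes(\Lambda^N_qV)^*$. In $\Web$ an $N$-labelled edge is a genuine determinant strand, not an erasable tag.
\item Flipping a downward $k$-strand therefore leaves a downward $N$-strand behind. To reach upward ladders you must still move these strands to the end of the sequence and bend them to the other side.
\item One way to do this is with the ladders $F^{(N-c)}1_{(N,c)}$ and $E^{(N-c)}1_{(c,N)}$. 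They are mutually inverse in $\Web$, because every other term of the rung-swap relation has a label outside $[0,N]$.
\item Alternatively, use a mixed or super $q$-Howe duality in which downward strands are built in, as the $\GL_N$ references do.
\end{itemize}

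Second, skew Howe duality only gives the functor $\dot U_q(\mathfrak{gl}_m)\to\Fund$. Your faithfulness argument needs a well-defined functor $\dot U_q(\mathfrak{gl}_m)\to\Web$, so that an element of the kernel (a combination of terms through bad idempotents) maps to zero in $\Web$. For that you must derive the $\dot U$ relations among ladders inside $\Web$ from associativity, the bigon relations and the rung-swap relation. These are the divided-power relations and, especially, the quantum Serre relations. This is a separate verification, part of the construction of the ladder functor in \cite{Cautis2014}. It does not follow from `the same mechanism'.
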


$\Fund(U_q(\GL_N))$ is the fundamental representation category of $U_q(\GL_N)$, the full subcategory of the representation category of $U_q(\GL_N)$, which is monoidally generated by the fundamental representations $\wedge_q^k\mathbb{C}_q^N$ and their duals.

Proposition \cite{Cautis2014} is due to Cautis--Kamnitzer--Morrison \cite{Cautis2014} where they state the results for $U_q(\SL_N)$. The $U_q(\GL_N)$ version appeared in \cite{MixedQHowe,SuperQHowe,ExtremalGLN}.

\begin{proposition} 
    The following relations follow from Relations \eqref{DefSkeinGLN}.  
    \begin{equation} \label{NeededSkein}
        \TypeACircleNeed= [n] , \quad \TypeABigonNeedA=[2] \ \TypeADoubleID \ , \quad \TypeABigonNeedB=  [n-1] \ \TypeASingleID .\ 
    \end{equation}   
\end{proposition}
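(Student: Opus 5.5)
These are specializations of the defining relations \eqref{DefSkeinGLN}, so the plan is to instantiate the general relations at the appropriate labels and evaluate the quantum binomials. Throughout, $n$ in \eqref{NeededSkein} is read as $N$, the rank of the web category; the unlabeled edges carry label $1$ and the double edges label $2$, by the notational convention above.

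For the circle, apply the first relation in \eqref{DefSkeinGLN} with $k=1$. It gives the value $\begin{bmatrix} N \\ 1 \end{bmatrix} = \frac{[N]!}{[1]![N-1]!} = [N]$. Either orientation may be used, since both circles are declared equal to the same binomial.

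For the first bigon (a double edge splitting into two single edges which merge back), use the second relation with $k=l=1$. This gives $\begin{bmatrix} 2 \\ 1 \end{bmatrix}$ times the identity on the edge labelled $k+l=2$. Since $\begin{bmatrix} 2 \\ 1 \end{bmatrix} = \frac{[2]!}{[1]![1]!} = [2]$, this is exactly the double-edge identity times $[2]$.

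For the second bigon (a single edge with a bigon formed by an oppositely oriented single edge and a double edge), match it to the third relation. The outer strands have label $k=1$, the inner double edge has label $k+l=2$, and the reversed edge has label $l=1$. The relation then gives $\begin{bmatrix} N-1 \\ 1 \end{bmatrix} = [N-1]$ times the single-strand identity.

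The only real obstacle is this last identification. The picture in \eqref{NeededSkein} may be a reflection or rotation of the one in \eqref{DefSkeinGLN}: the reversed edge could sit on the other side, or the double edge could be on the left. If the orientations or positions do not literally match, I would use pivotality and planar isotopy (the category is pivotal, and webs are taken up to isotopy) to reflect the diagram into the form of the third relation. Alternatively, I would rewrite the reversed $1$-edge via duality as an $(N-1)$-edge and use associativity (the fourth relation) to reduce to a bigon of the first type. Either route yields the same coefficient $[N-1]$, because quantum binomials are symmetric: $\begin{bmatrix} N-1 \\ 1 \end{bmatrix}=\begin{bmatrix} N-1 \\ N-2 \end{bmatrix}$.
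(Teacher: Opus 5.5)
Your treatment of the circle and of the first bigon is fine: these are literally the first two relations of \eqref{DefSkeinGLN} at $k=1$ and at $k=l=1$. The gap is in the third relation, exactly the case you flagged. The third picture in \eqref{NeededSkein} has the single arc on the left pointing down and the double arc on the right pointing up. It is the left--right mirror image of the third relation of \eqref{DefSkeinGLN} at $k=l=1$, where the $(k+l)$-arc is on the left pointing up and the $l$-arc on the right pointing down; it is not a rotation of it.

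Your first remedy fails because a reflection is not a planar isotopy, and pivotality only gives rotations. Rotating the defining relation by $\pi$ yields the orientation-reversed picture, with the outer strand pointing down, not the one you need. Definition \ref{TypeAWebs} as written does not list the reflected version of this relation, so there is no symmetry you can simply invoke. Your second remedy is not available inside \eqref{DefSkeinGLN} either: $\mathfrak{gl}_N$ webs have no tags identifying $1^-$ with $(N-1)^+$, and the associativity relation only involves co-oriented edges. The symmetry $\begin{bmatrix} N-1\\1\end{bmatrix}=\begin{bmatrix} N-1\\N-2\end{bmatrix}$ is not a derivation.

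There are two honest ways to finish.

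\emph{Mirror convention.} State explicitly that the defining relations are imposed together with their mirror images. This is the convention under which the paper, which gives no written proof, treats all three relations as immediate specializations. The third relation is then the mirrored third defining relation at $k=l=1$, with coefficient $\begin{bmatrix} N-1\\1\end{bmatrix}=[N-1]$.

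\emph{Direct argument.} By Proposition \ref{WebRep}, $\mathrm{End}_{\Web}(1^+)\cong\mathrm{End}_{U_q(\mathfrak{gl}_N)}(V)$ is one-dimensional, so the diagram equals $c\cdot\mathrm{id}_{1^+}$ for some $c\in\mathbb{C}(q)$. Close the outer strand around the right. The double arc and the closing strand now bound a bigon of exactly the defining type at $k=l=1$: double edge on its left pointing up, single edge on its right pointing down, and the former left arc playing the role of the outer strand. So the closed web equals $[N-1]$ times a $1$-labelled circle, i.e.\ $[N-1][N]$. Comparing with $c[N]$ gives $c=[N-1]$.
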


\def\TypeABraidWeb
{\begin{tric}
\draw (-1.5,1.5)--(1.5,-1.5) ;
\draw[double=darkblue,ultra thick,white,line width=3pt] (1.5,1.5)--(-1.5,-1.5);
\draw [->](0.7,0.7)--(1,1)node[below,black,pos=0.4,scale=0.7]{$1$};
\draw [->](-0.7,0.7)--(-1,1)node[below,black,pos=0.4,scale=0.7]{$1$};
\end{tric}
}

\def\TypeAIdWeb{
\begin{tric}
\draw[scale=0.7,decoration={markings,mark=at position 0.6 with {\arrow{>}}},postaction={decorate}](-2,-2)..controls(-1,-1)and(-1,1)..(-2,2)node[right,black,midway,scale=0.7]{$1$}  ;
\draw  [scale=0.7,decoration={markings,mark=at position 0.6 with {\arrow{>}}},postaction={decorate}] (2,-2)..controls(1,-1)and(1,1)..(2,2)node[left,black,midway,scale=0.7]{$1$};
\end{tric}
}

\def\TypeAIH
{\begin{tric}
\draw[scale=0.7,decoration={markings,mark=at position 0.7 with {\arrow{>}}},postaction={decorate}](-1.732,-2)--(0,-1)
node[below,black,midway,scale=0.7]{$1$};

\draw[scale=0.7,decoration={markings,mark=at position 0.7 with {\arrow{>}}},postaction={decorate}](1.732,-2)--(0,-1)
node[below,black,midway,scale=0.7]{$1$};

\draw[scale=0.7,decoration={markings,mark=at position 0.7 with {\arrow{>}}},postaction={decorate}](0,1)--(-1.732,2)
node[above,black,pos=0.6,scale=0.7]{$1$};

\draw[scale=0.7,decoration={markings,mark=at position 0.7 with {\arrow{>}}},postaction={decorate}](0,1)--(1.732,2)
node[above,black,pos=0.6,scale=0.7]{$1$};

\draw[double, scale=0.7,decoration={markings,mark=at position 0.7 with {\arrow{>}}},postaction={decorate}](0,-1)-- (0,1);
\end{tric}
}

\subsection{Closed \texorpdfstring{$\GL_N$}{GL_N} foams}

In this section we recall the definition of $\GL_N$ foams and their evaluation formula, following \cite{RW-eval-foams}.

\begin{definition} \label{defSL4foam}

A closed $\GL_N$ foam $F$ is topological space 
obtained by gluing together a finite collection of facets $\Sigma_i$, $i$ in some finite indexing set $I$, along with an embedding $\iota: F \rightarrow \mathbb{R}^3$. 
Each facet $\Sigma_i$ is an oriented compact connected surface with a label $k$ where $k \in \{ 1,2, \dots , N \}$. 
Facets are glued along \emph{bindings} and sometimes meet at \emph{singular vertices}, such that orientations are compatible  along bindings and vertices,
according to the local models depicted in Figure \ref{figure basic foam}.
\end{definition}

\def\BindingLineOrientGLN
{
\begin{tikzpicture}[yscale=.7, scale=.75, baseline={([yshift=-.8ex]current bounding box.center)}]
    \def \t{8}; 
    \def \r{3}; 
    \def \h{4}; 
    \def \a {(\t:\r cm)};
    \def \b {(\t+120: \r cm)};
    \def \c {(\t+240: \r cm)};
    \draw[thick,darkgreen] (0,0) -- \a;
    \draw[thick,darkgreen, dashed] (0,0) -- (\t+120: .5*\r cm);
    \draw[thick,darkgreen] (\t+120: .5*\r cm) -- (\t+120: \r cm);
    \draw[thick,darkgreen] (0,0) -- \c;
    \foreach \p in {\b,\c}{
        \draw[thick,darkgreen] \p -- +(0,\h);
    }
    \foreach \p in {(0,0)}{
    \draw[thick,darkgreen] \p -- +(0,\h);
    }
    \foreach \p in {\a}{
        \draw[thick,darkgreen] \p -- +(0,\h);
    }
    \begin{scope}[yshift=\h cm]
        \draw[thick,darkgreen] (0,0) -- \a;
        \draw[thick,darkgreen] (0,0) -- \b;
        \draw[thick,darkgreen] (0,0) -- \c;
    \end{scope}
    \draw[thick, darkorange, ->,yscale=1.4]  (\r/2.5,\h/2.6) arc (180:450:0.25);
    \draw[thick, darkorange, ->,yscale=1.4] (-\r/2,\h/1.5) arc (180:450:0.25);
    \draw[thick, darkorange, ->,yscale=1.4] (-\r/3.25,-0.25) arc (180:450:0.25);
    \draw (\r/2.5,\h/3.2) node[red, scale=0.7]{$a+b$};
    \draw (-\r/2,\h/1.5) node[red, scale=0.7]{$b$};
    \draw (-\r/3.4,-1.2) node[red, scale=0.7]{$a$};
\end{tikzpicture}
}

\def\BindingLineOrientEdge
{
\begin{tikzpicture}[yscale=.7, scale=.75, baseline={([yshift=-.8ex]current bounding box.center)}]
    \def \t{8}; 
    \def \r{3}; 
    \def \h{4}; 
    \def \a {(\t:\r cm)};
    \def \b {(\t+120: \r cm)};
    \def \c {(\t+240: \r cm)};
    \draw[thick,darkgreen] (0,0) -- \a;
    \draw[thick,darkgreen, dashed] (0,0) -- (\t+120: .5*\r cm);
    \draw[thick,darkgreen] (\t+120: .5*\r cm) -- (\t+120: \r cm);
    \draw[thick,darkgreen] (0,0) -- \c;
    \foreach \p in {\b,\c}{
        \draw[thick,darkgreen] \p -- +(0,\h);
    }
    \foreach \p in {(0,0)}{
    \draw[decoration={markings,mark=at position 0.52 with {\arrow{>}}},postaction={decorate}, thick,darkgreen] \p -- +(0,\h);
    }
    \foreach \p in {\a}{
        \draw[thick,darkgreen] \p -- +(0,\h);
    }
    \begin{scope}[yshift=\h cm]
        \draw[thick,darkgreen] (0,0) -- \a;
        \draw[thick,darkgreen] (0,0) -- \b;
        \draw[thick,darkgreen] (0,0) -- \c;
    \end{scope}
    \draw[thick, darkorange, ->,yscale=1.4]  (\r/2.5,\h/2.6) arc (180:450:0.25);
    \draw[thick, darkorange, ->,yscale=1.4] (-\r/2,\h/1.5) arc (180:450:0.25);
    \draw[thick, darkorange, ->,yscale=1.4] (-\r/3.25,-0.25) arc (180:450:0.25);
    \draw (\r/2.5,\h/3.2) node[red, scale=0.7]{$a+b$};
    \draw (-\r/2,\h/1.5) node[red, scale=0.7]{$b$};
    \draw (-\r/3.4,-1.2) node[red, scale=0.7]{$a$};
\end{tikzpicture}
}

\def\PurpleVertexOrient
{\begin{tikzpicture}[scale=.75, baseline={([yshift=-.8ex]current bounding box.center)}]
    \begin{scope}[xshift=3cm]
        \draw[double,darkgreen] (-2,-1) -- (0,1) 
            -- (0,-1.5) -- (-2,-3.5) -- (-2,-1);
        \draw[red, line width= \DefectThick ] (-1,1) -- (-1,-2.5);
    \end{scope}
    \begin{scope}[xshift=1cm, darkgreen]
        \draw (-2.3,-1) -- (-0.3,1) -- (4.3,1) -- (2.3,-1) -- (-2.3,-1);
    \end{scope}
    \draw[double,darkgreen] (-0.3,0) rectangle (4.3,2.3);
    \draw[red, line width= \DefectThick ] (2,0) -- (2,2.3);
    \filldraw[purple] (2,0) circle (2pt);
    \draw[thick, darkorange, ->] (2.7,-1.5) arc (0:270:0.25);
    \draw[thick, darkorange, <-] (1.5,-2.2) arc (-90:180:0.25);
    \draw[thick, darkorange, <-] (0,-0.7) arc (-90:180:0.25);
    \draw[thick, darkorange, ->] (3.7,-0.4) arc (0:270:0.25);
    \draw[thick, darkorange, ->] (0.4,0.4) arc (180:450:0.25);
    \draw[thick, darkorange, <-] (3.8,0.65) arc (90:360:0.25);
    \draw[thick, darkorange, ->] (2.85,1.75) arc (0:270:0.25);
    \draw[thick, darkorange, <-] (1.4,1.5) arc (-90:180:0.25);
\end{tikzpicture}
}

\def\PurpleVertexGLN
{\begin{tikzpicture}[baseline={([yshift=-.8ex]current bounding box.center)}]
    \begin{scope}[xshift=3cm]
        \draw[thick,darkgreen] (-0.2,-1) 
            -- (-0.2,-1.5) -- (-1.8,-3.5) -- (-1.8,-1)--(-1,0);
        \draw[dashed,thick,darkgreen](-1,0)--(-0.2,1)--(-0.2,-1);    
    \end{scope}
    \begin{scope}[thick,xshift=1cm, darkgreen]
        \draw (-1,0)--(-1.8,-1)--(2.2,-1)--(3,0);   
        \draw [dashed](-1,0)--(-0.2,1)--(3.8,1)--(3,0);  
        \draw (3,0)--(3.8,1)--(3,1); 
    \end{scope}
    \draw[thick,darkgreen] (0,0) rectangle (4,2);
    \draw (1,-0.3) node[red,scale=0.7]{$a+b+c$};
    \draw (1.7,0.3) node[red,scale=0.7]{$a$};
    \draw (2.4,-0.3) node[red,scale=0.7]{$c$};
    \draw (3,0.3) node[red,scale=0.7]{$b$};
    \draw[thick, darkorange, <-] (-0.1,-0.6) arc (180:450:0.25);
    \draw[thick, darkorange, <-] (0.6,0.4) arc (180:450:0.25);
    \draw[thick, darkorange, ->] (3.7,0.5) arc (0:270:0.25);
    \draw[thick, darkorange, <-] (2.45,-0.55) arc (180:450:0.25);
    \draw (1.8,-1.5) node[red,scale=0.7]{$a+b$};
    \draw (2.5,1.5) node[red,scale=0.7]{$b+c$};
    \draw[thick, darkorange, <-] (1.5,-2) arc (180:450:0.25);
    \draw[thick, darkorange, <-] (1.6,1.5) arc (180:450:0.25);
\end{tikzpicture}
}

\begin{figure}[htbp]
    \centering
    \scalebox{0.8}{%
        $\displaystyle    \BindingLineOrientGLN \qquad \qquad \PurpleVertexGLN$
    }
    \caption{An $(a,b,a+b)$-binding (on the left) and an $(a,b,c,a+b,b+c,a+b+c)$-singular vertex (on the right).}
    \label{figure basic foam}
\end{figure}

\begin{definition}\label{localmodeldef}
We use a few more intuitive terms to describe foams.

\begin{itemize}

\item  We call the label $k$ of a facet the \emph{thickness} of the facet, and also refer to a facet with thickness $k$ as a \emph{$k$-facet}. When we later define colorings on foams, a $k$-facet will be colored with $k$ pairwise distinct pigments. 

\item The $1$-dimensional glued seam among $a$-facet, $b$-facet, and $(a+b)$-facet is called a \emph{$(a,b,a+b)$-binding}, or a \emph{binding} for short.

\item The $0$-dimensional glued region in the intersection of all the facets in the local model is called a \emph{singular point}.  A singular point is adjacent to six facets. There are four seams (bindings) near each singular point.  

\end{itemize}
    
\end{definition}

\def\BindingLine
{
\begin{tikzpicture}[yscale=.7, scale=.75]
    \def \t{10}; 
    \def \r{2}; 
    \def \h{4}; 
    \def \a {(\t:\r cm)};
    \def \b {(\t+120: \r cm)};
    \def \c {(\t+240: \r cm)};
    \draw[double, darkgreen] (0,0) -- \a;
    \draw[darkgreen, dashed] (0,0) -- (\t+120: .5*\r cm);
    \draw[darkgreen] (\t+120: .5*\r cm) -- (\t+120: \r cm);
    \draw[darkgreen] (0,0) -- \c;
    \foreach \p in {\b,\c}{
        \draw[darkgreen] \p -- +(0,\h);
    }
    \foreach \p in {(0,0)}{
    \draw[thick, darkgreen] \p -- +(0,\h);
    }
    \foreach \p in {\a}{
        \draw[double, darkgreen] \p -- +(0,\h);
    }
    \begin{scope}[yshift=\h cm]
        \draw[double, darkgreen] (0,0) -- \a;
        \draw[darkgreen] (0,0) -- \b;
        \draw[darkgreen] (0,0) -- \c;
    \end{scope}
\end{tikzpicture}
}

\def\BindingLineOrient
{
\begin{tikzpicture}[yscale=.7, scale=.75, baseline={([yshift=-.8ex]current bounding box.center)}]
    \def \t{8}; 
    \def \r{3}; 
    \def \h{4}; 
    \def \a {(\t:\r cm)};
    \def \b {(\t+120: \r cm)};
    \def \c {(\t+240: \r cm)};
    \draw[double, darkgreen] (0,0) -- \a;
    \draw[darkgreen, dashed] (0,0) -- (\t+120: .5*\r cm);
    \draw[darkgreen] (\t+120: .5*\r cm) -- (\t+120: \r cm);
    \draw[darkgreen] (0,0) -- \c;
    \foreach \p in {\b,\c}{
        \draw[darkgreen] \p -- +(0,\h);
    }
    \foreach \p in {(0,0)}{
    \draw[thick, darkgreen] \p -- +(0,\h);
    }
    \foreach \p in {\a}{
        \draw[double, darkgreen] \p -- +(0,\h);
    }
    \begin{scope}[yshift=\h cm]
        \draw[double, darkgreen] (0,0) -- \a;
        \draw[darkgreen] (0,0) -- \b;
        \draw[darkgreen] (0,0) -- \c;
    \end{scope}
    \draw[thick, darkgreen, ->,yscale=1.4] (\r/2.5,\h/2.6) arc (180:450:0.25);
       \draw[thick, darkgreen, ->,yscale=1.4] (-\r/2,\h/1.5) arc (180:450:0.25);
       \draw[thick, darkgreen, ->,yscale=1.4] (-\r/3.25,-0.25) arc (180:450:0.25);
\end{tikzpicture}
}

\def\GLBindingLine
{
\begin{tikzpicture}[yscale=.7, scale=.65]
    \def \t{10}; 
    \def \r{2}; 
    \def \h{4}; 
    \def \a {(\t:\r cm)};
    \def \b {(\t+120: \r cm)};
    \def \c {(\t+240: \r cm)};
    \draw[darkgreen] (0,0) -- \a;
    \draw[darkgreen, dashed] (0,0) -- (\t+120: .5*\r cm);
    \draw[darkgreen] (\t+120: .5*\r cm) -- (\t+120: \r cm);
    \draw[darkgreen] (0,0) -- \c;
    \foreach \p in {\b,\c}{
        \draw[darkgreen] \p -- +(0,\h);
    }
    \foreach \p in {(0,0)}{
    \draw[thick, darkgreen] \p -- +(0,\h);
    }
    \foreach \p in {\a}{
        \draw[darkgreen] \p -- +(0,\h);
    }
    \draw (-0.3,\h/2.5) node[black,scale=0.7]{$a$};
    \draw (1,\h/1.75) node[black,scale=0.7]{$a+b$};
    \draw (-1,\h/1.5) node[black,scale=0.7]{$b$};
    \begin{scope}[yshift=\h cm]
        \draw[darkgreen] (0,0) -- \a;
        \draw[darkgreen] (0,0) -- \b;
        \draw[darkgreen] (0,0) -- \c;
    \end{scope}
\end{tikzpicture}
}

\def\DefectLine
{\begin{tikzpicture}
    \draw[double,darkgreen] (0,0) rectangle (2,1.5);
    \draw[red, line width=\DefectThick] (1,0) -- (1,1.5);
\end{tikzpicture}
}

\def\DefectLineOrient
{\begin{tikzpicture} [baseline={([yshift=-.8ex]current bounding box.center)}]
    \draw[double,darkgreen] (0,0) rectangle (3,2);
    \draw[red, line width=\DefectThick] (1.5,0) -- (1.5,2);
    \draw[thick, darkgreen, ->] (0.5,1) arc (180:450:0.2);
    \draw[thick, darkgreen, <-] (2.2,1.2) arc (90:360:0.2);
\end{tikzpicture}
}

\def\DefectLineOrientEdge
{\begin{tikzpicture} [baseline={([yshift=-.8ex]current bounding box.center)}]
    \draw[double,darkgreen] (0,0) rectangle (3,2);
    \draw[red, line width=\DefectThick,decoration={markings,mark=at position 0.52 with {\arrow{>}}},postaction={decorate}] (1.5,0) -- (1.5,2);
    \draw[thick, darkgreen, ->] (0.5,1) arc (180:450:0.2);
    \draw[thick, darkgreen, <-] (2.2,1.2) arc (90:360:0.2);
\end{tikzpicture}
}

\def\PurpleVertex
{\begin{tikzpicture}
    \begin{scope}[xshift=1.5cm, scale=.5]
        \draw[double,darkgreen] (-2,-1) -- (0,1) 
            -- (0,-2) -- (-2,-4) -- (-2,-1);
        \draw[red, line width= \DefectThick ] (-1,1) -- (-1,-3);
    \end{scope}
    \begin{scope}[xshift=.5cm, darkgreen, scale=.5]
        \draw (-2,-1) -- (0,1) -- (4,1) -- (2,-1) -- (-2,-1);
    \end{scope}
    \draw[double,darkgreen] (0,0) rectangle (2,1.5);
    \draw[red, line width= \DefectThick ] (1,0) -- (1,1.5);
    \filldraw[purple] (1,0) circle (2pt);
\end{tikzpicture}
}

\begin{remark}
 Given the orientation of a facet around a singular vertex, the compatibility condition for orientations around a binding uniquely determines the orientations of the rest of the facets around the singular vertex.
\end{remark}

\begin{remark}
    In a picture of $1$-facet, we allow the label $1$ to be omitted. Moreover, we will sometimes depict a $2$-facet by drawing two parallel facets close to one another. Any number denoting the thickness of a facet will always be written in red;  we preserve black text for \textit{colorings} (Definition \ref{def:coloring}). 
\end{remark}

\begin{definition}
    The orientation of the $(a,b,a+b)$-binding is induced by the orientations of the facets as in Figure \ref{figure of oriented seam}. 
    The orientation of the binding agrees with the orientations of the $a$-facet and the $b$-facet but is opposite to the orientation of the $(a+b)$-facet.
    
\begin{figure}[htbp]
    \centering
    \scalebox{0.9}{%
        $\displaystyle \BindingLineOrientEdge$
    }
    \caption{The induced orientation on the $(a,b,a+b)$-binding.}
    \label{figure of oriented seam}
\end{figure}

\end{definition}

\begin{remark}
    In what follows we distinguish lower case variables $x_i$ and upper case variables $\vari$. Namely, $x_i$'s will be used to denote abstract symmetric polynomials, whereas $\vari$'s are assigned to \textit{pigments} (to be defined below) and plugged into the former abstract symmetric polynomials. Importantly, $x_i$'s have degree $1$ as polynomials, but $\vari$'s are assumed to have degree $2$. We will write $\Symf$ to denote the symmetric polynomials in either set of variables when clear from the context.
\end{remark}

\begin{notation} 
A $k$-facet of a foam may be decorated with dots labeled by homogeneous symmetric polynomials in $k$ variables, with $\Q$-coefficients. On a single facet, a dot is labeled by a polynomial in $x$. On a $k$-facet, a dot is labeled by a homogeneous symmetric polynomial $s(x_1, x_2, \ldots, x_k)$ in $k$ variables $x_1, x_2, \ldots, x_k$. Multiple dots may appear on the same facet. 

We denote by $e_k$ the elementary symmetric polynomial of degree $k$ in all $N$ variables:
\[
e_k = \sum_{1 \leq i_1 < i_2 < \cdots < i_k \leq N} \prod_{j=1}^k x_{i_j} 
\in \mathbb{Q}[x_1 , x_2 , x_3 , \dots , x_N ]^{\mathfrak{S}_N}=\Symf.
\]
\end{notation}

\def\BindingLineOrient
{
\begin{tikzpicture}[yscale=.7, scale=.75]
    \def \t{10}; 
    \def \r{2}; 
    \def \h{4}; 
    \def \a {(\t:\r cm)};
    \def \b {(\t+120: \r cm)};
    \def \c {(\t+240: \r cm)};
    \draw[double, darkgreen] (0,0) -- \a;
    \draw[darkgreen, dashed] (0,0) -- (\t+120: .5*\r cm);
    \draw[darkgreen] (\t+120: .5*\r cm) -- (\t+120: \r cm);
    \draw[darkgreen] (0,0) -- \c;
    \foreach \p in {\b,\c}{
        \draw[darkgreen] \p -- +(0,\h);
    }
    \foreach \p in {(0,0)}{
    \draw[thick, darkgreen] \p -- +(0,\h);
    }
    \foreach \p in {\a}{
        \draw[double, darkgreen] \p -- +(0,\h);
    }
    \begin{scope}[yshift=\h cm]
        \draw[double, darkgreen] (0,0) -- \a;
        \draw[darkgreen] (0,0) -- \b;
        \draw[darkgreen] (0,0) -- \c;
    \end{scope}
\end{tikzpicture}
}

\def\DefectLineOrient
{\begin{tikzpicture}
    \draw[double,darkgreen] (0,0) rectangle (2,1.5);
    \draw[red, thick] (1,0) -- (1,1.5);
\end{tikzpicture}
}

\subsection{\texorpdfstring{$\GL_N$}{GL_N} foam evaluation}\label{sl4foameevaluationchapter}

Let $\palette:= \{\colora,\colorb,\colorc,\dots, \colorn \}$ denote the set of \emph{pigments}, which are also referred to as \emph{colors}. These correspond to the distinct roots $\vari$ of $\GL_N$ (see \cite{RW-eval-foams}).

\begin{definition}
\label{def:coloring}
A \emph{coloring} of a closed foam $F$ is a map $c$ from the set of facets of $F$ to the set of subsets of $\palette$ such that the following conditions hold. 

\begin{itemize}
    \item Each $k$-facet is assigned $k$ distinct pigments. We say a facet $\Sigma$ is \emph{colored}
    with pigment $i$ if $i \in c(\Sigma)$.
    \item Let $\Sigma_a, \Sigma_b,$ and $\Sigma_{a+b}$ denote facets meeting at an $(a,b,a+b)$-binding, where $\Sigma_k$ is a $k$-facet. If $ \{i_1,i_2,\dots,i_a\} = c(\Sigma_a)$ and $\{j_1,j_2,\dots j_b\}= c(\Sigma_b)$, then $c(\Sigma_a)\cap c(\Sigma_b)=\emptyset $ and $c(\Sigma_{a+b}) =c(\Sigma_a)\sqcup c(\Sigma_b) $. 
\end{itemize}

\end{definition}

\begin{definition}
Given a coloring $c$ of a closed foam $F$, 
let $F_i(c)$ denote the \emph{monochrome surface} embedded in $F$ consisting of all facets colored with pigment $i$.

For two distinct pigments $i,j$, the \emph{bichrome surface} $F_{ij}(c)$ is defined to be the symmetric difference of the surfaces $F_i$ and $F_j$ union the seams. In other words, $F_{ij}(c)$ consists of facets colored with pigment $i$ or $j$ but not with both $i$ and $j$.

\end{definition}

\begin{definition}
Consider a foam $F$ decorated by dots. Color $F$ by coloring $c$. Consider a facet $f$ of $F$. If $f$ is decorated by a single dot labeled by $x^m$, and $f$ is colored by $i$, then define $P_f(c(f))=\vari^m$. If $f$ is a $k$-facet  decorated with $s(x_1, x_2, \ldots, x_k)$, and $f$ is colored by $\{i_1, i_2 \ldots, i_k\}$, then define $P_f(c(f))= s(\varu{i_1}, \varu{i_2}, \ldots, \varu{i_k})$. In general, take the product of these polynomials over all dots on the facet $f$ to define $P_f(c(f))$. Note that the variables $\varu{1}, \varu{2}, \ldots,\varu{k}$ have degree $2$ (see~\cite[Section 2.2]{RW-eval-foams}). 
\end{definition}

\begin{definition}\label{foam eval definition}
If $(F,c)$ is a colored decorated $\GL_N$ foam, define
    \begin{align*}
    s(F,c) &= \sum_{i=1}^N {\cfrac{i}{2} \cdot \chi\left( {F_i(c)}\right)}  + \sum_{1\leq i < j \leq n} \theta^+_{ij}(F,c), \\
      P(F,c) &= \prod_{f \textrm{ facet of $F$}} P_f(c(f)),\\
      Q(F,c) &= \prod_{1\leq i < j \leq N} (\vari-\varj)^{\frac{\chi(F_{ij}(c))}{2}}, \text{ and } \\
      \langle F,c \rangle &= (-1)^{s(F,c)} \frac{P(F,c)}{Q(F,c)}.
    \end{align*}
    If $F$ is a decorated foam, we define the \emph{evaluation} of the foam $F$ by
    \begin{equation}
    \label{eq_def_eval}
    \langle F \rangle := \sum_{c \textrm{ coloring of $F$}} \langle F,c \rangle. 
    \end{equation}
\end{definition}

\begin{definition}[{\cite[Definition 2.1]{RW-eval-foams}}]
    Let $F$ be a $\GL_N$ foam and $c$ a coloring of $F$.
    The \emph{set of $i,j$ circles} $\theta_{ij}$ is the set of connected components of 
    $F_i(c) \cap F_j(c) \cap F_{ij}(c)$. We call an $i,j$ circle \emph{positive} if its local seam section from Figure \ref{figure of oriented seam} is colored by $i$ on the $a$-facet  and by $j$ on the $b$-facet in coloring $c$. 
    In other words, we assign a positive sign to a circle (or seam) if, using the left hand rule along the circle (seam), we observe the facets in the order $(a+b, a, b)$. 
    We denote the number of positive $i,j$ circles by $\theta_{ij}^+$.
\end{definition}

The foam evaluation of a closed foam results in a symmetric polynomial $\langle F \rangle \in \Symf (\varu{1}, \varu{2}, \ldots,\varu{k})$ \cite[Proposition 2.18]{RW-eval-foams}. To aid in long computations we will need the following lemma. 

\begin{lemma}
 [{\cite[Lemma 2.19]{RW-eval-foams}}] \label{rwlemma2.19}
    Let $F$ be a $\GL_N$ and let $c$ be a coloring of $F$. Suppose $c'$ is a coloring related to $c$ by a Kempe move \cite[Remark 2.8]{RW-eval-foams}. That is, there is a closed subsurface $S_c$ of the bichrome surface $F_{12}(c)$ such that if we exchange all instances of colors $1$ and $2$ on $S_c$ the color we obtain is $c'$. Then 
    \[s(F,c) \equiv s(F, c') + \chi(S_c) \mod 2.\]
\end{lemma}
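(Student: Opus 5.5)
The plan is to reduce the statement to the pigment-swap symmetry of $s(F,c)$, working locally on $S_c$. Write $c'$ for the coloring obtained from $c$ by exchanging pigments $1$ and $2$ on $S_c$. The quantity $s(F,c)$ has two parts: the monochrome Euler characteristic term $\sum_i \frac{i}{2}\chi(F_i(c))$ and the positive-circle count $\sum_{i<j}\theta^+_{ij}(F,c)$. I will compare the two parts separately for $c$ and $c'$ and show that their combined difference is congruent to $\chi(S_c)$ modulo $2$.

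\emph{Monochrome term.} Only the surfaces $F_1$ and $F_2$ change; every $F_k$ with $k\ge 3$ is unaffected. Set $A_1=F_1(c)\setminus S_c$ and $A_2=F_2(c)\setminus S_c$, the parts of $F_1$ and $F_2$ lying off $S_c$. Also let $S_c^{(1)}$ be the part of $S_c$ colored $1$ and $S_c^{(2)}$ the part colored $2$. Then
\[
F_1(c)=A_1\cup S_c^{(1)}\cup (F_1\cap F_2), \qquad F_1(c')=A_1\cup S_c^{(2)}\cup (F_1\cap F_2),
\]
and symmetrically for pigment $2$. Inclusion--exclusion on Euler characteristics along the gluing seams then gives
\[
\tfrac12\chi(F_1(c'))+\chi(F_2(c'))-\tfrac12\chi(F_1(c))-\chi(F_2(c)) = \tfrac12\bigl(\chi(F_1(c'))-\chi(F_1(c))\bigr)\cdot(1-2)\ \text{(up to the symmetric part)}.
\]
Since $\chi(F_1)+\chi(F_2)$ is preserved under the swap, this difference equals $\tfrac12\bigl(\chi(F_2(c))-\chi(F_2(c'))\bigr)$. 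I will express this as half of a difference of Euler characteristics of pieces of $S_c$, corrected by the seam circles $\partial S_c$. These boundary circles are exactly the curves along which $S_c$ meets facets containing both $1$ and $2$, or facets containing neither.

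\emph{Circle term.} The swap alters $\theta^+_{12}$ only on circles in $\partial S_c$ where $1$ and $2$ sit on distinct thin facets; there the sign flips. The counts $\theta^+_{1j}$ and $\theta^+_{2j}$ for $j\ge3$ are exchanged along circles inside $S_c$. Because the order of $i$ and $j$ matters in the definition of positivity, I will track these changes through the local seam models of Figure~\ref{figure of oriented seam}.

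\emph{Main obstacle.} The hard step is the bookkeeping: the half-integer Euler characteristic changes must combine with the circle sign changes into exactly $\chi(S_c)$ mod $2$. To handle this I will use that $S_c$ is closed in $F_{12}$, so its boundary consists of circles with $\chi=0$. It then suffices to check, facet by facet and seam by seam, a local formula in which each facet of $S_c$ contributes its $\chi$, each seam contributes $0$, and each singular vertex contributes a local sign. I will verify that this formula works by checking the vertex cases. Alternatively, I would simply cite~\cite[Lemma 2.19]{RW-eval-foams} directly, since the statement is exactly that result.
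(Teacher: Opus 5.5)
Comparing the monochrome term and the $\theta^+$ term of $s$ before and after the swap is the right strategy. The argument cannot close as planned, however, because the congruence you aim for is false as printed. $S_c$ is a union of components of the closed oriented surface $F_{12}(c)$, so $\chi(S_c)$ is even, and the claim would say that the parity of $s$ is unchanged by every Kempe move. A single $1$-facet sphere $F$ already refutes this: the colorings by pigment $1$ and by pigment $2$ differ by a Kempe move along $S_c=F$, yet $s=1$ for the first and $s=2$ for the second. The correct congruence is $s(F,c)\equiv s(F,c')+\tfrac12\chi(S_c) \pmod 2$. I believe this is the form of Robert--Wagner's lemma. It is certainly the form this paper actually uses: see the minus signs on the terms produced by Kempe moves along spheres in the proof of Lemma~\ref{hexagonidempotent}. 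So no vertex-by-vertex bookkeeping can produce $\chi(S_c)$, and citing the lemma as ``exactly that result'' does not rescue the printed statement.

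Your description of the relevant circles is also off. $S_c$ is closed, so there is no $\partial S_c$. The circles that matter are the $12$-circles lying on $S_c$, where $S_c$ meets a facet containing both $1$ and $2$. Where $S_c$ meets a facet containing neither pigment, $S_c$ simply continues through the binding and nothing changes.

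The missing idea is a short global parity fact, not a local vertex formula.
\begin{itemize}
\item Write $S_c=\Sigma_1\cup_C\Sigma_2$, where $\Sigma_m$ is the closure of the part of $S_c$ colored $m$ and $C$ is the set of $12$-circles on $S_c$. Then $\partial\Sigma_1=\partial\Sigma_2=C$. Let $R_m$ be the closure of $F_m(c)\setminus\Sigma_m$.
\item Monochrome term: $F_1(c)=R_1\cup_C\Sigma_1$ and $F_1(c')=R_1\cup_C\Sigma_2$, symmetrically for pigment $2$, and $F_k$ is unchanged for $k\ge 3$. Hence the monochrome term changes by $\tfrac12\bigl(\chi(\Sigma_1)-\chi(\Sigma_2)\bigr)$.
\item Circle terms: every circle in $C$ changes sign, so $\theta^+_{12}$ changes by a quantity congruent to $|C|$ mod $2$. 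For $k\ge 3$, each $1k$-circle along $S_c$ becomes a $2k$-circle and vice versa with the same sign, since $1,2<k$, so those terms are unchanged.
\item Parity: $\Sigma_2$ is a compact oriented surface with $|C|$ boundary circles, so $\chi(\Sigma_2)\equiv |C| \pmod 2$.
\end{itemize}
Combining these, $s(F,c')-s(F,c)\equiv \tfrac12\chi(S_c)-\chi(\Sigma_2)+|C|\equiv\tfrac12\chi(S_c) \pmod 2$.
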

Understanding how Kempe moves affect other colors besides $1,2$ follows from the proof of \cite[Lemma 2.19]{RW-eval-foams} and we refer the reader there for details. 

\subsection{\texorpdfstring{$\GL_N$}{GL_N} foams with boundary}\label{foam with boundary}

A closed foam comes with an embedding $\iota : F \longrightarrow  \mathbb{R}^3$. For $r \in \R$, let $T_r$ be the plane $\{(x,y,z) \ | \  z=r\}$. We say that $T_r$ intersects $\iota(F)$ \emph{generically} if the following conditions hold: 
\begin{itemize}
    \item  $\iota(F) \cap T_r $ is a closed $\GL_N$ web $\Gamma$ in $T_r$, where the orientations of the edges of $\Gamma$ are induced by the orientations of the facets of $\iota(F)$. 
    \item There exists $\epsilon>0$, such that $
   \iota(F) \cap \{(x,y,z) \ |\  x,y\in \mathbb{R}, \ z \in (r-\epsilon, r+\epsilon)\} $ is homeomorphic to $\Gamma\times(0,1)$.
\end{itemize}

In other words, $T_r$ intersects $\iota(F)$ generically if $T_r\cap \iota(F)$ does not contain any singular points of $\iota(F)$ and $T_r$ intersects the binding lines and the facets of $\iota(F)$ transversely.

\begin{definition}\label{def:foam with closed boundary}
Given an embedding of a closed foam $\iota : F \longrightarrow  \mathbb{R}^3, $
define a foam with boundary $(\Gamma_0,\Gamma_1)$ as the intersection of $\iota(F)$ with $\mathbb{R}^2 \times [0,1]= \{ (x,y,z)\ |\  x,y\in \mathbb{R}, z \in [0, 1] \}$ such that $T_0$ and $T_1$ intersect $\iota(F)$ generically and $T_i \cap \iota(F)= \Gamma_i$ are closed webs, for $i=0,1.$ 
\end{definition} 

We refer to a foam with boundary $(\Gamma_0, \Gamma_1)$ as a \emph{$(\Gamma_0,\Gamma_1)$-foam}. We will occasionally refer to foams with boundary as bordered foams. 
Later, when we use foams with corners, we will adopt the same notation of $(\Gamma_0,\Gamma_1)$-foam, see Definition \ref{def:foamfromto}. 

\begin{remark}
    A foam with boundary $(\Gamma_0,\Gamma_1)$ is thought of as a singular cobordism from $\Gamma_0$ to $\Gamma_1$. We choose the orientations of the edges of $\Gamma_1$ to agree with the edge orientations induced from the orientations of the facets of $\iota(F)$, and the orientations of the edges of $\Gamma_0$ to be opposite to the edge orientations induced by the orientations of the facets of $\iota(F)$.

    Consider a foam $U$ with boundary $(\Gamma_0,\Gamma_1)$ and a foam $V$ with boundary $(\Gamma_1,\Gamma_2)$. We define the composition of $V$ and $U$, denoted by $V\circ U$, as the foam with boundary $(\Gamma_0,\Gamma_2)$ obtained by stacking $V$ on top of $U$ and rescaling to fit in $\mathbb{R}^2\times [0,1]$.

\end{remark}

\begin{definition} [{\cite[Definition 2.3]{RW-eval-foams}}] \label{dfn:degreefoam}

The \emph{degree} of a closed $\GL_N$ foam $F$ is defined to be 
\[
    \foamdeg(F) := -\sum_{f\textrm{ facet}}d(f) + \sum_{\substack{i \textrm{ interval} \\ \textrm{binding}}}d(i) -\sum_{\substack{p \textrm{ singular}\\ \textrm{point}}} d(p) + \sum_{\substack{s \textrm{ labeling}\\ \textrm{of a dot}}} \deg(s),
\]
with terms defined as follows. 
\begin{itemize}
    \item For each facet $f$ with label $a$, set $d(f)=a(N-a)\chi(f)$, where $\chi(f)$ is the Euler characteristic of $f$.
    \item For each interval (i.e.\ not a circle) binding $i$ with adjacent facets labeled $(a, b, a+b)$, set $d(i)= ab + (a+b)(N-a -b)$.
    \item For each singular vertex $p$ with adjacent facets labeled $(a, b, c, a+b, b+c, a+b+c)$, set $d(p) = ab + bc+ cr + ra + ac+ br $,  where $r =N -a -b -c $.
    \item Finally, $\deg(s)$ is the degree of the homogeneous symmetric polynomial $s \in \Symf$. 
\end{itemize}

\end{definition}

The same formula is used to define the degree of a $\GL_N$ foam with boundary; see the discussion in \cite[Section 4]{RW-eval-foams}. 

Alternatively, the degree of a foam can be computed by fixing a coloring and summing up the negatives of Euler characteristics of bichrome surfaces, and the contribution from decorations (see~\cite[Lemma 2.14]{RW-eval-foams}). As an explicit formula, for any choice of coloring $c$ of a foam $F$ we have 
 \begin{align}\label{eq_degree_euler}
     \foamdeg(F) &= -\sum_{1 \leq i < j \leq N} \chi(F_{ij}(c)) + \sum_{f \textrm{ facet of $F$}} \deg(P_f). 
 \end{align}
 (If a foam admits no coloring, it induces the 0 map on the state spaces of its boundary webs.) 
Comparing this to Definition \ref{foam eval definition}, we note that the factor of $\frac{1}{2}$ in the exponent of $Q$ is balanced by $\vari$ all having degree $2$. This is designed to make~\cite[Lemma 2.14]{RW-eval-foams} work.

This approach also generalizes to foams with boundary. In that case the foam must first be closed. The grading convention for bordered foams 
in \cite{RW-eval-foams} agrees with the definition of $\foamdeg$ from \cite[Lemma 3.4]{Queffelec2016}. 
Later, we will also work with cornered foams, whose degree will be computed by first closing the cornered foam (Definition \ref{def:foamfromto}) by $\idfoam_W$; the degree computation does not depend on the choice of web closure $W$.

\begin{definition}\label{def:foam between closed webs}
    Given a closed web $\Gamma$, the state space of $\Gamma$, denoted by $\mathcal{F}(\Gamma)$, 
    is a $\Symf$-module generated by foams with boundary $(\emptyweb,\Gamma)$ modulo the foam evaluation formula. 
 
 In other words, $\mathcal{F}(\Gamma)$ is a $\Symf$-module generated by symbols $\mathcal{F}(U)$ for all foams $U$ with boundary $(\emptyweb,\Gamma)$, where a relation $\sum_i a_i \mathcal{F}(U_i)=0$ holds in $\mathcal{F}(\Gamma)$ for $a_i \in \Symf$ and 
 $U_i$ 
 if and only if 
 \begin{equation*} 
   \sum_i a_i \langle V \circ U_i \rangle = 0 
 \end{equation*} 
 for any foam $V$ with boundary $(\Gamma,\emptyweb)$. Note that  
 $\langle  V \circ U_i \rangle \in \Symf$ is the evaluation of the closed foam $V\circ U_i.$ 

 The state space $\mathcal{F}(\Gamma)$ is naturally a graded $\Symf$-module. The degree of an element $\mathcal{F}(U)$ in $\mathcal{F}(\Gamma)$ is given by the degree of the foam $U$, and $\deg(a \mathcal{F}(U))=\deg(a)+\deg(\mathcal{F}(U))$. 

There is a direct sum decomposition of the graded abelian group  
 \[ \mathcal{F}(\Gamma) = \bigoplus_{k} \mathcal{F}(\Gamma)_k,\] where $\mathcal{F}(\Gamma)_k$ is the subspace of elements in $\mathcal{F}(\Gamma)$ of degree $k$, union with the $0$ element.  
 
 We denote by $\mathcal{F}(\Gamma)\{i\}$ the graded $\Symf$-module obtained by raising the grading of $\mathcal{F}(\Gamma)$ by $i$:
\[ \mathcal{F}(\Gamma)\{i\} = \bigoplus_{k} \mathcal{F}(\Gamma)\{i\}_k, \quad \mathcal{F}(\Gamma)\{i\}_k=  \mathcal{F}(\Gamma)_{k-i}.  \] 
\end{definition}

\begin{proposition}
    A foam $V$ with boundary $(\Gamma_0,\Gamma_1)$ induces a grading-preserving homomorphism 
    \begin{align*}
    \mathcal{F}(V): \mathcal{F}(\Gamma_0) &\longrightarrow 
    \mathcal{F}(\Gamma_1)\{-\foamdeg(V)\} \\
     \mathcal{F}(U) &\mapsto  \mathcal{F}(V\circ U).
    \end{align*} 

\end{proposition}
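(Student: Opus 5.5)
We need two things from the definition of $\mathcal{F}(V)$: that it is well defined on the quotient module $\mathcal{F}(\Gamma_0)$, and that it is $\Symf$-linear and shifts degree correctly. The map is specified on generators $\mathcal{F}(U)$, where $U$ ranges over foams with boundary $(\emptyweb,\Gamma_0)$. Set $\mathcal{F}(V)(\sum_i a_i\mathcal{F}(U_i)) := \sum_i a_i \mathcal{F}(V\circ U_i)$ and then check each property.

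\textbf{Well-definedness.} Suppose $\sum_i a_i\mathcal{F}(U_i)=0$ in $\mathcal{F}(\Gamma_0)$. By Definition~\ref{def:foam between closed webs}, this means $\sum_i a_i\langle W\circ U_i\rangle=0$ for every foam $W$ with boundary $(\Gamma_0,\emptyweb)$. We must show $\sum_i a_i\mathcal{F}(V\circ U_i)=0$ in $\mathcal{F}(\Gamma_1)$, i.e.\ that $\sum_i a_i\langle W'\circ(V\circ U_i)\rangle=0$ for every foam $W'$ with boundary $(\Gamma_1,\emptyweb)$.

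\begin{itemize}
\item Composition of foams is stacking followed by rescaling, so it is associative up to isotopy: $W'\circ(V\circ U_i)$ is isotopic to $(W'\circ V)\circ U_i$.
\item The Robert--Wagner evaluation depends only on the foam up to ambient isotopy. Colorings, monochrome and bichrome surfaces, Euler characteristics, and the positive $i,j$ circles are all combinatorial or isotopy invariant.
\item Hence $\langle W'\circ(V\circ U_i)\rangle=\langle (W'\circ V)\circ U_i\rangle$.
\item Taking $W=W'\circ V$, which is a foam with boundary $(\Gamma_0,\emptyweb)$, in the hypothesis gives the vanishing.
\end{itemize}

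The main subtlety is checking that $W'\circ V$ is a legitimate foam with boundary in the sense of Definition~\ref{def:foam with closed boundary}. The gluing planes intersect the foams generically, so the glued foam has product neighborhoods near $\Gamma_1$ and $\Gamma_0$. It is therefore again the intersection of an embedded closed foam with a slab.

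\textbf{Linearity.} $\Symf$-linearity holds by construction.

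\textbf{Grading.} Since $\mathcal{F}(V)$ sends $\mathcal{F}(U)$ to $\mathcal{F}(V\circ U)$, it suffices to show degrees add under composition:
\[
\foamdeg(V\circ U)=\foamdeg(V)+\foamdeg(U).
\]
Given this, an element of degree $k$ maps to an element of degree $k+\foamdeg(V)$ in $\mathcal{F}(\Gamma_1)$. That is degree $k$ in $\mathcal{F}(\Gamma_1)\{-\foamdeg(V)\}$, so the map is grading preserving.

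To prove additivity, use Definition~\ref{dfn:degreefoam} extended to bordered foams as in \cite[Section 4]{RW-eval-foams}, where boundary intervals are counted with the appropriate half-contributions. Gluing along $\Gamma_1$ affects each term as follows.
\begin{itemize}
\item \emph{Facets.} Each facet of the composite is a union of facets of $V$ and $U$ glued along arcs or circles. Euler characteristic is additive after correcting by the glued $1$-dimensional pieces.
\item \emph{Bindings.} Bindings are glued along the vertices of $\Gamma_1$. Their contributions compensate exactly the corrections from the facet gluing, because at each trivalent vertex of $\Gamma_1$ the numbers satisfy $ab+(a+b)(N-a-b)=\tfrac12\bigl(a(N-a)+b(N-b)+(a+b)(N-a-b)\bigr)$.
\item \emph{Singular points and dots.} These contribute additively.
\end{itemize}

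An alternative route is Equation~\eqref{eq_degree_euler}: fix a coloring and use additivity of $\chi$ of the bichrome surfaces under gluing along the $1$-manifold $\Gamma_1$ cut into bichrome arcs. Here a circle has $\chi=0$, which makes the bookkeeping trivial. This combinatorial bookkeeping is the only real computation, and I expect it to be routine.
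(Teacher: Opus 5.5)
Your proposal is correct and follows the paper's argument, whose proof consists only of the grading check via $\foamdeg(V\circ U)=\foamdeg(V)+\foamdeg(U)$. You also verify well-definedness on the quotient (by taking $W=W'\circ V$) and justify the additivity of degree, both of which the paper leaves implicit. One small correction: for foams between closed webs, Definition~\ref{dfn:degreefoam} is used unchanged, so interval bindings ending on $\Gamma_1$ count with full weight $ab+(a+b)(N-a-b)$, not a half-contribution; it is exactly this full count (one interval lost per vertex of $\Gamma_1$ upon gluing) that your vertex identity cancels against the facet correction $\sum_e a_e(N-a_e)$ over the arc edges of $\Gamma_1$.
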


\begin{proof}
For any $(\emptyweb,\Gamma_0)$-foam $U$, $\mathcal{F}(U) \in \mathcal{F}(\Gamma_0)_{\foamdeg(U)}$ since $U$ has degree $\foamdeg(U)$. 
Since $V\circ U$ is a $(\emptyweb,\Gamma_1)$-foam with degree $\foamdeg(V) + \foamdeg(U)$,  
\[ \mathcal{F}(V\circ U)\in \mathcal{F}(\Gamma_1)_{\foamdeg(V)+\foamdeg(U)}
=\mathcal{F}(\Gamma_1)\{-\foamdeg(V)\}_{\foamdeg(U)}. \]
\end{proof}

\def\SquareClosedTop{
\begin{tikzpicture}[scale=0.7, decoration={
    markings,
    mark=at position 0.5 with {\arrow{>}}}, baseline={([yshift=-.8ex]current bounding box.center)}]

\def \fs{.4}; 
\def \sw {(-1+\fs,-1)};
\def \nw {(-1,1)};
\def \ne {(1,1)};
\def \se {(1-\fs,-1)};

    \directedtripleline{\se -- \ne};
    \directedsingleline{\sw -- \se};
    \directedsingleline{\sw -- \nw};
    \directedsingleline{\ne -- \nw};
    
    \directeddoubleline{\nw to[out=180,in=180, looseness=3] (-1+\fs,-1)};
    \directeddoubleline{(1,1) to[out=0,in=0, looseness=3] (1-\fs,-1) };

\end{tikzpicture}
}

\def\SquareClosedBottom{
\begin{tikzpicture}[scale=0.7, decoration={
    markings,
    mark=at position 0.5 with {\arrow{>}}}, baseline={([yshift=-.8ex]current bounding box.center)}]

\def \fs{.4}; 
\def \sw {(-1+\fs,-1)};
\def \nw {(-1,1)};
\def \ne {(1,1)};
\def \se {(1-\fs,-1)};


\directedtripleline{\sw -- \nw};
\directedsingleline{\se -- \sw};
\directedsingleline{\se -- \ne};
\directedsingleline{\nw -- \ne};

\directeddoubleline{\nw to[out=180,in=180, looseness=3] (-1+\fs,-1)};
\directeddoubleline{(1,1) to[out=0,in=0, looseness=3] (1-\fs,-1) };

\end{tikzpicture}
}

\def\SquareFoamClosed{
\begin{tikzpicture}[scale=0.7,decoration={
    markings,
    mark=at position 0.5 with {\arrow{>}}}, baseline={([yshift=-.8ex]current bounding box.center)}]

\def \fs{.4}; 
\def \sw {(-1+\fs,-1)};
\def \nw {(-1,1)};
\def \ne {(1,1)};
\def \se {(1-\fs,-1)};


\directedtripleline{\sw -- \nw};
\directedsingleline{\se -- \sw};
\directedsingleline{\se -- \ne};
\directedsingleline{\nw -- \ne};

\directeddoubleline{\nw to[out=180,in=180, looseness=3] (-1+\fs,-1)};
\directeddoubleline{(1,1) to[out=0,in=0, looseness=3] (1-\fs,-1) };

\def \h{5};

\draw[darkgreen] \nw -- (-1,1+\h);
\draw[darkgreen] \ne -- (1,1+\h);
\draw[darkgreen] \sw -- (-1+\fs,-1+\h);
\draw[darkgreen] \se -- (1-\fs,-1+\h);
\draw[darkgreen] (-2.62,0.1) -- (-2.62, 0.1+\h);
\draw[darkgreen] (2.62,0.1) -- (2.62, 0.1+\h);

\begin{scope}[yshift=.5*\h cm]
    \draw[darkgreen] \nw -- \ne -- \se -- \sw -- \nw;
\end{scope}

\begin{scope}[yshift=.5*\h cm]
    \node[scale=0.7] at (-2,0) {\color{red} $2$};
    \node[scale=0.7] at (2,.0) {\color{red} $2$};
    \node[scale=0.7] at (0,0) {\color{red} $2$};
\end{scope}
\begin{scope}[yshift=.25*\h cm]
    \node[scale=0.7] at (-.8,0) {\color{red} $3$};
    \node[scale=0.7] at (.8,.0) {\color{red} $1$};
    \node[scale=0.7] at (0,0) {\color{red!40!white} $(1)$};
\end{scope}
\node[scale=0.7] at (0,0) {\color{red} $1$};
\begin{scope}[yshift=.75*\h cm]
    \node[scale=0.7] at (-.8,0) {\color{red} $1$};
    \node[scale=0.7] at (.8,.0) {\color{red} $3$};
    \node[scale=0.7] at (0,1) {\color{red} $1$};
\end{scope}


\begin{scope}[yshift= \h cm]
    \directedtripleline{\se -- \ne};
    \directedsingleline{\sw -- \se};
    \directedsingleline{\sw -- \nw};
    \directedsingleline{\ne -- \nw};
    
    \directeddoubleline{\nw to[out=180,in=180, looseness=3] (-1+\fs,-1)};
    \directeddoubleline{(1,1) to[out=0,in=0, looseness=3] (1-\fs,-1) };

\end{scope}

\end{tikzpicture}
}

\def\SquareFoamState{
\begin{tikzpicture}[scale=0.7,decoration={
    markings,
    mark=at position 0.5 with {\arrow{<}}}, baseline={([yshift=-.8ex]current bounding box.center)}]


\def \fs{.2}; 
\def \sw {(-1+\fs,-1)};
\def \nw {(-1,1)};
\def \ne {(1,1)};
\def \se {(1-\fs,-1)};

\directedtripleline{\nw -- \sw};
\directedsingleline{\sw -- \se};
\directedsingleline{\ne -- \se};
\directedsingleline{\ne -- \nw};

\directeddoubleline{\sw to[out=180,in=180, looseness=3] (-1,1)};
\directeddoubleline{\se to[out=0,in=0, looseness=3] (1,1)};

\draw[darkgreen] (-2.69,0) to[out=270, in=270, looseness=2] (2.69,0);

\def \vx{2.135}; 
\draw[darkgreen] \sw to[out=260, in=-10] (-1*\vx,-2);
\draw[dashed, darkgreen] \nw to[out=260, in=40] (-1*\vx,-2);
\begin{scope}[xscale=-1]
    \draw[darkgreen] \sw to[out=260, in=-10] (-1*\vx,-2);
    \draw[dashed, darkgreen] \nw to[out=260, in=40] (-1*\vx,-2);
\end{scope}

\node[scale=0.7] at (0,-2) {\color{red} $1$};
\node[scale=0.7] at (-2,0) {\color{red} $2$};
\node[scale=0.7] at (2,0) {\color{red} $2$};
\node[scale=0.7] at (-1.3,-1.3) {\color{red} $3$};
\node[scale=0.7] at (1.3,-1.3) {\color{red} $1$};

\end{tikzpicture}
}

\def\SquareFoamStateCompo{
\begin{tikzpicture}[scale=0.7, decoration={
    markings,
    mark=at position 0.5 with {\arrow{>}}}, baseline={([yshift=-.8ex]current bounding box.center)}]

\def \fs{.4}; 
\def \sw {(-1+\fs,-1)};
\def \nw {(-1,1)};
\def \ne {(1,1)};
\def \se {(1-\fs,-1)};


\directedtripleline{\sw -- \nw};
\directedsingleline{\se -- \sw};
\directedsingleline{\se -- \ne};
\directedsingleline{\nw -- \ne};

\directeddoubleline{\nw to[out=180,in=180, looseness=3] (-1+\fs,-1)};
\directeddoubleline{(1,1) to[out=0,in=0, looseness=3] (1-\fs,-1) };

\def \h{5};

\draw[darkgreen] \nw -- (-1,1+\h);
\draw[darkgreen] \ne -- (1,1+\h);
\draw[darkgreen] \sw -- (-1+\fs,-1+\h);
\draw[darkgreen] \se -- (1-\fs,-1+\h);
\draw[darkgreen] (-2.62,0.1) -- (-2.62, 0.1+\h);
\draw[darkgreen] (2.62,0.1) -- (2.62, 0.1+\h);

\begin{scope}[yshift=.5*\h cm]
    \draw[darkgreen] \nw -- \ne -- \se -- \sw -- \nw;
\end{scope}

\begin{scope}[yshift=.5*\h cm]
    \node[scale=0.7] at (-2,0) {\color{red} $2$};
    \node[scale=0.7] at (2,.0) {\color{red} $2$};
    \node[scale=0.7] at (0,0) {\color{red} $2$};
\end{scope}
\begin{scope}[yshift=.25*\h cm]
    \node[scale=0.7] at (-.8,0) {\color{red} $3$};
    \node[scale=0.7] at (.8,.0) {\color{red} $1$};
    \node[scale=0.7] at (0,0) {\color{red!40!white} $(1)$};
\end{scope}
\node[scale=0.7] at (0,0) {\color{red} $1$};
\begin{scope}[yshift=.75*\h cm]
    \node[scale=0.7] at (-.8,0) {\color{red} $1$};
    \node[scale=0.7] at (.8,.0) {\color{red} $3$};
    \node[scale=0.7] at (0,1) {\color{red} $1$};
\end{scope}


\begin{scope}[yshift= \h cm]
    \directedtripleline{\se -- \ne};
    \directedsingleline{\sw -- \se};
    \directedsingleline{\sw -- \nw};
    \directedsingleline{\ne -- \nw};
    
    \directeddoubleline{\nw to[out=180,in=180, looseness=3] (-1+\fs,-1)};
    \directeddoubleline{(1,1) to[out=0,in=0, looseness=3] (1-\fs,-1) };

\end{scope}


\begin{scope}[yshift=-\h, decoration={
    markings,
    mark=at position 0.5 with {\arrow{<}}}]

\def \dx{.07}; 
\def \dy{.25}; 
\draw[darkgreen] (-2.69+\dx,\dy) to[out=270, in=270, looseness=2] (2.69-\dx,\dy);

\def \vx{1.85}; 
\draw[darkgreen] (-1+\fs,-.85) to[out=260, in=-10] (-1*\vx,-2);
\draw[dashed, darkgreen] \nw to[out=260, in=40] (-1*\vx,-2);
\begin{scope}[xscale=-1]
    \draw[darkgreen] (-1+\fs,-.85) to[out=260, in=-10] (-1*\vx,-2);
    \draw[dashed, darkgreen] \nw to[out=260, in=40] (-1*\vx,-2);
\end{scope}

\node[scale=0.7] at (0,-2) {\color{red} $1$};
\node[scale=0.7] at (-2,0) {\color{red} $2$};
\node[scale=0.7] at (2,0) {\color{red} $2$};
\node[scale=0.7] at (-1.05,-1.3) {\color{red} $3$};
\node[scale=0.7] at (1.05,-1.3) {\color{red} $1$};

\end{scope}

\end{tikzpicture}
}

\begin{example}\label{eg:closed web foam}
Consider closed webs 
\begin{align*}
    \Gamma_0 =\SquareClosedBottom \text{ and } & & \Gamma_1 =\SquareClosedTop.
\end{align*}
If $V$ is a foam with boundary $(\Gamma_0,\Gamma_1)$ and $U$ is a foam with boundary $(\emptyweb,\Gamma_0)$. Then $V\circ U$ is a foam with boundary $(\emptyweb,\Gamma_1)$. See Figure \ref{figure:composition of foam}.\\
\begin{figure}[htbp]
    \centering
\begin{align*}
  \scalebox{0.95} {$\displaystyle
   \SquareFoamClosed  \quad \SquareFoamState  \quad \SquareFoamStateCompo  $}
\end{align*}
    \caption{Depicted from left to right are foams $V$, $U$, and $V \circ U$.}
    \label{figure:composition of foam}
\end{figure}
\end{example}

Now we consider foams between $\GL_N$ webs with a fixed boundary instead of foams between closed $\GL_N$ webs.

\begin{definition}\label{def:foamfromto}
Consider two webs $W_0$ and $W_1$ with the same boundary $\varepsilon$. 
Define a foam $F$ from $W_0$ to $W_1$ as a foam $F_\cup$
with boundary $(\emptyweb, \overline{W_0} W_1  )$ cut along a tubular neighborhood of $\varepsilon$ relative to $F_\cup$. We will consider the boundary of $F$ in a natural way as
\[
\partial F = (\overline{W_0}\times \{0\}) \cup (W_1\times \{1\}) \cup \left( \bigsqcup_{p\in  \varepsilon} p\times [0,1] \right) . 
\]
The orientation of $W_1$ and $\varepsilon\times[0,1]$ agrees with the orientation of the facets of $F$, and the orientation of $W_0$ is opposite to the orientation of the facets of $F$. A coloring of $F$ is a coloring of $F_\cup$ that respects the cutting we performed.

When $\varepsilon \neq \emptylist$, we call $F$ a \emph{foam with corners} or a \emph{cornered foam}.
\end{definition}
We will occasionally denote a foam from $W_0$ to $W_1$ as $(W_0,W_1)$-foam, just as in the case of boarded foams.

The following example illustrates how a foam with boundary $(\emptyweb, \overline{W_0} W_1  )$ corresponds to a foam with corners from $W_0$ to $W_1$. This will serve as a running example through the rest of this section.

\begin{example}\label{eg:foam with corners}
\def\SqOpenTop{
\begin{tikzpicture}[ scale=0.5, decoration={
    markings,
    mark=at position 0.5 with {\arrow{>}}},   
    baseline={([yshift=-.8ex]current bounding box.center)} ] 

\def \fs{.4}; 
\def \sw {(-1+\fs,-1)};
\def \nw {(-1,1)};
\def \ne {(1,1)};
\def \se {(1-\fs,-1)};

\def \topdx{1.5};
\def \botdx{1};

    \directedtripleline{\se -- \ne};
    \directedsingleline{\sw -- \se};
    \directedsingleline{\sw-- \nw};
    \directedsingleline{\ne -- \nw};
    
    \directeddoubleline{\nw  -- (-1-\topdx,1)};
    \directeddoubleline{(-1-\botdx,-1) -- \sw };
    \begin{scope}[xscale=-1]
        \directeddoubleline{\nw  -- (-1-\topdx,1)};
        \directeddoubleline{(-1-\botdx,-1) -- \sw};
    \end{scope}
\end{tikzpicture}
}

\def\SqOpenBottom{
\begin{tikzpicture}[ scale=0.5, decoration={
    markings,
    mark=at position 0.5 with {\arrow{>}}},   
    baseline={([yshift=-.8ex]current bounding box.center)} ] 

\def \fs{.4}; 
\def \sw {(-1+\fs,-1)};
\def \nw {(-1,1)};
\def \ne {(1,1)};
\def \se {(1-\fs,-1)};


\directedtripleline{\sw -- \nw};
\directedsingleline{\se -- \sw};
\directedsingleline{\se -- \ne};
\directedsingleline{\nw -- \ne};

\def \topdx{1.5};
\def \botdx{1};
\directeddoubleline{\nw  -- (-1-\topdx,1)};
\directeddoubleline{(-1-\botdx,-1) -- \sw};
\begin{scope}[xscale=-1]
    \directeddoubleline{\nw  -- (-1-\topdx,1)};
    \directeddoubleline{(-1-\botdx,-1) -- \sw};
\end{scope}
\end{tikzpicture}
}

\def\SquareFoamOpen{
\begin{tikzpicture}[scale=0.75, decoration={
    markings,
    mark=at position 0.5 with {\arrow{>}}},   
    baseline={([yshift=-.8ex]current bounding box.center)} ] 

\def \fs{.4}; 
\def \sw {(-1+\fs,-1)};
\def \nw {(-1,1)};
\def \ne {(1,1)};
\def \se {(1-\fs,-1)};


\directedtripleline{\sw -- \nw};
\directedsingleline{\se -- \sw};
\directedsingleline{\se -- \ne};
\directedsingleline{\nw -- \ne};

\def \topdx{1.5};
\def \botdx{1};
\directeddoubleline{\nw  -- (-1-\topdx,1)};
\directeddoubleline{(-1-\botdx,-1) -- \sw};
\begin{scope}[xscale=-1]
    \directeddoubleline{\nw  -- (-1-\topdx,1)};
    \directeddoubleline{(-1-\botdx,-1) -- \sw};
\end{scope}

\def \h{5};

\draw[darkgreen] \nw -- (-1,1+\h);
\draw[darkgreen] \ne -- (1,1+\h);
\draw[darkgreen] \sw -- (-1+\fs,-1+\h);
\draw[darkgreen] \se -- (1-\fs,-1+\h);
\draw[darkgreen] (-1-\topdx,1) -- (-1-\topdx,1+\h);
\draw[darkgreen] (-1-\botdx,-1) -- (-1-\botdx,-1+\h);
\begin{scope}[xscale=-1]
    \draw[darkgreen] (-1-\topdx,1) -- (-1-\topdx,1+\h);
    \draw[darkgreen] (-1-\botdx,-1) -- (-1-\botdx,-1+\h);
\end{scope}

\begin{scope}[yshift=.5*\h cm]
    \draw[darkgreen] \nw -- \ne -- \se -- \sw -- \nw;
\end{scope}

\begin{scope}[yshift=.5*\h cm]
    \node[scale=0.7] at (0,0) {\color{red} $2$};
\end{scope}
\begin{scope}[yshift=.25*\h cm]
    \node[scale=0.7] at (-.8,0) {\color{red} $3$};
    \node[scale=0.7] at (.8,.0) {\color{red} $1$};
    \node[scale=0.7] at (0,0) {\color{red!40!white} $(1)$};
\end{scope}
\node[scale=0.7] at (0,0) {\color{red} $1$};
\node[scale=0.7] at (-1.25,0) {\color{red} $2$};
\node[scale=0.7] at (1.25,0) {\color{red} $2$};
\begin{scope}[yshift=.75*\h cm]
    \node[scale=0.7] at (-.8,0) {\color{red} $1$};
    \node[scale=0.7] at (.8,.0) {\color{red} $3$};
    \node[scale=0.7] at (0,1) {\color{red} $1$};
\end{scope}
\begin{scope}[yshift=\h cm]
    \node[scale=0.7] at (-1.75,0) {\color{red} $2$};
    \node[scale=0.7] at (1.75,0) {\color{red} $2$};
\end{scope}


\begin{scope}[yshift= \h cm]
    \directedtripleline{\se -- \ne};
    \directedsingleline{\sw -- \se};
    \directedsingleline{\sw-- \nw};
    \directedsingleline{\ne -- \nw};
    
    \directeddoubleline{\nw  -- (-1-\topdx,1)};
    \directeddoubleline{(-1-\botdx,-1) -- \sw };
    \begin{scope}[xscale=-1]
        \directeddoubleline{\nw  -- (-1-\topdx,1)};
        \directeddoubleline{(-1-\botdx,-1) -- \sw};
    \end{scope}

\end{scope}

\end{tikzpicture}
}

\def\SquareFoamBent{
\begin{tikzpicture}[decoration={
    markings,
    mark=at position 0.5 with {\arrow{>}}},  
    baseline={([yshift=-.8ex]current bounding box.center)}]

\def \s{.6}; 
\def \wsw{(0,0)};
\def \wnw{(0+\s, 1)};
\def \wse{(1,0)};
\def \wne{(1+\s, 1)};
\def \esw{(2,0)};
\def \enw{(2+\s, 1)};
\def \ese{(3,0)};
\def \ene{(3+\s, 1)};


\def \cl{2.5}; 
\draw[darkgreen] (-.41,-.27) to[out=270,in=270,looseness=\cl] ++(3.84,0);
\draw[darkgreen, densely dashed] (.19,1.27) to[out=270,in=270,looseness=\cl] ++(3.84,0);
\draw[darkgreen] \wnw to[out=270,in=270,looseness=\cl] ++(3,0);
\draw[darkgreen] \wsw to[out=270,in=270,looseness=\cl] ++(3,0);
\draw[darkgreen] \wne to[out=270,in=270,looseness=\cl] ++(1,0);
\draw[darkgreen] \wse to[out=270,in=270,looseness=\cl] ++(1,0);

\def \a{(1.5,-.73)};
\def \b{(1.5+\s, .27)};
\def \c{(1.5,-.73-1.47)};
\def \d{(1.5+\s, .27-1.47)};
\draw[darkgreen] \a -- \b -- \d -- \c -- \a;

\begin{scope}[xshift=-.15cm, yshift=-.55cm]
    \draw[darkgreen] \enw -- \esw;
\end{scope}
\begin{scope}[xshift=-.4cm, yshift=-1.55cm]
    \draw[darkgreen] \ene -- \ese;
\end{scope}

\directedsingleline{\wsw -- \wnw};
\directedtripleline{\wse -- \wne};
\directedsingleline{\enw -- \esw};
\directedtripleline{\ene -- \ese};
\directeddoubleline{\wne -- \enw};
\directeddoubleline{\esw -- \wse};
\directedsingleline{\wne -- \wnw};
\directedsingleline{\wsw -- \wse};
\directedsingleline{\enw -- \ene};
\directedsingleline{\ese -- \esw};

\def \l{2}; 
\def \t{20}; 
\directeddoubleline{\wnw to[out=180-\t,in=\t,looseness=\l] (3+\s, 1)};
\directeddoubleline{\ese to[out=-\t,in=180+\t,looseness=\l] (0, 0)};

\node[scale=0.7] at (1.5+\s, 1.3) {\color{red} $2$};
\node[scale=0.7] at (1.5+\s, .7) {\color{red} $2$};
\node[scale=0.7] at (1.5, -2.7) {\color{red} $2$};
\node[scale=0.7] at (1.5, -.3) {\color{red} $2$};
\node[scale=0.7] at (1.5+.5*\s, -1) {\color{red} $2$};
\node[scale=0.7] at (1.5,.25) {\color{red} $3$};
\node[scale=0.7] at (3.35,.25) {\color{red} $3$};

\end{tikzpicture}
}

Consider the following two webs with boundary ${2^-}{2^+}{2^+}{2^-}$:
\begin{align*}
    W_0=\SqOpenBottom \quad \text{ and } \quad  W_1=\SqOpenTop
\end{align*} 
A foam $F$ from $W_0$ to $W_1$ is obtained from a foam $F_\cup$ with boundary $(\emptyweb, \overline{W_0} W_1  )$. See Figure \ref{figureWoW1overline} for an example. 
Colloquially, we say that $F_\cup$ is obtained by \emph{bending up} the foam $F$. 

\begin{figure}[htbp]
    \centering
\begin{align*}
    \scalebox{1}{$
   \SquareFoamOpen
$} &&\scalebox{1}{$
   \SquareFoamBent
$}
\end{align*}
    \caption{Depicted on the left is a foam $F$ from $W_0$ to $W_1$. On the right the foam $F$ is viewed as a foam $F_\cup$ with boundary $(\emptyweb, \overline{W_0} W_1 )$.}
    \label{figureWoW1overline}
\end{figure}
\end{example}

There are various ways to concatenate cornered foams (with suitably compatible boundary).
We present two of them in the next definition, also allowing linear combinations of foams. 

\begin{definition} \label{Foam-Compo-Conca}
Given webs $W_0, W_1,$ and $W_2$ with the same boundary $\varepsilon$, consider a foam $M_0$ from $W_0$ to $W_1$ and a foam $M_1$ from $W_1$ to $W_2$. Define the composition of $M_1$ with $M_0$, denoted by $M_1 \circ M_0$, as the foam from $W_0$ to $W_2$ obtained by stacking $M_1$ on top of $M_0$. Consider a finite $\Symf$-linear combination $f_0= \sum_i u_i \cdot \leftindex^i M_0 $ of foams $\leftindex^i M_0$ from $W_0$ to $W_1$ and a finite $\Symf$-linear combination $f_1=\sum_{j} v_j \cdot \leftindex^j M_1$ of foams $\leftindex^j M_1$ from $W_1$ to $W_2$, define the composition of $f_1$ with $f_0$ as:  
\[ f_1 \circ f_0= \sum_{i,j} v_j u_i \cdot \leftindex^j M_1 \circ  \leftindex^i M_0. \]

Given webs $W_0, W_1, W_0',$ and $W_1'$ with the same boundary $\varepsilon$, consider a foam $M_0$ from $W_0$ to $W_1$ and a foam $M_0'$ from $W_0'$ to $W_1'$. Denote by $\overline{M_0} \ast M_0'$ the foam with boundary $(\overline{W_0}W_0',\overline{W_1}W_1')$, obtained by reversing the orientations of all the facets of $M_0$ and then gluing the boundary facets of the foams together along $\varepsilon \times [0,1]$. Consider a finite $\Symf$-linear combination $f_0= \sum_i u_i \cdot \leftindex^i M_0$ of foams $\leftindex^i M_0$ from $W_0$ to $W_1$ and a finite $\Symf$-linear combination ${f_0}'=\sum_{j} v_j \cdot \leftindex^j M_0'$ of foams $\leftindex^j M_0'$ from $W_0'$ to $W_1'$, define $\overline{f_0} \ast f_0'$ as:  
\[ \overline{f_0} \ast f_0'  = \sum_{i,j} u_i v_j  \cdot \overline{\leftindex^i M_0} \ast \leftindex^j M_0'. \] 
\end{definition}

\def\DoubleID{
\begin{tikzpicture}[ scale=0.2, decoration={
    markings,
    mark=at position 0.5 with {\arrow{>}}},   
    baseline={([yshift=-.8ex]current bounding box.center)} ] 
\directeddoubleline{(-2,-2)..controls(-1,-1)and(-1,1)..(-2,2) };
\directeddoubleline{(2,-2)..controls(1,-1)and(1,1)..(2,2)} ;
\end{tikzpicture}
}

\begin{example}\label{eg:different foam clousure}
    Recall the webs and foams from Example \ref{eg:foam with corners} and Example \ref{eg:closed web foam}. Denote by $W_2$ the web $\DoubleID$ with boundary $2^-2^+2^-2^+$. Using the notation from the aforementioned Examples, we notice that $\Gamma_0= \overline{W_2} W_0 $, $\Gamma_1= \overline{W_2} W_1 $. Moreover, we can see that $V=\overline{\idfoam_{W_2}} \ast F $.
\end{example}

We will now adapt the discussion following Definition \ref{dfn:degreefoam} to cornered foams. In particular are going to define the degree of a foam with corners. The original definition is due to \cite[Definition 3.3]{Queffelec2016}.

\begin{lemma}
\label{lem:euler-char-boundary-conditions}
    Let $\varepsilon = a_1^{\epsilon_1}a_2^{\epsilon_2}\cdots a_k^{\epsilon_k} \in \Seq_{N}$. 
    Let $W$ be a web with boundary $\varepsilon$ and choose any coloring $c$ for the identity foam $F:= \idfoam_W$. Then 
    \[
        \sum_{1 \leq i < j \leq N} \chi(F_{ij}) 
        = \frac{1}{2} \sum_{l =1}^k a_l (N - a_l).
    \]
    In particular, this quantity depends only on the boundary sequence $\varepsilon$. 
    \begin{proof}
        Each $F_{ij}$ is of the form $A_{ij} \times I$ where $A_{ij}$ is a 1-manifold properly embedded in the disk with $k$ marked points $\{p_1,\ldots, p_k\} = \partial W$; $\partial A_{ij}$ is a subset of these points. 
        For $1 \leq l \leq k$, let $P_l$ denote the set of $a_l$ distinct pigments on the facet of $F$ abutting $p_l \times I$. 
        
        The 1-manifold $A_{ij}$ consists of arcs and closed circles. 
        Since $\chi(S^1 \times I) = 0$, only the arcs contribute to $\chi(F_{ij}) = \chi(A_{ij})$. 
        Let $C_l$ denote the set of pigments 
        Now $p_l \in \partial A_{ij}$ if and only if one of the following is true: 
            \begin{itemize}
                \item  $i \in P_{l}$ and $j \not\in P_{l}$, or
                \item $i \not\in P_l$ and $j \in P_l$.
            \end{itemize}
        Thus 
        \[
            \sum_{1 \leq i < j \leq N} |\partial A_{ij}| = \sum_{l =1}^k a_l (N - a_l).
        \]
        The claim follows from the fact that each arc has Euler characteristic 1, and has two endpoints.
    \end{proof}
\end{lemma}

Lemma \ref{lem:euler-char-boundary-conditions} justifies the following definition and notation. 

\begin{definition}\label{def:deg of web with boundary}
For an admissible boundary sequence $\varepsilon =  a_1^{\epsilon_1}a_2^{\epsilon_2}\cdots a_k^{\epsilon_k} \in \Seq_N$, define
\[
    \boundarydeg(\varepsilon) := \frac{1}{2} \sum_{l=1}^k a_l (N - a_l).
\]
\end{definition}

This quantity appears in \cite[Section 2]{Robert2015} and corresponds to the dimension of complex Grassmannians.
It will serve as a correction term in the degree of a foam with corners. We first record the degree of the closed-up foam $F_\cup$.

\begin{remark}
\label{rem:bending-up-foamdeg}
Let $W, W'$ be webs with boundary $\varepsilon$, and let $F$ be a $(W,W')$-foam.
Let $F_\cup$ be the foam with boundary in $\Hom_{\Foam}(\emptyweb, \overline{W} W')$ 
associated to $F$ (see Definition \ref{def:foamfromto}).
Let $c$ be a coloring of $F$.
Lemma \ref{lem:euler-char-boundary-conditions} shows that
\[
    \foamdeg(F_\cup)= -\sum_{1 \leq i < j \leq N} \chi(F_{ij}(c)) + \sum_{f \textrm{ facet of $F$}} \deg(P_f).
\]
\end{remark}

Using $\foamdeg(F_\cup)$ directly as the degree of a cornered foam would not assign degree $0$ to identity foams; the correction term $\boundarydeg(\varepsilon)$ remedies this. Accordingly, we define the degree of a cornered foam $F$ by horizontally composing $F$ (using $*$) with any compatible identity foam. The following lemma shows that this does not depend on the choice of identity foam.

\begin{lemma} \label{lem:foam_degree_with corners}
    Let $W_0, W_1, W_2,$ and $ W_3$ be webs with the same boundary $\varepsilon$. Let $F$ be a foam from $W_0$ to $W_1$. Then
    \[
   \foamdeg(\overline{\idfoam_{W_2}} \ast F) =\foamdeg(\overline{\idfoam_{W_3}} \ast F)
    \]
\end{lemma}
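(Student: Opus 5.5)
The plan is to compute both degrees with a single coloring, using formula \eqref{eq_degree_euler}. We split each bichrome surface into the part coming from $F$ and the part coming from the identity foam, and then apply Lemma \ref{lem:euler-char-boundary-conditions} to show the identity-foam contribution depends only on $\varepsilon$.

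First, if $F$ admits no coloring, then neither closed-up foam does, and both sides are defined from the combinatorial formula of Definition \ref{dfn:degreefoam}. In that case I would argue directly: the facets, interval bindings and singular points of $\overline{\idfoam_{W}} \ast F$ split into those of $F$, those of $\overline{\idfoam_W}\cong W\times[0,1]$, and the boundary facets $\varepsilon\times[0,1]$ along which the gluing happens. For $W\times[0,1]$ every facet is (edge)$\times I$. Each internal vertex of $W$ gives an interval binding, and each internal edge gives a disk facet. The two contributions then cancel by the Euler relation for the planar graph $W$, leaving a term that depends only on $\varepsilon$. This amounts to checking, per vertex and per edge, that $-\sum_f d(f)+\sum_i d(i)$ for $W\times I$ is a local quantity depending only on boundary labels.

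So the cleaner route is via colorings, assuming one exists. Fix a coloring $c$ of $G_2:=\overline{\idfoam_{W_2}}\ast F$. It restricts to a coloring $c_F$ of $F$ and a coloring of $\idfoam_{W_2}$, and these agree on the strips $\varepsilon\times[0,1]$. For each pair $i<j$, the surface $(G_2)_{ij}$ is the union of $F_{ij}$ and $(\idfoam_{W_2})_{ij}=A_{ij}\times I$, glued along the arcs $(\partial A_{ij})\times I$, which are intervals. Inclusion–exclusion gives
\[
\chi((G_2)_{ij})=\chi(F_{ij})+\chi(A_{ij}\times I)-|\partial A_{ij}|.
\]
Summing over $i<j$, Lemma \ref{lem:euler-char-boundary-conditions} says the middle term totals $\boundarydeg(\varepsilon)$, and its proof shows $\sum|\partial A_{ij}|=2\boundarydeg(\varepsilon)$. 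Both totals depend only on the pigment sets $P_l$ on the boundary strips, that is, only on $\varepsilon$.

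Next, I need a coloring of $\idfoam_{W_3}$ compatible with $c_F$ on the strips. Any coloring of $W_3$ extends to $\idfoam_{W_3}$. The main obstacle is showing that the boundary pigment sets $(P_l)$ induced by $c_F$ extend to some coloring of $W_3$. I would handle this by noting that $\overline{\idfoam_{W_2}}$ already realizes them. I would then use the state-space/representation-theoretic nonvanishing: a boundary coloring extends across $W_3$ whenever the corresponding weight space of $\Phi(W_3)$ is hit. If that fails, I would fall back on the combinatorial local argument above, which avoids colorings entirely.

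Given such a compatible coloring, the same decomposition holds for $G_3$. Dot contributions all lie in $F$, so they are equal. The identity-foam contributions agree, since each equals $\boundarydeg(\varepsilon)-2\boundarydeg(\varepsilon)$. Hence $\foamdeg(G_2)=\foamdeg(G_3)$.
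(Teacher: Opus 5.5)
\textbf{The gap in the coloring route.} The coloring route fails at exactly the step you flagged, and the proposed repair does not work. The boundary pigment data $(P_l)$ of a coloring $c_F$ need not extend over $W_3$, and having zero weight is all the ``weight space'' heuristic can detect. Here is a concrete failure. Take $\varepsilon=\HexaBound$, $W_0=W_1=W_2=\Wzero$, $F=\idfoam_{\Wzero}$, $W_3=\Wone$, and let $c_F$ have boundary pattern $i$-$j$-$k$-$i$-$j$-$k$. This pattern has zero weight and does color the hexagon; it is one of the cases in the proof of Lemma \ref{hexagonidempotent}. But every arc of the matching web forces equal pigments at its two adjacent endpoints, so no compatible coloring of $\idfoam_{\Wone}$ exists.

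You also cannot simply switch to another coloring $c'_F$ that does extend over $W_3$. You would then need $\sum_{i<j}\chi(F_{ij}(c_F))=\sum_{i<j}\chi(F_{ij}(c'_F))$ for the cornered foam $F$. That coloring-independence is essentially what the lemma is supposed to deliver; the paper only obtains it afterwards, in Corollary \ref{cor:degree foam with corner alt}. As written, the coloring argument is therefore incomplete.

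\textbf{The combinatorial route works.} Your combinatorial ``fallback'' paragraph gives a complete proof, valid whether or not anything is colorable, so you should make it the proof and drop colorings entirely. It is a genuinely different, self-contained route: the paper simply cites \cite[Lemma 3.4]{Rose_2016}. It does need to be written out correctly, in two steps.

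First, $\varepsilon\times[0,1]$ consists of arcs, not facets. For each $p_l$, the facet of $F$ along $p_l\times[0,1]$ merges with the facet $e_l\times[0,1]$ of $W\times[0,1]$ along a contractible arc. Since $d(f)=a(N-a)\chi(f)$ is additive in $\chi$, this merging contributes $+\sum_l a_l(N-a_l)=2\boundarydeg(\varepsilon)$.

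Second, the remaining contribution of $W\times[0,1]$ is $-\sum_e a_e(N-a_e)+\sum_v d(v)$, summed over non-circle edges $e$ and vertices $v$ of $W$. What makes this local is not Euler's relation but the vertex identity $a(N-a)+b(N-b)+(a+b)(N-a-b)=2\bigl(ab+(a+b)(N-a-b)\bigr)$. Combined with a half-edge count (internal edges have two vertex ends, boundary edges one, arcs none), it gives $-\tfrac12\sum_l a_l(N-a_l)=-\boundarydeg(\varepsilon)$.

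Hence $\foamdeg(\overline{\idfoam_W}\ast F)$ equals the combinatorial sum over the facets, bindings, singular points and dots of $F$ itself, plus $\boundarydeg(\varepsilon)$. This is visibly independent of $W$.
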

\begin{proof}
    This lemma is a direct consequence of \cite[Lemma 3.4]{Rose_2016}.
\end{proof}

\begin{definition}\label{def:degree foam with corners}
    Let $W_0, W_1$ and $ W_2$ be webs with the same boundary $\varepsilon$. Let $F$ be a foam from $W_0$ to $W_1$. Define
    \begin{align*}
        \foamdeg(F) := \foamdeg(\overline{\idfoam_{W_2}}\ast F ).
    \end{align*}
\end{definition}

{This definition arranges so that the identity foam on any web $W$ with boundary $\varepsilon$ has degree $0$.}

The following corollary follows from \cite[Section 3]{Queffelec2016}, and provides an alternative formula:
\begin{corollary}\label{cor:degree foam with corner alt}
     Let $W_0, W_1$ be webs with boundary $\varepsilon$. Let $F$ be a foam from $W_0$ to $W_1$. Then
    \begin{align*}
        \foamdeg(F) =-\sum_{1 \leq i < j \leq N} \chi(F_{ij}(c)) + \sum_{f \textrm{ facet of $F$}} \deg(P_f) + \boundarydeg(\varepsilon).
    \end{align*}
\end{corollary}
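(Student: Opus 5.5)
The strategy is to reduce the cornered foam to a foam with boundary by composing with an identity foam, and then apply the Euler-characteristic formula for bordered foams. By Definition \ref{def:degree foam with corners}, $\foamdeg(F)=\foamdeg(\overline{\idfoam_{W_2}}\ast F)$ for any web $W_2$ with boundary $\varepsilon$. I take $W_2=W_0$, although by Lemma \ref{lem:foam_degree_with corners} any choice would do. Then $G:=\overline{\idfoam_{W_0}}\ast F$ is a foam with boundary $(\overline{W_0}W_0,\overline{W_0}W_1)$ between closed webs.

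\textbf{Step 1.} Fix a coloring $c$ of $F$ and extend it to a coloring of $G$. On the identity piece, use the product coloring induced by the restriction of $c$ to $W_0\times\{0\}$. This extension is compatible along $\varepsilon\times[0,1]$, because colorings of $F$ restricted to the side facets $p\times[0,1]$ are constant in the $[0,1]$ direction.

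\textbf{Step 2.} Apply the formula \eqref{eq_degree_euler} to $G$ in its bordered form, as recorded in Remark \ref{rem:bending-up-foamdeg}. This gives
\[
\foamdeg(G)=-\sum_{i<j}\chi(G_{ij})+\sum_f\deg(P_f).
\]
The dots all lie on $F$, since the identity foam carries no decorations.

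\textbf{Step 3.} Compute $\chi(G_{ij})$. The bichrome surface $G_{ij}$ is obtained by gluing $F_{ij}$ and $(\idfoam_{W_0})_{ij}$ along $A\times[0,1]$, where $A$ is the set of boundary points $p_l$ at which exactly one of $i,j$ lies in $P_l$. By inclusion–exclusion,
\[
\chi(G_{ij})=\chi(F_{ij})+\chi((\idfoam_{W_0})_{ij})-|A_{ij}|.
\]
Sum over $i<j$. By Lemma \ref{lem:euler-char-boundary-conditions}, the identity term contributes $\boundarydeg(\varepsilon)$. By the counting in the proof of that lemma, $\sum_{i<j}|A_{ij}|=\sum_l a_l(N-a_l)=2\boundarydeg(\varepsilon)$. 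Hence
\[
-\sum_{i<j}\chi(G_{ij})=-\sum_{i<j}\chi(F_{ij})+\boundarydeg(\varepsilon),
\]
which is exactly the claimed formula.

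The main obstacle is making Step 3 rigorous. The foam $F$ has corners, so its bichrome surfaces are surfaces with corners, and $\chi(F_{ij})$ must be interpreted as the Euler characteristic of the cut surface. One also has to check that the gluing locus is exactly $A_{ij}\times[0,1]$, a disjoint union of intervals with $\chi=1$ each, rather than including pieces of the seam graph. I would verify this carefully by working locally near each $p_l\times[0,1]$, using the characterization of $\partial A_{ij}$ from Lemma \ref{lem:euler-char-boundary-conditions}. As an alternative, one could cite \cite[Section 3]{Queffelec2016} directly for the degree formula with corners.
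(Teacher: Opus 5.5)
Your proof is correct and takes the paper's route. The paper's one-line proof combines the Euler-characteristic formula for the bent-up foam (Remark \ref{rem:bending-up-foamdeg}) with additivity of Euler characteristic under the gluing of Definition \ref{def:foamfromto}; your inclusion--exclusion along $\varepsilon\times[0,1]$, together with Lemma \ref{lem:euler-char-boundary-conditions}, is exactly the computation that produces the $\boundarydeg(\varepsilon)$ correction the paper leaves implicit. The rigor concern you flag is harmless: near each $p\times[0,1]$ the foam is a product on a single facet, so no bindings reach the gluing locus, which is therefore a disjoint union of $|A_{ij}|$ intervals.
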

\begin{proof}
    This follows by the definition of Euler characteristics, Remark \ref{rem:bending-up-foamdeg}, and Definition \ref{def:foamfromto}.
\end{proof}

{Equivalently, the degree of $F$ and the degree of its closure are related by $\foamdeg(F_\cup) + \boundarydeg(\varepsilon) = \foamdeg(F)$ as the following example demonstrates.}

\begin{example}
    Given the foams in Example \ref{eg:foam with corners}, \ref{eg:closed web foam}, and \ref{eg:different foam clousure}, we have
    \begin{align*}
        \foamdeg(F) &= \foamdeg(\overline{\idfoam_{W_2}} \ast F )=\foamdeg V = \foamdeg(F_\cup)+\foamdeg(2^-2^+2^+2^-) \\
        &= -4N+4 + 4(N-2) \\
        &= -4.
    \end{align*}
\end{example}

The following category should be compared with the formalization found in the $2$-categories from \cite[Definition 3.1]{Queffelec2016}. 

\begin{definition}
Fix an admissible boundary sequence $\varepsilon \in \Seq_N$. The \emph{$\mathfrak{gl}_N$ foam category with boundary $\varepsilon$}, denoted $\Foam^\varepsilon$, is defined as follows. Its objects are the webs with boundary $\varepsilon$. Given two such webs $W_0$ and $W_1$, a morphism from $W_0$ to $W_1$ is a finite formal $\Symf$-linear combination of foams from $W_0$ to $W_1$,
\[
    f=\sum_i k_i M_i,
\]
where each $k_i\in \Symf$ and each $M_i$ is a foam from $W_0$ to $W_1$. These morphism spaces are taken modulo the following evaluation relation: For foams $M_i$ from $W_0$ to $W_1$ and coefficients $a_i\in \Symf$, we set $\sum_i a_i M_i =0$ if and only if $\sum_i a_i \langle V M_i\rangle =0$ for every foam $V$ with boundary $(\overline{W_0}W_1,\emptyweb)$, where $\langle VM_i\rangle\in \Symf$ denotes the evaluation of the closed foam $VM_i$.
\end{definition}

The category $\Foam^\varepsilon$ is graded. The degree of an individual term $k_iM_i$ is
\[
    \deg(k_iM_i) := \deg(k_i)+\foamdeg(M_i),
\]
and for each $d\in \mathbb{Z}$ the homogeneous degree-$d$ morphism space $\Hom_{\Foam^\varepsilon}^d(W_0,W_1)$ is spanned by those finite sums $\sum_i k_iM_i$ in which every nonzero term has total degree $d$. Thus
\[
    \Hom_{\Foam^\varepsilon}(W_0,W_1)
    =
    \bigoplus_{d\in \mathbb{Z}}
    \Hom_{\Foam^\varepsilon}^d(W_0,W_1)
\]
is a graded $\Symf$-module.

Note that in comparison to Definition \ref{def:foam between closed webs}, we omit the analogue of $\mathcal{F}$ in order to make the text in the next sections cleaner. 
We will use $\mathcal{E}$ for elements in the Karoubi envelope, which we define now:

\begin{definition}\label{def:Karoubi of cornered foams}
Let $\operatorname{Kar}(\Foam^\varepsilon)$ denote the \emph{Karoubi completion} (or \emph{Karoubi envelope}) of $\Foam^\varepsilon$. An object of $\operatorname{Kar}(\Foam^\varepsilon)$ is a pair $(W,P)$, where $W$ is a web with boundary $\varepsilon$ and
\[
    P\in \Hom_{\Foam^\varepsilon}(W, W)
\]
is an idempotent, that is, $P\circ P=P$. Notice that $\deg(P) = 0$ since $\deg(P \circ P)= \deg(P)+\deg(P)$. We denote this object by $\mathcal{E}(P)$. In particular, if $P=\idfoam_W$, we write
\[
    \mathcal{E}(W):=\mathcal{E}(\idfoam_W).
\]

A morphism
\[
    f\colon \mathcal{E}(P_0)=(W_0,P_0)
    \longrightarrow
    \mathcal{E}(P_1)=(W_1,P_1)
\]
is a morphism $f\in \Hom_{\Foam^\varepsilon}(W_0,W_1)$ satisfying
\[
    P_1\circ f=f=f\circ P_0.
\]

There exists a formal grading of objects in $\operatorname{Kar}(\Foam^\varepsilon)$ which respects the degree of morphisms between the objects. 
An object $\mathcal{E}(P)$ with formal grading $i\in \mathbb{Z}$ is denoted by
\[
    \mathcal{E}(P)\{i\}.
\]
Similarly, write
\[
    \mathcal{E}(W)\{i\}:=\mathcal{E}(\idfoam_W)\{i\}.
\]

The morphism space between objects $\mathcal{E}(P_0)\{i\}$ and $\mathcal{E}(P_1)\{j\}$ is defined by
\[
\Hom_{\operatorname{Kar}(\Foam^\varepsilon)}
\bigl(\mathcal{E}(P_0)\{i\},\mathcal{E}(P_1)\{j\}\bigr)
=
P_1\,
\Hom_{\Foam^\varepsilon}^{\,i-j}(W_0,W_1)\,
P_0. 
\]

In other words, a morphism
\[
    f\colon \mathcal{E}(P_0)\{i\}
    \longrightarrow
    \mathcal{E}(P_1)\{j\}
\]
is a homogeneous morphism $f\in \Hom_{\Foam^\varepsilon}(W_0,W_1)$ such that
\[
   \deg(f)=i-j 
   \quad 
   \text{and} 
   \quad 
   P_1\circ f=f=f\circ P_0.
\]
\end{definition}

\begin{remark}
   The formal grading of objects in  $\operatorname{Kar}(\Foam^\varepsilon)$ matches the grading shift of the corresponding state spaces. To be more precise, given $\mathcal{E}(P_0)=(W_0,P_0)$ and 
   $\mathcal{E}(P_1)=(W_1,P_1) \in \operatorname{Kar}(\Foam^\varepsilon)$, consider a morphism
   \[
    f \colon \mathcal{E}(P_0)\{i\}
    \longrightarrow
    \mathcal{E}(P_1)\{j\},
\] 
where $f=M$ and $\deg(f)=\foamdeg(M)= i-j$.
Given any web $W_2$ with boundary $\varepsilon$, 
since $\foamdeg(\overline{\idfoam_{W_2}} \ast M )= \foamdeg(M)= i-j$, we have the grading-preserving homomorphism
\[ 
    \mathcal{F}(M): \mathcal{F}(\overline{\idfoam_{W_2}}\ast P_0 )(\mathcal{F}( \overline{W_2} \ast W_0 ))\{i\} \longrightarrow 
    \mathcal{F}(\overline{\idfoam_{W_2}} \ast P_1)(\mathcal{F}(\overline{W_2} \ast W_1 ))\{j\}.
\]

\end{remark}

\begin{remark}
In the web category viewed after decategorification, a formal linear combination of webs corresponds categorically to a direct sum of the corresponding objects in $\Foam^\varepsilon$ or in $\operatorname{Kar}(\Foam^\varepsilon)$. More precisely, a sum
\[
    W_1+\cdots+W_r
\]
is categorified by the direct sum
\[
    \mathcal{E}(W_1)\oplus \cdots \oplus \mathcal{E}(W_r).
\]

If grading shifts are present, then a graded term $q^i W$ corresponds to the shifted object $\mathcal{E}(W)\{i\}$. Thus formal sums of webs in the decategorified web category are realized by direct sums of the associated objects in the foam category or its Karoubi completion.
\end{remark}

\subsection{Indecomposable objects in $Kar(\Foam^\varepsilon)$}
We now present some results about the indecomposable objects in the Karoubi envelope. While the results are known (see for example \cite{BarNatan2006} for a treatment in a similar context), we reinterpret them in our setting.  

\begin{definition}\label{def:indecomposable_Web}
An object $\mathcal{E}(P)=(W,P) \in Kar(\Foam^\varepsilon)$ is \emph{decomposable} if it is isomorphic to a direct sum of objects, i.e.\ if there exist
\begin{itemize}
    \item objects $\mathcal{E}(P_1)=(W_1,P_1)$, $\mathcal{E}(P_2)=(W_2,P_2),$ 
    \item non-zero morphisms $F_i$ (non-zero linear combinations of foams) from $W$ to $W_i$ for $i=1,2$,
    \item and morphisms $I_i$ from $W_i$ to $W$ for $i=1,2$
\end{itemize}
such that the following foam relations hold:
\begin{align}
    F_i &= {F_i} \circ P = {P_i} \circ {F_i}, & I_i = P\circ {I_i} = {I_i} \circ {P_i}, \text{ for } i=1,2, \label{DecompRel} \\
    P &= {I_1}\circ {F_1}+{I_2} \circ{F_2},  \notag\\ 
    {P_1} &= {F_1}\circ {I_1} , & {P_2} = {F_2} \circ{I_2},  \notag\\
    0 &= {F_1} \circ {I_2} , & 0 = {F_2} \circ{I_1} \notag
\end{align}
We say $\mathcal{E}(P)$ is \emph{indecomposable} if it is not decomposable. 
\end{definition}

Note that the above definition implicitly requires the foams $F_1$ and $F_2$ appearing in the definition to satisfy certain conditions on their degrees. We will not need those details and we leave them implicit in the next proposition as well.

\begin{proposition} \label{IndecompoCrit}
    A web $W \in \Foam^\varepsilon$ is indecomposable if, given any foam $F$ from $W$ to $W$ of degree $0$,
    \[
        \mathcal{E}(F)=k \mathcal{E}(\idfoam_W) 
        \quad \text{ for some } \quad
        k \in \mathbb{Q}. 
    \]  
 
    In the Karoubi envelope, $\mathcal{E}(P)=(W,P) \in Kar(\Foam^{\varepsilon})$ is indecomposable if, 
    given any foam $F$ from $W$ to $W$ whose degree is $0$, there exists $k \in \mathbb{Q}$ such that
    \[\mathcal{E}(P\circ F \circ P)=k \mathcal{E}(P). \]   
\end{proposition}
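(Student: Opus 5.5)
We argue by contraposition. Assume $\mathcal{E}(P)=(W,P)$ is decomposable, with data $F_i, I_i, P_i$ for $i=1,2$ as in Definition \ref{def:indecomposable_Web} satisfying the relations \eqref{DecompRel}. We show that then some degree-$0$ endomorphism is not a scalar multiple of $P$; the first statement is the special case $P=\idfoam_W$. The candidate is the idempotent $e_1:=I_1\circ F_1$.

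\emph{Step 1: $e_1$ is idempotent and has degree $0$.} By the relations,
\[
e_1\circ e_1=I_1\circ(F_1\circ I_1)\circ F_1=I_1\circ P_1\circ F_1=I_1\circ F_1=e_1 ,
\]
where the last equality uses $I_1=I_1\circ P_1$. Moreover $P\circ e_1\circ P=e_1$, since $P\circ I_1=I_1$ and $F_1\circ P=F_1$. Following the convention left implicit after Definition \ref{def:indecomposable_Web}, we take $F_i$ and $I_i$ homogeneous with $\deg F_i+\deg I_i=0$; this is forced, since $P_i=F_i\circ I_i$ is an idempotent and so has degree $0$. Hence $e_1$ is homogeneous of degree $0$.

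\emph{Step 2: $e_1$ is neither $0$ nor $P$.} Suppose $e_1=0$. Then
\[
P_1=P_1\circ P_1=F_1\circ I_1\circ F_1\circ I_1=F_1\circ e_1\circ I_1=0 ,
\]
and so $F_1=P_1\circ F_1=0$, contradicting the requirement that $F_1$ be nonzero. Now suppose $e_1=P$. Then $I_2\circ F_2=P-e_1=0$, and the same computation with indices swapped gives $P_2=F_2\circ I_2\circ F_2\circ I_2=0$, hence $F_2=0$, again a contradiction.

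\emph{Step 3: conclude.} Write $e_1$ as a $\Symf$-linear combination of foams of total degree $0$. The coefficients of nonzero degree pair with foams of nonzero degree, and such terms are themselves $\Symf$-multiples of degree-$0$ endomorphisms of $W$. We apply the hypothesis to each degree-$0$ foam $F$ appearing (or to $P\circ F\circ P$ in the Karoubi case) and use $e_1=P\circ e_1\circ P$. This yields $e_1=kP$ with $k$ in the appropriate scalar ring, which is $\mathbb{Q}$ when only degree-$0$ coefficients occur. From $e_1^2=e_1$ we get $k^2P=kP$. Since $P\neq0$, this forces $k\in\{0,1\}$, contradicting Step 2.

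The main obstacle is the bookkeeping in Step 3. We must check that every degree-$0$ endomorphism of $\mathcal{E}(P)$ is a $\Symf$-combination of terms to which the hypothesis applies. Terms of the form $a\cdot F'$ with $\deg a=-\deg F'>0$ would produce non-constant scalars, and then the idempotent argument must be run in $\Symf$. There it still works: $\Symf$ is a graded domain, so a homogeneous degree-$0$ idempotent scalar is $0$ or $1$. We plan to handle this by a degree argument that reduces the scalar modulo the positive-degree ideal of $\Symf$, after which $k\in\mathbb{Q}$ and the conclusion of Step 3 follows.
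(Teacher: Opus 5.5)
Your argument is essentially the paper's: both apply the hypothesis to $I_1\circ F_1$ to get $I_1\circ F_1=kP$ with $k\in\mathbb{Q}$ and derive a contradiction. The paper then uses $F_2\circ I_1=0$ and $F_2\neq 0$ to force $k=0$, argues symmetrically for $I_2\circ F_2$, and reaches $P=0$. You instead use idempotency to get $k\in\{0,1\}$ and rule out both values in Step 2, so you never need the orthogonality relations $F_1\circ I_2=0=F_2\circ I_1$. Steps 1 and 2 are correct.

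Step 3, however, is not right as written. Take a term $a\cdot G$ of $e_1$ with $\deg a>0$. Then $\deg G<0$, so this term is an $\Symf$-multiple of a negative-degree foam, not of a degree-$0$ one, and the hypothesis says nothing about $G$. Such terms genuinely occur: for $N=5$, the idempotent $\beta_2\circ\alpha_2$ contains $-e_1\cdot\beta_2\circ\alpha_1$, where $e_1$ is the elementary symmetric polynomial and $\deg(\beta_2\circ\alpha_1)=-2$ by Lemma \ref{degcheck}. So the difficulty is not a non-constant scalar. It is that you have not shown $e_1=kP$ at all, and ``reducing the scalar'' does not address that. Either of the following closes the gap.

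(i) Multiplication by $a\in\Symf$ is realized by foams. A thickness-$1$ sphere with $N-1+m$ dots evaluates to $-h_m$, evaluation is multiplicative under disjoint union, and $h_1,\dots,h_N$ generate $\Symf$. Hence every homogeneous degree-$0$ morphism $W\to W$ is a $\mathbb{Q}$-combination of degree-$0$ foams, and the hypothesis applies term by term.

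(ii) A graded Nakayama argument. Let $J\subset\Symf$ be the positive-degree ideal. The hypothesis gives $e_1=P\circ e_1\circ P\equiv kP$ modulo $J\cdot\Hom(W,W)$, with $k\in\mathbb{Q}$. Since $\Hom(W,W)$ is a finitely generated graded $\Symf$-module, it vanishes in sufficiently negative degrees. So any degree-$0$ idempotent $e\in J\cdot\Hom(W,W)$ is zero, because $e=e^n\in J^n\cdot\Hom(W,W)$ for all $n$. In particular $P\notin J\cdot\Hom(W,W)$, so $k^2=k$. Then $k=0$ gives $e_1=0$, and $k=1$ gives $I_2\circ F_2=P-e_1=0$; both contradict Step 2.

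The paper's own proof passes over this point silently, by treating $I_1\circ F_1$ as a $\mathbb{Q}$-combination of degree-$0$ foams.
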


\begin{proof}
    Since $\Foam^\varepsilon \subset Kar(\Foam^{\varepsilon})$ via $F \mapsto (F, \idfoam_F)$, it suffices to prove the statement for the Karoubi completion.
    
    Suppose that $(W,P)$ satisfies the condition in the proposition but is decomposable. Then there exist $\mathcal{E}(P_1)=(W_1,P_1)$, $ \mathcal{E}(P_2)=(W_2,P_2),$ and morphisms $F_i,I_i$ for $i=1,2$, such that relations in Equation \eqref{DecompRel} are satisfied. Since ${I_1}{F_1}$ is a linear combination of foams from $W$ to $W$, there exists $k\in \mathbb{Q}$ such that 
    \[ \mathcal{E}({I_1}\circ{F_1}) = \mathcal{E}(P\circ {I_1}\circ {F_1} \circ P) = k \mathcal{E}(P). \]
   Then 
    \[ k{F_2} = {F_2} \circ  kP = {F_2} \circ {I_1} \circ {F_1} =0,  \]
    So $k=0$ since $F_2$ is non-zero. Hence ${I_1} \circ {F_1}= k P = 0$. By a similar argument, we get ${I_2} \circ {F_2} = 0$. This leads to    
    \[P= {I_1} \circ {F_1}+ {I_2} \circ {F_2}= 0, \] 
    which contradicts the assumption that $(W,P)$ is decomposable, completing the proof. 
\end{proof}

We also collect here some facts that we will need in the rest of the paper. First recall the following theorem and its corollary.

\begin{proposition}  [{\cite[Theorem 3.30]{RW-eval-foams}}] \label{StateSpaceDim}
For any closed $\GL_N$ web $\Gamma$, the graded rank of $\mathcal{F}(\Gamma)$ is equal to $\langle \Gamma \rangle$.
\end{proposition}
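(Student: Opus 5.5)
This is \cite[Theorem 3.30]{RW-eval-foams}, and I would follow the strategy there: establish enough local structure on the state spaces $\mathcal{F}(\Gamma)$ to reduce every closed web to the empty web, with each step matching a MOY relation \eqref{DefSkeinGLN}. The result would then follow because the MOY evaluation $\langle \Gamma\rangle$ is uniquely determined by these relations, via Proposition \ref{WebRep} or Cautis--Kamnitzer--Morrison.

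\textbf{Step 1: base case and a priori structure.} First I would compute the base case: $\mathcal{F}(\emptyweb)\cong \Symf$, of graded rank $1$. Next I would show $\mathcal{F}(\Gamma)$ is a finitely generated graded $\Symf$-module. The plan is to pair $(\emptyweb,\Gamma)$-foams against $(\Gamma,\emptyweb)$-foams by evaluating the glued closed foam, so that $\mathcal{F}(\Gamma)$ embeds in a dual. Foams of bounded degree span only finitely many classes modulo $\Symf$, because neck-cutting and dot-reduction relations bring any foam to a finite set of standard forms.

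\textbf{Step 2: categorified local relations.} For each MOY relation I would build explicit foams giving mutually inverse maps on state spaces:
\begin{itemize}
\item a $k$-circle is isomorphic to $\bigoplus$ of grading shifts of $\mathcal{F}(\emptyweb)$ indexed by Schur functions in a $k\times(N-k)$ box, which categorifies $\begin{bmatrix} N\\ k\end{bmatrix}$;
\item the two bigon relations, using zip/unzip foams decorated with Schur polynomials;
\item the associativity relation, via a foam isotopy;
\item the square-switch relations, via saddle-type foams with singular points.
\end{itemize}
In each case the compositions have to be checked to be the identity and the required orthogonal idempotents. These are identities between closed foams, which I would verify with the Robert--Wagner formula, Definition \ref{foam eval definition}. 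Kempe moves, Lemma \ref{rwlemma2.19}, control the signs $s(F,c)$ across colorings.

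\textbf{Step 3: reduction and obstacles.} Every closed $\GL_N$ web reduces to a scalar using these relations. The cleanest route is to write it as a closed ladder via skew Howe duality and reduce rung by rung, using square switches and digon removals. The isomorphisms from Step 2 then decompose $\mathcal{F}(\Gamma)$ compatibly, and induction gives that $\mathcal{F}(\Gamma)$ is free with graded rank equal to $\langle\Gamma\rangle$. Freeness comes automatically once $\mathcal{F}(\Gamma)$ is a direct sum of shifts of $\Symf$.

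I expect the main obstacle to be Step 2 for the square-switch relation: it carries the coefficient $[N-k-l]$ and involves singular vertices, so the idempotent identities require careful sign bookkeeping across all colorings. As a fallback, and as a consistency check on the ranks, I would base change to the fraction field of $\Symf$. There colorings of $\Gamma$ give orthogonal idempotents, so the dimension equals the number of colorings. The grading can then be compared with the coloring state-sum formula for the MOY polynomial.
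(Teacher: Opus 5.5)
Your plan is essentially the argument behind \cite[Theorem 3.30]{RW-eval-foams}, which the paper only quotes: realize each MOY relation as a decomposition of state spaces by explicit foams (checked with the Robert--Wagner formula), then induct along a reduction of $\Gamma$ (e.g.\ via closed ladders) down to $\mathcal{F}(\emptyweb)\cong\Symf$. Two small points: the finite-generation part of Step~1 is not needed, and when a relation enters with a negative coefficient, or a thick rung is traded for thin ones, $\mathcal{F}(\Gamma)$ appears as a graded direct summand rather than as a sum, so you must use that graded summands of free graded $\Symf$-modules are free; the fraction-field fallback only sees the ungraded generic rank, so it cannot replace the main argument.
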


\begin{corollary} \label{WebPairDim}
        Given two webs $W_1$ and $W_2$ with boundary $\varepsilon$, the graded dimension of $\Hom_{\Foam^{\varepsilon}}(W_1,W_2)$ is given by $q^{\boundarydeg(\varepsilon)}\langle \overline{W_1}W_2\rangle$. 
\end{corollary}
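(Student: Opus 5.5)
The plan is to reduce the cornered statement to the closed one, Proposition \ref{StateSpaceDim}, by bending up foams, and to track the grading shift using the correction term $\boundarydeg(\varepsilon)$.

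\textbf{Step 1: identify the Hom space with a state space.} By Definition \ref{def:foamfromto}, a foam $F$ from $W_1$ to $W_2$ corresponds bijectively to the bent-up foam $F_\cup$ with boundary $(\emptyweb, \overline{W_1}W_2)$. Both sides are $\Symf$-linear in the same way. I then compare the relations. In $\Hom_{\Foam^\varepsilon}(W_1,W_2)$, a combination $\sum a_i M_i$ vanishes iff $\sum a_i\langle V M_i\rangle=0$ for every foam $V$ with boundary $(\overline{W_1}W_2,\emptyweb)$. In $\mathcal{F}(\overline{W_1}W_2)$, $\sum a_i\mathcal{F}((M_i)_\cup)$ vanishes iff the same evaluations vanish for every such $V$. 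The closed foams $V\circ (M_i)_\cup$ and $VM_i$ are the same closed foam up to isotopy, so the two evaluations agree. Hence bending gives an isomorphism of $\Symf$-modules
\[
\Hom_{\Foam^\varepsilon}(W_1,W_2)\cong \mathcal{F}(\overline{W_1}W_2).
\]
The only care needed is that cutting and regluing along $\varepsilon\times[0,1]$ respects colorings, which is built into Definition \ref{def:foamfromto}.

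\textbf{Step 2: compare gradings.} On a single foam, the degree on the left is $\foamdeg(F)$. On the right it is $\foamdeg(F_\cup)$. By Corollary \ref{cor:degree foam with corner alt} together with Remark \ref{rem:bending-up-foamdeg}, these satisfy $\foamdeg(F)=\foamdeg(F_\cup)+\boundarydeg(\varepsilon)$. So the isomorphism of Step 1 shifts every degree uniformly by $\boundarydeg(\varepsilon)$, i.e. $\Hom_{\Foam^\varepsilon}(W_1,W_2)\cong\mathcal{F}(\overline{W_1}W_2)\{\boundarydeg(\varepsilon)\}$, with the sign fixed by the stated convention.

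\textbf{Step 3: conclude.} Proposition \ref{StateSpaceDim} gives that the graded rank of $\mathcal{F}(\overline{W_1}W_2)$ is $\langle\overline{W_1}W_2\rangle$. Multiplying by $q^{\boundarydeg(\varepsilon)}$ to account for the shift of Step 2 gives the claimed graded dimension $q^{\boundarydeg(\varepsilon)}\langle \overline{W_1}W_2\rangle$. Here the module is free, so the graded rank over $\Symf$ is meant.

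The main obstacle is Step 2: getting the sign and normalization of the shift right. That is, I must check that the $q$-exponent convention for graded rank matches the $\{i\}$ convention of Definition \ref{def:foam between closed webs}. As a sanity check I would test it on the identity foam, which has degree $0$ as a cornered foam and degree $-\boundarydeg(\varepsilon)$ when bent up.
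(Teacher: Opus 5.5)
Your proposal is correct and is the argument the paper leaves implicit. By Definition \ref{def:foamfromto} and the definition of $\Foam^\varepsilon$, bending identifies $\Hom_{\Foam^\varepsilon}(W_1,W_2)$ with $\mathcal{F}(\overline{W_1}W_2)\{\boundarydeg(\varepsilon)\}$ using $\foamdeg(F)=\foamdeg(F_\cup)+\boundarydeg(\varepsilon)$, and Proposition \ref{StateSpaceDim} then finishes; your shift direction is the right one, as your identity-foam check confirms.
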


The following lemma follows from Corollary \ref{WebPairDim} and Proposition \ref{IndecompoCrit}.

\begin{lemma} \label{WebPairIndecompo}
    A $\GL_N$ web $W$ with boundary $\varepsilon$ is indecomposable if the Laurent polynomial $\langle \overline{W}W \rangle$ has coefficient $1$ in degree $-\boundarydeg(\varepsilon)$. 
\end{lemma}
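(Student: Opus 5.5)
The plan is to combine Corollary \ref{WebPairDim} with Proposition \ref{IndecompoCrit}, reading off the dimension of the degree-$0$ part of $\operatorname{End}(W)$ from the closed web $\overline{W}W$.

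First, apply Corollary \ref{WebPairDim} with $W_1=W_2=W$. This gives that the graded dimension of $\Hom_{\Foam^{\varepsilon}}(W,W)$ equals $q^{\boundarydeg(\varepsilon)}\langle \overline{W}W\rangle$. If $\langle \overline{W}W\rangle$ has coefficient $1$ in degree $-\boundarydeg(\varepsilon)$, then the shifted series has coefficient $1$ in degree $0$. Hence the degree-$0$ endomorphism space $\Hom^0_{\Foam^\varepsilon}(W,W)$ has rank one.

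One point needs care here. The morphism spaces are $\Symf$-modules and $\Symf$ has elements of positive degree. So the degree-$0$ piece of the graded module should be read as the $\mathbb{Q}$-span of degree-$0$ elements. Positive-degree coefficients times negative-degree foams can also land in degree $0$. I will therefore interpret the graded rank as counting homogeneous free generators over $\Symf$, using that the state space is free (Proposition \ref{StateSpaceDim}). I will also use that there are no nonzero morphisms of negative degree that could be multiplied up, which follows from the lowest degree of the series being $0$ here. With that interpretation, the degree-$0$ component is a one-dimensional $\mathbb{Q}$-vector space.

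Next, identify a spanning element of that space. The identity foam $\idfoam_W$ has degree $0$ by the convention of Definition \ref{def:degree foam with corners}. It is nonzero, since closing it up with any foam gives a nonzero evaluation whenever $\langle \overline{W}W\rangle\neq 0$, which holds because the coefficient is $1$. A one-dimensional space containing the nonzero element $\idfoam_W$ is spanned by it. So every degree-$0$ foam $F$ from $W$ to $W$ satisfies $\mathcal{E}(F)=k\,\mathcal{E}(\idfoam_W)$ for some $k\in\mathbb{Q}$.

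Proposition \ref{IndecompoCrit} then gives that $W$ is indecomposable.

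The main obstacle is the grading bookkeeping in the first step. Corollary \ref{WebPairDim} is stated over $\Symf$, so I must argue that the degree-$0$ part of $\operatorname{End}(W)$ is exactly the $\mathbb{Q}$-span of the degree-$0$ free generators. This requires showing that no contributions come from $\Symf$-multiples of lower-degree generators. I plan to handle it by noting that $\langle \overline{W}W\rangle$, shifted by $q^{\boundarydeg(\varepsilon)}$, has no terms below degree $0$; this is the usual positivity and minimal-degree property of MOY evaluations of $\overline{W}W$. If that property is not available, I would instead read "coefficient $1$" as already encoding it, as the lemma's hypothesis implicitly does.
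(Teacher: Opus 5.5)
You follow the paper's route exactly: the paper just says the lemma follows from Corollary~\ref{WebPairDim} and Proposition~\ref{IndecompoCrit}. You are also right that this step is not automatic. Proposition~\ref{IndecompoCrit} needs the $\mathbb{Q}$-space $\Hom^0_{\Foam^\varepsilon}(W,W)$ to be spanned by $\idfoam_W$. The hypothesis only gives one free generator in degree $0$, and $\Symf$-multiples of negative-degree generators also land in degree $0$.

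The gap is in how you close this. The hypothesis says nothing about coefficients of $\langle\overline{W}W\rangle$ below degree $-\boundarydeg(\varepsilon)$, so ``the lowest degree of the series being $0$'' is not available. The general minimal-degree property you invoke is also false. For $W$ a single circle of thickness $1$ with boundary $\emptylist$, one has $\langle\overline{W}W\rangle=[N]^2$. This has a term in degree $-2(N-1)<0=-\boundarydeg(\emptylist)$, realized by a death disk followed by a birth disk. Webs containing a $(1,1,2)$-bigon behave similarly, because of the factor $[2]^2$. These are not counterexamples to the lemma, since their degree-$0$ coefficient is at least $2$, but they refute the property you rely on. Reading ``coefficient $1$'' as including it changes the statement.

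So as written you prove only the variant with the extra assumption $q^{\boundarydeg(\varepsilon)}\langle\overline{W}W\rangle\in 1+q\Z[q]$. That is exactly what Proposition~\ref{CartanMatrix} supplies for $\mathcal{W}_1,\mathcal{W}_2$, so the paper's applications are unaffected. The paper's one-line justification passes over the same point. A minor slip: $\idfoam_W\neq 0$ holds because otherwise $\Hom(W,W)=0$, not because every closure of it evaluates to something nonzero.

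To prove the lemma as stated, aim for ``local'' rather than ``one-dimensional''. Let $A=\Hom_{\Foam^\varepsilon}(W,W)$, a free graded $\Symf$-module of finite rank, and let $\mathfrak{m}\subset\Symf$ be the ideal of positive-degree elements; it is central in $A$.

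The degree-$0$ coefficient of the graded rank equals $\dim_{\mathbb{Q}}(A/\mathfrak{m}A)_0$. So the hypothesis gives $A_0=\mathbb{Q}\,\idfoam_W+(\mathfrak{m}A)_0$. The ideal $(\mathfrak{m}A)_0$ of $A_0$ is nilpotent. Its $j$-th power lies in $(\mathfrak{m}^jA)_0$, and $\mathfrak{m}^jA$ sits in degrees at least $2j+\delta$, where $\delta$ is the lowest degree of a free generator. Hence $A_0$ is local, and its only idempotents are $0$ and $\idfoam_W$.

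A decomposition as in Definition~\ref{def:indecomposable_Web} would give degree-$0$ orthogonal idempotents $I_1\circ F_1$ and $I_2\circ F_2$ summing to $\idfoam_W$. Both are nonzero, because $F_i=F_i\circ I_i\circ F_i$ and $F_i\neq 0$. This is impossible in a local ring. This local-ring form of Proposition~\ref{IndecompoCrit} needs no information about negative-degree terms.
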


\section{Definition and properties of the \texorpdfstring{$6$}{6}-valent vertex}\label{SixVertexProof}

Recent work \cite{SL4Basis} successfully uses $6$-valent vertices for a construction of a preferred $\mathrm{SL}(4)$ web basis. In this section we define the $6$-valent vertex for $\GL_N$ webs and provide the properties of these webs that we will use. These statements are proven using standard web calculations. This section prepares the ground for the introduction of the categorified $6$-valent vertex later on.

\subsection{Hexagon relation in the \texorpdfstring{$\GL_N$}{GL_N} web category} 

In this subsection we recall the \emph{benzene} and \emph{matching} webs, along with some of their properties. 
Additionally, we derive the relation that we will use to define the $6$-valent vertex. 

\begin{notation}\label{notation webs 6pt boundary}
    We recall the notation from the introduction (see Figure \ref{fig:web-names} and Equation \eqref{GLN6Vertex}) for the following webs in $\Hom_{\Web}(\varepsilon_\emptyset, (1^+\otimes 1^-)^{\otimes 3})$. We denote by $\HexaBound$ the boundary sequence $(1^+\otimes 1^-)^{\otimes 3}$ of the webs.
      \begin{align*}
      &\Wzero=\Benzeneweb  , \,  \Wzeroa= \Benzeneweba   , \,  \Wone= \BenzenewebM    ,\,  \Wtwo= \BenzenewebaM  , \\
  & \Wthree=\BenzenewebMB  ,\,  \Wfour=\BenzenewebMC  , \, \Wfive=\BenzenewebMA,  \\
  &\Wsix =  \SixVertex  =   \Benzeneweb   -   \BenzenewebM .
  \end{align*}  
\end{notation}

\begin{proposition}\label{hexagon rotation web}
The following relation holds in $\Web$:
\begin{equation} \label{HexagonRelation}
      \Benzeneweb \  -  [N-3] \ \BenzenewebM
        \  =  \ \Benzeneweba  \ -  [N-3] \  \BenzenewebaM. 
      \end{equation} 
\end{proposition}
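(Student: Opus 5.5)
The plan is to prove the relation \eqref{HexagonRelation} directly from the skein relations \eqref{DefSkeinGLN} and \eqref{NeededSkein}, by reducing both benzene webs $\Wzero$ and $\Wzeroa$ to linear combinations of the five matching webs $\Wone,\dots,\Wfive$, possibly modulo one common leftover term. Each double edge of $\Wzero$ sits between two trivalent vertices. At each of those vertices one single edge lies on the hexagon and one goes to the boundary. So a neighbourhood of a double edge together with the adjacent hexagon edges is exactly the configuration in the last relation of \eqref{DefSkeinGLN} with $k=l=1$, read in the appropriate rotation. In that case the coefficient is $[N-2]$, and the relation trades a ``$2$-rung'' ladder for a ``$1$-rung'' ladder plus $[N-2]$ times two parallel strands.

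Concretely, I will apply this square-switch relation to one chosen double edge of $\Wzero$. The parallel-strands term produces a web with fewer internal faces. Using the bigon relation $\TypeABigonNeedB=[N-1]$ and the associativity relation, it reduces to a square-type web, and one more application of the $k=l=1$ relation turns that into matchings with coefficients involving $[N-2]$. The ladder term should, after isotopy and the associativity relation, be recognizable as $\Wzeroa$ up to terms that again reduce to matchings. I will then perform the mirror computation starting from $\Wzeroa$, or equivalently apply the $\frac{\pi}{3}$-rotation/reflection symmetry that exchanges $\Wzero\leftrightarrow\Wzeroa$ and $\Wone\leftrightarrow\Wtwo$ while permuting $\Wthree,\Wfour,\Wfive$ among themselves. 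The identity should then come out in the form $\Wzero-\Wzeroa=c(\Wone-\Wtwo)$, with the crossing matchings $\Wthree,\Wfour,\Wfive$ cancelling by symmetry. Finally I will simplify $c$ to $[N-3]$ using the quantum-integer identity $[2][N-2]-[N-1]=[N-3]$.

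The main obstacle is the bookkeeping in the second step. One has to make sure that the ladder term really is $\Wzeroa$ (with the orientations of the $2$-edges on the correct sides) and not another benzene-type web. One also has to track the coefficients of $\Wthree,\Wfour,\Wfive$ carefully enough to see that they cancel.

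If this direct route becomes unwieldy, the fallback is a pairing argument. By Proposition~\ref{WebRep}, $\Hom_{\Web}(\emptylist,\HexaBound)$ is the $6$-dimensional invariant space, and the pairing $(U,W)\mapsto\langle\overline{U}W\rangle$ is nondegenerate on it. It then suffices to check that the difference of the two sides of \eqref{HexagonRelation} pairs to zero with a spanning set, for instance $\Wone,\dots,\Wfive$ together with $\Wzero$. Each such pairing is a closed-web evaluation obtained by removing squares and bigons with \eqref{NeededSkein}. The only nontrivial ones are $\langle\overline{\Wzero}\Wzero\rangle$ and $\langle\overline{\Wzero}\Wzeroa\rangle$, and comparing them is the computational heart of this alternative.
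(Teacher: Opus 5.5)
Your primary route fails at its first step. For $k=l=1$, the left side of the last relation in \eqref{DefSkeinGLN} is a consistently oriented square face with edges labelled $2,1,2,1$. The right side is a pair of turnbacks plus $[N-2]$ times two parallel strands, with no $2$-edge at all. The web $\Wzero$ contains no such square: its only internal face is the hexagon. The neighbourhood of one of its double edges is an H-shaped piece (a $2$-edge with two $1$-edges at each end), which matches neither side, so there is nothing to apply the relation to.

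Everything after that step ("the ladder term should be recognizable as $\Wzeroa$ up to matchings") is the content of the proposition itself. Associativity cannot help either while only labels $1,2$ occur, since associativity of three positive labels needs a $3$-edge.

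The missing idea is the instance $k=2$, $l=1$ of the same relation, which has coefficient $[N-3]$. Its first right-hand term is an upright $2\to1\to2$ with a $1$-edge leaving at the bottom and one entering at the top on the same side. Up to reflecting the picture (the mirror-image relation also holds in $\Web$), this is exactly the neighbourhood of a single hexagon edge of $\Wzero$. Solving for it gives $\Wzero=X-[N-3]Y$. Here $X$ contains a $3$-edge, and $Y$ is an arc together with a consistently oriented $(2,1,2,1)$-square, so $Y=\Wtwo+[N-2]\Wfive$. Now re-associate at both ends of the $3$-edge in $X$. The $3$-edge, the two adjacent hexagon $1$-edges, and the hexagon's double edge opposite the chosen single edge then form a new $(3,1,2,1)$-square. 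The same relation gives $X=\Wzeroa+[N-3]\bigl(\Wone+[N-2]\Wfive\bigr)$. Subtracting the two expressions for $X$ yields \eqref{HexagonRelation}. The $\Wfive$ terms cancel, and $[N-3]$ is read off directly rather than from $[2][N-2]-[N-1]$.

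Your fallback is a valid strategy, but it differs from the paper's and stops exactly where the work is hard. The paper uses the dimension count only to know that some nontrivial relation $\sum k_i\mathcal{W}_i=0$ exists among the seven webs. Attaching clasps forces $k_3=k_4=k_5=0$. Capping two adjacent boundary points then reduces each remaining web, by bigon, circle and square reductions, to the two matchings on four points. This gives a $4\times 4$ linear system with solution $k_0'=-k_0$, $k_1=-k_2=-[N-3]k_0$, and no closed pairing of two hexagons is ever needed.

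Your version needs three things. First, nondegeneracy of the pairing, which does follow from semisimplicity and Proposition \ref{WebRep}. Second, a proof that $\Wone,\dots,\Wfive,\Wzero$ span the six-dimensional space, i.e.\ that their Gram matrix is invertible (compare Proposition \ref{CartanMatrix}). Third, the value of $\langle\overline{\Wzero}\Wzeroa\rangle$. In that closed web every face between the two hexagons is a square whose two uprights run in the same direction. There is no bigon and no consistently oriented $(2,1,2,1)$-square, so it does not reduce with the relations you list. You need the $3$-edge relation again, and once you have it, the direct derivation above is shorter than the fallback.
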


\begin{proof}
 We know that $\Hom_{U_q(\GL_N)}(\mathbb{C}(q), (V_{\omega_1}\otimes V_{\omega_1}^*)^{\otimes 3})$ is $6$-dimensional from the tensor decomposition formulas. So $\Hom_{\Web}(\varepsilon_\emptyset, (1^+\otimes 1^-)^{\otimes 3})$ is also $6$-dimensional by Proposition \ref{WebRep}. Hence there is a linear relation between the $7$ webs $\Wzero, \Wzeroa, \Wone, \Wtwo, \Wthree, \Wfour,$ and $\Wfive$.

  Suppose for $k_0, k_0', k_1, k_2, k_3, k_4, k_5$ not all $0$, 
  \begin{equation}\label{LinearComboHexa}
      k_0 \Wzero  + k_0' \Wzeroa  + k_1 \Wone  + k_2 \Wtwo 
      + k_3 \Wthree  + k_4 \Wfour  + k_5 \Wfive = 0. 
\end{equation}

Attaching a $\GL_N$ clasp (see \cite{elias2015lightladdersclaspconjectures} for a definition of clasps) to each web in Equation \eqref{LinearComboHexa}, we get 
 \begin{align*}
      k_0 \ \BenzenewebClasp \ + k_0' \ \BenzenewebaClasp  \ + k_1 \ \BenzenewebMClasp  \  + k_2 \  & \BenzenewebaMClasp \\
      + k_3 \ \BenzenewebMAClasp \ + k_4 \ \BenzenewebMBClasp \ + k_5 \ & \BenzenewebMCClasp = 0. 
  \end{align*} 
  
 This implies that $k_3=0$. Similarly, we can show that $k_4=k_5=0$. Now we attach a cap diagram to each web in Equation \eqref{LinearComboHexa}: 
\begin{align*}
     & k_0 \ \BenzenewebCupA \ + k_0' \ \BenzenewebaCupA  \ + k_1\ \BenzenewebMCupA  \  + k_2 \  \BenzenewebaMCupA  = 0,  \\
     & k_0 \ \BenzenewebCupB \ + k_0' \ \BenzenewebaCupB  \ + k_1 \ \BenzenewebMCupB  \  + k_2 \  \BenzenewebaMCupB  = 0.  
  \end{align*} 

We simplify the above relations by applying Relations \eqref{NeededSkein}, and obtain the following system of equations:
\begin{align*}
    &[2][N-2]k_0+[N-1]k_0'+k_1=0, \\
    &[2]k_0+[N-1][N-2]k_0'+[N]k_2=0,\\
    &[N-1]k_0+[2][N-2]k_0'+k_2=0, \\
    &[N-1][N-2]k_0+[2]k_0'+[N]k_1=0. 
\end{align*}
So $k_0'=-k_0$, $k_1=-k_2=-[N-3]k_0$. Since $k_0, k_0', k_1, k_2, k_3, k_4,$ and $ k_5$ are not all $0$, we have proven Equation \eqref{HexagonRelation}.
\end{proof}

\subsection{\texorpdfstring{$6$}{6}-valent vertex}

Next we introduce the $6$-valent vertex in the $\GL_N$ web category $\Web$. We will show that the $6$-valent vertex is positive and list the graphical calculations  needed later for the categorification of the $6$-valent vertex. 

\def\SixVertex
{\begin{tric}
\draw [decoration={markings,mark=at position 0.6 with {\arrow{>}}},postaction={decorate}] (0,0)--(2,0);
\draw [decoration={markings,mark=at position 0.6 with {\arrow{>}}},postaction={decorate}](-2,0)--(0,0);
\draw [decoration={markings,mark=at position 0.6 with {\arrow{>}}},postaction={decorate}] (1,1.7)--(0,0);
\draw [decoration={markings,mark=at position 0.6 with {\arrow{>}}},postaction={decorate}] (0,0)--(-1,-1.7);
\draw [decoration={markings,mark=at position 0.6 with {\arrow{>}}},postaction={decorate}] (1,-1.7)--(0,0);
\draw [decoration={markings,mark=at position 0.6 with {\arrow{>}}},postaction={decorate}] (0,0)--(-1,1.7);
\end{tric}
}

\def\HexaColoring
{\begin{tric}
\draw [scale=0.7, decoration={markings,mark=at position 0.65 with {\arrow{>}}},postaction={decorate}] 
      (0,0)..controls(1.7,-0.6)and(2,-1.4)..(2,-2);
\draw [scale=0.7, decoration={markings,mark=at position 0.5 with {\arrow{>}}},postaction={decorate}]      
      (1.2,-2)..controls(1.2,-1.4)and(0.8,-0.6)..(0,0);
\draw [scale=0.7, decoration={markings,mark=at position 0.65 with {\arrow{>}}},postaction={decorate}]       
      (0,0)..controls(-0.8,-0.6)and(-1.2,-1.4)..(-1.2,-2);
\draw [scale=0.7, decoration={markings,mark=at position 0.45 with {\arrow{>}}},postaction={decorate}]       
     (-2,-2)..controls(-2,-1.4)and(-1.7,-0.6)..(0,0);
\draw [scale=0.7, decoration={markings,mark=at position 0.65 with {\arrow{>}}},postaction={decorate}]       
      (0,0)..controls(0.25,-0.6)and(0.4,-1.4)..(0.4,-2);
\draw [scale=0.7, decoration={markings,mark=at position 0.55 with {\arrow{>}}},postaction={decorate}]       
     (-0.4,-2) ..controls(-0.4,-1.4)and(-0.25,-0.6)..(0,0);
\end{tric}
}

\begin{defn}
    By Equation \eqref{HexagonRelation}, we can define the following $6$-valent vertex, denoted by $\Wsix$, which is $\cfrac{2\pi}{3}$-rotation-invariant and reflection-invariant along the edges:
      \begin{equation} \label{GLN6VertexDef}
      \SixVertex \ :=  \ \Benzeneweb \  -  [n-3] \ \BenzenewebM
        \  =  \ \Benzeneweba  \ -  [n-3] \  \BenzenewebaM
      \end{equation} 
\end{defn}

We write down concretely the morphisms in $\Fund(U_q(\GL_{N}))$ corresponding to $\overline{\Wzero}$, $\overline{\Wone}$, and $\overline{\Wsix}$ when $q=1$. We then also write down the vector calculations for generic $q$ and observe that the $6$-valent vertex $\Wsix$ is positive. 

\begin{proposition} \label{VertexColoring}
  For $\overline{\Wzero}, \overline{\Wone},$ and $ \overline{\Wsix} \in \Hom_{\Web}(\HexaBound, \emptylist)$, 
  $\Phi\left(\overline{\Wzero}\right), \Phi\left(\overline{\Wone}\right), $ and $ \Phi\left(\overline{\Wsix}\right)\in \Hom_{\Fund(U_q(\GL_{N}))}((V\otimes V^*)^{\otimes 3}, \mathbb{C}(q))$. Given basis vectors $e_i, 1\le i \le N$ of the $\GL_{N}$ defining representation $V$, when $q=1$, $\Phi\left(\overline{\Wzero}\right), \Phi\left(\overline{\Wone}\right), $ and $ \Phi\left(\overline{\Wsix}\right)$ act on the basis as follows, where $i,j$ and $k$ are different indices.  

  \begin{align*}
  \Phi\left(\overline{\Wzero}\right) : (V\otimes V^*)^{\otimes 3} &\longrightarrow \mathbb{C} \\
  (e_i\otimes e_i^*)^{\otimes 3} & \mapsto N-1 \\
  (e_i\otimes e_i^*)^{\otimes 2}\otimes e_j\otimes e_j^* & \mapsto N-2 \\
   e_i\otimes e_i^* \otimes e_j\otimes e_j^* \otimes e_k\otimes e_k^*  & \mapsto N-3 \\
   e_i\otimes e_j^* \otimes e_k \otimes e_k^* \otimes e_j\otimes e_i^*  & \mapsto 1 \\
    e_i\otimes e_j^* \otimes e_k\otimes e_i^* \otimes e_j\otimes e_k^*  & \mapsto -1 \\
    \text{else} & \mapsto 0 
\end{align*}

\begin{align*}
  \Phi\left(\overline{\Wone}\right) : (V\otimes V^*)^{\otimes 3} &\longrightarrow \mathbb{C} \\
  (e_i\otimes e_i^*)^{\otimes 3} & \mapsto 1 \\
   (e_i\otimes e_i^*)^{\otimes 2}\otimes e_j\otimes e_j^* & \mapsto 1 \\
   e_i\otimes e_i^* \otimes e_j\otimes e_j^* \otimes e_k\otimes e_k^*  & \mapsto 1 \\
    \text{else} & \mapsto 0 
\end{align*}

\begin{align*}
  \Phi\left(\overline{\Wsix}\right) : (V\otimes V^*)^{\otimes 3} &\longrightarrow \mathbb{C} \\
  (e_i\otimes e_i^*)^{\otimes 3} & \mapsto 2 \\
  (e_i\otimes e_i^*)^{\otimes 2}\otimes e_j\otimes e_j^* & \mapsto 1 \\
   e_i\otimes e_i^* \otimes e_j\otimes e_j^* \otimes e_k\otimes e_k^*  & \mapsto 0 \\
   e_i\otimes e_j^* \otimes e_k \otimes e_k^* \otimes e_j\otimes e_i^*  & \mapsto 1 \\
    e_i\otimes e_j^* \otimes e_k\otimes e_i^* \otimes e_j\otimes e_k^*  & \mapsto -1 \\
     \text{else} & \mapsto 0 
\end{align*}
\end{proposition}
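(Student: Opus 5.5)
The plan is to compute $\Phi(\overline{\Wzero})$ and $\Phi(\overline{\Wone})$ directly at $q=1$, using the explicit form of the functor $\Phi$ from Proposition \ref{WebRep}, and then read off $\Phi(\overline{\Wsix})$ by linearity from Equation \eqref{GLN6VertexDef}. At $q=1$ we have $[N-3]=N-3$, so
\[
\Phi(\overline{\Wsix})=\Phi(\overline{\Wzero})-(N-3)\,\Phi(\overline{\Wone}).
\]

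For the matching web $\overline{\Wone}$, each arc is an evaluation $V\otimes V^*\to\mathbb{C}$ (or the corresponding map with the factors permuted), sending $e_a\otimes e_b^*\mapsto\delta_{ab}$. The pairs in $\overline{\Wone}$ are adjacent pairs $(V,V^*)$ in the boundary order $\HexaBound$, so $\Phi(\overline{\Wone})$ is $1$ exactly on the vectors $e_a\otimes e_a^*\otimes e_b\otimes e_b^*\otimes e_c\otimes e_c^*$ and $0$ elsewhere. This gives the second table at once.

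For the benzene web $\overline{\Wzero}$, I will view it as a state sum, that is, a coloring of the edges by basis indices. At $q=1$, a trivalent vertex $1\otimes1\to2$ is $e_a\otimes e_b\mapsto e_a\wedge e_b$, and its dual splits $e_a\wedge e_b\mapsto e_a\otimes e_b-e_b\otimes e_a$ (up to sign and ordering conventions). Each double edge of the hexagon carries some $e_a\wedge e_b$ with $a\neq b$. The boundary indices fix the colors of the outer single edges, and the inner single edges of the hexagon are summed over.

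I will then organize the cases by the pattern of boundary indices. In the fully diagonal pattern $(e_i\otimes e_i^*)^{\otimes3}$, the inner hexagon edges must all carry one common index $m\neq i$, giving $N-1$ states. In the pattern with two equal pairs, the condition is $m\notin\{i,j\}$, giving $N-2$; in the pattern with three distinct pairs, it is $m\notin\{i,j,k\}$, giving $N-3$. The two non-diagonal patterns admit a unique state each, and its sign comes from the antisymmetry of $\wedge^2$, yielding $\pm1$.

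The main obstacle is fixing the sign conventions: the orientation of each trivalent vertex, the identification $V^*\cong\wedge^{N-1}V$ implicit in the pivotal structure, and the order of tensor factors. These must be chosen so that all diagonal states contribute $+1$ and the cyclic patterns get the signs claimed. I would normalize by checking against the skein relations \eqref{NeededSkein}, in particular the digon value $[2]=2$ and the loop value $N$. Contracting a boundary pair with a cap should reproduce the coefficients in the linear system from the proof of Proposition \ref{hexagon rotation web}, which serves as a consistency check. Once these signs are fixed, subtracting $(N-3)$ times the second table from the first gives $2$, $1$, $0$ on the three diagonal patterns and leaves the entries $\pm1$ unchanged, which is the third table.
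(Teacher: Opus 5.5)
Your route is the paper's: its proof is a direct vector (state-sum) evaluation at $q=1$, illustrated only for the first line by the coloring in which all inner single edges carry $e_a$ with $a\neq i$. $\Phi(\overline{\Wsix})$ then follows by linearity, since $[N-3]=N-3$ at $q=1$. Your counts $N-1$, $N-2$, $N-3$ are right, and so is your claim that the two non-diagonal listed patterns have a unique state each.

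The signs, however, are not something you get to choose. The digon relation gives merge$\circ$split$=2$ on the double edge. Split$\circ$merge is an equivariant endomorphism of $V\otimes V$ factoring through $\wedge^2V$, and its square is twice itself. Hence it equals $\mathrm{id}-P$, where $\mathrm{id}$ is the planar straight-through resolution. So every state has sign $(-1)^{\#\mathrm{swaps}}$: zero swaps on the diagonal lines, two on the fourth line, three on the fifth. The only genuine convention is which cyclic direction around the hexagon matches the order of tensor factors, and the fifth line pins that down.

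The real gap is that you never address the ``else $\mapsto 0$'' rows for $\Phi(\overline{\Wzero})$ and $\Phi(\overline{\Wsix})$, and these rows cannot be proved as literally stated. For $\sigma\in S_3$ write $\phi_\sigma(e_{a_1}\otimes e_{b_1}^*\otimes e_{a_2}\otimes e_{b_2}^*\otimes e_{a_3}\otimes e_{b_3}^*)=\prod_t\delta_{a_t,b_{\sigma(t)}}$, and let $c(t)=t+1 \bmod 3$. Expanding all three double edges as $\mathrm{id}-P$ gives
\[
\Phi(\overline{\Wzero})=(N-3)\phi_{\mathrm{id}}+\phi_{(12)}+\phi_{(13)}+\phi_{(23)}-\phi_{c},\qquad \Phi(\overline{\Wone})=\phi_{\mathrm{id}},\qquad \Phi(\overline{\Wsix})=\phi_{(12)}+\phi_{(13)}+\phi_{(23)}-\phi_{c}.
\]
Here zero swaps give $N\phi_{\mathrm{id}}$, one swap gives $-\phi_{\mathrm{id}}$ three times, two swaps give the transpositions, and three swaps give $-\phi_c$. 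This reproduces every listed value, but it also shows two further points.
\begin{itemize}
\item The list must be read up to cyclic rotation of the three $V\otimes V^*$ factors.
\item Even then it misses degenerate index coincidences. Take $e_i\otimes e_j^*\otimes e_j\otimes e_j^*\otimes e_j\otimes e_i^*$, the case $k=j$ of the fourth line. It has exactly one state (swap, straight, swap), so both $\Phi(\overline{\Wzero})$ and $\Phi(\overline{\Wsix})$ send it to $1$, although the statement puts it under ``else''. The case $k=i$ admits no state and does give $0$.
\end{itemize}
To finish, run your state sum over all boundary patterns, or equivalently prove the displayed formula, and state the result in that form. Your description of $\Phi(\overline{\Wone})$ is already complete.
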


\begin{proof}
The proposition follows directly from vector calculations. For example, we compute $\Phi\left(\overline{\Wzero}\right) \left( (e_i\otimes e_i^*)^{\otimes 3} \right)$ by observing that $\BenzeneDualColor$ gives a non-zero vector assignment which evaluates to $1$ when $a\ne i$.
\end{proof}

\def\VertWzero
{\begin{tric}
\draw [scale=0.7, decoration={markings,mark=at position 0.65 with {\arrow{>}}},postaction={decorate}] 
      (0,-3.5)--(0,-2.5);
\draw [scale=0.7, decoration={markings,mark=at position 0.65 with {\arrow{>}}},postaction={decorate}] 
      (3,-3.5)--(3,-1);
\draw [scale=0.7, decoration={markings,mark=at position 0.65 with {\arrow{>}}},postaction={decorate}] 
      (-3,-3.5)--(-3,-1);
      
\draw [scale=0.7, decoration={markings,mark=at position 0.65 with {\arrow{>}}},postaction={decorate}] 
      (0,-2.5)--(-3,-1);
\draw [scale=0.7] (-1.6,-1.95)node[black,scale=0.7,below]{$1$};          
\draw [scale=0.7, decoration={markings,mark=at position 0.65 with {\arrow{>}}},postaction={decorate}] 
      (0,-2.5)--(3,-1);
\draw [scale=0.7] (1.6,-1.95)node[black,scale=0.7,below]{$N-2$};     

\draw [scale=0.7, decoration={markings,mark=at position 0.65 with {\arrow{>}}},postaction={decorate}] 
      (3,-1)--(3,1)node[midway,black,scale=0.7,left]{$N-1$};     
\draw [scale=0.7, decoration={markings,mark=at position 0.65 with {\arrow{>}}},postaction={decorate}] 
      (-3,-1)--(-3,1);
\draw [scale=0.7] (-3.2,0) node[black,scale=0.7,left]{$2$};   

\draw [scale=0.7, decoration={markings,mark=at position 0.65 with {\arrow{>}}},postaction={decorate}] 
      (-3,1)--(0,2.5);
\draw [scale=0.7] (-1.6,1.95)node[black,scale=0.7,above]{$1$};       
\draw [scale=0.7, decoration={markings,mark=at position 0.65 with {\arrow{>}}},postaction={decorate}] 
      (3,1)--(0,2.5);
\draw [scale=0.7] (1.5,2)node[black,scale=0.7,above]{$N-2$}; 

\draw [scale=0.7, decoration={markings,mark=at position 0.65 with {\arrow{>}}},postaction={decorate}] 
      (0,2.5)--(0,3.5);
\draw [scale=0.7, decoration={markings,mark=at position 0.65 with {\arrow{>}}},postaction={decorate}] 
      (3,1)--(3,3.5); 
\draw [scale=0.7, decoration={markings,mark=at position 0.65 with {\arrow{>}}},postaction={decorate}] 
      (-3,1)--(-3,3.5); 

\draw[scale=0.7] (-3,-3.5)node[black,scale=0.7,below]{$1$}
                 (3,-3.5)node[black,scale=0.7,below]{$1$}
                 (0,-3.5)node[black,scale=0.7,below]{$N-1$}
                 (-3,3.5)node[black,scale=0.7,above]{$1$}
                 (3,3.5)node[black,scale=0.7,above]{$1$}
                 (0,3.5)node[black,scale=0.7,above]{$N-1$};      
\end{tric}
}

\def\VertWone
{\begin{tric}
\draw [scale=0.7, decoration={markings,mark=at position 0.65 with {\arrow{>}}},postaction={decorate}] 
      (-2,-3.5)--(-2,3.5);
      
\draw [scale=0.7, decoration={markings,mark=at position 0.55 with {\arrow{>}}},postaction={decorate}]      
(0,-3.5)..controls(0,-2)and(0,-1)..(1,-1);
\draw [scale=0.7, decoration={markings,mark=at position 0.55 with {\arrow{>}}},postaction={decorate}]      
(2,-3.5)..controls(2,-2)and(2,-1)..(1,-1);

\draw [scale=0.7, decoration={markings,mark=at position 0.45 with {\arrow{<}}},postaction={decorate}]      
(0,3.5)..controls(0,2)and(0,1)..(1,1);
\draw [scale=0.7, decoration={markings,mark=at position 0.45 with {\arrow{<}}},postaction={decorate}]      
(2,3.5)..controls(2,2)and(2,1)..(1,1);

\draw [scale=0.7, decoration={markings,mark=at position 0.45 with {\arrow{>}}},postaction={decorate}]  (1,-1)--(1,1) node[black, midway, scale=0.7,right ]{$N$};

\draw[scale=0.7] (-2,-3.5)node[black,scale=0.7,below]{$1$}
                 (2,-3.5)node[black,scale=0.7,below]{$1$}
                 (0,-3.5)node[black,scale=0.7,below]{$N-1$}
                 (-2,3.5)node[black,scale=0.7,above]{$1$}
                 (2,3.5)node[black,scale=0.7,above]{$1$}
                 (0,3.5)node[black,scale=0.7,above]{$N-1$};
\end{tric}
}

\begin{proposition}\label{QVertexColoring}
View $\Wzero$, $\Wone$, and $\Wsix$ as webs in $\Hom_{\Web}(1^{+}(N-1)^{+}1^{+}, 1^{+}(N-1)^{+}1^{+})$ in the following way:  
\[\Wsix=\Wzero- [N-3] \Wone := \VertWzero- [N-3] \VertWone.   \]
Denote $\bigwedge_{n \ne i}e_n \in \wedge_q^{N-1} V$ by $\widehat{e_i}$. For different indices $i,j$ and $k$, we have
  \begin{align*}
  (e_i \otimes \widehat{e_i} \otimes e_i )^*\big(\Phi\left(\Wsix\right)(e_i \otimes \widehat{e_i} \otimes e_i )\big)&= [2],\\
 (e_i \otimes \widehat{e_i} \otimes e_j )^*\big(\Phi\left(\Wsix\right)(e_i \otimes \widehat{e_i} \otimes e_j )\big)&= q^{\frac{j-i}{|j-i|}},\\
  (e_i \otimes \widehat{e_k} \otimes e_k )^*\big(\Phi\left(\Wsix\right)(e_i \otimes \widehat{e_j} \otimes e_j )\big)&= 0,\\
  (e_i \otimes \widehat{e_j} \otimes e_k )^*\big(\Phi\left(\Wsix\right)(e_i \otimes \widehat{e_j} \otimes e_k )\big)&= q^{\frac{k-i}{|k-i|}},\\
(e_i \otimes \widehat{e_j} \otimes e_k )^*\big(\Phi\left(\Wsix\right)(e_k \otimes \widehat{e_j} \otimes e_i )\big)&= -1, \\
\text{else} &= 0.  \\
\end{align*}
Fix arbitrary indices $i_1,i_2, \dots, i_6 $, and denote by $\kappa_n^t$ the integer coefficient in front of each term $q^t$ in $(e_{i_1} \otimes \widehat{e_{i_2}} \otimes e_{i_3} )^*\big(\Phi\left(\mathcal{W}_n\right)(e_{i_4} \otimes \widehat{e_{i_5}} \otimes e_{i_6} )\big)$. In other words, let 
\[(e_{i_1} \otimes \widehat{e_{i_2}} \otimes e_{i_3} )^*\big(\Phi\left(\mathcal{W}_n\right)(e_{i_4} \otimes \widehat{e_{i_5}} \otimes e_{i_6} )\big)= \sum_t \kappa_n^t q^t.\]
For any given $n$, any non-zero $\kappa_n^t$ has the same sign. 
Furthermore, $\kappa_0^t$ has the same sign as $\kappa_6^t$ when $\kappa_6^t \ne 0$, and $|\kappa_0^t|\ge |\kappa_6^t|  $. 
\end{proposition}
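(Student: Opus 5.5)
We will compute matrix coefficients of $\Phi(\Wzero)$ and $\Phi(\Wone)$ on the standard basis $\{e_a\otimes\widehat{e_b}\otimes e_c\}$ using the explicit $U_q(\GL_N)$ intertwiners for the merge and split vertices from \cite{Cautis2014}. On wedge basis vectors, each vertex acts by a signed power of $q$ determined by an inversion count. Then we will take the difference $\Wsix=\Wzero-[N-3]\Wone$.

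\emph{Step 1: $\Wone$.} The web $\VertWone$ factors as an identity strand tensored with a cap $1\otimes(N-1)\to N$ followed by a cup $N\to(N-1)\otimes 1$. The cap sends $\widehat{e_b}\otimes e_c$ to $\pm q^{\ast}\,e_1\wedge\cdots\wedge e_N$ when $b=c$ and kills it otherwise. The cup sends the top wedge to $\sum_d (\pm q^{\ast})\,\widehat{e_d}\otimes e_d$. Hence $\Phi(\Wone)$ is nonzero only from $e_a\otimes\widehat{e_b}\otimes e_b$ to $e_a\otimes\widehat{e_d}\otimes e_d$. Its coefficient is a monomial $q^{s}$; the signs cancel between cap and cup because they are adjoint.

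\emph{Step 2: $\Wzero$.} We factor $\VertWzero$ into three layers: splitting $N-1\to 1\otimes(N-2)$ and merging $1\otimes 1\to 2$ and $1\otimes(N-2)\to N-1$; the middle rungs; and the reverse merges. Then we sum over intermediate states. Weight conservation restricts the nonzero entries to the cases listed in the statement:
\begin{itemize}
\item $(a,b,c)=(i,i,i)$;
\item $(i,i,j)$ and its mirror $(j,i,i)$;
\item the ``diagonal'' case $(i,j,k)\mapsto(i,j,k)$;
\item the ``crossed'' case $(k,j,i)\mapsto(i,j,k)$;
\item the transition $(i,j,j)\mapsto(i,k,k)$, which coincides with the support of $\Wone$.
\end{itemize}

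In each case the intermediate sum runs over the choice of pigment on the internal $1$-edge. This gives quantum-integer expressions:
\begin{itemize}
\item case $(i,i,i)$: $[N-1]$-type terms;
\item case $(i,j,j)\mapsto(i,k,k)$: $[N-3]$-type terms plus correction monomials;
\item the other cases: single monomials.
\end{itemize}
This is the $q$-analogue of Proposition~\ref{VertexColoring}, which serves as the $q=1$ sanity check: the values $2,1,0,1,-1$ must be recovered. Subtracting $[N-3]$ times Step~1 then gives the displayed values $[2]$, $q^{\pm1}$, $0$, $q^{\pm1}$, $-1$. The $[N-3]$ parts cancel exactly, and the sign of the exponent is fixed by which of $i,j$ (respectively $i,k$) is larger, through the inversion counts.

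\emph{Step 3: sign and dominance claims.}
\begin{itemize}
\item For $\Wone$, every entry is a single monomial, so the sign claim is immediate.
\item For $\Wzero$, every entry is either a monomial or a positive combination such as $[N-1]$ or $[N-2]+q^{\pm1}$. Each such combination is a sum of monomials with the same sign, which gives the claim for $n=0$.
\item For $\Wsix$, the entries are $[2]$, $q^{\pm1}$, $-1$, or $0$. Each has coefficients of one sign, which gives the claim for $n=6$.
\item For the comparison, we compare entrywise: in every case the $\Wsix$ entry is a sub-sum of the $\Wzero$ entry. For example, $[N-1]$ contains the monomials $q^{\pm1}$ of $[2]$ when $N\ge 3$.
\end{itemize}
This yields same sign and $|\kappa_0^t|\ge|\kappa_6^t|$.

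The main obstacle is Step~2: tracking the $q$-powers and signs through the six vertices, in particular verifying that the ``off-diagonal'' entry $(i,j,j)\mapsto(i,k,k)$ of $\Wzero$ equals exactly $[N-3]$ times the $\Wone$ entry, so that it vanishes in $\Wsix$. I would first fix the normalization of the merge and split maps so that the bigon relations of \eqref{NeededSkein} hold. I would then check this entry for small $N$, and afterwards argue in general by induction on the number of spectator indices.
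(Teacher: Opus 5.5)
Your route is the ``direct vector calculation'' the paper cites: explicit merge/split intertwiners with inversion-count powers of $-q$, a sum over the pigment $m$ on the inner $1$-edge, then subtraction of $[N-3]\Phi(\Wone)$. The five displayed values do come out of it.

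The genuine gap is the support claim in Step~2. Weight conservation does not cut the nonzero entries of $\Phi(\Wzero)$ down to the displayed ones; for instance, $e_i\otimes\widehat{e_i}\otimes e_i$ and $e_i\otimes\widehat{e_j}\otimes e_j$ have the same weight. Step~3 only inspects the displayed entries. Tracing states, $e_a\otimes\widehat{e_b}\otimes e_c$ goes as follows.
\begin{itemize}
\item If $c=b$: to every $e_a\otimes\widehat{e_d}\otimes e_d$, with one path for each $m\notin\{a,b,d\}$, and to every $e_m\otimes\widehat{e_m}\otimes e_a$ with $m\notin\{a,b\}$.
\item If $c\notin\{a,b\}$: to $e_a\otimes\widehat{e_b}\otimes e_c$, and in addition to $e_c\otimes\widehat{e_b}\otimes e_a$ when $a\neq b$, or to every $e_c\otimes\widehat{e_d}\otimes e_d$ with $d\neq a$ when $a=b$.
\item If $c=a\neq b$: to $0$.
\end{itemize}
In particular $e_i\otimes\widehat{e_i}\otimes e_i\mapsto e_i\otimes\widehat{e_j}\otimes e_j$ has $N-2$ paths in $\Wzero$ against one in $\Wone$. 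So at $q=1$ its $\Wsix$-entry is $\pm1$; this is the pattern $(e_i\otimes e_i^*)^{\otimes 2}\otimes e_j\otimes e_j^*$ of Proposition~\ref{VertexColoring}. Likewise the diagonal $\Wsix$-entry at $e_j\otimes\widehat{e_i}\otimes e_i$ is $q^{\pm1}$. Hence ``else $=0$'' is false read entrywise; it holds only for colouring patterns up to the symmetries of the hexagon. More importantly, the sign and dominance claims range over all $i_1,\dots,i_6$, so they are unverified on these extra entries, several of which lie in the support of $\Wone$ without cancelling.

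To close the gap, use the enumeration above. Every entry of $\Phi(\Wzero)$ off the support of $\Wone$ comes from a single path, so it is $\pm$ a monomial and equals the $\Wsix$-entry. On the support, i.e.\ for $e_a\otimes\widehat{e_b}\otimes e_b\mapsto e_a\otimes\widehat{e_d}\otimes e_d$, you need exact control.

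Your reason ``signs cancel since cap and cup are adjoint'' only covers $b=d$. Fix the conventions: merges $e_S\otimes e_T\mapsto(-q)^{\ell(S,T)}e_{S\cup T}$ with $\ell(S,T)=\#\{(s,t)\in S\times T: s>t\}$, and splits carrying the extra factor $q^{-|S||T|}$ so the bigon relations hold (the statement uses the mirror convention $q\leftrightarrow q^{-1}$). With $\widehat{e_b}$ in increasing order, the $\Wone$-entry is then $(-1)^{b+d}q^{N+1-b-d}$.

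What makes the argument work is that every $\Wzero$-path for this entry carries the same sign $(-1)^{b+d}$. Its $q$-exponent is $1-b-d+2g(m)$, where $g(m)=m-\mathbf{1}_{m>b}-\mathbf{1}_{m>d}+\mathbf{1}_{m<a}$.
\begin{itemize}
\item If $a,b,d$ are distinct, $g$ is a bijection from $\{1,\dots,N\}\setminus\{a,b,d\}$ onto $\{2,\dots,N-2\}$. This gives exactly $[N-3]$ times the $\Wone$-entry, with no small-$N$ check or induction needed.
\item If exactly two of $a,b,d$ coincide, the values are $\{2,\dots,N-2\}$ plus one more, leaving a single same-signed monomial.
\item If $a=b=d$, the two extra values $a-1$ and $a$ leave $[2]$. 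This entry is $q^{N+1-2a}[N-3]+[2]$, not a shifted $[N-1]$ in general.
\end{itemize}
Thus, entrywise, $\Phi(\Wzero)=[N-3]\Phi(\Wone)+\Phi(\Wsix)$, where each of the three entries has one-signed coefficients and all three share that sign. This is precisely the sign and dominance statement.
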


\begin{proof}
This proposition follows directly from vector calculations. The formulas for vector calculations can be found in~\cite[Section A.2]{RobertWagner2020}, for example.   
\end{proof}

\begin{corollary}
    Let $\Gamma$ be a closed web built from $\Wsix$ and the usual trivalent vertices from Definition \ref{TypeAWebs}, then $\Phi(\Gamma)\in \Z_+[q,q^{-1}]$.  
\end{corollary}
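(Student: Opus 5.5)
The plan is a state-sum argument. I would evaluate $\Phi(\Gamma)$ by cutting $\Gamma$ into elementary pieces and expanding in the standard tensor bases of the fundamental representations, using the positivity recorded in Proposition~\ref{QVertexColoring}.

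First, I would isotope $\Gamma$ so that, with respect to a height function, it is a composition of elementary morphisms: cups, caps, trivalent vertices $k\otimes l\to k+l$ and $k+l\to k\otimes l$, and copies of $\Wsix$. Each copy of $\Wsix$ is positioned as the morphism $1^{+}(N-1)^{+}1^{+}\to 1^{+}(N-1)^{+}1^{+}$ of Proposition~\ref{QVertexColoring}. I would reach this form by bending the three $1^-$ legs using the tags/trivalent vertices that identify $1^-$ with $(N-1)^+$. Inserting the identity $\sum_v v\otimes v^*$ between consecutive slices writes $\Phi(\Gamma)$ as a sum over states, meaning assignments of standard basis vectors (wedge monomials $e_S$) to all cut edges. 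The contribution of each state is a product of matrix coefficients of the elementary pieces.

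Second, I would establish positivity of each factor. For the usual trivalent vertices, cups and caps, the matrix coefficients in the basis $e_S$ are, up to the quantum-exterior-algebra conventions of \cite[Section A.2]{RobertWagner2020}, signed monomials $\pm q^{t}$. So every state is a product of signed Laurent monomials. For the $\Wsix$ pieces, Proposition~\ref{QVertexColoring} gives matrix coefficients in the same basis, and all nonzero $q$-coefficients of a given entry have a common sign. Moreover, the sign of each coefficient of $\Wsix$ agrees with that of $\Wzero$, and $|\kappa_0^t|\ge|\kappa_6^t|$.

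The key step is to compare with the web $\Gamma_0$ obtained by replacing every $\Wsix$ in $\Gamma$ by the benzene web $\Wzero$. $\Gamma_0$ is an ordinary closed $\GL_N$ web, so the MOY state sum for it is known to be a sum of states each contributing an element of $\Z_{\ge0}[q,q^{-1}]$, after the usual grouping of signs along each state. More precisely, the signs in the Robert--Wagner vector formulas cancel globally state by state, which is the content of positivity of the MOY evaluation. Since each matrix coefficient of $\Wsix$ has, coefficient by coefficient, the same sign as the corresponding coefficient of $\Wzero$, the sign bookkeeping for a state of $\Gamma$ coincides with that for the same state of $\Gamma_0$. Each state of $\Gamma$ therefore contributes a Laurent polynomial whose coefficients all have the sign of a positive MOY state, which gives $\Phi(\Gamma)\in\Z_+[q,q^{-1}]$. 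The inequality $|\kappa_0^t|\ge|\kappa_6^t|$ is not needed for positivity; it shows that $\Phi(\Gamma)$ is coefficientwise bounded by $\Phi(\Gamma_0)$.

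The main obstacle is the sign bookkeeping. The statement that the MOY state sum is positive state by state in this particular basis is not literally local, because individual matrix entries like the $-1$ in Proposition~\ref{QVertexColoring} are negative. I would handle this by pairing each entry with the sign it contributes in the corresponding $\Wzero$ state, and arguing that the global sign of a state depends only on the boundary labels of each hexagon piece and not on its interior. That argument lets me swap $\Wzero$ for $\Wsix$ one piece at a time without changing signs.
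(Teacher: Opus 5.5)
Your proposal follows the paper's proof. You replace each $\Wsix$ by $\Wzero$ to get $\Gamma_0$, use positivity of $\Gamma_0$, and transfer signs entry by entry via Proposition~\ref{QVertexColoring}; your state-sum bookkeeping makes explicit what the paper leaves implicit, namely that positivity of $\Gamma_0$ is needed for each labeling of the edges outside the hexagon pieces, not just for the total. The input you flag as the main obstacle closes quickly: each flow $\sigma$ contributes $\epsilon(\sigma)q^{t(\sigma)}$ with $\epsilon(\sigma)=\pm 1$ independent of $q$, and at $q=1$ the MOY formula says $\Phi(\Gamma_0)$ counts flows, so $\sum_\sigma \epsilon(\sigma)=\#\{\sigma\}$ forces $\epsilon(\sigma)=+1$ for every $\sigma$.
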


\begin{proof}
We replace each $\Wsix$ in $\Gamma$ by $\Wzero$ and denote the resulting closed web by $\Gamma_0$. First we know that $\Phi(\Gamma_0)\in \Z_+[q,q^{-1}]$ due to positivity of the planar web evaluation in type $A$.
By Proposition \ref{QVertexColoring}, any non-zero integer coefficient in $\Phi(\Gamma)$ has the same sign as the corresponding coefficient in $\Phi(\Gamma_0)$, so $\Phi(\Gamma) \in \Z_+[q,q^{-1}]$.
\end{proof}

\begin{notation}\label{flapcolor on web}
    We use \flapcolor[i_1][i_2][i_3][i_4][i_5][i_6] to denote the coloring $e_{i_1}^* \otimes e_{i_2} \otimes e_{i_3}^* \otimes e_{i_4} \otimes e_{i_5}^* \otimes e_{i_6}$ of the web. 
\end{notation}

\def\HexaPairing
{\begin{tric}
\begin{scope}[yscale=-1]
\draw [scale=0.7, decoration={markings,mark=at position 0.5 with {\arrow{<}}},postaction={decorate}] 
      (0,0)..controls(1.7,-0.6)and(2,-1.4)..(2,-2)..controls(2,-2.6)and(1.7,-3.4)..(0,-4);
\draw [scale=0.7, decoration={markings,mark=at position 0.5 with {\arrow{<}}},postaction={decorate}]      
     (0,-4)..controls(0.8,-3.4)and(1.2,-2.6).. (1.2,-2)..controls(1.2,-1.4)and(0.8,-0.6)..(0,0);
\draw [scale=0.7, decoration={markings,mark=at position 0.5 with {\arrow{<}}},postaction={decorate}]       
      (0,0)..controls(-0.8,-0.6)and(-1.2,-1.4)..(-1.2,-2)..controls(-1.2,-2.6)and(-0.8,-3.4)..(0,-4);
\draw [scale=0.7, decoration={markings,mark=at position 0.5 with {\arrow{<}}},postaction={decorate}]       
     (0,-4)..controls(-1.7,-3.4)and(-2,-2.6)..(-2,-2)..controls(-2,-1.4)and(-1.7,-0.6)..(0,0);
\draw [scale=0.7, decoration={markings,mark=at position 0.5 with {\arrow{<}}},postaction={decorate}]       
      (0,0)..controls(0.25,-0.6)and(0.4,-1.4)..(0.4,-2)..controls(0.4,-2.6)and(0.25,-3.4)..(0,-4);
\draw [scale=0.7, decoration={markings,mark=at position 0.5 with {\arrow{<}}},postaction={decorate}]      
     (0,-4)..controls(-0.25,-3.4)and(-0.4,-2.6)..(-0.4,-2) ..controls(-0.4,-1.4)and(-0.25,-0.6)..(0,0);
\end{scope}    
\end{tric}
}

\def\BenzeneBound
{\begin{tricpobs}
\draw [->](-1,1.7)--(0.1,1.7); 
\draw (0,1.7)--(1,1.7);
\draw [->](2,0)--(1.4,-1.02); 
\draw (1.5,-0.85)--(1,-1.7);
\draw [->](-1,-1.7)--(-1.6,-0.68); 
\draw (-1.5,-0.85)--(-2,0); 
\draw[double,decoration={markings,mark=at position 0.65 with {\arrow{>}}},postaction={decorate}]  (-2,0)--(-1,1.7);
\draw[double,decoration={markings,mark=at position 0.75 with {\arrow{>}}},postaction={decorate}] (1,-1.7)--(-1,-1.7);
\draw[double, decoration={markings,mark=at position 0.65 with {\arrow{>}}},postaction={decorate}] (1,1.7)--(2,0);

\draw [->](2,0)--(3.1,0); 
\draw (3,0)--(4,0);
\draw [->](-4,0)--(-2.9,0); 
\draw (-3,0)--(-2,0);
\draw [->](-1,1.7)--(-1.6,2.72);
\draw (-1.5,2.55)--(-2,3.4);
\draw [->](2,-3.4)--(1.4,-2.38);
\draw (1.5,-2.55)--(1,-1.7);
\draw [->](-1,-1.7)--(-1.6,-2.72);
\draw (-1.5,-2.55)--(-2,-3.4);
\draw [->](2,3.4)--(1.4,2.38);
\draw (1.5,2.55)--(1,1.7);

\draw[double,decoration={markings,mark=at position 0.65 with {\arrow{>}}},postaction={decorate}] (-2,3.4)--(-4,0);
\draw[double,decoration={markings,mark=at position 0.65 with {\arrow{>}}},postaction={decorate}] (4,0)--(2,3.4);
\draw[double,decoration={markings,mark=at position 0.65 with {\arrow{>}}},postaction={decorate}] (-2,-3.4)--(2,-3.4);

\draw[decoration={markings,mark=at position 0.55 with {\arrow{>}}},postaction={decorate}] (2,3.4)--(-2,3.4);
\draw[decoration={markings,mark=at position 0.55 with {\arrow{>}}},postaction={decorate}] (-4,0)--(-2,-3.4);
\draw[decoration={markings,mark=at position 0.55 with {\arrow{>}}},postaction={decorate}] (2,-3.4)--(4,0);
\end{tricpobs}
}

\def\BenzeneMixBound
{\begin{tricpobs}
\draw [->](-1,1.7)--(0.1,1.7); 
\draw (0,1.7)--(1,1.7);
\draw [->](2,0)--(1.4,-1.02); 
\draw (1.5,-0.85)--(1,-1.7);
\draw [->](-1,-1.7)--(-1.6,-0.68); 
\draw (-1.5,-0.85)--(-2,0); 
\draw[double,decoration={markings,mark=at position 0.65 with {\arrow{>}}},postaction={decorate}]  (-2,0)--(-1,1.7);
\draw[double,decoration={markings,mark=at position 0.75 with {\arrow{>}}},postaction={decorate}] (1,-1.7)--(-1,-1.7);
\draw[double, decoration={markings,mark=at position 0.65 with {\arrow{>}}},postaction={decorate}] (1,1.7)--(2,0);

\draw [->](2,0)--(3.1,0); 
\draw (3,0)--(4,0);
\draw [->](-4,0)--(-2.9,0); 
\draw (-3,0)--(-2,0);
\draw [->](-1,1.7)--(-1.6,2.72);
\draw (-1.5,2.55)--(-2,3.4);
\draw [->](2,-3.4)--(1.4,-2.38);
\draw (1.5,-2.55)--(1,-1.7);
\draw [->](-1,-1.7)--(-1.6,-2.72);
\draw (-1.5,-2.55)--(-2,-3.4);
\draw [->](2,3.4)--(1.4,2.38);
\draw (1.5,2.55)--(1,1.7);

\draw[decoration={markings,mark=at position 0.65 with {\arrow{<}}},postaction={decorate}] (-2,3.4)--(-4,0);
\draw[decoration={markings,mark=at position 0.65 with {\arrow{<}}},postaction={decorate}] (4,0)--(2,3.4);
\draw[decoration={markings,mark=at position 0.65 with {\arrow{<}}},postaction={decorate}] (-2,-3.4)--(2,-3.4);

\draw[double,decoration={markings,mark=at position 0.55 with {\arrow{<}}},postaction={decorate}] (2,3.4)--(-2,3.4);
\draw[double,decoration={markings,mark=at position 0.55 with {\arrow{<}}},postaction={decorate}] (-4,0)--(-2,-3.4);
\draw[double,decoration={markings,mark=at position 0.55 with {\arrow{<}}},postaction={decorate}] (2,-3.4)--(4,0);
\end{tricpobs}
}

\begin{proposition}\label{prop:closed sixvertex evaluation}
The following webs from $\Web$ evaluate as stated:
\begin{align*}
 \BenzeneBound& = [N-2]^2 [N-1]^2[N]+ [2]^2[N-1]^2[N] + [N-2][2][N-1][N]  \\
 &= \begin{dcases}
       2 q^{-9} + \sum_{k>-9} a_k q^k, & N=4,  \\
      q^{11-5N}+ \sum_{k>11-5N} b_k q^k  , & N \ge 5. \\
    \end{dcases}       \\
 \BenzeneMixBound & = [2]^2[N-1]^2[N]+ [N-2][2][N-1][N] + [N-1]^2[N]+[N-3][2]^2[N-1][N] \\  
 & =  q^{3-3N} + \sum_{k>3-3N} c_k q^k. \\
 \HexaPairing& = [N]^3 + 3 [N][N-1]^2 + [N][3] =  q^{3-3N} + \sum_{k>3-3N} c_k q^k.
\end{align*}
\end{proposition}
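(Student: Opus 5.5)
The three closed webs in Proposition~\ref{prop:closed sixvertex evaluation} are, respectively, $\overline{\Wzero}$ glued to the ``outer'' hexagon closure, the mixed closure, and the pairing $\overline{\Wsix}\Wsix$ drawn as two $6$-valent vertices joined by six strands. I would compute all three through the equivalence $\Phi$ of Proposition~\ref{WebRep}, as state sums over vector colorings. The leading terms would then come from the lowest-degree part of each expression in quantum integers.

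\textbf{First two webs.} I would cut each closed web along the six strands joining the inner benzene to the outer frame. This exhibits it as a pairing $\overline{X}\,\Wzero$ (up to rotation and reflection), where $X$ is the outer hexagon viewed as an element of $\Hom_{\Web}(\emptylist,\HexaBound)$.
\begin{itemize}
\item In the first web, $X$ is a copy of $\Wzero$ with single and double edges placed so that the outer frame is again a benzene.
\item In the mixed web, the single and double edges of the frame are swapped.
\end{itemize}
I would then sum over colorings $e_{i_1},\dots,e_{i_6}$ of the six cut strands and use the table of Proposition~\ref{VertexColoring}, together with its $q$-version from Proposition~\ref{QVertexColoring}. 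The boundary colorings fall into the orbit types listed there:
\begin{itemize}
\item all indices equal;
\item two equal and one different;
\item three distinct diagonal indices;
\item the two ``twisted'' patterns $e_i\otimes e_j^*\otimes e_k\otimes e_k^*\otimes e_j\otimes e_i^*$ and $e_i\otimes e_j^*\otimes e_k\otimes e_i^*\otimes e_j\otimes e_k^*$.
\end{itemize}
Multiplying matching values and summing with multiplicities $N$, $N(N-1)$, $N(N-1)(N-2)$ and so on gives the stated expressions at $q=1$. To obtain the $q$-versions, I would instead reduce each web by the skein relations \eqref{DefSkeinGLN} and \eqref{NeededSkein}. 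The square relation (the last relation in \eqref{DefSkeinGLN}) applied to a square face splits each web into webs with fewer faces, each producing a product like $[N-2]^2[N-1]^2[N]$. The $q=1$ count then serves as a consistency check on the coefficients.

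\textbf{The pairing $\overline{\Wsix}\Wsix$.} I would expand bilinearly using $\Wsix=\Wzero-[N-3]\Wone$. This leaves
\[
\langle\overline{\Wzero}\Wzero\rangle,\qquad \langle\overline{\Wzero}\Wone\rangle=\langle\overline{\Wone}\Wzero\rangle,\qquad \langle\overline{\Wone}\Wone\rangle=[N]^3.
\]
Each of these is reduced by digon and circle removal: capping the benzene with the matching $\Wone$ is exactly the computation already done in the proof of Proposition~\ref{hexagon rotation web}. Collecting terms should produce $[N]^3+3[N][N-1]^2+[N][3]$.

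\textbf{Leading coefficients.} Writing $[m]=q^{1-m}+\dots$, the lowest power of a product $\prod[m_r]$ is $q^{\sum(1-m_r)}$ with coefficient $1$. For each expression I would compare the minimal exponents of the summands:
\begin{itemize}
\item For the first web, the summands have minimal exponents $11-5N$, $5-3N$ and $6-4N$. The first is strictly lowest for $N\ge 5$; for $N=4$, the first and second both equal $-9$, giving the coefficient $2$.
\item For the mixed web and the pairing, $[N-1]^2[N]$ and $[N]^3$ respectively attain $3-3N$. The remaining summands have exponent at least $3-3N$, and I would check that the sum of coefficients at $q^{3-3N}$ is $1$.
\end{itemize}

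\textbf{Main obstacle.} I expect the hardest step to be the bookkeeping at this last stage. For small $N$, some summands in the mixed and pairing evaluations also reach exponent $3-3N$; for example, $[N-3][2]^2[N-1][N]$ does when $N=4$. These contributions have to be tracked with their signs from the expansion of $\Wsix$ so that the claimed leading coefficient $1$ comes out. If that fails, I would recheck the decomposition of the mixed web by expanding at $q=1$ with the colorings above and matching the coefficients.
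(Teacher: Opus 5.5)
Your overall route matches the paper's, whose proof is just ``apply the skein relations'': skein-reduce the two benzene closures, expand $\Wsix=\Wzero-[N-3]\Wone$ bilinearly for the third web, then read off lowest terms. The three closed formulas in the statement are correct. However, several steps as you wrote them are wrong and need repair.

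\emph{Lowest exponents.} Since $[m]=q^{1-m}+\cdots$ with leading coefficient $1$, the three summands of the first web start at $q^{11-5N}$, $q^{3-3N}$ and $q^{5-3N}$. You state $q^{11-5N}$, $q^{5-3N}$, $q^{6-4N}$, and with those numbers the conclusion fails:
\begin{itemize}
\item at $N=4$ the third summand would start at $q^{-10}$, and your own claimed tie at $-9$ contradicts your $5-3N$;
\item at $N=5$ it would tie with the first at $q^{-14}$.
\end{itemize}
With the correct exponents, $11-5N\le 3-3N$, with equality exactly when $N=4$. This gives $2q^{-9}$ for $N=4$ and $q^{11-5N}$ for $N\ge 5$.

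For the mixed web, the only summand reaching $q^{3-3N}$ is $[2]^2[N-1]^2[N]$. The other three, including $[N-1]^2[N]$ and $[N-3][2]^2[N-1][N]$, all start at $q^{5-3N}$ for every $N\ge4$; at $N=4$, $[1][2]^2[3][4]$ starts at $q^{-7}$. For the pairing, the summands start at $q^{3-3N}$, $q^{5-3N}$ and $q^{-1-N}$. Every summand of the closed formulas has coefficient $+1$, so no signs or cancellations occur. The ``main obstacle'' you anticipate therefore does not exist.

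\emph{The mixed web.} This web is $\overline{\Wzeroa}\Wzero$. The last relation of \eqref{DefSkeinGLN} with $k=l=1$ applies to an oriented $4$-cycle whose opposite edges carry label $2$. These are exactly the three double--double faces of $\overline{\Wzero}\Wzero$. There the relation gives square $=$ turned arcs $+\,[N-2]\cdot$ straight strands. Two applications give precisely the three stated summands; the turned term has two $[2]$-bigons and two $[N-1]$-bigons.

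In $\overline{\Wzeroa}\Wzero$, by contrast, every square face has exactly one $2$-edge and contains a source and a sink. So that relation does not apply directly; the rung-swap relation does, but it introduces $3$-labelled edges. The efficient fix is Proposition~\ref{hexagon rotation web}:
\[
\overline{\Wzeroa}\Wzero=\overline{\Wzero}\Wzero-[N-3]\,\overline{\Wone}\Wzero+[N-3]\,\overline{\Wtwo}\Wzero .
\]
By bigon and circle removal, $\overline{\Wone}\Wzero=[N-1]^3[N]$ and $\overline{\Wtwo}\Wzero=[2]^3\frac{[N][N-1]}{[2]}=[2]^2[N-1][N]$. The identity $[N-2]^2-[N-3][N-1]=1$ then yields the stated four-term formula.

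\emph{The pairing.} $\overline{\Wzero}\Wzero$ contains no bigons, so it is not a digon computation; it is the first web. Collecting terms in $\overline{\Wzero}\Wzero-2[N-3][N-1]^3[N]+[N-3]^2[N]^3$ to reach $[N]^3+3[N][N-1]^2+[N][3]$ requires the identities $[m]^2-[m-1][m+1]=1$, $[2][m]=[m+1]+[m-1]$ and $[x+1][y-1]-[x][y]=[y-x-1]$.
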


\begin{proof}
Apply the skein relations in Definition \ref{TypeAWebs}.    
\end{proof}

\begin{proposition} \label{CartanMatrix}
Given webs $\Wone,\Wtwo,\dots, \Wsix \in \Hom_{\Web}(\emptylist, \HexaBound) $, the pairing matrix is $\left(\overline{\mathcal{W}_i}\mathcal{W}_j\right)_{i,j}=q^{3-3n} (I_6 + \left(p_{i,j}(q)\right)_{i,j})$, where $p_{i,j}(q)\in q\Z[q]$ are polynomials in $q$ without constant terms.  
\end{proposition}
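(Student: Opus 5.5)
The pairing matrix will be computed entry by entry. The diagonal and off-diagonal entries are analysed separately, and in each case only the lowest power of $q$ is tracked. Every closed web $\overline{\mathcal{W}_i}\mathcal{W}_j$ evaluates to a symmetric Laurent polynomial in $q$. So it suffices to show two things. First, each diagonal entry equals $q^{3-3N}$ plus higher-order terms. Second, each off-diagonal entry involves only powers $q^t$ with $t>3-3N$.

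\emph{Matchings among themselves.} For $i,j\in\{1,\dots,5\}$, the closed web $\overline{\mathcal{W}_i}\mathcal{W}_j$ is a union of $c_{ij}$ oriented circles, so its value is $[N]^{c_{ij}}$. We have $c_{ii}=3$ and $c_{ij}\le 2$ for $i\neq j$, since distinct perfect matchings compatible with the orientations glue to fewer loops. Because $[N]=q^{1-N}(1+q^2+\cdots)$, the diagonal entry is $q^{3-3N}(1+q\Z[q])$. The off-diagonal entries have lowest degree at least $2-2N>3-3N$, which holds for $N\ge 2$.

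\emph{Entries involving $\Wsix$.} The diagonal entry $\overline{\Wsix}\Wsix$ is exactly the closed web $\HexaPairing$. By Proposition \ref{prop:closed sixvertex evaluation}, it equals $q^{3-3N}+\sum_{k>3-3N}c_kq^k$. For $i\le 5$, I expand $\Wsix=\Wzero-[N-3]\Wone$ or use the rotated form $\Wzeroa-[N-3]\Wtwo$, whichever is convenient. Pairing with a matching $\mathcal{W}_i$ then closes off the hexagon. The bigon and square skein relations \eqref{NeededSkein} reduce the result to an explicit expression in quantum integers, as in the cap computations in the proof of Proposition \ref{hexagon rotation web}.

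For example, capping $\Wsix$ with $\overline{\Wone}$ gives $\overline{\Wone}\Wzero-[N-3][N]^3$. Here $\overline{\Wone}\Wzero$ is the closed web obtained by capping adjacent boundary points of the hexagon. It evaluates to something like $[N][N-1][2][N-2]$ plus lower-complexity terms. The leading terms $q^{3-3N}$ cancel, and what remains is of order $q^{4-3N}$ or higher.

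The sign identity of Proposition \ref{QVertexColoring} gives a uniform alternative. It says that $|\kappa_0^t|\ge|\kappa_6^t|$ and that signs agree. Hence every entry $\overline{\mathcal{W}_i}\Wsix$ is dominated coefficientwise by $\overline{\mathcal{W}_i}\Wzero$, and it is enough to show the $q^{3-3N}$ coefficient vanishes. For matchings $i\in\{3,4,5\}$, I expect the rotational symmetry of $\Wsix$ to reduce everything to one or two representative cases.

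\emph{Main obstacle.} The main obstacle is this off-diagonal cancellation. The lowest-degree coefficient of $\overline{\Wone}\Wzero$ and of $[N-3]\overline{\Wone}\Wone$ must agree exactly. The same must hold for the crossing-type matchings $\mathcal{W}_3,\mathcal{W}_4,\mathcal{W}_5$, where the capped hexagon becomes a more intricate closed web.

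I would first try the $q=1$ state-sum description of Proposition \ref{VertexColoring} to identify which colorings contribute. Then I would refine it with the $q$-weights from Proposition \ref{QVertexColoring}: the contributions of lowest degree come from "all-distinct" colorings, and $\Phi(\overline{\Wsix})$ sends those to $0$. This should make the vanishing of the $q^{3-3N}$ coefficient transparent. The pairing is symmetric, so the entries $\overline{\Wsix}\mathcal{W}_i$ follow from the entries $\overline{\mathcal{W}_i}\Wsix$.
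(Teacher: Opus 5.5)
You follow the paper's strategy: evaluate each entry with the skein relations and read off its lowest power of $q$. Your treatment of the matching--matching block and of $\overline{\Wsix}\Wsix$ is correct. The gap is in the entries $\overline{\mathcal{W}_i}\Wsix$ for $i\le 5$: you never compute them, and the reduction you propose for them is false.

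Three bigon removals, each contributing $[N-1]$, give exactly $\overline{\Wone}\Wzero=[N][N-1]^3$ (not ``$[N][N-1][2][N-2]$ plus lower terms''). Both this and $[N-3]\,\overline{\Wone}\Wone=[N-3][N]^3$ start at $q^{7-4N}$. That is strictly below $q^{3-3N}$ once $N\ge 5$; the two degrees coincide only for $N=4$. So it is not enough that the $q^{3-3N}$ terms cancel: every coefficient in degrees $7-4N,\dots,3-3N$ must cancel. For the same reason, your domination argument gives nothing here. Even granting that coefficientwise domination follows from Proposition~\ref{QVertexColoring} (which would need its own positivity argument), domination by $\overline{\Wone}\Wzero$ only shows that $\overline{\Wone}\Wsix$ has no terms below $q^{7-4N}$. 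The heuristic about all-distinct colorings does not address this.

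The missing step is the exact identity the paper's proof relies on. Using $[N-1]^2-[N][N-2]=1$ and $[N-1][N-2]-[N-3][N]=[2]$, one gets
\[
\overline{\Wone}\Wsix=[N]\bigl([N-1]^3-[N-3][N]^2\bigr)=[N]\bigl([N-1]+[2][N]\bigr),
\]
whose lowest term is $q^{1-2N}=q^{3-3N}\cdot q^{N-2}$. The same computation with $\Wsix=\Wzeroa-[N-3]\Wtwo$ gives $\overline{\Wtwo}\Wsix$.

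By contrast, the crossing-type matchings you flag as the main obstacle need no cancellation at all. For $\mathcal{W}_5$, the capped hexagon reduces by one $[2]$-bigon and one $[N-1]$-bigon, leaving a theta web of value $[N][N-1]$. Hence $\overline{\mathcal{W}_5}\Wzero=[2][N][N-1]^2$ begins at $q^{4-3N}$, and $[N-3]\,\overline{\mathcal{W}_5}\Wone=[N-3][N]^2$ begins at $q^{6-3N}$. The cases $\mathcal{W}_3$ and $\mathcal{W}_4$ follow by $2\pi/3$-rotation.

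This entry simplifies to $\overline{\mathcal{W}_5}\Wsix=[N]\bigl([2]+[N][N-1]\bigr)$. That is not the flower value $B(q)$ the paper enters in rows $3$--$5$ of the last column. Its lowest term $q^{4-3N}$ is still above $q^{3-3N}$, so the statement itself is unaffected.
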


\def\flower
{\begin{tric}
\begin{scope}[yscale=-1]
\draw[decoration={markings,mark=at position 0.4 with {\arrow{<}}},postaction={decorate}]  (0,0)..controls(1,-3)and(-1,-3)..(0,0);
\draw[decoration={markings,mark=at position 0.4 with {\arrow{<}}},postaction={decorate}]  (0,0)..controls(4.5,-3)and(1.5,-3)..(0,0);
\draw[decoration={markings,mark=at position 0.65 with {\arrow{>}}},postaction={decorate}]  (0,0)..controls(-4.5,-3)and(-1.5,-3)..(0,0);
\end{scope}
\end{tric}
}

\begin{proof}

Recall that the webs in the theorem are denoted by $\Wone,\Wtwo,\dots, \Wsix$ from left to right. By graphical calculations, 
   \begin{align*}
   & \overline{\Wsix} \Wsix   = \HexaPairing = [N]^3 + 3 [N][N-1]^2 + [N][3] =  q^{3-3N} + \sum_{k>3-3N} c_k q^k :=A(q), \\
   & \overline{\Wone} \Wsix = \flower=\bigl([N-1] + [2][N]\bigr)[N] := B(q).
   \end{align*}

 Similarly, we obtain the following matrix by graphical calculations.  
\begin{align*}
&\left(\overline{\mathcal{W}_i}\mathcal{W}_j\right)_{i,j} = \begin{pmatrix}
[N]^3 & [N] & [N]^2 & [N]^2 & [N]^2 & B(q) \\
[N] & [N]^3 & [N]^2 & [N]^2 & [N]^2 & B(q) \\
[N]^2 & [N]^2 & [N]^3 & [N] & [N] & B(q) \\
[N]^2 & [N]^2 & [N] & [N]^3 & [N] & B(q) \\
[N]^2 & [N]^2 & [N] & [N] & [N]^3 & B(q) \\
B(q) & B(q) & B(q) & B(q) & B(q) & A(q)
\end{pmatrix}\\
&=q^{3-3n} \begin{pmatrix}
1 + p_{1,1}(q) & p_{1,2}(q) & p_{1,3}(q) & p_{1,4}(q) & p_{1,5}(q) & p_{1,6}(q) \\
p_{2,1}(q) & 1+ p_{2,2}(q) & p_{2,3}(q) & p_{2,4}(q) & p_{2,5}(q) & p_{2,6}(q) \\
p_{3,1}(q) & p_{3,2}(q) & 1+p_{3,3}(q) & p_{3,4}(q) & p_{3,5}(q) & p_{3,6}(q)  \\
p_{4,1}(q) & p_{4,2}(q) & p_{4,3}(q) & 1+p_{4,4}(q) & p_{4,5}(q) & p_{4,6}(q) \\
p_{5,1}(q) & p_{5,2}(q) & p_{5,3}(q) & p_{5,4}(q) & 1+p_{5,5}(q) & p_{5,6}(q) \\
p_{6,1}(q) & p_{6,2}(q) & p_{6,3}(q) & p_{6,4}(q) & p_{6,5}(q) & 1 + p_{6,6}(q) 
\end{pmatrix} \\
&=q^{3-3n} (I_6 + \left(p_{i,j}(q)\right)_{i,j}),\quad  p_{i,j}(q)\in q\Z[q]. 
\end{align*}  
\end{proof}

After categorification, this matrix, with the scalar $q^{3-3n}$ removed, is the graded Cartan matrix of the $\operatorname{Kar}(\Foam^\varepsilon)$ for $\HexaBound = 1^+1^-1^+1^-1^+1^-$. That is, the matrix describes graded ranks of homomorphisms between the six non-isomorphic indecomposable objects of $\operatorname{Kar}(\Foam^\varepsilon)$. Any indecomposable object in that category is isomorphic to the grading shift of one of the six objects $\Wone,\dots, \Wsix$. Entries of the matrix describe graded ranks of $\Hom$ spaces between these objects, as graded modules over the ground ring.

\section{Categorifying the \texorpdfstring{$6$}{6}-valent vertex with \texorpdfstring{$\GL_N$}{GLN} foams}

We now categorify the $6$-vertex introduced in the previous section and the relations involving it. The categorical interpretation leads to decompositions of webs into indecomposable summands. 
We then present the categorification for the case of $\GL_4$ and $\GL_5$ foams.

\def\SingleCy
{\begin{tricF}[scale=.5, yscale=.5]
    \def \r{2}; 
    \def \h{14}; 
    \draw[dotted,darkblue] (\r, -1*\h) arc (0:180:\r cm);
    \draw [darkblue,decoration={markings,mark=at position 0.5 with {\arrow{>}}},postaction={decorate}](-\r, -\h) arc (-180:0:\r cm);
    \draw [darkgreen] (\r,0) -- (\r,-\h);
    \draw [darkgreen] (-\r,0) -- (-\r,-\h);
    \draw[darkblue,decoration={markings,mark=at position 0.75 with {\arrow{>}}},postaction={decorate}] (0,0) circle (\r cm);
\end{tricF}}

\def\SingleCupCapAb{
\begin{tricF}[scale=.5, yscale=.5]
    \def \r{2}; 
    \def \h{14}; 
    \draw[darkgreen] (-\r,0) .. controls (-1,-.5*\h) and (1,-.5*\h) .. (\r,0);
    \draw[darkblue,decoration={markings,mark=at position 0.75 with {\arrow{>}}},postaction={decorate}] (0,0) circle (\r cm);
    \draw[dotted,darkblue] (\r, -1*\h) arc (0:180:\r cm);
    \draw[darkblue,decoration={markings,mark=at position 0.5 with {\arrow{>}}},postaction={decorate}] (-\r, -\h) arc (-180:0:\r cm);
    \draw[darkgreen] (-\r,-\h) .. controls (-1,-.5*\h) and (1,-.5*\h) .. (\r,-\h);
    \node at (0, -.25*\h) {$\bullet$};
    \node at (0.5, -.25*\h)[scale=0.7] {$x^i$};
    \node at (0, -.75*\h) {$\bullet$};
    \node at (0.5, -.75*\h)[scale=0.7] {$x^j$};
\end{tricF}
}

\def\matchingbubble{
\begin{tikzpicture}[yscale=.5,
    decoration={
    markings,
    mark=at position 0.7 with {\arrow{>}}},
    baseline=2cm
    ]
\def \h{5};
\def \t{45}; 
\def \r{1.2}; 
\def \R{2}; 

\def \T{\t-30};
\begin{scope}[yscale=-1]
    \foreach \k in {0,2,4}{
        \directedsingleline{ (\T+\k * 60:\R cm) 
        .. controls (0,0) ..  (\T + \k * 60 + 60:\R cm)};
    }
    \draw[darkgreen!50] (.15,-.585*\h) to[out=\T, in=-90, looseness=.4] (\T+30:.44 cm);
    \draw[darkgreen!50] (-.25,-.605*\h) to[out=\T+120, in=-90, looseness=.5] (\T+150:.44 cm);
    \draw[darkgreen!50] (0.07,-.675*\h) to[out=\T-75, in=-90, looseness=.5] (\T+270:.44 cm);
    \draw[darkgreen!50, dashed] 
            (.15,-.585*\h)
            -- (-.25,-.605*\h)
            -- (0.07,-.675*\h)
            -- (.15,-.585*\h)
            ;

    \foreach \k in {0,1,2,3,4,5}{
        \draw[darkgreen] (\T + \k * 60:2cm) -- +(0,-\h);
    }
    \begin{scope}[yshift=-\h cm]
        \foreach \k in {0,2,4}{
            \directeddoubleline{(\T+\k * 60:\r cm) --  (\T + \k * 60 + 60:\r cm)};
            \directedsingleline{ (\T+\k*60 - 60:\r cm) -- (\T +\k*60:\r cm)};
        }
        \foreach \k in {0,2,4}{
            \directedsingleline{(\T+\k*60:\R cm) -- (\T+\k*60:\r cm)};
        }
        \foreach \k in {1,3,5}{
            \directedsingleline{ (\T+\k*60:\r cm) -- (\T+\k*60:\R cm)};
        }
    \end{scope}
    \begin{scope}[yshift=-\h cm]
        \foreach \k in {0,2,4}{
            \filldraw[blue!20, fill opacity=0.5] (\T+\k * 60:\r cm) 
                .. controls (0,.5*\h) .. (\T + \k * 60 + 60:\r cm) -- (\T+\k * 60:\r cm) ;
            \draw[blue] (\T+\k * 60:\r cm) 
                .. controls (0,.5*\h) .. (\T + \k * 60 + 60:\r cm);
        }
    \end{scope}
\end{scope}

\begin{scope}[yshift=10cm]
    \foreach \k in {0,2,4}{
        \directedsingleline{ (\t + \k * 60 + 60:\R cm) 
        .. controls (0,0) .. (\t+\k * 60:\R cm)} ;
    }
    \begin{scope}[yshift=-\h cm]
        \foreach \k in {0,2,4}{
            \draw[double] (\t+\k * 60:\r cm) --  (\t + \k * 60 + 60:\r cm);
            \draw (\t+\k*60 - 60:\r cm) -- (\t +\k*60:\r cm);
        }
        \foreach \k in {0,1,2,3,4,5}{
            \draw (\t+\k*60:\r cm) -- (\t+\k*60:\R cm);
        }
    \end{scope}
    \begin{scope}[yshift=-\h cm]
        \foreach \k in {0,2,4}{
            \filldraw[blue!20, fill opacity=0.5] (\t+\k * 60:\r cm) 
                .. controls (0,.5*\h) .. (\t + \k * 60 + 60:\r cm) -- (\t+\k * 60:\r cm) ;
            \draw[blue] (\t+\k * 60:\r cm) 
                .. controls (0,.5*\h) .. (\t + \k * 60 + 60:\r cm);
        }
    \end{scope}
    \draw[darkgreen!50] (.03,-.575*\h) to[out=\t, in=-90, looseness=.4] (\t+30:.44 cm);
    \draw[darkgreen!50] (-.265,-.635*\h) to[out=\t+120, in=-90, looseness=.5] (\t+150:.44 cm);
    \draw[darkgreen!50] (.2,-.66*\h) to[out=\t-75, in=-90, looseness=.5] (\t+270:.44 cm);
    \begin{scope}[yshift=-.625*\h cm]
        \def \d {.25}; 
        \draw[darkgreen!50, dashed] (82:\d cm) 
            -- (190:\d cm)
            -- (315:\d cm)
            -- (82:\d cm) 
            ;
    \end{scope}
    \foreach \k in {0,1,2,3,4,5}{
        \draw[darkgreen] (\t + \k * 60:2cm) -- +(0,-\h);
    }
\end{scope}

\filldraw (0,6.9) circle (2pt) node[above] {$x^{2N-8-k}$};

\end{tikzpicture}
}

\def\matchingbubbleA
{
\begin{tikzpicture}[decoration={
    markings,
    mark=at position 0.7 with {\arrow{>}}},
    xscale=.5,
    yscale=.25,
    baseline=.5cm
    ]

\def \h{5};
\def \t{45}; 
\def \r{1.2}; 
\def \R{2}; 
\def \T{\t-30};

\foreach \k in {0,2,4}{
    \directedsingleline{ (\T + \k * 60 + 60:\R cm)
    .. controls (0,0) .. (\T+\k * 60:\R cm)  };
}
\foreach \k in {0,1,2,3,4,5}{
     \draw[darkgreen] (\T + \k * 60 + 60:\R cm)
    -- ++(0,6);
}
\draw[darkgreen] (62:.41*\r cm) -- ++(0,6);
\draw[darkgreen] (155:.38*\r cm) -- ++(0,6);
\begin{scope}[yshift=6cm]
    \foreach \k in {0,2,4}{
        \directedsingleline{ (\T + \k * 60 + 60:\R cm)
        .. controls (0,0) .. (\T+\k * 60:\R cm)  };
    }
\end{scope}
\end{tikzpicture}
}

\def\matchingbubbleB
{
\begin{tikzpicture}[decoration={
    markings,
    mark=at position 0.7 with {\arrow{>}}},
    xscale=.5,
    yscale=.25,
    baseline=.5cm
    ]

\def \h{5};
\def \t{45}; 
\def \r{1.2}; 
\def \R{2}; 
\def \T{\t-30};

\foreach \k in {0,2,4}{
    \directedsingleline{ (\T + \k * 60 + 60:\R cm)
    .. controls (0,0) .. (\T+\k * 60:\R cm)  };
}
\foreach \k in {0,1,2,3,4,5}{
     \draw[darkgreen] (\T + \k * 60 + 60:\R cm)
    -- ++(0,6);
}
\draw[darkgreen] (62:.41*\r cm) -- ++(0,6);
\draw[darkgreen] (155:.38*\r cm) -- ++(0,6);
\begin{scope}[yshift=6cm]
    \foreach \k in {0,2,4}{
        \directedsingleline{ (\T + \k * 60 + 60:\R cm)
        .. controls (0,0) .. (\T+\k * 60:\R cm)  };
    }
\end{scope}
\begin{scope}[yscale=2] 
    \filldraw (-1,2) circle (2pt) node[below] {$x$};
\end{scope}
\end{tikzpicture}
}

\def\matchingbubbleC{
\begin{tikzpicture}[decoration={
    markings,
    mark=at position 0.7 with {\arrow{>}}},
    xscale=.5,
    yscale=.25,
    baseline=.5cm
    ]

\def \h{5};
\def \t{45}; 
\def \r{1.2}; 
\def \R{2}; 
\def \T{\t-30};

\foreach \k in {0,2,4}{
    \directedsingleline{ (\T + \k * 60 + 60:\R cm)
    .. controls (0,0) .. (\T+\k * 60:\R cm)  };
}
\foreach \k in {0,1,2,3,4,5}{
     \draw[darkgreen] (\T + \k * 60 + 60:\R cm)
    -- ++(0,6);
}
\draw[darkgreen] (62:.41*\r cm) -- ++(0,6);
\draw[darkgreen] (155:.38*\r cm) -- ++(0,6);
\begin{scope}[yshift=6cm]
    \foreach \k in {0,2,4}{
        \directedsingleline{ (\T + \k * 60 + 60:\R cm)
        .. controls (0,0) .. (\T+\k * 60:\R cm)  };
    }
\end{scope}
\begin{scope}[yscale=2] 
    \filldraw (-.1,1) circle (2pt) node[below] {$x$};
\end{scope}
\end{tikzpicture}
}

\def\matchingbubbleD{
\begin{tikzpicture}[decoration={
    markings,
    mark=at position 0.7 with {\arrow{>}}},
    xscale=.5,
    yscale=.25,
    baseline=.5cm
    ]

\def \h{5};
\def \t{45}; 
\def \r{1.2}; 
\def \R{2}; 
\def \T{\t-30};

\foreach \k in {0,2,4}{
    \directedsingleline{ (\T + \k * 60 + 60:\R cm)
    .. controls (0,0) .. (\T+\k * 60:\R cm)  };
}
\foreach \k in {0,1,2,3,4,5}{
     \draw[darkgreen] (\T + \k * 60 + 60:\R cm)
    -- ++(0,6);
}
\draw[darkgreen] (62:.41*\r cm) -- ++(0,6);
\draw[darkgreen] (155:.38*\r cm) -- ++(0,6);
\begin{scope}[yshift=6cm]
    \foreach \k in {0,2,4}{
        \directedsingleline{ (\T + \k * 60 + 60:\R cm)
        .. controls (0,0) .. (\T+\k * 60:\R cm)  };
    }
\end{scope}
\begin{scope}[yscale=2] 
    \filldraw (1.7,2) circle (2pt) node[below] {$x$};
\end{scope}
\end{tikzpicture}
}

\def\AlphaZ {
\begin{tikzpicture}[draw=darkblue, scale=.4, 
    mid arrow/.style={
            postaction={
                decorate,
                decoration={
                    markings,
                    mark=at position 0.5 with {      
                        \arrow[##1]{>}
                    }
                }
            }
        },
        mid arrow/.default=black, baseline={([yshift=-.8ex]current bounding box.center)}
    ]
\def \r{2}; 
\def \R{4}; 
\def\t{4}; 
\begin{scope}[yscale=.5]
    \def \v {(0,-6)}; 
    \foreach \a in {0,2,4}{
        \draw[double, postaction={mid arrow}] (60*\a+\t:\r) --  (60*\a+60+\t:\r);
        \draw[postaction={mid arrow}] (60*\a+\t:\R) -- (60*\a+\t:\r);
    }
    \foreach \a in {1,3,5}{
        \draw[postaction={mid arrow}] (60*\a+\t:\r) -- (60*\a+60+\t:\r);
        \draw[postaction={mid arrow}] (60*\a+\t:\r) -- (60*\a+\t:\R);
    }
    \foreach \a in {0,1,2,3,4,5}{
        \draw[darkgreen] (60*\a+\t:\r) -- \v;
        \draw[darkgreen, very thin] (60*\a+\t:\R) -- ($(60*\a+\t:\R) + (0,-10)$);
    }
\end{scope}
\draw[very thick, orange] (0,-3) -- (0,-5);
\begin{scope}[yscale=.5, yshift=-10cm]
    \foreach \a in {0,2,4}{
        \draw[postaction={mid arrow}] (60*\a+\t:\R) -- (60*\a+\t:0);
    }
    \foreach \a in {1,3,5}{
        \draw[postaction={mid arrow}] (60*\a+\t:0) -- (60*\a+\t:\R);
    }
\end{scope}
\end{tikzpicture}
}

\def\TAlphaZ {
\begin{tikzpicture}[draw=darkblue, scale=.4, 
    mid arrow/.style={
            postaction={
                decorate,
                decoration={
                    markings,
                    mark=at position 0.5 with {      
                        \arrow[##1]{>}
                    }
                }
            }
        },
        mid arrow/.default=black, baseline={([yshift=-.8ex]current bounding box.center)}
    ]
\def \r{2}; 
\def \R{4}; 
\def\t{184}; 
\begin{scope}[yscale=.5]
    \def \v {(0,-6)}; 
    \foreach \a in {0,2,4}{
        \draw[double, postaction={mid arrow}] (60*\a+\t:\r) --  (60*\a+60+\t:\r);
        \draw[postaction={mid arrow}] (60*\a+\t:\R) -- (60*\a+\t:\r);
    }
    \foreach \a in {1,3,5}{
        \draw[postaction={mid arrow}] (60*\a+\t:\r) -- (60*\a+60+\t:\r);
        \draw[postaction={mid arrow}] (60*\a+\t:\r) -- (60*\a+\t:\R);
    }
    \foreach \a in {0,1,2,3,4,5}{
        \draw[darkgreen] (60*\a+\t:\r) -- \v;
        \draw[darkgreen, very thin] (60*\a+\t:\R) -- ($(60*\a+\t:\R) + (0,-10)$);
    }
\end{scope}
\draw[very thick, orange] (0,-3) -- (0,-5);
\begin{scope}[yscale=.5, yshift=-10cm]
    \foreach \a in {0,2,4}{
        \draw[postaction={mid arrow}] (60*\a+\t:\R) -- (60*\a+\t:0);
    }
    \foreach \a in {1,3,5}{
        \draw[postaction={mid arrow}] (60*\a+\t:0) -- (60*\a+\t:\R);
    }
\end{scope}
\end{tikzpicture}
}

\def\TBetaZ{
\begin{tikzpicture}[draw=darkblue, scale=-.4, 
    mid arrow/.style={
            postaction={
                decorate,
                decoration={
                    markings,
                    mark=at position 0.5 with {      
                        \arrow[##1]{>}
                    }
                }
            }
        },
        mid arrow/.default=black,  baseline={([yshift=-.8ex]current bounding box.center)}
    ]
\def \r{2}; 
\def \R{4}; 
\def\t{4}; 
\begin{scope}[yscale=.5]
    \def \v {(0,-6)}; 
    \foreach \a in {0,2,4}{
        \draw[double, postaction={mid arrow}] (60*\a+\t:\r) --  (60*\a+60+\t:\r);
        \draw[postaction={mid arrow}] (60*\a+\t:\R) -- (60*\a+\t:\r);
    }
    \foreach \a in {1,3,5}{
        \draw[postaction={mid arrow}] (60*\a+\t:\r) -- (60*\a+60+\t:\r);
        \draw[postaction={mid arrow}] (60*\a+\t:\r) -- (60*\a+\t:\R);
    }
    \foreach \a in {0,1,2,3,4,5}{
        \draw[darkgreen] (60*\a+\t:\r) -- \v;
        \draw[darkgreen, very thin] (60*\a+\t:\R) -- ($(60*\a+\t:\R) + (0,-10)$);
    }
\end{scope}
\draw[very thick, orange] (0,-3) -- (0,-5);
\begin{scope}[yscale=.5, yshift=-10cm]
    \foreach \a in {0,2,4}{
        \draw[postaction={mid arrow}] (60*\a+\t:\R) -- (60*\a+\t:0);
    }
    \foreach \a in {1,3,5}{
        \draw[postaction={mid arrow}] (60*\a+\t:0) -- (60*\a+\t:\R);
    }
\end{scope}
\end{tikzpicture}
}

\def\BetaZ{
\begin{tikzpicture}[draw=darkblue, scale=-.4, 
    mid arrow/.style={
            postaction={
                decorate,
                decoration={
                    markings,
                    mark=at position 0.5 with {      
                        \arrow[##1]{>}
                    }
                }
            }
        },
        mid arrow/.default=black,  baseline={([yshift=-.8ex]current bounding box.center)}
    ]
\def \r{2}; 
\def \R{4}; 
\def\t{184}; 
\begin{scope}[yscale=.5]
    \def \v {(0,-6)}; 
    \foreach \a in {0,2,4}{
        \draw[double, postaction={mid arrow}] (60*\a+\t:\r) --  (60*\a+60+\t:\r);
        \draw[postaction={mid arrow}] (60*\a+\t:\R) -- (60*\a+\t:\r);
    }
    \foreach \a in {1,3,5}{
        \draw[postaction={mid arrow}] (60*\a+\t:\r) -- (60*\a+60+\t:\r);
        \draw[postaction={mid arrow}] (60*\a+\t:\r) -- (60*\a+\t:\R);
    }
    \foreach \a in {0,1,2,3,4,5}{
        \draw[darkgreen] (60*\a+\t:\r) -- \v;
        \draw[darkgreen, very thin] (60*\a+\t:\R) -- ($(60*\a+\t:\R) + (0,-10)$);
    }
\end{scope}
\draw[very thick, orange] (0,-3) -- (0,-5);
\begin{scope}[yscale=.5, yshift=-10cm]
    \foreach \a in {0,2,4}{
        \draw[postaction={mid arrow}] (60*\a+\t:\R) -- (60*\a+\t:0);
    }
    \foreach \a in {1,3,5}{
        \draw[postaction={mid arrow}] (60*\a+\t:0) -- (60*\a+\t:\R);
    }
\end{scope}
\end{tikzpicture}
}

\def\AlphaZTBetaZDef{
\begin{tikzpicture}[draw=darkblue, scale=.4, xscale=-1, yscale=1.5,
    mid arrow/.style={
            postaction={
                decorate,
                decoration={
                    markings,
                    mark=at position 0.5 with {      
                        \arrow[##1]{>}
                    }
                }
            }
        },
        mid arrow/.default=black, 
        baseline={([yshift=-.8ex]current bounding box.center)}
    ]
\def \r{2}; 
\def \R{4}; 
\def\t{-7}; 
\begin{scope}[yscale=.5, yshift=-10cm]
    \foreach \a in {0,2,4}{
        \draw[postaction={mid arrow}] (60*\a+60+\t:\r) -- (60*\a+\t:\r)  ;
        \draw[postaction={mid arrow}] (60*\a+\t:\R) -- (60*\a+\t:\r);
    }
    \foreach \a in {1,3,5}{
        \draw[postaction={mid arrow}, double] (60*\a+60+\t:\r) -- (60*\a+\t:\r) ;
        \draw[postaction={mid arrow}] (60*\a+\t:\r) -- (60*\a+\t:\R);
    }
\end{scope}
\begin{scope}[yshift=-2.5 cm, yscale=.5]
    \foreach \a in {0,1,2,3,4,5}{
        \draw (60*\a+\t:\r) --  (60*\a+60+\t:\r);
        \draw[dotted] (60*\a+\t:\R) -- (60*\a+\t:\r);
    }
\end{scope}
\begin{scope}[yscale=.5]
    \foreach \a in {0,2,4}{
        \draw[double, postaction={mid arrow}] (60*\a+\t:\r) --  (60*\a+60+\t:\r);
        \draw[postaction={mid arrow}] (60*\a+\t:\R) -- (60*\a+\t:\r);
    }
    \foreach \a in {1,3,5}{
        \draw[postaction={mid arrow}] (60*\a+\t:\r) -- (60*\a+60+\t:\r);
        \draw[postaction={mid arrow}] (60*\a+\t:\r) -- (60*\a+\t:\R);
    }
    \foreach \a in {0,1,2,3,4,5}{
        \draw[darkgreen, very thin] (60*\a+\t:\R) -- ($(60*\a+\t:\R) + (0,-10)$);
        \draw[darkgreen, very thin] (60*\a+\t:\r) -- ($(60*\a+\t:\r) + (0,-10)$);
    }
\end{scope}
\end{tikzpicture}
}

\def\AlphaZTBetaZ{
\begin{tikzpicture}[draw=darkblue, scale=.4, 
    mid arrow/.style={
            postaction={
                decorate,
                decoration={
                    markings,
                    mark=at position 0.5 with {      
                        \arrow[##1]{>}
                    }
                }
            }
        },
        mid arrow/.default=black,   baseline={([yshift=-.8ex]current bounding box.center)}
    ]
\begin{scope}[xscale=-1] 
    \def \r{2}; 
    \def \R{4}; 
    \def\t{-4}; 
    \begin{scope}[yscale=.5]
        \def \v {(0,-6)}; 
        \foreach \a in {0,2,4}{
            \draw[double, postaction={mid arrow}] (60*\a+\t:\r) --  (60*\a+60+\t:\r);
            \draw[postaction={mid arrow}] (60*\a+\t:\R) -- (60*\a+\t:\r);
        }
        \foreach \a in {1,3,5}{
            \draw[postaction={mid arrow}] (60*\a+\t:\r) -- (60*\a+60+\t:\r);
            \draw[postaction={mid arrow}] (60*\a+\t:\r) -- (60*\a+\t:\R);
        }
        \foreach \a in {0,1,2,3,4,5}{
            \draw[darkgreen] (60*\a+\t:\r) -- \v;
            \draw[darkgreen, very thin] (60*\a+\t:\R) -- ($(60*\a+\t:\R) + (0,-10)$);
        }
    \end{scope}
    \draw[very thick, orange] (0,-3) -- (0,-5);
    \begin{scope}[yscale=.5, yshift=-10cm]
        \foreach \a in {0,2,4}{
            \draw[postaction={mid arrow}] (60*\a+\t:\R) -- (60*\a+\t:0);
        }
        \foreach \a in {1,3,5}{
            \draw[postaction={mid arrow}] (60*\a+\t:0) -- (60*\a+\t:\R);
        }
    \end{scope}
\end{scope}
\begin{scope}[scale=-1, yshift=10cm]
    \def \r{2}; 
    \def \R{4}; 
    \def\t{4}; 
    \begin{scope}[yscale=.5]
        \def \v {(0,-6)}; 
        \foreach \a in {0,2,4}{
            \draw[double, postaction={mid arrow}] (60*\a+\t:\r) --  (60*\a+60+\t:\r);
            \draw[postaction={mid arrow}] (60*\a+\t:\R) -- (60*\a+\t:\r);
        }
        \foreach \a in {1,3,5}{
            \draw[postaction={mid arrow}] (60*\a+\t:\r) -- (60*\a+60+\t:\r);
            \draw[postaction={mid arrow}] (60*\a+\t:\r) -- (60*\a+\t:\R);
        }
        \foreach \a in {0,1,2,3,4,5}{
            \draw[darkgreen] (60*\a+\t:\r) -- \v;
            \draw[darkgreen, very thin] (60*\a+\t:\R) -- ($(60*\a+\t:\R) + (0,-10)$);
        }
    \end{scope}
    \draw[very thick, orange] (0,-3) -- (0,-5);
    \begin{scope}[yscale=.5, yshift=-10cm]
        \foreach \a in {0,2,4}{
            \draw[postaction={mid arrow}] (60*\a+\t:\R) -- (60*\a+\t:0);
        }
        \foreach \a in {1,3,5}{
            \draw[postaction={mid arrow}] (60*\a+\t:0) -- (60*\a+\t:\R);
        }
    \end{scope}
\end{scope}
\end{tikzpicture}
}

\def\TAlphaZBetaZDef{
\begin{tikzpicture}[draw=darkblue, scale=.4, xscale=-1, yscale=1.5,
    mid arrow/.style={
            postaction={
                decorate,
                decoration={
                    markings,
                    mark=at position 0.5 with {      
                        \arrow[##1]{>}
                    }
                }
            }
        },
        mid arrow/.default=black, 
        baseline={([yshift=-.8ex]current bounding box.center)}
    ]
\def \r{2}; 
\def \R{4}; 
\def\t{180-7}; 
\begin{scope}[yscale=.5, yshift=-10cm]
    \foreach \a in {0,2,4}{
        \draw[postaction={mid arrow}] (60*\a+\t:\r) --  (60*\a+60+\t:\r) ;
        \draw[postaction={mid arrow}] (60*\a+\t:\r) -- (60*\a+\t:\R);
    }
    \foreach \a in {1,3,5}{
        \draw[postaction={mid arrow}, double] (60*\a+\t:\r) --  (60*\a+60+\t:\r);
        \draw[postaction={mid arrow}] (60*\a+\t:\R) -- (60*\a+\t:\r);
    }
\end{scope}
\begin{scope}[yshift=-2.5 cm, yscale=.5]
    \foreach \a in {0,1,2,3,4,5}{
        \draw (60*\a+\t:\r) --  (60*\a+60+\t:\r);
        \draw[dotted] (60*\a+\t:\R) -- (60*\a+\t:\r);
    }
\end{scope}
\begin{scope}[yscale=.5]
    \foreach \a in {0,2,4}{
        \draw[double, postaction={mid arrow}] (60*\a+60+\t:\r) -- (60*\a+\t:\r) ;
        \draw[postaction={mid arrow}]  (60*\a+\t:\r) -- (60*\a+\t:\R);
    }
    \foreach \a in {1,3,5}{
        \draw[postaction={mid arrow}] (60*\a+60+\t:\r) -- (60*\a+\t:\r);
        \draw[postaction={mid arrow}] (60*\a+\t:\R) -- (60*\a+\t:\r);
    }
    \foreach \a in {0,1,2,3,4,5}{
        \draw[darkgreen, very thin] (60*\a+\t:\R) -- ($(60*\a+\t:\R) + (0,-10)$);
        \draw[darkgreen, very thin] (60*\a+\t:\r) -- ($(60*\a+\t:\r) + (0,-10)$);
    }
\end{scope}
\end{tikzpicture}
}

\def\TAlphaZBetaZ{
\begin{tikzpicture}[draw=darkblue, scale=.4, 
    mid arrow/.style={
            postaction={
                decorate,
                decoration={
                    markings,
                    mark=at position 0.5 with {      
                        \arrow[##1]{>}
                    }
                }
            }
        },
        mid arrow/.default=black,   baseline={([yshift=-.8ex]current bounding box.center)}
    ]
\begin{scope}[xscale=-1] 
    \def \r{2}; 
    \def \R{4}; 
    \def\t{180-4}; 
    \begin{scope}[yscale=.5]
        \def \v {(0,-6)}; 
        \foreach \a in {0,2,4}{
            \draw[double, postaction={mid arrow}] (60*\a+\t:\r) --  (60*\a+60+\t:\r);
            \draw[postaction={mid arrow}] (60*\a+\t:\R) -- (60*\a+\t:\r);
        }
        \foreach \a in {1,3,5}{
            \draw[postaction={mid arrow}] (60*\a+\t:\r) -- (60*\a+60+\t:\r);
            \draw[postaction={mid arrow}] (60*\a+\t:\r) -- (60*\a+\t:\R);
        }
        \foreach \a in {0,1,2,3,4,5}{
            \draw[darkgreen] (60*\a+\t:\r) -- \v;
            \draw[darkgreen, very thin] (60*\a+\t:\R) -- ($(60*\a+\t:\R) + (0,-10)$);
        }
    \end{scope}
    \draw[very thick, orange] (0,-3) -- (0,-5);
    \begin{scope}[yscale=.5, yshift=-10cm]
        \foreach \a in {0,2,4}{
            \draw[postaction={mid arrow}] (60*\a+\t:\R) -- (60*\a+\t:0);
        }
        \foreach \a in {1,3,5}{
            \draw[postaction={mid arrow}] (60*\a+\t:0) -- (60*\a+\t:\R);
        }
    \end{scope}
\end{scope}
\begin{scope}[scale=-1, yshift=10cm]
    \def \r{2}; 
    \def \R{4}; 
    \def\t{180+4}; 
    \begin{scope}[yscale=.5]
        \def \v {(0,-6)}; 
        \foreach \a in {0,2,4}{
            \draw[double, postaction={mid arrow}] (60*\a+\t:\r) --  (60*\a+60+\t:\r);
            \draw[postaction={mid arrow}] (60*\a+\t:\R) -- (60*\a+\t:\r);
        }
        \foreach \a in {1,3,5}{
            \draw[postaction={mid arrow}] (60*\a+\t:\r) -- (60*\a+60+\t:\r);
            \draw[postaction={mid arrow}] (60*\a+\t:\r) -- (60*\a+\t:\R);
        }
        \foreach \a in {0,1,2,3,4,5}{
            \draw[darkgreen] (60*\a+\t:\r) -- \v;
            \draw[darkgreen, very thin] (60*\a+\t:\R) -- ($(60*\a+\t:\R) + (0,-10)$);
        }
    \end{scope}
    \draw[very thick, orange] (0,-3) -- (0,-5);
    \begin{scope}[yscale=.5, yshift=-10cm]
        \foreach \a in {0,2,4}{
            \draw[postaction={mid arrow}] (60*\a+\t:\R) -- (60*\a+\t:0);
        }
        \foreach \a in {1,3,5}{
            \draw[postaction={mid arrow}] (60*\a+\t:0) -- (60*\a+\t:\R);
        }
    \end{scope}
\end{scope}
\end{tikzpicture}
}

\def\AlphaO{
\begin{tikzpicture}[scale=0.85,yscale=.75,xscale=-1, draw=darkblue,
    decoration={
    markings},
    baseline={([yshift=-.8ex]current bounding box.center)}
    ]
\def \h{5};
\def \t{230}; 
\def \r{1.2}; 
\def \R{2}; 
%
\begin{scope}[yshift=-\h cm]
    \foreach \k in {0,2,4}{
        \draw[double, postaction={decorate}, decoration={
            markings,
            mark=at position 0.5 with {\arrow{>}}
        }] (\t+\k * 60:\r cm) --  (\t + \k * 60 + 60:\r cm);
        \draw[postaction={decorate}, decoration={
            markings,
            mark=at position 0.5 with {\arrow{>}}
        }] (\t+\k*60 - 60:\r cm) -- (\t +\k*60:\r cm);
    }
    \foreach \k in {0,2,4}{
        \draw[postaction={decorate}, decoration={
            markings,
            mark=at position 0.7 with {\arrow{<}}
        }] (\t+\k*60:\r cm) -- (\t+\k*60:\R cm);
    }
    \foreach \k in {1,3,5}{
        \draw[postaction={decorate}, decoration={
            markings,
            mark=at position 0.7 with {\arrow{>}}
        }] (\t+\k*60:\r cm) -- (\t+\k*60:\R cm);
    }
\end{scope}
\begin{scope}[yshift=-\h cm]
    \foreach \k in {0,2,4}{
        \filldraw[green!20, fill opacity=0.5] (\t+\k * 60:\r cm) 
            .. controls (0,.5*\h) .. (\t + \k * 60 + 60:\r cm) -- (\t+\k * 60:\r cm) ;
        \draw[green] (\t+\k * 60:\r cm) 
            .. controls (0,.5*\h) .. (\t + \k * 60 + 60:\r cm);
    }
\end{scope}
\draw[darkgreen!50] (-0.045,-.675*\h)--(\t+30:.44 cm);
\draw[darkgreen!50] (.22,-.6*\h) to[out=\t+120, in=-90, looseness=.5] (\t+150:.44 cm);
\draw[darkgreen!50] (-.18,-.585*\h) to[out=\t-75, in=-90, looseness=.5] (\t+270:.44 cm);
\begin{scope}[yshift=-.625*\h cm]
    \def \d {.25}; 
    \draw[darkgreen!50, dashed] (-98:\d cm) 
        -- (24:\d cm)
        -- (135:\d cm)
        -- (-98:\d cm) 
        ;
\end{scope}
\foreach \k in {0,1,2,3,4,5}{
    \draw[darkgreen] (\t + \k * 60:2cm) -- +(0,-\h);
}
\foreach \k in {0,2,4}{
\draw[
        postaction={decorate}, decoration={
            markings,
            mark=at position 0.7 with {\arrow{>}}
        }
     ]
    (\t+\k * 60:\R cm) 
    .. controls (0,0) ..  (\t + \k * 60 + 60:\R cm);
}
\end{tikzpicture}
}

\def\AlphaTwoA{
\begin{tikzpicture}[scale=0.85,yscale=.75,xscale=-1, draw=darkblue,
    decoration={
    markings},
    baseline={([yshift=-.8ex]current bounding box.center)}
    ]
\def \h{5};
\def \t{230}; 
\def \r{1.2}; 
\def \R{2}; 
%
\filldraw[black] (1.6,-1.2) circle (2pt) node[black,scale=0.7, above]{$x$};
\begin{scope}[yshift=-\h cm]
    \foreach \k in {0,2,4}{
        \draw[double, postaction={decorate}, decoration={
            markings,
            mark=at position 0.5 with {\arrow{>}}
        }] (\t+\k * 60:\r cm) --  (\t + \k * 60 + 60:\r cm);
        \draw[postaction={decorate}, decoration={
            markings,
            mark=at position 0.5 with {\arrow{>}}
        }] (\t+\k*60 - 60:\r cm) -- (\t +\k*60:\r cm);
    }
    \foreach \k in {0,2,4}{
        \draw[postaction={decorate}, decoration={
            markings,
            mark=at position 0.7 with {\arrow{<}}
        }] (\t+\k*60:\r cm) -- (\t+\k*60:\R cm);
    }
    \foreach \k in {1,3,5}{
        \draw[postaction={decorate}, decoration={
            markings,
            mark=at position 0.7 with {\arrow{>}}
        }] (\t+\k*60:\r cm) -- (\t+\k*60:\R cm);
    }
\end{scope}
\begin{scope}[yshift=-\h cm]
    \foreach \k in {0,2,4}{
        \filldraw[green!20, fill opacity=0.5] (\t+\k * 60:\r cm) 
            .. controls (0,.5*\h) .. (\t + \k * 60 + 60:\r cm) -- (\t+\k * 60:\r cm) ;
        \draw[green] (\t+\k * 60:\r cm) 
            .. controls (0,.5*\h) .. (\t + \k * 60 + 60:\r cm);
    }
\end{scope}
\draw[darkgreen!50] (-0.045,-.675*\h)--(\t+30:.44 cm);
\draw[darkgreen!50] (.22,-.6*\h) to[out=\t+120, in=-90, looseness=.5] (\t+150:.44 cm);
\draw[darkgreen!50] (-.18,-.585*\h) to[out=\t-75, in=-90, looseness=.5] (\t+270:.44 cm);
\begin{scope}[yshift=-.625*\h cm]
    \def \d {.25}; 
    \draw[darkgreen!50, dashed] (-98:\d cm) 
        -- (24:\d cm)
        -- (135:\d cm)
        -- (-98:\d cm) 
        ;
\end{scope}
\foreach \k in {0,1,2,3,4,5}{
    \draw[darkgreen] (\t + \k * 60:2cm) -- +(0,-\h);
}
\foreach \k in {0,2,4}{
\draw[
        postaction={decorate}, decoration={
            markings,
            mark=at position 0.7 with {\arrow{>}}
        }
     ]
    (\t+\k * 60:\R cm) 
    .. controls (0,0) ..  (\t + \k * 60 + 60:\R cm);
}
\end{tikzpicture}
}

\def\AlphaTwoB{
\begin{tikzpicture}[scale=0.85,yscale=.75,xscale=-1, draw=darkblue,
    decoration={
    markings},
    baseline={([yshift=-.8ex]current bounding box.center)}
    ]
\def \h{5};
\def \t{230}; 
\def \r{1.2}; 
\def \R{2}; 
%
\filldraw[black] (0.15,-1.5) circle (2pt) node[black,scale=0.7, above]{$x$};
\begin{scope}[yshift=-\h cm]
    \foreach \k in {0,2,4}{
        \draw[double, postaction={decorate}, decoration={
            markings,
            mark=at position 0.5 with {\arrow{>}}
        }] (\t+\k * 60:\r cm) --  (\t + \k * 60 + 60:\r cm);
        \draw[postaction={decorate}, decoration={
            markings,
            mark=at position 0.5 with {\arrow{>}}
        }] (\t+\k*60 - 60:\r cm) -- (\t +\k*60:\r cm);
    }
    \foreach \k in {0,2,4}{
        \draw[postaction={decorate}, decoration={
            markings,
            mark=at position 0.7 with {\arrow{<}}
        }] (\t+\k*60:\r cm) -- (\t+\k*60:\R cm);
    }
    \foreach \k in {1,3,5}{
        \draw[postaction={decorate}, decoration={
            markings,
            mark=at position 0.7 with {\arrow{>}}
        }] (\t+\k*60:\r cm) -- (\t+\k*60:\R cm);
    }
\end{scope}
\begin{scope}[yshift=-\h cm]
    \foreach \k in {0,2,4}{
        \filldraw[green!20, fill opacity=0.5] (\t+\k * 60:\r cm) 
            .. controls (0,.5*\h) .. (\t + \k * 60 + 60:\r cm) -- (\t+\k * 60:\r cm) ;
        \draw[green] (\t+\k * 60:\r cm) 
            .. controls (0,.5*\h) .. (\t + \k * 60 + 60:\r cm);
    }
\end{scope}
\draw[darkgreen!50] (-0.045,-.675*\h)--(\t+30:.44 cm);
\draw[darkgreen!50] (.22,-.6*\h) to[out=\t+120, in=-90, looseness=.5] (\t+150:.44 cm);
\draw[darkgreen!50] (-.18,-.585*\h) to[out=\t-75, in=-90, looseness=.5] (\t+270:.44 cm);
\begin{scope}[yshift=-.625*\h cm]
    \def \d {.25}; 
    \draw[darkgreen!50, dashed] (-98:\d cm) 
        -- (24:\d cm)
        -- (135:\d cm)
        -- (-98:\d cm) 
        ;
\end{scope}
\foreach \k in {0,1,2,3,4,5}{
    \draw[darkgreen] (\t + \k * 60:2cm) -- +(0,-\h);
}
\foreach \k in {0,2,4}{
\draw[
        postaction={decorate}, decoration={
            markings,
            mark=at position 0.7 with {\arrow{>}}
        }
     ]
    (\t+\k * 60:\R cm) 
    .. controls (0,0) ..  (\t + \k * 60 + 60:\R cm);
}
\end{tikzpicture}
}

\def\AlphaTwoC{
\begin{tikzpicture}[scale=0.85,yscale=.75,xscale=-1, draw=darkblue,
    decoration={
    markings},
    baseline={([yshift=-.8ex]current bounding box.center)}
    ]
\def \h{5};
\def \t{230}; 
\def \r{1.2}; 
\def \R{2}; 
%
\filldraw[black] (-1.4,-0.6) circle (2pt) node[black,scale=0.7, above]{$x$};
\begin{scope}[yshift=-\h cm]
    \foreach \k in {0,2,4}{
        \draw[double, postaction={decorate}, decoration={
            markings,
            mark=at position 0.5 with {\arrow{>}}
        }] (\t+\k * 60:\r cm) --  (\t + \k * 60 + 60:\r cm);
        \draw[postaction={decorate}, decoration={
            markings,
            mark=at position 0.5 with {\arrow{>}}
        }] (\t+\k*60 - 60:\r cm) -- (\t +\k*60:\r cm);
    }
    \foreach \k in {0,2,4}{
        \draw[postaction={decorate}, decoration={
            markings,
            mark=at position 0.7 with {\arrow{<}}
        }] (\t+\k*60:\r cm) -- (\t+\k*60:\R cm);
    }
    \foreach \k in {1,3,5}{
        \draw[postaction={decorate}, decoration={
            markings,
            mark=at position 0.7 with {\arrow{>}}
        }] (\t+\k*60:\r cm) -- (\t+\k*60:\R cm);
    }
\end{scope}
\begin{scope}[yshift=-\h cm]
    \foreach \k in {0,2,4}{
        \filldraw[green!20, fill opacity=0.5] (\t+\k * 60:\r cm) 
            .. controls (0,.5*\h) .. (\t + \k * 60 + 60:\r cm) -- (\t+\k * 60:\r cm) ;
        \draw[green] (\t+\k * 60:\r cm) 
            .. controls (0,.5*\h) .. (\t + \k * 60 + 60:\r cm);
    }
\end{scope}
\draw[darkgreen!50] (-0.045,-.675*\h)--(\t+30:.44 cm);
\draw[darkgreen!50] (.22,-.6*\h) to[out=\t+120, in=-90, looseness=.5] (\t+150:.44 cm);
\draw[darkgreen!50] (-.18,-.585*\h) to[out=\t-75, in=-90, looseness=.5] (\t+270:.44 cm);
\begin{scope}[yshift=-.625*\h cm]
    \def \d {.25}; 
    \draw[darkgreen!50, dashed] (-98:\d cm) 
        -- (24:\d cm)
        -- (135:\d cm)
        -- (-98:\d cm) 
        ;
\end{scope}
\foreach \k in {0,1,2,3,4,5}{
    \draw[darkgreen] (\t + \k * 60:2cm) -- +(0,-\h);
}
\foreach \k in {0,2,4}{
\draw[
        postaction={decorate}, decoration={
            markings,
            mark=at position 0.7 with {\arrow{>}}
        }
     ]
    (\t+\k * 60:\R cm) 
    .. controls (0,0) ..  (\t + \k * 60 + 60:\R cm);
}
\end{tikzpicture}
}

\def\AlphaTwo{
\begin{tikzpicture}[scale=0.85,yscale=.75,xscale=-1, draw=darkblue,
    decoration={
    markings},
    baseline={([yshift=-.8ex]current bounding box.center)}
    ]
\def \h{5};
\def \t{230}; 
\def \r{1.2}; 
\def \R{2}; 
%
\filldraw[black] (0.86,-5.2) circle (2pt) node[black,scale=0.7, above]{ $x$};
\begin{scope}[yshift=-\h cm]
    \foreach \k in {0,2,4}{
        \draw[double, postaction={decorate}, decoration={
            markings,
            mark=at position 0.5 with {\arrow{>}}
        }] (\t+\k * 60:\r cm) --  (\t + \k * 60 + 60:\r cm);
        \draw[postaction={decorate}, decoration={
            markings,
            mark=at position 0.5 with {\arrow{>}}
        }] (\t+\k*60 - 60:\r cm) -- (\t +\k*60:\r cm);
    }
    \foreach \k in {0,2,4}{
        \draw[postaction={decorate}, decoration={
            markings,
            mark=at position 0.7 with {\arrow{<}}
        }] (\t+\k*60:\r cm) -- (\t+\k*60:\R cm);
    }
    \foreach \k in {1,3,5}{
        \draw[postaction={decorate}, decoration={
            markings,
            mark=at position 0.7 with {\arrow{>}}
        }] (\t+\k*60:\r cm) -- (\t+\k*60:\R cm);
    }
\end{scope}
\begin{scope}[yshift=-\h cm]
    \foreach \k in {0,2,4}{
        \filldraw[green!20, fill opacity=0.5] (\t+\k * 60:\r cm) 
            .. controls (0,.5*\h) .. (\t + \k * 60 + 60:\r cm) -- (\t+\k * 60:\r cm) ;
        \draw[green] (\t+\k * 60:\r cm) 
            .. controls (0,.5*\h) .. (\t + \k * 60 + 60:\r cm);
    }
\end{scope}
\draw[darkgreen!50] (-0.045,-.675*\h)--(\t+30:.44 cm);
\draw[darkgreen!50] (.22,-.6*\h) to[out=\t+120, in=-90, looseness=.5] (\t+150:.44 cm);
\draw[darkgreen!50] (-.18,-.585*\h) to[out=\t-75, in=-90, looseness=.5] (\t+270:.44 cm);
\begin{scope}[yshift=-.625*\h cm]
    \def \d {.25}; 
    \draw[darkgreen!50, dashed] (-98:\d cm) 
        -- (24:\d cm)
        -- (135:\d cm)
        -- (-98:\d cm) 
        ;
\end{scope}
\foreach \k in {0,1,2,3,4,5}{
    \draw[darkgreen] (\t + \k * 60:2cm) -- +(0,-\h);
}
\foreach \k in {0,2,4}{
\draw[
        postaction={decorate}, decoration={
            markings,
            mark=at position 0.7 with {\arrow{>}}
        }
     ]
    (\t+\k * 60:\R cm) 
    .. controls (0,0) ..  (\t + \k * 60 + 60:\R cm);
}
\end{tikzpicture}
}

\def\TAlphaO{
\begin{tikzpicture}[scale=0.85,yscale=.75, xscale=-1, draw=darkblue,
    decoration={
    markings},
    baseline={([yshift=-.8ex]current bounding box.center)}
    ]
\def \h{5};
\def \t{50}; 
\def \r{1.2}; 
\def \R{2}; 
%
\begin{scope}[yshift=-\h cm]
    \foreach \k in {0,2,4}{
        \draw[double,  postaction={decorate}, decoration={
            markings,
            mark=at position 0.5 with {\arrow{<}}
        }] (\t+\k * 60:\r cm) --  (\t + \k * 60 + 60:\r cm);
        \draw[ postaction={decorate}, decoration={
            markings,
            mark=at position 0.5 with {\arrow{<}}
        }] (\t+\k*60 - 60:\r cm) -- (\t +\k*60:\r cm);
    }
    \foreach \k in {0,2,4}{
        \draw[postaction={decorate}, decoration={
            markings,
            mark=at position 0.7 with {\arrow{<}}
        }] (\t+\k*60:\r cm) -- (\t+\k*60:\R cm);
    }
    \foreach \k in {1,3,5}{
        \draw[postaction={decorate}, decoration={
            markings,
            mark=at position 0.7 with {\arrow{<}}
        }] (\t+\k*60:\r cm) -- (\t+\k*60:\R cm);
    }
\end{scope}
\begin{scope}[yshift=-\h cm]
    \foreach \k in {0,2,4}{
        \filldraw[green!20, fill opacity=0.5] (\t+\k * 60:\r cm) 
            .. controls (0,.5*\h) .. (\t + \k * 60 + 60:\r cm) -- (\t+\k * 60:\r cm) ;
        \draw[green] (\t+\k * 60:\r cm) 
            .. controls (0,.5*\h) .. (\t + \k * 60 + 60:\r cm);
    }
\end{scope}
\draw[darkgreen!50] (.03,-.575*\h) to[out=\t, in=-90, looseness=.4] (\t+30:.44 cm);
\draw[darkgreen!50] (-.265,-.635*\h) to[out=\t+120, in=-90, looseness=.5] (\t+155:.44 cm);
\draw[darkgreen!50] (.22,-.655*\h) to[out=\t-75, in=-100, looseness=.5] (\t+257.5:.46 cm);
\begin{scope}[yshift=-.625*\h cm]
    \def \d {.25}; 
    \draw[darkgreen!50, dashed] (82:\d cm) 
        -- (190:\d cm)
        -- (320:\d cm)
        -- (82:\d cm) 
        ;
\end{scope}
\foreach \k in {0,1,2,3,4,5}{
    \draw[darkgreen] (\t + \k * 60:2cm) -- +(0,-\h);
}
\foreach \k in {0,2,4}{
    \draw[
        postaction={decorate},
        decoration={
            markings,
            mark=at position 0.7 with {\arrow{<}}
        }
    ]
    (\t+\k * 60:\R cm)
    .. controls (0,0) ..
    (\t + \k * 60 + 60:\R cm);
}
\end{tikzpicture}
}

\def\TAlphaTwo{
\begin{tikzpicture}[scale=0.85,yscale=.75, xscale=-1, draw=darkblue,
    decoration={
    markings},
    baseline={([yshift=-.8ex]current bounding box.center)}
    ]
\def \h{5};
\def \t{50}; 
\def \r{1.2}; 
\def \R{2}; 
%
\filldraw[black] (-0.15,-5.5) circle (2pt) node[black,scale=0.7, above]{$x$};
\begin{scope}[yshift=-\h cm]
    \foreach \k in {0,2,4}{
        \draw[double,  postaction={decorate}, decoration={
            markings,
            mark=at position 0.5 with {\arrow{<}}
        }] (\t+\k * 60:\r cm) --  (\t + \k * 60 + 60:\r cm);
        \draw[ postaction={decorate}, decoration={
            markings,
            mark=at position 0.5 with {\arrow{<}}
        }] (\t+\k*60 - 60:\r cm) -- (\t +\k*60:\r cm);
    }
    \foreach \k in {0,2,4}{
        \draw[postaction={decorate}, decoration={
            markings,
            mark=at position 0.7 with {\arrow{<}}
        }] (\t+\k*60:\r cm) -- (\t+\k*60:\R cm);
    }
    \foreach \k in {1,3,5}{
        \draw[postaction={decorate}, decoration={
            markings,
            mark=at position 0.7 with {\arrow{<}}
        }] (\t+\k*60:\r cm) -- (\t+\k*60:\R cm);
    }
\end{scope}
\begin{scope}[yshift=-\h cm]
    \foreach \k in {0,2,4}{
        \filldraw[green!20, fill opacity=0.5] (\t+\k * 60:\r cm) 
            .. controls (0,.5*\h) .. (\t + \k * 60 + 60:\r cm) -- (\t+\k * 60:\r cm) ;
        \draw[green] (\t+\k * 60:\r cm) 
            .. controls (0,.5*\h) .. (\t + \k * 60 + 60:\r cm);
    }
\end{scope}
\draw[darkgreen!50] (-0.045,-.675*\h)--(\t+30:.44 cm);
\draw[darkgreen!50] (.22,-.6*\h) to[out=\t+120, in=-90, looseness=.5] (\t+150:.44 cm);
\draw[darkgreen!50] (-.18,-.585*\h) to[out=\t-75, in=-90, looseness=.5] (\t+270:.44 cm);
\begin{scope}[yshift=-.625*\h cm]
    \def \d {.25}; 
    \draw[darkgreen!50, dashed] (-98:\d cm) 
        -- (24:\d cm)
        -- (135:\d cm)
        -- (-98:\d cm) 
        ;
\end{scope}
\foreach \k in {0,1,2,3,4,5}{
    \draw[darkgreen] (\t + \k * 60:2cm) -- +(0,-\h);
}
\foreach \k in {0,2,4}{
    \draw[
        postaction={decorate},
        decoration={
            markings,
            mark=at position 0.7 with {\arrow{<}}
        }
    ]
    (\t+\k * 60:\R cm)
    .. controls (0,0) ..
    (\t + \k * 60 + 60:\R cm);
}
\end{tikzpicture}
}

\def\TAlphaTwoA{
\begin{tikzpicture}[scale=0.85,yscale=.75, xscale=-1, draw=darkblue,
    decoration={
    markings},
    baseline={([yshift=-.8ex]current bounding box.center)}
    ]
\def \h{5};
\def \t{50}; 
\def \r{1.2}; 
\def \R{2}; 
%
\filldraw[black] (1.6,-1) circle (2pt) node[black,scale=0.7, above]{$x$};
\begin{scope}[yshift=-\h cm]
    \foreach \k in {0,2,4}{
        \draw[double,  postaction={decorate}, decoration={
            markings,
            mark=at position 0.5 with {\arrow{<}}
        }] (\t+\k * 60:\r cm) --  (\t + \k * 60 + 60:\r cm);
        \draw[ postaction={decorate}, decoration={
            markings,
            mark=at position 0.5 with {\arrow{<}}
        }] (\t+\k*60 - 60:\r cm) -- (\t +\k*60:\r cm);
    }
    \foreach \k in {0,2,4}{
        \draw[postaction={decorate}, decoration={
            markings,
            mark=at position 0.7 with {\arrow{<}}
        }] (\t+\k*60:\r cm) -- (\t+\k*60:\R cm);
    }
    \foreach \k in {1,3,5}{
        \draw[postaction={decorate}, decoration={
            markings,
            mark=at position 0.7 with {\arrow{<}}
        }] (\t+\k*60:\r cm) -- (\t+\k*60:\R cm);
    }
\end{scope}
\begin{scope}[yshift=-\h cm]
    \foreach \k in {0,2,4}{
        \filldraw[green!20, fill opacity=0.5] (\t+\k * 60:\r cm) 
            .. controls (0,.5*\h) .. (\t + \k * 60 + 60:\r cm) -- (\t+\k * 60:\r cm) ;
        \draw[green] (\t+\k * 60:\r cm) 
            .. controls (0,.5*\h) .. (\t + \k * 60 + 60:\r cm);
    }
\end{scope}
\draw[darkgreen!50] (-0.045,-.675*\h)--(\t+30:.44 cm);
\draw[darkgreen!50] (.22,-.6*\h) to[out=\t+120, in=-90, looseness=.5] (\t+150:.44 cm);
\draw[darkgreen!50] (-.18,-.585*\h) to[out=\t-75, in=-90, looseness=.5] (\t+270:.44 cm);
\begin{scope}[yshift=-.625*\h cm]
    \def \d {.25}; 
    \draw[darkgreen!50, dashed] (-98:\d cm) 
        -- (24:\d cm)
        -- (135:\d cm)
        -- (-98:\d cm) 
        ;
\end{scope}
\foreach \k in {0,1,2,3,4,5}{
    \draw[darkgreen] (\t + \k * 60:2cm) -- +(0,-\h);
}
\foreach \k in {0,2,4}{
    \draw[
        postaction={decorate},
        decoration={
            markings,
            mark=at position 0.7 with {\arrow{<}}
        }
    ]
    (\t+\k * 60:\R cm)
    .. controls (0,0) ..
    (\t + \k * 60 + 60:\R cm);
}
\end{tikzpicture}
}

\def\TAlphaTwoC{
\begin{tikzpicture}[scale=0.85,yscale=.75, xscale=-1, draw=darkblue,
    decoration={
    markings},
    baseline={([yshift=-.8ex]current bounding box.center)}
    ]
\def \h{5};
\def \t{50}; 
\def \r{1.2}; 
\def \R{2}; 
%
\filldraw[black] (0.7,0.2) circle (2pt) node[black,scale=0.7, above]{$x$};
\begin{scope}[yshift=-\h cm]
    \foreach \k in {0,2,4}{
        \draw[double,  postaction={decorate}, decoration={
            markings,
            mark=at position 0.5 with {\arrow{<}}
        }] (\t+\k * 60:\r cm) --  (\t + \k * 60 + 60:\r cm);
        \draw[ postaction={decorate}, decoration={
            markings,
            mark=at position 0.5 with {\arrow{<}}
        }] (\t+\k*60 - 60:\r cm) -- (\t +\k*60:\r cm);
    }
    \foreach \k in {0,2,4}{
        \draw[postaction={decorate}, decoration={
            markings,
            mark=at position 0.7 with {\arrow{<}}
        }] (\t+\k*60:\r cm) -- (\t+\k*60:\R cm);
    }
    \foreach \k in {1,3,5}{
        \draw[postaction={decorate}, decoration={
            markings,
            mark=at position 0.7 with {\arrow{<}}
        }] (\t+\k*60:\r cm) -- (\t+\k*60:\R cm);
    }
\end{scope}
\begin{scope}[yshift=-\h cm]
    \foreach \k in {0,2,4}{
        \filldraw[green!20, fill opacity=0.5] (\t+\k * 60:\r cm) 
            .. controls (0,.5*\h) .. (\t + \k * 60 + 60:\r cm) -- (\t+\k * 60:\r cm) ;
        \draw[green] (\t+\k * 60:\r cm) 
            .. controls (0,.5*\h) .. (\t + \k * 60 + 60:\r cm);
    }
\end{scope}
\draw[darkgreen!50] (-0.045,-.675*\h)--(\t+30:.44 cm);
\draw[darkgreen!50] (.22,-.6*\h) to[out=\t+120, in=-90, looseness=.5] (\t+150:.44 cm);
\draw[darkgreen!50] (-.18,-.585*\h) to[out=\t-75, in=-90, looseness=.5] (\t+270:.44 cm);
\begin{scope}[yshift=-.625*\h cm]
    \def \d {.25}; 
    \draw[darkgreen!50, dashed] (-98:\d cm) 
        -- (24:\d cm)
        -- (135:\d cm)
        -- (-98:\d cm) 
        ;
\end{scope}
\foreach \k in {0,1,2,3,4,5}{
    \draw[darkgreen] (\t + \k * 60:2cm) -- +(0,-\h);
}
\foreach \k in {0,2,4}{
    \draw[
        postaction={decorate},
        decoration={
            markings,
            mark=at position 0.7 with {\arrow{<}}
        }
    ]
    (\t+\k * 60:\R cm)
    .. controls (0,0) ..
    (\t + \k * 60 + 60:\R cm);
}
\end{tikzpicture}
}

\def\TAlphaTwoB{
\begin{tikzpicture}[scale=0.85,yscale=.75, xscale=-1, draw=darkblue,
    decoration={
    markings},
    baseline={([yshift=-.8ex]current bounding box.center)}
    ]
\def \h{5};
\def \t{50}; 
\def \r{1.2}; 
\def \R{2}; 
%
\filldraw[black] (-1.5,-0.6) circle (2pt) node[black,scale=0.7, above]{$x$};
\begin{scope}[yshift=-\h cm]
    \foreach \k in {0,2,4}{
        \draw[double,  postaction={decorate}, decoration={
            markings,
            mark=at position 0.5 with {\arrow{<}}
        }] (\t+\k * 60:\r cm) --  (\t + \k * 60 + 60:\r cm);
        \draw[ postaction={decorate}, decoration={
            markings,
            mark=at position 0.5 with {\arrow{<}}
        }] (\t+\k*60 - 60:\r cm) -- (\t +\k*60:\r cm);
    }
    \foreach \k in {0,2,4}{
        \draw[postaction={decorate}, decoration={
            markings,
            mark=at position 0.7 with {\arrow{<}}
        }] (\t+\k*60:\r cm) -- (\t+\k*60:\R cm);
    }
    \foreach \k in {1,3,5}{
        \draw[postaction={decorate}, decoration={
            markings,
            mark=at position 0.7 with {\arrow{<}}
        }] (\t+\k*60:\r cm) -- (\t+\k*60:\R cm);
    }
\end{scope}
\begin{scope}[yshift=-\h cm]
    \foreach \k in {0,2,4}{
        \filldraw[green!20, fill opacity=0.5] (\t+\k * 60:\r cm) 
            .. controls (0,.5*\h) .. (\t + \k * 60 + 60:\r cm) -- (\t+\k * 60:\r cm) ;
        \draw[green] (\t+\k * 60:\r cm) 
            .. controls (0,.5*\h) .. (\t + \k * 60 + 60:\r cm);
    }
\end{scope}
\draw[darkgreen!50] (-0.045,-.675*\h)--(\t+30:.44 cm);
\draw[darkgreen!50] (.22,-.6*\h) to[out=\t+120, in=-90, looseness=.5] (\t+150:.44 cm);
\draw[darkgreen!50] (-.18,-.585*\h) to[out=\t-75, in=-90, looseness=.5] (\t+270:.44 cm);
\begin{scope}[yshift=-.625*\h cm]
    \def \d {.25}; 
    \draw[darkgreen!50, dashed] (-98:\d cm) 
        -- (24:\d cm)
        -- (135:\d cm)
        -- (-98:\d cm) 
        ;
\end{scope}
\foreach \k in {0,1,2,3,4,5}{
    \draw[darkgreen] (\t + \k * 60:2cm) -- +(0,-\h);
}
\foreach \k in {0,2,4}{
    \draw[
        postaction={decorate},
        decoration={
            markings,
            mark=at position 0.7 with {\arrow{<}}
        }
    ]
    (\t+\k * 60:\R cm)
    .. controls (0,0) ..
    (\t + \k * 60 + 60:\R cm);
}
\end{tikzpicture}
}

\def\TBetaO{
\begin{tikzpicture}[scale=0.85,yscale=-.75, xscale=1, draw=darkblue,
    decoration={
    markings},
    baseline={([yshift=-.8ex]current bounding box.center)}
    ]
\def \h{5};
\def \t{230}; 
\def \r{1.2}; 
\def \R{2}; 
%
\begin{scope}[yshift=-\h cm]
    \foreach \k in {0,2,4}{
        \draw[double,  postaction={decorate}, decoration={
            markings,
            mark=at position 0.5 with {\arrow{<}}
        }] (\t+\k * 60:\r cm) --  (\t + \k * 60 + 60:\r cm);
        \draw[ postaction={decorate}, decoration={
            markings,
            mark=at position 0.5 with {\arrow{<}}
        }] (\t+\k*60 - 60:\r cm) -- (\t +\k*60:\r cm);
    }
    \foreach \k in {0,2,4}{
    \draw[
        postaction={decorate},
        decoration={
            markings,
            mark=at position 0.7 with {\arrow{>}}
        }
    ]
      (\t+\k*60:\r cm) -- (\t+\k*60:\R cm);
    }
    \foreach \k in {1,3,5}{
        \draw[
        postaction={decorate},
        decoration={
            markings,
            mark=at position 0.7 with {\arrow{<}}
        }
    ]
       (\t+\k*60:\r cm) -- (\t+\k*60:\R cm);
    }
\end{scope}
\begin{scope}[yshift=-\h cm]
    \foreach \k in {0,2,4}{
        \filldraw[green!20, fill opacity=0.5] (\t+\k * 60:\r cm) 
            .. controls (0,.5*\h) .. (\t + \k * 60 + 60:\r cm) -- (\t+\k * 60:\r cm) ;
        \draw[green] (\t+\k * 60:\r cm) 
            .. controls (0,.5*\h) .. (\t + \k * 60 + 60:\r cm);
    }
\end{scope}
\draw[darkgreen!50] (-0.045,-.675*\h)--(\t+30:.44 cm);
\draw[darkgreen!50] (.22,-.6*\h) to[out=\t+120, in=-90, looseness=.5] (\t+150:.44 cm);
\draw[darkgreen!50] (-.18,-.585*\h) to[out=\t-75, in=-90, looseness=.5] (\t+270:.44 cm);
\begin{scope}[yshift=-.625*\h cm]
    \def \d {.25}; 
    \draw[darkgreen!50, dashed] (-98:\d cm) 
        -- (24:\d cm)
        -- (135:\d cm)
        -- (-98:\d cm) 
        ;
\end{scope}
\foreach \k in {0,1,2,3,4,5}{
    \draw[darkgreen] (\t + \k * 60:2cm) -- +(0,-\h);
}
\foreach \k in {0,2,4}{
    \draw[
        postaction={decorate},
        decoration={
            markings,
            mark=at position 0.7 with {\arrow{<}}
        }
    ]
 (\t+\k * 60:\R cm) 
    .. controls (0,0) ..  (\t + \k * 60 + 60:\R cm);
}
\end{tikzpicture}
}

\def\TBetaOne{
\begin{tikzpicture}[scale=0.85,yscale=-.75, xscale=1, draw=darkblue,
    decoration={
    markings},
    baseline={([yshift=-.8ex]current bounding box.center)}
    ]
\def \h{5};
\def \t{230}; 
\def \r{1.2}; 
\def \R{2}; 
%
\filldraw[black] (0.85,-5) circle (2pt) node[black,scale=0.7, above]{$x$};
\begin{scope}[yshift=-\h cm]
    \foreach \k in {0,2,4}{
        \draw[double,  postaction={decorate}, decoration={
            markings,
            mark=at position 0.5 with {\arrow{<}}
        }] (\t+\k * 60:\r cm) --  (\t + \k * 60 + 60:\r cm);
        \draw[ postaction={decorate}, decoration={
            markings,
            mark=at position 0.5 with {\arrow{<}}
        }] (\t+\k*60 - 60:\r cm) -- (\t +\k*60:\r cm);
    }
    \foreach \k in {0,2,4}{
    \draw[
        postaction={decorate},
        decoration={
            markings,
            mark=at position 0.7 with {\arrow{>}}
        }
    ]
      (\t+\k*60:\r cm) -- (\t+\k*60:\R cm);
    }
    \foreach \k in {1,3,5}{
        \draw[
        postaction={decorate},
        decoration={
            markings,
            mark=at position 0.7 with {\arrow{<}}
        }
    ]
       (\t+\k*60:\r cm) -- (\t+\k*60:\R cm);
    }
\end{scope}
\begin{scope}[yshift=-\h cm]
    \foreach \k in {0,2,4}{
        \filldraw[green!20, fill opacity=0.5] (\t+\k * 60:\r cm) 
            .. controls (0,.5*\h) .. (\t + \k * 60 + 60:\r cm) -- (\t+\k * 60:\r cm) ;
        \draw[green] (\t+\k * 60:\r cm) 
            .. controls (0,.5*\h) .. (\t + \k * 60 + 60:\r cm);
    }
\end{scope}
\draw[darkgreen!50] (-0.045,-.675*\h)--(\t+30:.44 cm);
\draw[darkgreen!50] (.22,-.6*\h) to[out=\t+120, in=-90, looseness=.5] (\t+150:.44 cm);
\draw[darkgreen!50] (-.18,-.585*\h) to[out=\t-75, in=-90, looseness=.5] (\t+270:.44 cm);
\begin{scope}[yshift=-.625*\h cm]
    \def \d {.25}; 
    \draw[darkgreen!50, dashed] (-98:\d cm) 
        -- (24:\d cm)
        -- (135:\d cm)
        -- (-98:\d cm) 
        ;
\end{scope}
\foreach \k in {0,1,2,3,4,5}{
    \draw[darkgreen] (\t + \k * 60:2cm) -- +(0,-\h);
}
\foreach \k in {0,2,4}{
    \draw[
        postaction={decorate},
        decoration={
            markings,
            mark=at position 0.7 with {\arrow{<}}
        }
    ]
 (\t+\k * 60:\R cm) 
    .. controls (0,0) ..  (\t + \k * 60 + 60:\R cm);
}
\end{tikzpicture}
}

\def\BetaOne{
\begin{tikzpicture}[scale=0.85, yscale=-.75, draw=darkblue,
    decoration={
    markings},
    baseline={([yshift=-.8ex]current bounding box.center)}
    ]
\def \h{5};
\def \t{50}; 
\def \r{1.2}; 
\def \R{2}; 
%
\filldraw[black] (-0.1,-5.2) circle (2pt) node[black,scale=0.7, above]{$x$};
\begin{scope}[yshift=-\h cm]
    \foreach \k in {0,2,4}{
        \draw[double, postaction={decorate}, decoration={
            markings,
            mark=at position 0.5 with {\arrow{>}}
        }] (\t+\k * 60:\r cm) --  (\t + \k * 60 + 60:\r cm);
        \draw[postaction={decorate}, decoration={
            markings,
            mark=at position 0.5 with {\arrow{>}}
        }] (\t+\k*60 - 60:\r cm) -- (\t +\k*60:\r cm);
    }
    \foreach \k in {0,2,4}{
        \draw[postaction={decorate}, decoration={
            markings,
            mark=at position 0.7 with {\arrow{<}}
        }] (\t+\k*60:\r cm) -- (\t+\k*60:\R cm);
    }
    \foreach \k in {1,3,5}{
        \draw[postaction={decorate}, decoration={
            markings,
            mark=at position 0.7 with {\arrow{>}}
        }] (\t+\k*60:\r cm) -- (\t+\k*60:\R cm);
    }
\end{scope}
\begin{scope}[yshift=-\h cm]
    \foreach \k in {0,2,4}{
        \filldraw[green!20, fill opacity=0.5] (\t+\k * 60:\r cm) 
            .. controls (0,.5*\h) .. (\t + \k * 60 + 60:\r cm) -- (\t+\k * 60:\r cm) ;
        \draw[green] (\t+\k * 60:\r cm) 
            .. controls (0,.5*\h) .. (\t + \k * 60 + 60:\r cm);
    }
\end{scope}
\draw[darkgreen!50] (.03,-.575*\h) to[out=\t, in=-90, looseness=.4] (\t+30:.44 cm);
\draw[darkgreen!50] (-.265,-.635*\h) to[out=\t+120, in=-90, looseness=.5] (\t+155:.44 cm);
\draw[darkgreen!50] (.22,-.655*\h) to[out=\t-75, in=-100, looseness=.5] (\t+257.5:.46 cm);
\begin{scope}[yshift=-.625*\h cm]
    \def \d {.25}; 
    \draw[darkgreen!50, dashed] (82:\d cm) 
        -- (190:\d cm)
        -- (320:\d cm)
        -- (82:\d cm) 
        ;
\end{scope}
\foreach \k in {0,1,2,3,4,5}{
    \draw[darkgreen] (\t + \k * 60:2cm) -- +(0,-\h);
}
\foreach \k in {0,2,4}{
    \draw[
        postaction={decorate},
        decoration={
            markings,
            mark=at position 0.7 with {\arrow{>}}
        }
    ]
    (\t+\k * 60:\R cm) 
    .. controls (0,0) ..  (\t + \k * 60 + 60:\R cm);
}
\end{tikzpicture}
}

\def\BetaO{
\begin{tikzpicture}[scale=0.85, yscale=-.75, draw=darkblue,
    decoration={
    markings},
    baseline={([yshift=-.8ex]current bounding box.center)}
    ]
\def \h{5};
\def \t{50}; 
\def \r{1.2}; 
\def \R{2}; 
%
\begin{scope}[yshift=-\h cm]
    \foreach \k in {0,2,4}{
        \draw[double, postaction={decorate}, decoration={
            markings,
            mark=at position 0.5 with {\arrow{>}}
        }] (\t+\k * 60:\r cm) --  (\t + \k * 60 + 60:\r cm);
        \draw[postaction={decorate}, decoration={
            markings,
            mark=at position 0.5 with {\arrow{>}}
        }] (\t+\k*60 - 60:\r cm) -- (\t +\k*60:\r cm);
    }
    \foreach \k in {0,2,4}{
        \draw[postaction={decorate}, decoration={
            markings,
            mark=at position 0.7 with {\arrow{<}}
        }] (\t+\k*60:\r cm) -- (\t+\k*60:\R cm);
    }
    \foreach \k in {1,3,5}{
        \draw[postaction={decorate}, decoration={
            markings,
            mark=at position 0.7 with {\arrow{>}}
        }] (\t+\k*60:\r cm) -- (\t+\k*60:\R cm);
    }
\end{scope}
\begin{scope}[yshift=-\h cm]
    \foreach \k in {0,2,4}{
        \filldraw[green!20, fill opacity=0.5] (\t+\k * 60:\r cm) 
            .. controls (0,.5*\h) .. (\t + \k * 60 + 60:\r cm) -- (\t+\k * 60:\r cm) ;
        \draw[green] (\t+\k * 60:\r cm) 
            .. controls (0,.5*\h) .. (\t + \k * 60 + 60:\r cm);
    }
\end{scope}
\draw[darkgreen!50] (.03,-.575*\h) to[out=\t, in=-90, looseness=.4] (\t+30:.44 cm);
\draw[darkgreen!50] (-.265,-.635*\h) to[out=\t+120, in=-90, looseness=.5] (\t+155:.44 cm);
\draw[darkgreen!50] (.22,-.655*\h) to[out=\t-75, in=-100, looseness=.5] (\t+257.5:.46 cm);
\begin{scope}[yshift=-.625*\h cm]
    \def \d {.25}; 
    \draw[darkgreen!50, dashed] (82:\d cm) 
        -- (190:\d cm)
        -- (320:\d cm)
        -- (82:\d cm) 
        ;
\end{scope}
\foreach \k in {0,1,2,3,4,5}{
    \draw[darkgreen] (\t + \k * 60:2cm) -- +(0,-\h);
}
\foreach \k in {0,2,4}{
    \draw[
        postaction={decorate},
        decoration={
            markings,
            mark=at position 0.7 with {\arrow{>}}
        }
    ]
    (\t+\k * 60:\R cm) 
    .. controls (0,0) ..  (\t + \k * 60 + 60:\R cm);
}
\end{tikzpicture}
}

\subsection{{$\GL_N$} foams for $6$-valent vertex categorification}
In this subsection, we introduce the $\GL_N$ foams which are used to categorify the $6$-valent vertex and decompose the hexagon web in $\operatorname{Kar}(\Foam^{\HexaBound})$. We develop basic properties of the $\GL_N$ foams for arbitrary $N$, which will be used in the following subsections when $N$ is specialized to $4$ and $5$.    
 
 First, we introduce notations for the $\GL_N$ foams in between webs $\Wone$, $\Wzero$, $\Wzeroa$, and $\Wtwo$ in Figure \ref{gl4foamdeef}.

\begin{figure}[htbp]
    \centering
     \[ \BenzenewebM 
      \quad \stackrel[\beta]{\alpha}  {\leftrightarrows} 
       \quad 
      \Benzeneweb  \quad \stackrel[\nu]{\mu}{\rightleftarrows}
      \quad \Benzeneweba   \quad \stackrel[\xi]{\eta}{\rightleftarrows}
      \quad \BenzenewebaM \]
      
\[
\scalebox{0.9}{$\displaystyle
    \mu=\TAlphaZBetaZDef, \,   \nu=\AlphaZTBetaZDef,
$}
\] 
\[
\scalebox{0.85}{$\displaystyle
    \alpha =  \AlphaO, \, \beta =  \BetaO, \, \eta =  \TAlphaO, \, \xi=  \TBetaO. 
$}
\] 
    \caption{The $\GL_N$ foams used to demonstrate the decomposition of the hexagon web. The horizontal facets in the middle of the foams $\mu, \nu$ are $3$-facets. The highlighted facets of foams $\alpha, \beta, \eta, \xi$ are $2$-facets.}
    \label{gl4foamdeef}
\end{figure}

\begin{lemma}\label{degcheck}
 When the foams $\mu, \nu, \beta, \alpha, \xi$ and $\eta$ are interpreted as $\GL_N$ foams their degrees are given by
    \[\foamdeg(\mu)= 0,\quad \foamdeg(\nu)= 0, \quad
    \foamdeg(\beta \circ \alpha)= -2N + 8,\quad \foamdeg(\xi \circ \eta)= -2N + 8.  \]
\end{lemma}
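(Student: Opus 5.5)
I will compute the degrees with the coloring formula of Corollary~\ref{cor:degree foam with corner alt}. For a cornered foam $F$ with boundary $\HexaBound$ and a coloring $c$,
\[
\foamdeg(F) = -\sum_{i<j}\chi(F_{ij}(c)) + \boundarydeg(\HexaBound),
\]
since none of the foams carries dots. Here $\boundarydeg(\HexaBound)=\tfrac12\cdot 6\cdot(N-1)=3N-3$. Equivalently, I can use the combinatorial formula of Definition~\ref{dfn:degreefoam}: sum $-a(N-a)\chi(f)$ over facets, add $ab+(a+b)(N-a-b)$ for each interval binding, subtract $d(p)$ for each singular vertex, and apply the same boundary correction. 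Whichever is easier for a given foam can be used, and the two serve as a cross-check.

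\textbf{$\mu$ and $\nu$.} Each is the "square-switch" type foam between $\Wzero$ and $\Wzeroa$. It passes through a middle level where the inner hexagon has been merged, with a horizontal $3$-facet. First I would list its facets:
\begin{itemize}
\item six vertical boundary $1$-facets, which are discs;
\item inner facets coming from the hexagon edges of $\Wzero$ and $\Wzeroa$, which are $1$- and $2$-facets and are discs;
\item the central $3$-facet, a disc.
\end{itemize}
I would then count the bindings, all of which are intervals, and the singular vertices, which sit where the $2$-edges of the hexagon meet the $3$-facet. Substituting into Definition~\ref{dfn:degreefoam} gives a linear expression in $N$. I expect the $N$-terms to cancel against $3N-3$, leaving $0$.

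A quicker sanity check is available. $\mu$ is locally a composition of "zip/unzip" moves through the $3$-facet. By decategorification it should realize the coefficient-$1$ term relating $\Wzero$ and $\Wzeroa$ in Equation~\eqref{HexagonRelation}, which forces degree $0$. By the symmetry $\nu=\overline{\mu}$ under reflection, $\nu$ has the same degree.

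\textbf{$\beta\circ\alpha$ and $\xi\circ\eta$.} Rather than computing $\alpha$ and $\beta$ separately, I would compose them first. Stacking $\beta$ on $\alpha$ gives an endomorphism foam of $\Wone$ whose underlying surface is the identity on the three arcs. The tube regions in the middle are three $2$-facet "lids" on each side, glued across the hexagon, and they close up around a small sphere-like bubble region. So $\beta\circ\alpha$ is $\idfoam_{\Wone}$ with an inserted closed-up piece, namely the one drawn in the definition of the matching bubble.

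Fix a coloring $c$ in which the three arcs carry pigments $1,2,3$. Since the identity contributes exactly $\boundarydeg$, only the correction matters: the change in $\sum_{i<j}\chi(F_{ij})$ caused by the inserted piece. That piece introduces a new closed $1$-facet region, a disc-capped annulus carrying an extra pigment $a$. It changes the bichrome surfaces $F_{ai}$ for the $N-4$ pigments $i$ not used nearby, by $+2$ each; the bichrome surfaces involving the pigments $1,2,3,a$ change by a fixed amount. The net change is $-2(N-4)=-2N+8$.

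This matches the decorated matching bubble above, which carries $x^{2N-8-k}$: a dot of degree $2N-8$ is exactly what is needed to return the endomorphism to degree $0$. This is a good consistency check. The same argument, rotated by $\pi/3$, handles $\xi\circ\eta$; I would simply note the symmetry.

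\textbf{Main obstacle.} The hard part is bookkeeping the bichrome Euler characteristics for the composed foams $\beta\circ\alpha$ and $\xi\circ\eta$, which contain singular vertices, consistently with the chosen coloring. I would first try the facet/binding/vertex formula on $\alpha$ alone: three $2$-facet discs, six singular points, and the boundary $1$-facets. I would then double the result using $\foamdeg(\beta)=\foamdeg(\alpha)$ by reflection, and cross-check against the coloring computation.
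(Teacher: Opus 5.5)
Your final values are right, but the $\beta\circ\alpha$ argument misidentifies the foam, contains a false structural claim, and skips the one step that needs checking.

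\textbf{Wrong composite.} Since $\alpha\colon\Wzero\to\Wone$ and $\beta\colon\Wone\to\Wzero$, the composite $\beta\circ\alpha$ is an endomorphism of the hexagon web $\Wzero$ passing through $\Wone$. The bubble you describe, an endomorphism of $\Wone$ through $\Wzero$, is $\alpha\circ\beta$. This is repairable by additivity, $\foamdeg(\beta\circ\alpha)=\foamdeg(\alpha)+\foamdeg(\beta)=\foamdeg(\alpha\circ\beta)$, but you must say so.

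\textbf{No singular vertices.} None of $\alpha$, $\beta$, or their composites has a singular vertex. A singular vertex has adjacent thicknesses $(a,b,c,a+b,b+c,a+b+c)$ with $a,b,c\ge 1$, so it needs a facet of thickness at least $3$. These foams have only $1$- and $2$-facets. Your fallback count on $\alpha$ ``with six singular points'' would subtract spurious $d(p)$ terms and give the wrong answer. In fact $\alpha$ has three $(1,1,2)$ interval bindings (arcs capping the double edges), three $2$-discs, and $1$-facets.

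\textbf{Unchecked cancellation.} In the coloring argument, ``the bichrome surfaces involving $1,2,3,a$ change by a fixed amount'' hides the only real content: you need that amount to be $0$, and you do not check it. In $\alpha\circ\beta$ the inner $1$-facet is a pair of pants (the inner discs of $\alpha$ and $\beta$ glued along the three single hexagon edges). Its three boundary circles are circle bindings, each capped by a $2$-disc colored $\{r,a\}$ that also fills the hole in the outer annulus colored $r$. Take $r,s\in\{1,2,3\}$ and $b\notin\{1,2,3,a\}$. Then $F_a$ is a $2$-sphere (pants plus three discs, not a ``disc-capped annulus''). Each $F_r$ is still a disc. $F_{ra}$, consisting of sheet $r$, the pants, and the other two $2$-discs, is still a disc. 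So $F_{rs}$, $F_{rb}$, $F_{ra}$ are unchanged, and only the $N-4$ spheres $F_{ab}=F_a$ contribute. With that check, your bichrome route gives $-2N+8$ and is a legitimate alternative to the paper's count.

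\textbf{$\mu$ and $\nu$.} Here you only announce a count (``I expect the $N$-terms to cancel''), and the decategorification remark is not an argument: the web relation does not fix the degree of a particular foam. The paper's proof is exactly this count, done on $\overline{\idfoam_{\Wtwo}}\ast\mu$:
\begin{itemize}
\item nine $1$-facets, six $2$-facets and one $3$-facet, all discs;
\item twelve $(1,1,2)$ and six $(1,2,3)$ interval bindings;
\item six $(1,1,1,2,2,3)$ vertices with $d(p)=3N-6$.
\end{itemize}
The total is $0$. On $\mu$ itself the same count gives $-3N+3$, which does cancel against $\boundarydeg(\HexaBound)=3N-3$, as you predicted.

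\textbf{The paper's count for $\beta\circ\alpha$.} The paper closes the same way and finds two $1$-discs, one $1$-facet annulus (the outer sheets joined with the closure), six $2$-discs and six $(1,1,2)$ bindings, giving $-2N+8$.

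\textbf{Dot check.} Your side remark conflates exponent and degree. Since $x$ has degree $2$, the dot that returns the bubble to degree $0$ is $x^{N-4}$ (the case $k=N-4$), which has degree $2N-8$. The dot $x^{2N-8}$ has degree $4N-16$.
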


\begin{proof}
   This is a simple count and an application of Definition \ref{dfn:degreefoam}. Note that this definition does not work for foams with corners, so we have to close the boundary webs. Close the boundaries of $\beta \circ \alpha, \xi \circ \eta$ so that the boundary webs $\Wone$ and$ \Wtwo $ become a single circle  (see Notation \ref{notation webs 6pt boundary}). Close $\mu$ and $\nu$ following the induced closure of $\beta \circ \alpha$ from $\mu$ and $\xi \circ \eta$ from $\nu$. 
   
   First consider the foam $\mu$. Following Definition \ref{Foam-Compo-Conca} we modify this foam to $\overline{ \idfoam_{W_2}}\ast \mu$ (this corresponds to the above description) and count:

   \begin{itemize}
       \item six $(1,2,3)$ bindings, 
       \item twelve $(1,1,2)$ bindings, 
       \item nine $1$-facets with Euler characteristic $1$,
       \item six $2$-facets with Euler characteristic $1$,
       \item one $3$-facet with Euler characteristic $1$,
       \item and six $(1,1,1,2,2,3)$-vertices.
   \end{itemize}
   Following Definition \ref{dfn:degreefoam}, 
   \begin{align*}
       \foamdeg(\mu) &= - (1 \cdot (N -1) \cdot 9 + 2\cdot (N-2) \cdot 6 + 3 \cdot (N - 3) \cdot 1) +\\
       + &((2 + 3 \cdot(N - 3) )\cdot 6 + (1 + 2\cdot (N-2)) \cdot 12 ) - \\
       -&(3 + 3 \cdot(N - 3)) \cdot 6 = 0.
   \end{align*}

   In the case of $\beta \circ \alpha$ we modify the foam to $\overline{\idfoam_{\Wtwo}} \ast (\beta \circ \alpha) $ and count
   \begin{itemize}
       \item six $(1,1,2)$ bindings,
       \item two $1$-facets with Euler characteristic $1$,
       \item one $1$-facet with Euler characteristic $0$ (a tube), and
       \item six $2$-facets with Euler characteristic $1$.
   \end{itemize}
   The total degree is therefore
       \begin{align*}
       \foamdeg(\beta \circ \alpha) &= - (1 \cdot(N -1) \cdot 2 + 2\cdot (N-2) \cdot 6 ) + (1 + 2\cdot (N-2)) \cdot 6  \\ 
       &= -2N + 8.
   \end{align*}
   
   The degrees of the other two foams can be similarly calculated, as they are just rotations of the two foams for which we have done the computation. Hence, the degree of them is as written in the statement of the lemma and can be computed using the same count.
\end{proof}

\begin{thm} \label{GLNIndecompo}
     $\mathcal{E}(\Wone)$, and $\mathcal{E}(\Wtwo)$ are indecomposable objects in $Kar(\Foam^{\HexaBound})$. When $\nu\circ\mu$ is a projection in $Kar(\Foam^{\HexaBound})$, $\mathcal{E}(\nu\circ\mu)$ is an indecomposable object. Similarly, when $\mu\circ\nu$ is a projection, $\mathcal{E}(\mu\circ\nu)$ is an indecomposable object.
\end{thm}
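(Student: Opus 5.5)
The plan is to reduce everything to the degree-zero criterion of Proposition \ref{IndecompoCrit}, and to control degree-zero endomorphism spaces via the graded ranks in Corollary \ref{WebPairDim}, together with the pairing computations of Proposition \ref{CartanMatrix}.

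\textbf{The objects $\mathcal{E}(\Wone)$ and $\mathcal{E}(\Wtwo)$.} We apply Lemma \ref{WebPairIndecompo}. Here $\boundarydeg(\HexaBound)=\tfrac12\cdot 6(N-1)=3N-3$. The closed web $\overline{\Wone}\Wone$ is three disjoint circles, so it evaluates to $[N]^3$. Its lowest term is $q^{3-3N}$, with coefficient $1$. Hence the degree-zero part of $\Hom_{\Foam^{\HexaBound}}(\Wone,\Wone)$ has rank one, spanned by $\idfoam_{\Wone}$. The same computation applies to $\Wtwo$, which is a rotation of $\Wone$.

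\textbf{The idempotent $P=\nu\circ\mu$.} Set $P=\nu\circ\mu\in\Hom(\Wzero,\Wzero)$, which has degree $0$ by Lemma \ref{degcheck}. It suffices to show that the degree-zero part of $P\,\Hom(\Wzero,\Wzero)\,P$ is one-dimensional, spanned by $P$. The key observation is that this space factors through $\Wzeroa$. For any $F\in \Hom(\Wzero,\Wzero)$,
\[
P\circ F\circ P=\nu\circ(\mu\circ F\circ\nu)\circ\mu ,
\]
and $\mu\circ F\circ\nu$ lies in $\Hom(\Wzeroa,\Wzeroa)$. Since $\mu$ and $\nu$ have degree $0$, it is enough to show two things:
\begin{itemize}
\item the degree-zero part of $\Hom(\Wzeroa,\Wzeroa)$ is spanned by $\idfoam_{\Wzeroa}$ together with foams factoring through $\Wtwo$, and
\item such foams are killed after sandwiching, or land in $\mathbb{Q}P$.
\end{itemize}

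\textbf{Computing the degree-zero endomorphisms of $\Wzeroa$.} By Corollary \ref{WebPairDim}, the lowest terms of $\overline{\Wzeroa}\Wzeroa$ come from the first formula in Proposition \ref{prop:closed sixvertex evaluation}.
\begin{itemize}
\item For $N\geq5$ the lowest term is $q^{11-5N}$. After the shift by $q^{3N-3}$ it sits in degree $8-2N<0$, so I would need to check the coefficient in degree $3-3N$ directly.
\end{itemize}
A cleaner route is to use the web identity $\Wzeroa=\Wsix+[N-3]\Wtwo$ together with the pairing matrix of Proposition \ref{CartanMatrix}:
\[
\overline{\Wzeroa}\Wzeroa=A+2[N-3]B'+[N-3]^2[N]^3 ,
\]
where $B'=\overline{\Wtwo}\Wsix=B(q)$. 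Only the term $A(q)$ contributes $q^{3-3N}$ with coefficient $1$; the other terms begin in higher degree. This last claim must be checked against the leading exponents:
\begin{itemize}
\item the leading exponent of $[N-3]B$ is $-(N-4)-(N-1)-(N+1)$, which is at least $3-3N$, with equality only when $N=4$;
\item so the bookkeeping is delicate when $N=4$.
\end{itemize}

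This gives the structural input: the degree-zero part of $\Hom(\Wzero,\Wzero)$ has rank equal to the $q^{3-3N}$ coefficient of $\overline{\Wzero}\Wzero$. That coefficient should be $1$ from the $\Wsix$-piece plus contributions from the $\Wone$-piece. The contributions from the $\Wone$-piece are exactly the foams factoring as $\alpha$-type maps through $\Wone$, namely combinations of $\alpha\circ x^k\circ\beta$.

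The argument then runs as follows.
\begin{itemize}
\item Show that $\mu\circ\alpha$ and $\beta\circ\nu$ vanish in the relevant degrees, or more precisely that $P\circ\alpha=0=\beta\circ P$. This is expected from the decategorified statement $\overline{\Wone}\Wsix=B(q)$ having no $q^{3-3N}$ term.
\item The degree-zero endomorphisms of $\Wzero$ modulo those factoring through $\Wone$ then form a rank-one space.
\item Sandwiching by $P$ yields $P\circ F\circ P\in\mathbb{Q}P$.
\end{itemize}

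The case $\mu\circ\nu$ is symmetric, with the roles of $\Wzero,\Wone$ and $\Wzeroa,\Wtwo$ exchanged.

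\textbf{Main obstacle.} The hardest step is the vanishing $P\circ\alpha=0$, equivalently the statement that $\mu$ kills the image of $\alpha$ in low degree. I would attempt it by evaluating the closed foams $\langle V\circ\mu\circ\alpha\rangle$ with Robert--Wagner colorings. The approach is to show that no coloring survives: the $3$-facet of $\mu$ must absorb three distinct pigments, which is incompatible with the tube and $2$-facets of $\alpha$. Failing that, I would use a degree count, showing that the relevant $\Hom(\Wone,\Wzeroa)$ space is zero in the degrees that occur, via the $[N]^2$-type pairings.
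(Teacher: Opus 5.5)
Your treatment of $\mathcal{E}(\Wone)$ and $\mathcal{E}(\Wtwo)$ is fine and matches the paper. The argument for $P=\nu\circ\mu$, however, has a genuine gap.

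\textbf{Where the proposal fails.} You route $P\circ F\circ P=\nu\circ(\mu\circ F\circ\nu)\circ\mu$ through the degree-zero part of $\Hom(\Wzeroa,\Wzeroa)$. That space is not one-dimensional:
\begin{itemize}
\item for $N=4$ its rank is the coefficient $2$ of $q^{-9}$ in Proposition \ref{prop:closed sixvertex evaluation};
\item for $N\geq 5$ the hexagon has nonzero endomorphisms of negative degree and a larger degree-zero part.
\end{itemize}
Your bookkeeping puts the extra contribution in the wrong term. The lowest exponent of $[N-3]B(q)$ is $5-3N$, strictly above $3-3N$ for every $N$. It is $[N-3]^2[N]^3$ that starts at $q^{11-5N}$, which equals $q^{3-3N}$ for $N=4$ and lies below it for $N\geq5$. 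So the claim that only $A(q)$ contributes to $q^{3-3N}$ is false.

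To finish along your route you would need two further facts:
\begin{itemize}
\item[(i)] the extra endomorphisms are spanned by maps factoring through $\Wtwo$ (or $\Wone$);
\item[(ii)] the orthogonality relations $\alpha\circ\nu=0$ and $\mu\circ\beta=0$.
\end{itemize}
Note that your $P\circ\alpha$ does not compose, since $\alpha\colon\Wzero\to\Wone$. Likewise, endomorphisms of $\Wzero$ factoring through $\Wone$ have the form $\beta\circ(\cdot)\circ\alpha$, not $\alpha\circ x^k\circ\beta$.

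Rank counts do not give (i). A decategorified identity such as $\overline{\Wone}\Wsix=B(q)$ cannot force a particular foam to vanish, so (ii) needs the explicit coloring computations of Proposition \ref{orthogonalityprop}. Together, (i) and (ii) amount to the decomposition theorems. The paper proves these only for $N=4,5$ (Theorems \ref{gl_4 sixvalent seam} and \ref{gl_5 sixvalent seam}); for $N\geq 6$ they are Conjecture \ref{gl_n sixvalent seam}.

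Since you leave this step as an unresolved ``main obstacle'', the proof is incomplete. Even if completed, it would cover only the $N$ for which orthogonality is known. The statement assumes nothing beyond $\nu\circ\mu$ being idempotent.

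\textbf{The missing idea.} Bracket differently and look at $\Hom(\Wzero,\Wzeroa)$ rather than $\Hom(\Wzeroa,\Wzeroa)$. For degree-zero $F\in\Hom(\Wzero,\Wzero)$, the composite $\mu\circ F$ is a degree-zero element of $\Hom(\Wzero,\Wzeroa)$. By Corollary \ref{WebPairDim} and the second formula in Proposition \ref{prop:closed sixvertex evaluation}, this space has graded rank
\[
q^{3N-3}\langle\overline{\Wzero}\Wzeroa\rangle=1+(\text{positive powers of } q).
\]
Hence $\mu\circ F=k\mu$ for some $k\in\mathbb{Q}$, and
\[
P\circ F\circ P=\nu\circ(\mu\circ F)\circ\nu\circ\mu=k\,(\nu\circ\mu)\circ(\nu\circ\mu)=kP,
\]
so Proposition \ref{IndecompoCrit} applies. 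This uses only the idempotency of $P$. It needs no orthogonality and no description of the hexagon's endomorphisms, so it works for every $N$. The case $\mu\circ\nu$ is identical, using $\nu\circ F'\in\Hom(\Wzeroa,\Wzero)$.
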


\begin{proof}
      $\mathcal{E}(\Wone)$ and $\mathcal{E}(\Wtwo)$ are indecomposable due to Proposition \ref{CartanMatrix} and Lemma \ref{WebPairIndecompo}. 
     In order to prove that $\mathcal{E}(\nu\circ\mu)$ is indecomposable, we consider any foam $F$ from $\Wzero$ to $\Wzero$.
     Since $ \mu\circ  F  \in \Hom_{\Foam^{\HexaBound}}(\Wzero,\Wzeroa)$ and $\langle \overline{\Wzero} \Wzeroa  \rangle$ has coefficient $1$ in front of $q^{-\boundarydeg({\HexaBound})}$ (see Proposition \ref{prop:closed sixvertex evaluation}), we know that $ \mu F= k \mu $ for some $k\in \mathbb{Q}$ by Corollary \ref{WebPairDim}. So   
     \[ (\nu\circ\mu) \circ F \circ (\nu\circ\mu) 
        = \nu\circ ( \mu \circ F) \circ (\nu\circ\mu)
        =\nu\circ k \mu \circ (\nu\circ\mu) 
        =  k (\nu\circ\mu)  \circ (\nu\circ\mu) 
        =k(\nu\circ\mu) . \]
     Thus, by Proposition \ref{IndecompoCrit}, $\mathcal{E}(\nu\circ\mu)$ is indecomposable. Similarly, we can show that $\mathcal{E}(\mu\circ \nu)$ is indecomposable when $\mu\circ \nu$ is a projection. 
\end{proof}

Now we present two foam relations deduced from the foam evaluation formulas. The following proposition gives an alternative expression of the foam relation \cite[Equation 10]{RW-eval-foams}. The $\GL_3$ version is given by Proposition 2.22 in \cite{Khovanov2018FoamEA}.

\begin{proposition}\label{thickness1neckcut}
 The following neck-cutting relation holds for $\GL_N$ foams. 
\[
\scalebox{0.9}{$\displaystyle
   \SingleCy =
 \sum_{\substack{0 \le i,j  \\  i+j\le N-1}
} (-1)^{N +i+j} e_{N-1-i-j} \SingleCupCapAb 
$}
\] 
\end{proposition}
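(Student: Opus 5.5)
Since the morphism spaces of $\Foam^\varepsilon$ are defined modulo the evaluation relation, I will prove the equality by pairing both sides with an arbitrary closing foam and comparing evaluations coloring by coloring. Let $V$ be any foam such that $V$ glued to the cylinder (resp.\ to each cup--cap term) is closed. Call the two closed foams $G$ (with the tube) and $G_{ij}$ (with the tube replaced by a cap carrying $x^i$ and a cup carrying $x^j$). I must show
\[
\langle G\rangle=\sum_{i+j\le N-1}(-1)^{N+i+j}e_{N-1-i-j}\langle G_{ij}\rangle .
\]
Colorings of $G$ and of $G_{ij}$ agree away from the surgery region, except that in $G_{ij}$ the cap and cup disks may carry independent pigments $a,b$, while in $G$ the tube carries a single pigment. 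I will therefore group the colorings of $G_{ij}$ by the pair $(a,b)$ and the common coloring $c$ of the rest, and sum over $i,j$ first.

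The key algebraic identity I need is
\[
\sum_{i+j\le N-1}(-1)^{N+i+j}e_{N-1-i-j}X_a^{\,j}X_b^{\,i}
=\begin{cases}\prod_{k\ne a}(X_a-X_k) & a=b,\\ 0 & a\neq b,\end{cases}
\]
up to a global sign fixed by conventions. I will obtain it from the generating function $\prod_k(1-tX_k)=\sum(-1)^m e_m t^m$. Since $\sum_{i+j=m}X_a^jX_b^i=h_m(X_a,X_b)$, the left side is, up to sign, the degree-$(N-1)$ coefficient of $\prod_k(1-tX_k)/((1-tX_a)(1-tX_b))$. For $a\neq b$ this is a polynomial of degree $N-2$ in $t$, so the coefficient vanishes. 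For $a=b$ the ratio is $\prod_{k\ne a}(1-tX_k)/(1-tX_a)$, whose $t^{N-1}$ coefficient equals $\prod_{k\ne a}(X_a-X_k)$, as seen by evaluating the complete-symmetric expansion. Only diagonal colorings $a=b$ survive, and these are in bijection with colorings of $G$.

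Next I compare $\langle G,c\rangle$ with $\langle G_{aa},c\rangle$ for a diagonal coloring. Cutting the tube changes Euler characteristics by $+2$ on exactly the surfaces containing the tube's facet. These are the monochrome surface $F_a$ and the bichrome surfaces $F_{ak}$ for $k\ne a$; none of the $F_{kl}$ with $a\notin\{k,l\}$ contain it. Thus $Q(G_{aa})=Q(G)\prod_{k\ne a}(X_a-X_k)^{\pm1}$, with sign factors coming from the ordering $i<j$ in the definition of $Q$. This extra factor cancels against the polynomial produced above. The $\theta^+$ term is unchanged, since no seams meet the tube. The change $\tfrac a2\cdot 2=a$ in the $\chi(F_a)$ term, together with the signs $(X_a-X_k)$ versus $(X_k-X_a)$ for $k<a$ and the prefactor $(-1)^{N}$, must combine to $+1$.

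The main obstacle is this sign bookkeeping. I expect the exponent contributions from $s(F,c)$ (namely $a$) and from reordering the $Q$-factors (namely $a-1$) to combine with $(-1)^{N+i+j}$ to give exactly the sign needed. I will check it by a direct parity count, and sanity-check the result against the $\GL_3$ version in \cite{Khovanov2018FoamEA} and \cite[Equation 10]{RW-eval-foams}. Because $V$ was arbitrary, the equality of evaluations gives the relation in $\Foam^\varepsilon$.
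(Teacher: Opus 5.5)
Your proposal is correct and takes essentially the paper's approach. You compare evaluations coloring by coloring, show that colorings with different pigments on the two hemispheres cancel, and show that the diagonal colorings recover $\langle F,c\rangle$ once you account for the $+2$ change in $\chi$ of $F_a$ and of every $F_{ak}$. Your generating-function identity packages in one line the two identities the paper proves separately: the derivative formula for $\prod_{k\neq 1}(X_1-X_k)$, and $\sum_k(-1)^k e_k h_{N-1-k}(X_1,X_2)=0$.

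The parity count you left pending does close. The contributions are $a$ from $s$, $a-1$ from reordering the factors of $Q$, and $1$ from $(-1)^{N+m}=-(-1)^{N-1-m}$ with $m=i+j$. These sum to $2a$, which is even.
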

\begin{proof}

Denote by $F$ the foam on the left side of the equality and by $(G_{ij})_{ij}$ the foams on the right hand side of the equality, where $i,j$ stand for the number of dots as in the picture --- $i$ dots on the top hemisphere and $j$ on the bottom one. 
We first analyze the case when a coloring $c$ of $F$ induces a coloring of $G_{ij}$'s, denote the induced coloring by $c$ as well. Without loss of generality assume that $c$ is the coloring by $1$. 
In particular, all hemispheres in $G_{ij}$'s are colored by $1$ as is the cylinder of $F$. Other cases follow analogously. 

Note that the change in $s$ when going from $F$ to $G_{ij}$'s is the change of Euler characteristics of the monocolored surfaces. 
With this coloring that change is equal to $(-1)$, as the Euler characteristic of $F_1$ changes by $2$. 
Using $e_{i} = 0$ for $i < 0$ gives
\begin{align*}
    \langle G_{ij}, c \rangle &= (-1)^{N +i+j}\frac{e_{N - 1 - i - j}\vara^{i+j}}{\prod_{k = 2}^N (\vara - \vark)}  (-1) \langle F, c \rangle \\
    \sum_{\substack{0 \le i,j  \\  i+j\le N}} \langle G_{ij}, c \rangle &=  \langle F, c \rangle \frac{\sum_{\substack{0 \le i,j  \\  i+j\le N-1}} (-1)^{N -1+i+j} e_{N -1 - i - j}\vara^{i+j}}{\prod_{k = 2}^N (\vara - \vark)} = \langle F, c \rangle.
\end{align*}
The last equality holds due to the following equality for (abstract) polynomials:
\begin{align*}
    P(m,\lambda) &= \prod_{k = 1}^m (\lambda - \vark) = \sum_{k = 0}^m (-1)^k e_k(\vara, \varb, \ldots, \varu{m})\lambda^{m-k} \\
    \frac{d}{d\lambda} P(m,\lambda) &= \sum_{l = 1}^m \frac{1}{(\lambda - \varu{l})} \prod_{k = 1}^m (\lambda - \vark) =  \sum_{k = 0}^{m-1} (m-k)(-1)^k e_k(\vara, \varb, \ldots, \varu{m})\lambda^{m-k - 1}\\
    \left.\frac{d}{d\lambda}\right|_{\lambda = \vara} P(m,\lambda) &= \prod_{k = 2}^m (\vara - \vark) =\sum_{k = 0}^{m-1} (m-k)(-1)^k e_k(\vara, \varb, \ldots, \varu{m})\vara^{m-k - 1}
\end{align*}
Lastly, set $m = N$ and the result follows.

Next consider the case where the hemispheres of $G_{ij}$'s are colored differently. Without loss of generality assume that the bottom hemisphere is colored by $2$ and the top by $1$. Name this coloring $c$. 
Denote by $G$ the foam that underlies $G_{ij}$'s, that is, $G_{ij}$ without decorations. 
We compute
\begin{align*}
    \langle G_{ij}, c \rangle &= (-1)^{N +i+j}e_{N -1- i - j}\vara^{i}\varb^j\langle G, c \rangle \\
    \sum_{\substack{0 \le i,j  \\  i+j\le N}} \langle G_{ij}, c \rangle &= \langle G, c \rangle \sum_{\substack{0 \le i,j  \\  i+j\le N-1}} (-1)^{N +i+j}e_{N -1- i - j}\vara^{i}\varb^j \\
    &= \langle G, c \rangle \sum_{k =0}^N (-1)^{k}e_{k}(\vara, \varb, \ldots, \varu{N})h_{N- k -1}(\vara, \varb) = 0.
\end{align*}
The last equality follows from $\sum_{i = 0}^N (-1)^i e_i(\vara, \varb, \ldots, \varu{m}) h_{N-i}(\vara, \varb, \ldots, \varu{m}) = 0$. Before applying that equality, we use formulas to express $e_i$'s and $h_i$'s in fewer variables to reduce $$\sum_{i = 0}^N (-1)^i e_i(\vara, \varb, \ldots, \varu{m}) h_{N-i}(\vara, \varb, \ldots, \varu{m}) = 0$$ to $$\sum_{i =0}^N (-1)^{i}e_{i}(\vara, \varb, \ldots, \varu{m})h_{N- i}(\vara, \varb) q_k(\vara, \varb, \ldots, \varu{m}),$$ where $q_k$ is some polynomial. The result then follows by an application of the above formulas.
\end{proof}

\begin{lemma}\label{dotsglngeneric}
    The following $\GL_N$ foam equality holds, assuming $k \ge 0$ and $e_i = 0$ if $i < 0$.
\[
\scalebox{0.85}{$\displaystyle
    \matchingbubble = e_{N-4-k}  \matchingbubbleA 
- e_{N-5-k} 
\left (  \matchingbubbleB  +
\matchingbubbleC + \matchingbubbleD \right)
$}
\] 

\end{lemma}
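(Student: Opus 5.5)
We need to understand the picture: the left side is a foam from $\Wone$ to $\Wone$, namely $\alpha\circ\beta$ type composition (matching, through hexagon $\Wzero$, back to matching) decorated with a dot $x^{2N-8-k}$ on a 1-facet. Actually the dot label is $x^{2N-8-k}$ placed centrally; the right side is the identity foam on $\Wone$ times $e_{N-4-k}$ minus $e_{N-5-k}$ times identity with a dot $x$ on each of the three sheets. My plan: verify the equality through the universal construction, i.e. show both sides give equal evaluations after closing with an arbitrary foam $V$ from $\Wone$ back. Since $\Hom_{\Foam^{\HexaBound}}(\Wone,\Wone)$ is free over $\Symf$ of graded rank $q^{\boundarydeg}\langle\overline{\Wone}\Wone\rangle = q^{\boundarydeg}[N]^3$ (Corollary \ref{WebPairDim}), with an explicit basis given by identity foams decorated by monomials $x_1^{a}x_2^{b}x_3^{c}$, $0\le a,b,c\le N-1$ on the three sheets, it suffices to compare the two sides locally.

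Concretely, I will first use the structure of $\beta\circ\alpha$: it is three disks of $1$-facets in the top and bottom glued via the three $2$-facets to a central tube. Apply neck-cutting (Proposition \ref{thickness1neckcut}) on the tube, or more cleanly compute coloring-by-coloring: for a coloring $c$ of the closed foam, the local piece $\beta\circ\alpha$ restricted to the three sheets forces (since the three matching arcs, colored $a_1,a_2,a_3$ on the sheets, each merge with the central facet into $2$-facets) the central facet's pigment $m$ to differ from $a_1,a_2,a_3$, and relative to the identity foam it contributes a factor $\prod_{t}(X_m-X_{a_t})^{-1}$ (from the three new bichrome discs) times a sign, while the dot contributes $X_m^{2N-8-k}$. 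Summing over $m\notin\{a_1,a_2,a_3\}$ (when the $a_t$ are distinct; otherwise handle separately) gives, by the standard residue identity $\sum_{m}\frac{X_m^{r}}{\prod_{t}(X_m-X_{a_t})\prod_{\text{others}}}$-type Lagrange interpolation, a symmetric expression equal to the complete/elementary symmetric combination in the remaining $N-3$ variables. Precisely I expect $\sum_{m\ne a_t} X_m^{2N-8-k}/\prod_{t}(X_m-X_{a_t})$ to be not what appears; rather the relevant bichrome Euler characteristic changes produce $\prod_{j\ne m,a_t}(X_m-X_j)^{-1}$, and Lagrange interpolation over the $N-3$ remaining pigments yields $h_{k'}$ of those pigments, which after the identity $h$ in $N-3$ variables expressed via $e$'s of all $N$ and the $X_{a_t}$ becomes $e_{N-4-k}-e_{N-5-k}(X_{a_1}+X_{a_2}+X_{a_3})$ plus higher corrections that vanish in the degree range. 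Matching this with the right side's per-coloring value (identity times $e_{N-4-k}$ minus $e_{N-5-k}$ times sum of dots) gives the equality.

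Key steps in order: (1) determine precisely the local colorings of the left foam compatible with a fixed boundary coloring and compute the ratio $\langle F,c\rangle/\langle \idfoam,c\rangle$ including the sign via Lemma \ref{rwlemma2.19}-style Euler characteristic bookkeeping; (2) sum over the free central pigment using a Lagrange-interpolation identity; (3) treat colorings where two sheets share a pigment (there the central pigment choice is similar but the identity evaluation also differs; check the same formula holds); (4) check degrees agree using Lemma \ref{degcheck} ($-2N+8$ plus $2(2N-8-k)$ vs.\ $2(N-4-k)$, consistent). The main obstacle is step (1)–(2): getting the signs $s(F,c)$ and the exact symmetric-function identity right, in particular confirming the polynomial truncates to only the $e_{N-4-k}$ and $e_{N-5-k}$ terms; I would first test it at $N=4,5$ with $k=0$ by direct computation before doing the general interpolation.
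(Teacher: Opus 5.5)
Your route differs from the paper's. The paper performs six neck-cuttings (Proposition \ref{thickness1neckcut}) to split the foam into the dotted identity foam on $\Wone$, three $(1,1,2)$ theta foams and one dotted sphere. A theta survives only with dot split $(N-1,N-2)$, so each attachment contributes a factor $\pm(y-x_{(t)})$, with $y$ a dot on the sphere, and everything reduces to a single sphere evaluation. You instead sum directly over the pigment $m$ of the central facet. Your corrected bookkeeping is right. For distinct sheet pigments $A=\{a_1,a_2,a_3\}$, the only changes relative to $\idfoam_{\Wone}$ are:
\begin{itemize}
\item a new sphere in $F_m$ and in each $F_{mj}$ with $j\notin A\cup\{m\}$;
\item three $\theta_{a_t m}$ circles.
\end{itemize}
The resulting sign is independent of $m$, and Lagrange interpolation over $B=\{1,\dots,N\}\setminus A$ gives $h_{N-4-k}(X_B)$.

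The gap is the next step. There is no identity turning $h_n(X_B)$, $n=N-4-k$, into $e_n-e_{n-1}(X_{a_1}+X_{a_2}+X_{a_3})$ ``up to corrections vanishing in the degree range.'' From $\prod_{b\in B}(1-X_bt)^{-1}=\prod_{a\in A}(1-X_at)\prod_{i=1}^N(1-X_it)^{-1}$ one gets
\[
h_n(X_B)=\sum_{i=0}^{3}(-1)^i\,e_i(X_{a_1},X_{a_2},X_{a_3})\,h_{n-i}(X_1,\dots,X_N).
\]
The colorings with repeated sheet pigments give the same polynomial evaluated at $(X_{a_1},X_{a_2},X_{a_3})$. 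This agrees with the right-hand side only when $n\le 1$: then $h_0=e_0$, $h_1=e_1$, and the $e_2,e_3$ terms are absent.

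For $n\ge 2$ the two sides genuinely differ. Identity foams with sheet dots $x_{(1)}^a x_{(2)}^b x_{(3)}^c$, $0\le a,b,c\le N-1$, are $\Symf$-linearly independent: pairing with dotted caps gives a unitriangular matrix of sphere evaluations. So the coefficient $h_{n-2}=1$ of $x_{(1)}x_{(2)}$ on your side has no counterpart on the right. Concretely, for $N=6$, $k=0$, capping each sheet with $x^5$ gives $\pm(e_1^2-e_2)$ for the left side and $3e_1^2-e_2$ for the right side.

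Hence your argument establishes the lemma only when $N-4-k\le 1$. That covers $N=4,5$ and every use in the paper, and your planned tests at $N=4,5$, $k=0$ would not reveal the problem. The paper's proof carries the same implicit restriction. It only evaluates spheres with $N-1$ or $N$ dots, whereas after neck-cutting the sphere carries $y^{2N-8-k}\prod_t(y-x_{(t)})$, and a sphere with $N-1+j$ dots equals $-h_j$, not $-e_j$. For general $N$ you should either add the hypothesis $N-4-k\le1$, or replace the right-hand side by $\sum_{i=0}^{3}(-1)^i h_{N-4-k-i}\,e_i(x_{(1)},x_{(2)},x_{(3)})\,\idfoam_{\Wone}$.
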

\begin{proof}
We perform $6$ neck cuttings that decompose the foam into a union of the identity foam on the planar matching web, three thickness-$(1,1,2)$ 
theta foams and one thickness-$1$ sphere, with some number of dots.

In order for the theta foams to evaluate to a nonzero value  $\pm 1$, there need to be $N-1$ dots on one $1$-facet and $N-2$ dots on the other. This follows from \cite[Equation 9]{RW-eval-foams} and dot migration, that is, having a dot on each of the $1$-facets is the same as having $e_2$ on the $2$-facet; this follows from $e_2(\vara, \varb) = \vara \cdot \varb$ and the foam evaluation formula. 
Additionally there need to be at least $N-1$ dots on the sphere. A sphere with $N-1$ dots evaluates to $-1$ and a sphere with $N$ dots evaluates to $-e_{1}$ by \cite[Equation 8]{RW-eval-foams}.

Using this discussion and Proposition \ref{thickness1neckcut} one obtains the desired result. 
\end{proof}

\subsection{Categorify \texorpdfstring{$\GL_4$}{GL_4} \texorpdfstring{$6$}{6}-valent vertex with \texorpdfstring{$\GL_4$}{GL_4} foams}
For $\GL_4$, Equation \eqref{GLN6VertexDef} becomes 
      \begin{equation*} 
      \SixVertex \ =  \ \Benzeneweb \  -  \ \BenzenewebM
        \  =  \ \Benzeneweba  \ -   \  \BenzenewebaM.
      \end{equation*} 
 In the following theorem, we provide a direct sum decomposition of the state space of $\Wzero$ (considered as a part of a closed web) into the state space of $\Wone$ and the state space of $\Wsix$ using $\GL_4$ foams. In particular, we provide a projection in the $\GL_4$ foam category with fixed boundary $\varepsilon$, which decategorifies to the $6$-vertex. 

\begin{remark}\label{rmk: notation for hexagon colorin}
    Recall Notation \ref{flapcolor on web}. By a light abuse of notation, we use the same shorthand \flapcolor[i_1][i_2][i_3][i_4][i_5][i_6] to denote the coloring of the foams appearing in the rest of the paper.  In particular, for a $(W_1, W_2)$-foam with corners $F$, denote by \flapcolor[i_1][i_2][i_3][i_4][i_5][i_6] the coloring of $F$ induced from the coloring of $W_1$, which we will also refer to as the ``bottom foam''. Note that a coloring of a boundary web often does not induce a unique coloring of a foam. The notation \flapcolor[i_1][i_2][i_3][i_4][i_5][i_6] only makes sense when $W_1$ has six boundary points.

    Moreover, a coloring of a foam may be induced by a coloring of another foam with shared boundary. 
    Suppose that $F_1, F_2$ are two $(W_1, W_2)$-foams, where $F_2$ is colored with a coloring $c$. 
    Inducing a coloring of $F_1$ is a two-step process. 
    First restrict the coloring $c$ of $F_2$ to $W_1$. Then induce a coloring of $F_1$ from a coloring of $W_1$. Again, in many cases the extension is not unique. This procedure makes sense for a pair consisting of a $(W_1,W_2)$-foam $F_1$ and a $(W_1, W_3)$-foam $F_2$ which share only one of the web boundaries $(W_2)$. 
\end{remark}

\begin{thm}\label{gl_4 sixvalent seam} Consider the $\GL_4$ foams $\alpha, \beta, \eta, \xi, \mu$, and $\nu$ (see Figure \ref{gl4foamdeef}) between the following webs: 
     \[ \BenzenewebM 
      \quad \stackrel[\beta]{\alpha}  {\leftrightarrows} 
       \quad 
      \Benzeneweb  \quad \stackrel[\nu]{\mu}{\rightleftarrows}
      \quad \Benzeneweba   \quad \stackrel[\xi]{\eta}{\rightleftarrows}
      \quad \BenzenewebaM \]

The following relations among morphisms in $\FoamFour^{\HexaBound}$ hold:

\begin{align*}
    \alpha \circ \nu =0 , \quad 
    \mu \circ \beta = 0, \quad 
    &\eta \circ \mu=0, \quad   
     \nu \circ \xi =0, \\
     \mu \circ \nu \circ \mu = \mu , \quad 
     &\nu \circ \mu \circ \nu = \nu , \\
    \alpha \circ \beta = \idfoam_{W_2}, &\quad
    \eta \circ \xi = \idfoam_{W_3},  \\
     \beta \circ \alpha + \nu \circ \mu = \idfoam_{W_0}, 
     & \quad
    \xi \circ \eta + \mu \circ \nu = \idfoam_{\Wzeroa}.
\end{align*}

\end{thm}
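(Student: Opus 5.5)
We will verify each relation by the defining property of $\FoamFour^{\HexaBound}$: two linear combinations of $(W,W')$-foams coincide if they give the same evaluation after gluing with every foam $V$ closing $\overline{W}W'$. We will not range over all such $V$. Instead we will use Corollary \ref{WebPairDim} and Proposition \ref{prop:closed sixvertex evaluation} to bound the dimensions of the relevant graded pieces, and then compare a few evaluations.

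\emph{Vanishing relations.} Consider $\alpha\circ\nu\colon \Wzeroa\to\Wone$. By Lemma \ref{degcheck}, at $N=4$ the degrees of $\alpha$, $\beta$, $\mu$, $\nu$ are all $0$, so $\alpha\circ\nu$ has degree $0$. Hence it lies in the lowest-degree part of $\Hom(\Wzeroa,\Wone)$. Its graded rank is $q^{\boundarydeg}\langle\overline{\Wzeroa}\Wone\rangle$, and by the pairing computations of Proposition \ref{CartanMatrix} this lowest-degree part should be zero. If it is not zero, we will show directly that no coloring of $\alpha\circ\nu$ survives. The reason is that $\nu$ forces the three "outer" $1$-facets around the $3$-facet to carry pairwise distinct pigments, while $\alpha$ caps pairs of them into $2$-facets whose boundary requires a matching compatible with $\Wone$, and these two requirements are incompatible. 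The same argument, up to rotation and reflection, gives $\mu\circ\beta=0$, $\eta\circ\mu=0$ and $\nu\circ\xi=0$.

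\emph{Identity relations.} For $\alpha\circ\beta=\idfoam_{\Wone}$: the composite is the identity foam on $\Wone$ with three $(1,1,2)$ bubbles, equivalently a matching bubble. We will apply Lemma \ref{dotsglngeneric} with $N=4$ and $k=0$. The coefficient $e_{N-4-k}=e_0=1$ and the term $e_{N-5-k}$ vanishes, which gives the identity up to a sign that we fix by orientation conventions. The relation $\eta\circ\xi$ is its rotation. For $\mu\circ\nu\circ\mu=\mu$: by Proposition \ref{prop:closed sixvertex evaluation}, the degree-$0$ part of $\Hom(\Wzero,\Wzeroa)$ is one-dimensional and spanned by $\mu$. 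So $\mu\nu\mu=c\mu$, and we find $c$ by closing with $\overline{\nu}$-type caps, that is, by evaluating $\langle\nu\mu\nu\mu\text{-closure}\rangle$ against $\langle\nu\mu\text{-closure}\rangle$ with a single coloring sum.

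\emph{Completeness relation $\beta\alpha+\nu\mu=\idfoam_{\Wzero}$.} This is the main obstacle, because $\Hom^0(\Wzero,\Wzero)$ has dimension $2$ at $N=4$ (the coefficient $2q^{-9}$ in Proposition \ref{prop:closed sixvertex evaluation}). The plan:
\begin{enumerate}
\item Show that $\beta\alpha$ and $\nu\mu$ are linearly independent in $\Hom^0(\Wzero,\Wzero)$, so they form a basis. This follows from the previous relations: $\alpha(\beta\alpha)=\alpha$ while $\alpha(\nu\mu)=0$, and $\mu(\nu\mu)=\mu$ while $\mu(\beta\alpha)=0$.
\item Write $\idfoam_{\Wzero}=a\,\beta\alpha+b\,\nu\mu$ for some scalars $a,b$.
\item Precompose with $\beta$ and use $\alpha\beta=\idfoam$ and $\mu\beta=0$ to get $\beta=a\beta$, so $a=1$.
\item Postcompose with $\mu$ and use $\mu\nu\mu=\mu$ and $\mu\beta=0$ to get $\mu=b\mu$, so $b=1$.
\end{enumerate}
The same argument gives the $\Wzeroa$ version. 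Finally, $\nu\mu\nu=\nu$ follows from $\nu\mu\nu=\nu$ applied with the one-dimensional degree-$0$ space $\Hom(\Wzeroa,\Wzero)$ together with the relations already established. The main technical risk is that the coloring computation for the scalar $c$ in $\mu\nu\mu=c\mu$ must come out to exactly $1$; if it does not, $\mu$ should be rescaled.
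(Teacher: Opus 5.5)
Most of your reduction is sound, and parts of it are cleaner than the paper's argument.

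\textbf{What works.} The orthogonality relations follow from the dimension count alone. One computes $\langle\overline{\Wzeroa}\Wone\rangle=[2]^2[N-1][N]$, whose lowest term is $q^{1-2N}$. This lies strictly above $q^{3-3N}=q^{-\boundarydeg(\HexaBound)}$, so $\Hom^0(\Wzeroa,\Wone)=0$ and the degree-$0$ foam $\alpha\circ\nu$ vanishes. The same holds for the other three relations. The paper instead cancels colorings in pairs. Your fallback claim is false, though: for the boundary colorings $i$-$i$-$i$-$i$-$i$-$i$ and $i$-$i$-$j$-$j$-$i$-$i$, the foam $\alpha\circ\nu$ does admit colorings; they just cancel.

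Your basis argument for $\beta\alpha+\nu\mu=\idfoam_{\Wzero}$ is also valid once three inputs are known: $\alpha\beta=\idfoam_{\Wone}$ (Lemma \ref{dotsglngeneric} with $k=0$), the orthogonality relations, and $\mu\nu\mu=\mu$. It also avoids the paper's separate coloring check of $\alpha\beta\alpha=\alpha$. Your sentence on $\nu\mu\nu=\nu$ is circular as written, but the relation does follow from $\mu\nu\mu=\mu$: writing $\nu\mu\nu=c'\nu$ gives $\mu=\mu\nu\mu\nu\mu=c'\mu$.

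\textbf{The gap.} The one genuinely computational input, $c=1$ in $\mu\nu\mu=c\mu$, is never established, and everything downstream depends on it. Your steps 1 and 4 use both $c=1$ and $\mu\neq 0$. Rescaling $\mu$ is not allowed, since the theorem concerns the specific foams of Figure \ref{gl4foamdeef}, and it would not help if $c=0$.

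The method you sketch also cannot determine $c$. Closing a degree-$0$ endomorphism of a web with boundary $\HexaBound$ by an undecorated mirror foam gives a closed foam of degree $-2\boundarydeg(\HexaBound)=-18$, which evaluates to $0$. So comparing closures of $\nu\mu\nu\mu$ and $\nu\mu$ yields only $0=c\cdot 0$.

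You would need one of two things. One option is a closing foam carrying dots of total degree $18$, followed by a large coloring sum. The other is what the paper does in Lemma \ref{hexagonidempotent} for $\nu\mu\nu=\nu$ (either relation suffices given your one-dimensionality observation): a local comparison that holds for all closures at once. For each coloring of $\nu$, one lists the induced colorings of $\nu\mu\nu$.
\begin{itemize}
\item For boundary coloring $i$-$i$-$i$-$i$-$i$-$i$ there are three induced colorings, related by Kempe moves along the $k$-$l$ and $j$-$k$ spheres. With $(i,j,k,l)=(1,2,3,4)$, their contributions relative to that of the given coloring of $\nu$ sum to
\[
\frac{(X_1-X_3)^2}{(X_2-X_3)(X_3-X_4)}-\frac{(X_1-X_2)^2}{(X_2-X_3)(X_2-X_4)}-\frac{(X_1-X_4)^2}{(X_3-X_4)(X_2-X_4)}=-1,
\]
times a sign $-1$ coming from $s$, i.e.\ to $1$.
\item For boundary coloring $j$-$i$-$i$-$i$-$i$-$j$ there are two induced colorings, summing to $1$.
\item For boundary coloring $i$-$j$-$k$-$i$-$j$-$k$ there is a single induced coloring with matching data.
\end{itemize}
That computation is the heart of the theorem, and it is what your proposal leaves out.
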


The following proposition is a consequence of 
Theorem \ref{gl_4 sixvalent seam} and its proof.

\begin{proposition}
\label{prop:gl4-kar-isom-objects}
The foams $P_0=\nu \circ \mu $ 
and $P_0'=\mu \circ \nu$ are projections in $\FoamFour^{\HexaBound}$. Furthermore, the associated objects $\mathcal{E}(P_0)$ and $\mathcal{E}(P_0')$ in $Kar(\FoamFour^{\HexaBound})$ are isomorphic via the inverse morphisms $\mu$ and $\nu$. 
\end{proposition}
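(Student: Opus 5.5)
The argument is pure bookkeeping with the relations of Theorem \ref{gl_4 sixvalent seam}; no new foam evaluations should be needed. First I check idempotency. From $\mu\circ\nu\circ\mu=\mu$ we get
\[
P_0\circ P_0=\nu\circ(\mu\circ\nu\circ\mu)=\nu\circ\mu=P_0 .
\]
Symmetrically, $\nu\circ\mu\circ\nu=\nu$ gives $P_0'\circ P_0'=\mu\circ(\nu\circ\mu\circ\nu)=\mu\circ\nu=P_0'$. So $P_0$ is an idempotent endomorphism of $\Wzero$ and $P_0'$ is one of $\Wzeroa$, and both define objects of $Kar(\FoamFour^{\HexaBound})$.

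Lemma \ref{degcheck} gives $\foamdeg(\mu)=\foamdeg(\nu)=0$. Hence $P_0$ and $P_0'$ have degree $0$, as any idempotent must, and $\mu$, $\nu$ are homogeneous of degree $0$. They can therefore serve as morphisms between $\mathcal{E}(P_0)$ and $\mathcal{E}(P_0')$ with no grading shift.

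Next I check that $\mu$ and $\nu$ are morphisms in the Karoubi envelope, i.e.\ that they satisfy the compatibility conditions of Definition \ref{def:Karoubi of cornered foams}. For $\mu\colon\mathcal{E}(P_0)\to\mathcal{E}(P_0')$ we need $P_0'\circ\mu=\mu=\mu\circ P_0$. Both equalities are the relation $\mu\circ\nu\circ\mu=\mu$, bracketed as $(\mu\circ\nu)\circ\mu$ and as $\mu\circ(\nu\circ\mu)$. Likewise, for $\nu\colon\mathcal{E}(P_0')\to\mathcal{E}(P_0)$ we need $P_0\circ\nu=\nu=\nu\circ P_0'$, and both follow from $\nu\circ\mu\circ\nu=\nu$.

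Finally, the compositions are the identities in the Karoubi envelope, where the identity of $(W,P)$ is $P$ itself: $\nu\circ\mu=P_0=\idfoam_{\mathcal{E}(P_0)}$ and $\mu\circ\nu=P_0'=\idfoam_{\mathcal{E}(P_0')}$. So $\mu$ and $\nu$ are mutually inverse isomorphisms.

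The only nontrivial input is the pair of relations $\mu\circ\nu\circ\mu=\mu$ and $\nu\circ\mu\circ\nu=\nu$ from Theorem \ref{gl_4 sixvalent seam}; everything else is formal. Assuming that theorem, I expect no obstacle. If one wanted to avoid the triple relations, they can be recovered from the other relations of the theorem: $\mu\circ\nu\circ\mu=\mu\circ(\idfoam_{\Wzero}-\beta\circ\alpha)=\mu$, using $\mu\circ\beta=0$, and similarly for $\nu$ using $\nu\circ\xi=0$.
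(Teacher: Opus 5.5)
Your argument is correct and is exactly the formal consequence of Theorem \ref{gl_4 sixvalent seam} that the paper intends; the paper gives no separate proof. Idempotency of $P_0$ and $P_0'$, the Karoubi compatibility conditions for $\mu$ and $\nu$, and the identities $\nu\circ\mu=P_0$, $\mu\circ\nu=P_0'$ all reduce to $\mu\circ\nu\circ\mu=\mu$ and $\nu\circ\mu\circ\nu=\nu$, with Lemma \ref{degcheck} supplying degree $0$. Your closing fallback is a valid deduction from the theorem's statement, but it does not genuinely bypass the triple relations: the paper proves $\beta\circ\alpha+\nu\circ\mu=\idfoam_{\Wzero}$ by using $\nu\circ\mu\circ\nu=\nu$ (Lemma \ref{hexagonidempotent}).
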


\begin{thm} \label{GL4Decompo}
In $Kar(\FoamFour^{\HexaBound})$, we have the following direct sum decomposition of the object given by the hexagon web:
\[ \mathcal{E}\left( \Benzeneweb \right) = \mathcal{E}\left( \BenzenewebM \right)\oplus \mathcal{E}\left( P_0' \right).\]

So $\mathcal{E}\left( P_0 \right) \cong \mathcal{E}\left( P_0' \right)$ decategorifies to $\Wsix$, since  
  \[ \Benzeneweb = \BenzenewebM + \SixVertex. \]   
\end{thm}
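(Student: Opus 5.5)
The decomposition should follow directly from the relations of Theorem \ref{gl_4 sixvalent seam}, in the standard Karoubi-envelope way. I would set $W=\Wzero$, $P_1=\idfoam_{\Wone}$ and $P_2=P_0'=\mu\circ\nu$, with inclusions $I_1=\beta$ and $I_2=\nu$, and projections $F_1=\alpha$ and $F_2=\mu$. Definition \ref{def:indecomposable_Web} asks that these exhibit $\mathcal{E}(\Wzero)\cong\mathcal{E}(\Wone)\oplus\mathcal{E}(P_0')$, and I would verify its relations \eqref{DecompRel} one at a time.

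First, $P_0'$ is idempotent: $(\mu\nu)(\mu\nu)=(\mu\nu\mu)\nu=\mu\nu$ by $\mu\circ\nu\circ\mu=\mu$. So $\mathcal{E}(P_0')$ is an object of $Kar(\FoamFour^{\HexaBound})$; this is the content of Proposition \ref{prop:gl4-kar-isom-objects}. The required compatibilities then reduce to the relations already listed:
\begin{itemize}
\item $F_2\circ P=\mu=P_0'\circ\mu$ holds by $\mu\nu\mu=\mu$, and $I_2=\nu=\nu\circ P_0'$ holds by $\nu\mu\nu=\nu$.
\item For the first summand, $P_1=\idfoam$ makes $F_1=\alpha$ and $I_1=\beta$ compatible trivially.
\item $P_1=F_1\circ I_1=\alpha\circ\beta=\idfoam_{\Wone}$.
\item $P_2=F_2\circ I_2=\mu\circ\nu=P_0'$, which holds by definition.
\item The cross terms vanish: $F_1\circ I_2=\alpha\circ\nu=0$ and $F_2\circ I_1=\mu\circ\beta=0$.
\item Completeness: $I_1F_1+I_2F_2=\beta\circ\alpha+\nu\circ\mu=\idfoam_{\Wzero}$.
\end{itemize}
All of these are exactly relations in Theorem \ref{gl_4 sixvalent seam}. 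I would also note that $\alpha,\beta,\mu,\nu$ have degree $0$ as morphisms between the appropriately shifted objects. By Lemma \ref{degcheck} with $N=4$, $\deg\mu=\deg\nu=0$ and $\deg(\beta\circ\alpha)=0$. The split $\deg\alpha=\deg\beta=0$ I would obtain from the same facet and binding count applied to $\alpha$ alone, or by symmetry. No grading shift therefore appears.

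For the decategorification statement, I would pass to the split Grothendieck group. It sends $\mathcal{E}(\Wzero)\mapsto\Wzero$ and $\mathcal{E}(\Wone)\mapsto\Wone$, and the decomposition then forces $[\mathcal{E}(P_0')]=\Wzero-\Wone$. At $N=4$ we have $[N-3]=1$, so this equals $\Wsix$ by \eqref{GLN6VertexDef}. Since $\mathcal{E}(P_0)\cong\mathcal{E}(P_0')$ by Proposition \ref{prop:gl4-kar-isom-objects}, $\mathcal{E}(P_0)$ has the same class.

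The identification of the Grothendieck group class with a web requires the functor to the web category to be compatible. I would justify this via graded ranks: the graded rank of $\Hom(\mathcal{E}(W'),\mathcal{E}(W))$ is given by Corollary \ref{WebPairDim}, and this pairing is nondegenerate by Proposition \ref{CartanMatrix}.

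The main obstacle is not this formal argument but Theorem \ref{gl_4 sixvalent seam} itself, in particular the completeness relation $\beta\alpha+\nu\mu=\idfoam_{\Wzero}$. That relation would have to be checked by evaluating against all closures with the Robert--Wagner formula, using Kempe moves (Lemma \ref{rwlemma2.19}) and neck-cutting (Proposition \ref{thickness1neckcut}). Here I take Theorem \ref{gl_4 sixvalent seam} as given.
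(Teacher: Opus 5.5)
Your proposal is correct and follows the paper's proof: the decomposition is read off directly from the relations of Theorem \ref{gl_4 sixvalent seam}, with projections $\alpha,\mu$ and inclusions $\beta,\nu$, and the absence of grading shifts follows from $\foamdeg(\alpha)=0$, obtained by the same count as in Lemma \ref{degcheck}. Your graded-rank justification of the decategorification, via Corollary \ref{WebPairDim} and Proposition \ref{CartanMatrix}, only spells out what the paper takes as immediate from $\Wzero=\Wone+\Wsix$ at $N=4$.
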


\begin{proof}
Given the relations in Theorem \ref{gl_4 sixvalent seam} between the morphisms in $Kar(\FoamFour^{\HexaBound})$, this isomorphism between the object $\mathcal{E}(W_0)$ and the direct sum of objects $\mathcal{E}(W_2)$, and $\mathcal{E}(P_0')$ follows directly from the definition of direct sum of objects in a category. The projection maps are given by $\alpha$ and $\mu$, and the inclusion maps are given by $\beta$ and $\nu$. 

The degree computation follows from the degree computations of foams. In particular, this is a simpler version of Lemma \ref{degcheck}. A modification of that proof shows that $\foamdeg(\alpha) = 0$. This determines the rest of the foam degrees and they descend to the web degrees.
\end{proof}

We also have the direct sum decomposition of the state space of a closed web containing a (single) hexagon. 

\begin{remark}
The direct sum decomposition of the hexagon web in the above theorem is a decomposition into indecomposable objects due to Theorem \ref{GLNIndecompo}. 
\end{remark}

\begin{corollary}\label{cor:single hexagon closed gl4}
Given any web $W$ with boundary $\varepsilon$, we have the direct sum decomposition of the state space of $\overline{W} W_0$.

\[ \mathcal{F}(\overline{W} W_0 ) \cong \mathcal{F}(\overline{W} W_1)\oplus 
\left( \mathcal{F}(\overline{ \idfoam_W} \ast P_0' )(\mathcal{F}(\overline{W} W_0'))  \right)  \]
\end{corollary}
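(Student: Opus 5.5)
The plan is to apply the state-space functor to the decomposition of Theorem \ref{GL4Decompo}, so that everything reduces to foam identities already established in Theorem \ref{gl_4 sixvalent seam}. Concretely, for a fixed web $W$ with boundary $\varepsilon=\HexaBound$, horizontal composition with $\overline{\idfoam_W}$ turns each cornered foam $M$ from $W_a$ to $W_b$ into a bordered foam $\overline{\idfoam_W}\ast M$ with boundary $(\overline{W}W_a,\overline{W}W_b)$. This operation is compatible with vertical composition:
\[
\overline{\idfoam_W}\ast(M_1\circ M_0)=(\overline{\idfoam_W}\ast M_1)\circ(\overline{\idfoam_W}\ast M_0).
\]
It also sends $\idfoam_{W_a}$ to the identity foam of $\overline{W}W_a$ and respects $\Symf$-linear combinations. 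By the Proposition on induced maps $\mathcal{F}(V)$, we obtain a $\Symf$-linear functor from $\Foam^{\HexaBound}$ to graded $\Symf$-modules, $W_a\mapsto\mathcal{F}(\overline{W}W_a)$.

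The one point to verify is that this functor is well defined on morphisms, i.e.\ that a relation $\sum a_iM_i=0$ in $\Foam^{\HexaBound}$ gives $\sum a_i\mathcal{F}(\overline{\idfoam_W}\ast M_i)=0$. Take any $(\emptyweb,\overline{W}W_a)$-foam $U$ and any $(\overline{W}W_b,\emptyweb)$-foam $V$. The closed foam $V\circ(\overline{\idfoam_W}\ast M_i)\circ U$ can be isotoped into the form $V'M_i$, where $V'$ is a single foam with boundary $(\overline{W_a}W_b,\emptyweb)$ built from $U$, $V$ and the $W$-part, obtained by bending up as in Definition \ref{def:foamfromto}. The defining relation of $\Foam^{\HexaBound}$ then kills $\sum a_i\langle V'M_i\rangle$. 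I expect this bending and regluing bookkeeping to be the only real obstacle. It is a topological isotopy argument, and I would handle it by following Example \ref{eg:different foam clousure}.

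Next, apply the functor to the relations of Theorem \ref{gl_4 sixvalent seam}. Set $A=\mathcal{F}(\overline{\idfoam_W}\ast\alpha)$, $B=\mathcal{F}(\overline{\idfoam_W}\ast\beta)$, $M=\mathcal{F}(\overline{\idfoam_W}\ast\mu)$ and $N'=\mathcal{F}(\overline{\idfoam_W}\ast\nu)$. The relations give
\[
AB=\mathrm{id},\quad BA+N'M=\mathrm{id},\quad \mathcal{F}(\overline{\idfoam_W}\ast\mu)\circ B=0,\quad A\circ N'=0,\quad MN'M=M .
\]
Hence $p=\mathcal{F}(\overline{\idfoam_W}\ast P_0')=MN'$ is an idempotent on $\mathcal{F}(\overline{W}W_0')$.

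Finally, consider the map
\[
(A,M)\colon \mathcal{F}(\overline{W}W_0)\to \mathcal{F}(\overline{W}W_1)\oplus p\bigl(\mathcal{F}(\overline{W}W_0')\bigr),
\]
with proposed inverse $(B,N')$. The image of $M$ lies in $\mathrm{im}\,p$, since $pM=MN'M=M$. The composite $(B,N')\circ(A,M)$ equals $BA+N'M=\mathrm{id}$. In the other direction, the diagonal entries are $AB=\mathrm{id}$ and $MN'=p$, which is the identity on $\mathrm{im}\,p$. The off-diagonal entries are $AN'=0$ and $MB=0$. This proves the claimed isomorphism.

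For gradings, $\foamdeg(\alpha)=0$ by the proof of Theorem \ref{GL4Decompo}, and $\foamdeg(\mu)=\foamdeg(\nu)=0$ by Lemma \ref{degcheck}. Since $\foamdeg(\overline{\idfoam_W}\ast M)=\foamdeg(M)$ by Definition \ref{def:degree foam with corners}, all four maps preserve degree, so the isomorphism is one of graded $\Symf$-modules.
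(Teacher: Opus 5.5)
Your proposal is correct and takes essentially the paper's approach: the paper's proof just names $\overline{\idfoam_W}\ast\alpha$ and $\overline{\idfoam_W}\ast\mu$ as the projections and $\overline{\idfoam_W}\ast\beta$ and $\overline{\idfoam_W}\ast\nu$ as the inclusions, and relies on the relations of Theorem \ref{gl_4 sixvalent seam}. Your argument makes explicit what the paper leaves implicit: well-definedness under $\overline{\idfoam_W}\ast -$, the biproduct identities, and the degree bookkeeping. In that bookkeeping, $\foamdeg(\beta)=0$ also follows, since $\foamdeg(\beta\circ\alpha)=-2N+8=0$ at $N=4$.
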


\begin{proof}
    This is a direct sum decomposition of the module 
    $\mathcal{F}(\overline{W} W_0 )$. The projection maps are given by $\overline{ \idfoam_W} \ast \alpha $ and $\overline{ \idfoam_W} \ast \mu$, and the inclusion maps are given by $\overline{ \idfoam_W} \ast \beta $ and $\overline{ \idfoam_W} \ast \nu $.
\end{proof}

The rest of this section is dedicated to the proof of Theorem \ref{gl_4 sixvalent seam}. We split the proof into two parts, Propositions \ref{orthogonalityprop} and \ref{idempotencyprop}, which are themselves divided into multiple lemmas.

\begin{proposition}\label{orthogonalityprop}
    The following equalities from Theorem \ref{gl_4 sixvalent seam} hold:
    \[
        \alpha \circ \nu=0,
        \quad 
        \eta \circ \mu=0,
        \quad 
        \mu \circ \beta=0,
        \quad
        \text{and} \quad  \nu  \circ \xi =0.
    \]
\end{proposition}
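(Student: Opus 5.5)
By the symmetries of the configuration, it suffices to prove one of the four identities, $\alpha\circ\nu=0$. The other three then follow by symmetry.

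\emph{Symmetry reduction.} Reflecting a foam top-to-bottom and reversing orientations (the operation $W\mapsto\overline{W}$ lifted to foams) exchanges $\mu\leftrightarrow\nu$ and $\alpha\leftrightarrow\beta$. It therefore sends $\alpha\circ\nu$ to $\mu\circ\beta$, up to reading the composition in the opposite order. The $\frac{\pi}{3}$-rotation of the hexagonal picture exchanges $\Wzero\leftrightarrow\Wzeroa$ and $\Wone\leftrightarrow\Wtwo$. It thus carries $(\alpha,\beta,\mu,\nu)$ to $(\eta,\xi,\nu,\mu)$, sending $\alpha\circ\nu$ to $\eta\circ\mu$ and $\mu\circ\beta$ to $\nu\circ\xi$. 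These operations preserve the evaluation relations defining $\FoamFour^{\HexaBound}$, since the Robert--Wagner evaluation is compatible with mirror image and planar rotation up to a global sign. Hence each of them preserves vanishing of morphisms.

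\emph{Degree argument for $\alpha\circ\nu=0$.} The composite $\alpha\circ\nu$ is a morphism in $\Hom_{\FoamFour^{\HexaBound}}(\Wzeroa,\Wone)$. Its degree is $\foamdeg(\alpha)+\foamdeg(\nu)$. Lemma \ref{degcheck} gives $\foamdeg(\nu)=0$, and $\foamdeg(\alpha)=0$ for $N=4$ (equivalently $\foamdeg(\beta\circ\alpha)=-2N+8=0$, with $\alpha$ and $\beta$ related by the reflection above, so they have equal degree). So $\alpha\circ\nu$ is homogeneous of degree $0$. By Corollary \ref{WebPairDim}, the graded rank of this Hom space is $q^{\boundarydeg(\HexaBound)}\langle\overline{\Wzeroa}\,\Wone\rangle$, with $\boundarydeg(\HexaBound)=3(N-1)=9$. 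So it suffices to show that $\langle\overline{\Wzeroa}\Wone\rangle$ has zero coefficient in degree $-9$.

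\emph{Computing $\langle\overline{\Wzeroa}\Wone\rangle$.} Capping the hexagon $\Wzeroa$ by the matching $\Wone$ produces the same kind of closed web as $\overline{\Wone}\Wsix$ computed in the proof of Proposition \ref{CartanMatrix}. Two of the three caps are the "wrong" pairs for $\Wzeroa$, so the bigon and circle relations \eqref{NeededSkein} reduce it to a product of the shape $([N-1]+[2][N])[N]$, or the analogous expression with the roles of the caps permuted. For $N=4$ the lowest power of $q$ is then $q^{-(N-1)-(N-1)}=q^{-6}$, or $q^{-7}$ in the worst case. In every case it is strictly above $q^{-9}$, and the top-degree term $q^{3-3N}=q^{-9}$ occurs only on the diagonal pairings, matching Proposition \ref{CartanMatrix}. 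Hence $\Hom^0(\Wzeroa,\Wone)=0$, and $\alpha\circ\nu=0$.

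The main obstacle is this web evaluation. One must identify exactly which closed web $\overline{\Wzeroa}\Wone$ is, as opposed to $\overline{\Wzero}\Wone$, whose degree-$(-9)$ part is nonzero because $\beta\circ\alpha$ lives there. One must also keep track of the grading convention so that the lowest degree genuinely misses $-9$.

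If the bound turns out to be tight, the fallback is direct. Close $\alpha\circ\nu$ against an arbitrary foam and show that colorings cancel in pairs under a Kempe move swapping the two pigments on the $2$-facets of $\alpha$, using Lemma \ref{rwlemma2.19} for the sign change.
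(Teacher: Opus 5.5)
Your degree argument is sound and genuinely different from the paper's. However, the central web evaluation is misidentified and the symmetry reduction is stated incorrectly; both are easy to repair.

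\textbf{The two routes.} The paper argues directly with colorings. It takes the boundary colorings of $\nu$ of types \flapcolor[i][i][i][i][i][i] and \flapcolor[i][i][j][j][i][i], shows that the two induced colorings of $\alpha\circ\nu$ cancel, and notes that the remaining boundary colorings admit no coloring of $\alpha\circ\nu$ at all. You instead observe that $\alpha\circ\nu$ is homogeneous of degree $\foamdeg(\alpha)+\foamdeg(\nu)=0$ in $\Hom_{\FoamFour^{\HexaBound}}(\Wzeroa,\Wone)$. By Corollary~\ref{WebPairDim}, via Robert--Wagner freeness, this is a free graded $\Symf$-module of graded rank $q^{9}\langle\overline{\Wzeroa}\Wone\rangle$, so it suffices that this has no $q^0$ term. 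That reasoning is valid.

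\textbf{Correcting the evaluation.} Every arc of $\Wone$ joins two hexagon vertices of $\Wzeroa$ that are connected by a single edge. This holds for all three arcs, not two. In each case the single edge and the arc both run from a split vertex to a merge vertex. So you get three bigons, each worth $[2]$, and what remains is a circle labelled $2$:
\[
\langle\overline{\Wzeroa}\Wone\rangle=[2]^3\cdot\frac{[N][N-1]}{[2]}=[2]^2[N-1][N].
\]
Its lowest term is $q^{1-2N}$, i.e.\ $q^{-7}$ for $N=4$.

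The expression $([N-1]+[2][N])[N]$ you quote is $\langle\overline{\Wone}\Wsix\rangle$. The two values differ by $[N-3][N]$, as $\Wzeroa=\Wzero-[N-3]\Wone+[N-3]\Wtwo$ predicts. The appeal to Proposition~\ref{CartanMatrix} is irrelevant, since $\Wzeroa$ is not among $\Wone,\dots,\Wsix$; indeed $\langle\overline{\Wzero}\Wzeroa\rangle$ does have a $q^{3-3N}$ term. With the correct value, $\Hom(\Wzeroa,\Wone)$ is concentrated in degrees $\ge N-2=2$, so $\alpha\circ\nu=0$.

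\textbf{The symmetry step.} A $\pi/3$ rotation does not preserve $\HexaBound$, because it exchanges incoming and outgoing boundary points. So it does not carry $\Wzero$ to $\Wzeroa$ without an extra orientation reversal, whose effect on the evaluation you would have to justify. It is simpler to drop the reduction. The same bigon count gives
\[
\langle\overline{\Wzero}\Wtwo\rangle=\langle\overline{\Wone}\Wzeroa\rangle=\langle\overline{\Wtwo}\Wzero\rangle=[2]^2[N-1][N].
\]
Since $\eta\circ\mu$, $\mu\circ\beta$ and $\nu\circ\xi$ all have degree $0$, each vanishes for the same reason.

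\textbf{What each route buys.} Yours is uniform in $N$: $\foamdeg(\alpha\circ\nu)=4-N$, while the Hom space starts in degree $N-2$. It therefore also yields the $\GL_5$ orthogonality relations, including those for the dotted $\alpha_2,\eta_2,\beta_1,\xi_1$ (degree $1<3$). It would likewise cover any projection with at most $N-4$ dots relevant to Conjecture~\ref{gl_n sixvalent seam}, with no Kempe-move bookkeeping. The paper's coloring argument needs neither the freeness theorem nor degree computations, but it must be redone case by case for each $N$.
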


\begin{proof}
    We prove that $\alpha \circ \nu=0 $ by considering colorings of the foam. The other equalities follow a similar argument.

    Consider the colorings of $\nu$ where the boundary points of the bottom hexagon are colored as \flapcolor[i][i][i][i][i][i]. Suppose that the single edges in the middle of the bottom hexagon are colored by $j$. Then there are two possible colorings of the foam $\nu$ in this case, corresponding to the horizontal single facet being colored by $k$ or $l$. (Note that $i,j,k,l$ are distinct from each other.) See Figure \ref{iiiiii gl4 figure} for reference.

\def\AlphaZTBetaZDefK{
\begin{tikzpicture}[draw=darkblue, scale=.8, xscale=-1, yscale=1.5,
    mid arrow/.style={
            postaction={
                decorate,
                decoration={
                    markings,
                    mark=at position 0.5 with {      
                        \arrow[##1]{>}
                    }
                }
            }
        },
        mid arrow/.default=black, 
        baseline={([yshift=-.8ex]current bounding box.center)}
    ]
\def \r{2}; 
\def \R{4}; 
\def\t{-7}; 
\begin{scope}[yscale=.5, yshift=-10cm]
    \foreach \a in {0,2,4}{
        \draw[postaction={mid arrow}] (60*\a+60+\t:\r) -- (60*\a+\t:\r)  ;
        \draw[postaction={mid arrow}] (60*\a+\t:\R) -- (60*\a+\t:\r);
    }
    \foreach \a in {1,3,5}{
        \draw[postaction={mid arrow}, double] (60*\a+60+\t:\r) -- (60*\a+\t:\r) ;
        \draw[postaction={mid arrow}] (60*\a+\t:\r) -- (60*\a+\t:\R);
\draw (60*3+58+\t:0.8*\R) node[above,scale=0.7]{$i$};  
\draw (60*4+62+\t:0.8*\R) node[above,scale=0.7]{$i$};  
\draw (210+60+\t:0.8*\r) node[above,scale=0.7]{$j$};  
    }
\end{scope}
\begin{scope}[yshift=-2.5 cm, yscale=.5]
    \foreach \a in {0,1,2,3,4,5}{
        \draw (60*\a+\t:\r) --  (60*\a+60+\t:\r);
        \draw[dotted] (60*\a+\t:\R) -- (60*\a+\t:\r);
    }
\draw (0,0) node[scale=0.7]{$i,j,l$};
\end{scope}
\begin{scope}[yscale=.5]
    \foreach \a in {0,2,4}{
        \draw[double, postaction={mid arrow}] (60*\a+\t:\r) --  (60*\a+60+\t:\r);
        \draw[postaction={mid arrow}] (60*\a+\t:\R) -- (60*\a+\t:\r);
    }
    \foreach \a in {1,3,5}{
        \draw[postaction={mid arrow}] (60*\a+\t:\r) -- (60*\a+60+\t:\r);
        \draw[postaction={mid arrow}] (60*\a+\t:\r) -- (60*\a+\t:\R);
    }
    \foreach \a in {0,1,2,3,4,5}{
        \draw[darkgreen, very thin] (60*\a+\t:\R) -- ($(60*\a+\t:\R) + (0,-10)$);
        \draw[darkgreen, very thin] (60*\a+\t:\r) -- ($(60*\a+\t:\r) + (0,-10)$);
    }
\draw (\t:0.8*\R) node[below,scale=0.7]{$i$};  
\draw (60*0+58+\t:0.8*\R) node[below,scale=0.7]{$i$};  
\draw (60*1+61+\t:0.75*\R) node[right,scale=0.7]{$i$};  
\draw (60*2+60+\t:0.8*\R) node[below,scale=0.7]{$i$};  
\draw (30+60+\t:0.8*\r) node[below,scale=0.7]{$l$};  
\end{scope}
\end{tikzpicture}
}

\def\AlphaZTBetaZDefL{
\begin{tikzpicture}[draw=darkblue, scale=.8, xscale=-1, yscale=1.5,
    mid arrow/.style={
            postaction={
                decorate,
                decoration={
                    markings,
                    mark=at position 0.5 with {      
                        \arrow[##1]{>}
                    }
                }
            }
        },
        mid arrow/.default=black, 
        baseline={([yshift=-.8ex]current bounding box.center)}
    ]
\def \r{2}; 
\def \R{4}; 
\def\t{-7}; 
\begin{scope}[yscale=.5, yshift=-10cm]
    \foreach \a in {0,2,4}{
        \draw[postaction={mid arrow}] (60*\a+60+\t:\r) -- (60*\a+\t:\r)  ;
        \draw[postaction={mid arrow}] (60*\a+\t:\R) -- (60*\a+\t:\r);
    }
    \foreach \a in {1,3,5}{
        \draw[postaction={mid arrow}, double] (60*\a+60+\t:\r) -- (60*\a+\t:\r) ;
        \draw[postaction={mid arrow}] (60*\a+\t:\r) -- (60*\a+\t:\R);
\draw (60*3+58+\t:0.8*\R) node[above,scale=0.7]{$i$};  
\draw (60*4+62+\t:0.8*\R) node[above,scale=0.7]{$i$};  
\draw (210+60+\t:0.8*\r) node[above,scale=0.7]{$j$};  
    }
\end{scope}
\begin{scope}[yshift=-2.5 cm, yscale=.5]
    \foreach \a in {0,1,2,3,4,5}{
        \draw (60*\a+\t:\r) --  (60*\a+60+\t:\r);
        \draw[dotted] (60*\a+\t:\R) -- (60*\a+\t:\r);
    }
\draw (0,0) node[scale=0.7]{$i,j,k$};
\end{scope}
\begin{scope}[yscale=.5]
    \foreach \a in {0,2,4}{
        \draw[double, postaction={mid arrow}] (60*\a+\t:\r) --  (60*\a+60+\t:\r);
        \draw[postaction={mid arrow}] (60*\a+\t:\R) -- (60*\a+\t:\r);
    }
    \foreach \a in {1,3,5}{
        \draw[postaction={mid arrow}] (60*\a+\t:\r) -- (60*\a+60+\t:\r);
        \draw[postaction={mid arrow}] (60*\a+\t:\r) -- (60*\a+\t:\R);
    }
    \foreach \a in {0,1,2,3,4,5}{
        \draw[darkgreen, very thin] (60*\a+\t:\R) -- ($(60*\a+\t:\R) + (0,-10)$);
        \draw[darkgreen, very thin] (60*\a+\t:\r) -- ($(60*\a+\t:\r) + (0,-10)$);
    }
\draw (\t:0.8*\R) node[below,scale=0.7]{$i$};  
\draw (60*0+58+\t:0.8*\R) node[below,scale=0.7]{$i$};  
\draw (60*1+61+\t:0.75*\R) node[right,scale=0.7]{$i$};  
\draw (60*2+60+\t:0.8*\R) node[below,scale=0.7]{$i$};  
\draw (30+60+\t:0.8*\r) node[below,scale=0.7]{$k$};  
\end{scope}
\end{tikzpicture}
}

\begin{figure}[htbp]
    \centering
\[
\scalebox{0.65}{$\displaystyle
\AlphaZTBetaZDefK \quad  \AlphaZTBetaZDefL
$}
\]   
    \caption{The two colorings of $\nu$ where the boundary points of the bottom hexagon are colored as \flapcolor[i][i][i][i][i][i].}
    \label{iiiiii gl4 figure}
\end{figure}

      The above two colorings of $\mu$ each induce a unique coloring of $\alpha \circ \mu$, denoted by $c_{ijl}$ and $c_{ijk}$. Explicitly comparing the monochrome and bichrome surfaces and signs of the intersection circles shows that $\langle \alpha \circ \nu, c_{ijl}\rangle + \langle \alpha \circ \nu, c_{ijk}\rangle =0 $. This computation follows a similar pattern to that of the proof of Lemma \ref{hexagonidempotent} in the case of the \flapcolor[i][j][k][i][j][k] coloring. 

    Now consider the colorings of $\mu$ where the boundary points of the bottom hexagon are colored as \flapcolor[i][i][j][j][i][i]. There are two possible colorings of the foam $\nu$. See Figure \ref{iijjiisl4 foam} for reference.

\def\AlphaZTBetaZDefKIJ{
\begin{tikzpicture}[draw=darkblue, scale=.8, xscale=-1, yscale=1.5,
    mid arrow/.style={
            postaction={
                decorate,
                decoration={
                    markings,
                    mark=at position 0.5 with {      
                        \arrow[##1]{>}
                    }
                }
            }
        },
        mid arrow/.default=black, 
        baseline={([yshift=-.8ex]current bounding box.center)}
    ]
\def \r{2}; 
\def \R{4}; 
\def\t{-7}; 
\begin{scope}[yscale=.5, yshift=-10cm]
    \foreach \a in {0,2,4}{
        \draw[postaction={mid arrow}] (60*\a+60+\t:\r) -- (60*\a+\t:\r)  ;
        \draw[postaction={mid arrow}] (60*\a+\t:\R) -- (60*\a+\t:\r);
    }
    \foreach \a in {1,3,5}{
        \draw[postaction={mid arrow}, double] (60*\a+60+\t:\r) -- (60*\a+\t:\r) ;
        \draw[postaction={mid arrow}] (60*\a+\t:\r) -- (60*\a+\t:\R);
\draw (60*3+58+\t:0.8*\R) node[above,scale=0.7]{$j$};  
\draw (60*4+62+\t:0.8*\R) node[above,scale=0.7]{$j$};  
\draw (210+60+\t:0.8*\r) node[above,scale=0.7]{$i$};  
    }
\end{scope}
\begin{scope}[yshift=-2.5 cm, yscale=.5]
    \foreach \a in {0,1,2,3,4,5}{
        \draw (60*\a+\t:\r) --  (60*\a+60+\t:\r);
        \draw[dotted] (60*\a+\t:\R) -- (60*\a+\t:\r);
    }
\draw (0,0) node[scale=0.7]{$i,j,l$};
\end{scope}
\begin{scope}[yscale=.5]
    \foreach \a in {0,2,4}{
        \draw[double, postaction={mid arrow}] (60*\a+\t:\r) --  (60*\a+60+\t:\r);
        \draw[postaction={mid arrow}] (60*\a+\t:\R) -- (60*\a+\t:\r);
    }
    \foreach \a in {1,3,5}{
        \draw[postaction={mid arrow}] (60*\a+\t:\r) -- (60*\a+60+\t:\r);
        \draw[postaction={mid arrow}] (60*\a+\t:\r) -- (60*\a+\t:\R);
    }
    \foreach \a in {0,1,2,3,4,5}{
        \draw[darkgreen, very thin] (60*\a+\t:\R) -- ($(60*\a+\t:\R) + (0,-10)$);
        \draw[darkgreen, very thin] (60*\a+\t:\r) -- ($(60*\a+\t:\r) + (0,-10)$);
    }
\draw (\t:0.8*\R) node[below,scale=0.7]{$i$};  
\draw (60*0+58+\t:0.8*\R) node[below,scale=0.7]{$i$};  
\draw (60*1+61+\t:0.75*\R) node[right,scale=0.7]{$i$};  
\draw (60*2+60+\t:0.8*\R) node[below,scale=0.7]{$i$};  
\draw (30+60+\t:0.8*\r) node[below,scale=0.7]{$l$};  
\end{scope}
\end{tikzpicture}
}

\def\AlphaZTBetaZDefLIJ{
\begin{tikzpicture}[draw=darkblue, scale=.8, xscale=-1, yscale=1.5,
    mid arrow/.style={
            postaction={
                decorate,
                decoration={
                    markings,
                    mark=at position 0.5 with {      
                        \arrow[##1]{>}
                    }
                }
            }
        },
        mid arrow/.default=black, 
        baseline={([yshift=-.8ex]current bounding box.center)}
    ]
\def \r{2}; 
\def \R{4}; 
\def\t{-7}; 
\begin{scope}[yscale=.5, yshift=-10cm]
    \foreach \a in {0,2,4}{
        \draw[postaction={mid arrow}] (60*\a+60+\t:\r) -- (60*\a+\t:\r)  ;
        \draw[postaction={mid arrow}] (60*\a+\t:\R) -- (60*\a+\t:\r);
    }
    \foreach \a in {1,3,5}{
        \draw[postaction={mid arrow}, double] (60*\a+60+\t:\r) -- (60*\a+\t:\r) ;
        \draw[postaction={mid arrow}] (60*\a+\t:\r) -- (60*\a+\t:\R);
\draw (60*3+58+\t:0.8*\R) node[above,scale=0.7]{$j$};  
\draw (60*4+62+\t:0.8*\R) node[above,scale=0.7]{$j$};  
\draw (210+60+\t:0.8*\r) node[above,scale=0.7]{$i$};  
    }
\end{scope}
\begin{scope}[yshift=-2.5 cm, yscale=.5]
    \foreach \a in {0,1,2,3,4,5}{
        \draw (60*\a+\t:\r) --  (60*\a+60+\t:\r);
        \draw[dotted] (60*\a+\t:\R) -- (60*\a+\t:\r);
    }
\draw (0,0) node[scale=0.7]{$i,j,k$};
\end{scope}
\begin{scope}[yscale=.5]
    \foreach \a in {0,2,4}{
        \draw[double, postaction={mid arrow}] (60*\a+\t:\r) --  (60*\a+60+\t:\r);
        \draw[postaction={mid arrow}] (60*\a+\t:\R) -- (60*\a+\t:\r);
    }
    \foreach \a in {1,3,5}{
        \draw[postaction={mid arrow}] (60*\a+\t:\r) -- (60*\a+60+\t:\r);
        \draw[postaction={mid arrow}] (60*\a+\t:\r) -- (60*\a+\t:\R);
    }
    \foreach \a in {0,1,2,3,4,5}{
        \draw[darkgreen, very thin] (60*\a+\t:\R) -- ($(60*\a+\t:\R) + (0,-10)$);
        \draw[darkgreen, very thin] (60*\a+\t:\r) -- ($(60*\a+\t:\r) + (0,-10)$);
    }
\draw (\t:0.8*\R) node[below,scale=0.7]{$i$};  
\draw (60*0+58+\t:0.8*\R) node[below,scale=0.7]{$i$};  
\draw (60*1+61+\t:0.75*\R) node[right,scale=0.7]{$i$};  
\draw (60*2+60+\t:0.8*\R) node[below,scale=0.7]{$i$};  
\draw (30+60+\t:0.8*\r) node[below,scale=0.7]{$k$};  
\end{scope}
\end{tikzpicture}
}

\begin{figure}[htbp]
    \centering
\[
\scalebox{0.65}{$\displaystyle
\AlphaZTBetaZDefKIJ \qquad \AlphaZTBetaZDefLIJ
$}
\]   
    \caption{The two colorings of $\nu$ when the boundary points of the bottom hexagon are colored as \flapcolor[i][i][j][j][i][i].}
    \label{iijjiisl4 foam}
\end{figure}

 Again, it can be shown that the corresponding two colorings of $\alpha \circ \nu$ cancel each other out. 

Morever, when the boundary points of the bottom hexagon of $\nu$ are colored by \flapcolor[i][i][j][j][k][k], \flapcolor[i][i][k][j][j][k], or \flapcolor[i][j][k][i][j][k], there is no way to color the foam $\alpha \circ \nu$. 
Summing over all possible colorings of the foam, it follows that $\alpha \circ \nu=0$. 

 \end{proof}

\begin{proposition}\label{idempotencyprop}
    The following two equalities from Theorem \ref{gl_4 sixvalent seam} hold:
    \[
        \beta \circ \alpha + \nu \circ \mu = \idfoam_{\Wzero}
        \quad 
        \text{and}
        \quad \xi \circ \eta + \mu \circ \nu =\idfoam_{\Wzeroa}.
    \]
\end{proposition}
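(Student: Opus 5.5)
Since $\Foam^{\HexaBound}$ is defined modulo the evaluation relation, the plan is to test both sides against every closing foam. To show $\beta\circ\alpha+\nu\circ\mu=\idfoam_{\Wzero}$, I will prove that for every foam $V$ with boundary $(\overline{\Wzero}\Wzero,\emptyweb)$,
\[
\langle V\circ(\beta\circ\alpha)\rangle+\langle V\circ(\nu\circ\mu)\rangle=\langle V\circ \idfoam_{\Wzero}\rangle .
\]
The Robert--Wagner evaluation is a sum over colorings, so I will work coloring by coloring. Each of the three foams agrees with $\idfoam_{\Wzero}$ outside a neighbourhood of the hexagon. I will therefore fix the coloring outside that region, which in particular fixes the boundary coloring \flapcolor[i_1][i_2][i_3][i_4][i_5][i_6] of the six outer legs and of the bottom and top copies of $\Wzero$. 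I then compare the local sums of $(-1)^{s}P/Q$ over the colorings that extend it.

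For $N=4$ the boundary colorings fall, up to rotation and the symmetry of $\Wzero$, into the same types used in Proposition \ref{orthogonalityprop}:
\[
\text{\flapcolor[i][i][i][i][i][i]},\quad \text{\flapcolor[i][i][j][j][i][i]},\quad \text{\flapcolor[i][i][j][j][k][k]},\quad \text{\flapcolor[i][i][k][j][j][k]},\quad \text{\flapcolor[i][j][k][i][j][k]}.
\]
For each type I will list the colorings of the hexagon interior.
\begin{itemize}
\item For $\idfoam_{\Wzero}$ the list is the colorings of $\Wzero$ itself, taken as a product with an interval.
\item For $\nu\circ\mu$ the interior colorings are governed by the central $3$-facet, as in Figures \ref{iiiiii gl4 figure} and \ref{iijjiisl4 foam}.
\item For $\beta\circ\alpha$ they are governed by the three $2$-facets and the central tube. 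For the tube I will apply the neck-cutting relation Proposition \ref{thickness1neckcut}, in the form of Lemma \ref{dotsglngeneric} with $N=4$ and $k=0$, so $\beta\circ\alpha$ becomes the matching-identity plus dotted terms.
\end{itemize}

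The key steps, in order, are as follows.
\begin{enumerate}
\item For the types \flapcolor[i][i][j][j][k][k], \flapcolor[i][i][k][j][j][k] and \flapcolor[i][j][k][i][j][k], the foam $\beta\circ\alpha$ admits no coloring, because $\Wone$ pairs the legs as matching arcs, which forces equal colors on paired legs. So only $\nu\circ\mu$ contributes. There I will show that the colorings of $\nu\circ\mu$ are in bijection with those of $\idfoam_{\Wzero}$ with the same sign and the same $P/Q$, by tracking how the bichrome surfaces change when the $3$-facet disc is inserted. Lemma \ref{rwlemma2.19} on Kempe moves will control the signs.
\item For \flapcolor[i][i][i][i][i][i] and \flapcolor[i][i][j][j][i][i], both summands contribute. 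I will check that their sum reproduces the identity contribution, using the explicit interior colorings and the factors $(X_a-X_b)^{\chi/2}$. The two colorings of $\nu\circ\mu$ in Figure \ref{iiiiii gl4 figure} differ by a Kempe move on a disc, and the remaining colorings of $\idfoam_{\Wzero}$ are the ones that must be supplied by $\beta\circ\alpha$.
\end{enumerate}
The second equality $\xi\circ\eta+\mu\circ\nu=\idfoam_{\Wzeroa}$ then follows by applying the same argument to the rotated picture, or directly by the $\frac{2\pi}{6}$-rotation symmetry carrying $\Wzero$ to $\Wzeroa$ and $(\alpha,\beta,\mu,\nu)$ to $(\eta,\xi,\nu,\mu)$.

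I expect the main obstacle to be step 2. There the rational functions produced by the different colorings have different denominators, and the identity comes out only after combining them as a partial-fraction type identity of the same kind as in Proposition \ref{thickness1neckcut}. The sign bookkeeping for $\theta^+$ across the singular vertices of $\mu,\nu$ is also delicate. If the coloring-by-coloring comparison becomes unwieldy, the fallback is a dimension argument. Corollary \ref{WebPairDim} and Proposition \ref{CartanMatrix} show that the degree-$0$ part of $\Hom(\Wzero,\Wzero)$ is spanned by $\idfoam_{\Wzero}$ and a second element, so it is $2$-dimensional over $\mathbb{Q}$. It would then suffice to check the claimed equality after pairing with two suitable test foams, for instance closures by $\alpha$ and by $\mu$. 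By Proposition \ref{orthogonalityprop} together with $\alpha\circ\beta=\idfoam$ and $\mu\nu\mu=\mu$, these reduce to evaluating a few small closed foams.
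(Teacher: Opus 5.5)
Your primary coloring-by-coloring route fails at its case analysis. Your fallback is sound and is essentially the paper's proof.

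\textbf{The case analysis.} Steps~1 and~2 contradict each other. \flapcolor[i][i][j][j][i][i] and \flapcolor[i][i][j][j][k][k] have the same pairing pattern, so $\Wone$ admits either both or neither.

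The five types you borrow from Proposition~\ref{orthogonalityprop} (where they only decide colorability of $\alpha\circ\nu$) do not classify the boundary colorings of $\Wzero$. Call a $2$-edge of $\Wzero$ straight if its in-leg and out-leg have the same color, and crossed otherwise. The three $2$-edge constraints force the number of crossed $2$-edges to be $0$, $2$ or $3$. Up to the $\frac{2\pi}{3}$ rotation and renaming of colors, this gives six classes, not five.

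The decisive class has legs colored in $\Wone$-matched pairs by three distinct colors $a,b,c$.
\begin{itemize}
\item $\Wzero$ has a single coloring there: all inner $1$-edges get the fourth color.
\item $\beta\circ\alpha$ is colorable.
\item $\Wzeroa$ is not colorable: its three $2$-edges would all have to carry one two-element set containing $a$, $b$ and $c$.
\end{itemize}
Hence $\mu$ and $\nu\circ\mu$ contribute nothing, and $\beta\circ\alpha$ alone must reproduce $\idfoam_{\Wzero}$. This is the opposite of your step~1, and the bijection you propose there cannot exist. Conversely, when the legs are matched as in $\Wtwo$ by two colors, $\beta\circ\alpha$ has no coloring and $\nu\circ\mu$ alone must match.

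Step~2 also misses what actually has to be checked. With the outside coloring fixed, $\idfoam_{\Wzero}$ has at most one extension. Take monochromatic legs with the inner $1$-edges of the bottom and top copies of $\Wzero$ colored $s\neq s'$. Then the identity has no extension, while $\beta\circ\alpha$ and $\nu\circ\mu$ each have one, so their contributions must cancel.

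\textbf{The fallback.} Make it the main argument, with three corrections.
\begin{enumerate}
\item The $2$-dimensionality of the degree-$0$ endomorphisms of $\Wzero$ comes from $\langle\overline{\Wzero}\Wzero\rangle=2q^{-9}+\cdots$ (Proposition~\ref{prop:closed sixvertex evaluation}), together with Corollary~\ref{WebPairDim} and Lemma~\ref{degcheck}. It does not come from Proposition~\ref{CartanMatrix}, which concerns only $\Wone,\dots,\Wsix$.
\item You must say why testing with $\alpha\circ-$ and $\mu\circ-$ detects zero. Use $\alpha\circ\nu=0=\mu\circ\beta$, $\alpha\circ\beta=\idfoam_{\Wone}$ (Lemma~\ref{dotsglngeneric} with $N=4$, $k=0$) and $\mu\circ\nu\circ\mu=\mu$. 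These give $\alpha\circ(p\,\beta\circ\alpha+r\,\nu\circ\mu)=p\,\alpha$ and $\mu\circ(p\,\beta\circ\alpha+r\,\nu\circ\mu)=r\,\mu$. So $\beta\circ\alpha$ and $\nu\circ\mu$ form a basis of the $2$-dimensional space, and $\idfoam_{\Wzero}$ has both coordinates equal to $1$.
\item The substantive input is the idempotent relation (Lemma~\ref{hexagonidempotent} and its rotation). That is a Kempe-move computation, not an evaluation of a few small closed foams.
\end{enumerate}

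The paper phrases the same argument slightly differently. It takes a nontrivial relation $a\,\idfoam_{\Wzero}+b\,\beta\circ\alpha+c\,\nu\circ\mu=0$, composes with $\alpha$ on the left and with $\nu$ on the right, and then uses $\alpha\circ\beta\circ\alpha=\alpha$ and $\nu\circ\mu\circ\nu=\nu$.
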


We split the proof of this proposition into three lemmas.

\begin{lemma}\label{lindependence}
    There is a linear dependence between $\nu$ and $\nu \circ \mu \circ \nu $. Additionally, there is a linear dependence between $\alpha$ and $\alpha  \circ  \beta \circ \alpha$ (see Figure \ref{gl4foamdeef}).
\end{lemma}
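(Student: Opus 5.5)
The approach is to get both linear dependences from a dimension count in the relevant graded Hom spaces, using Corollary \ref{WebPairDim}. We do not compute either composite directly. For each pair we identify the Hom space containing both foams and the common degree of the two foams. We then show that the graded component in that degree has rank one over $\mathbb{Q}$, so that any two elements of it are proportional.

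\emph{First dependence.} Both $\nu$ and $\nu\circ\mu\circ\nu$ lie in $\Hom_{\FoamFour^{\HexaBound}}(\Wzeroa,\Wzero)$. By Lemma \ref{degcheck}, $\foamdeg(\mu)=\foamdeg(\nu)=0$, so both foams have degree $0$. By Corollary \ref{WebPairDim}, the graded rank of this Hom space is $q^{\boundarydeg(\HexaBound)}\langle\overline{\Wzeroa}\Wzero\rangle$. For $N=4$ we have $\boundarydeg(\HexaBound)=\frac12\cdot 6\cdot 3=9$. Proposition \ref{prop:closed sixvertex evaluation} gives the lowest term $q^{3-3N}=q^{-9}$ of the mixed closure with coefficient $1$. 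This is exactly the fact already used in the proof of Theorem \ref{GLNIndecompo}.

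Hence the lowest-degree piece of the Hom space sits in degree $0$ and is one-dimensional over $\mathbb{Q}$. Since $\Symf$ has no negative-degree elements, no $\Symf$-multiples of lower-degree morphisms contribute in degree $0$. Therefore $\nu\circ\mu\circ\nu=k\,\nu$ for some $k\in\mathbb{Q}$. The one point to confirm is that the mixed closure $\overline{\Wzeroa}\Wzero$ is the web called $\BenzeneMixBound$ there, with matching orientations.

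\emph{Second dependence.} Both $\alpha$ and $\alpha\circ\beta\circ\alpha$ lie in $\Hom(\Wzero,\Wone)$. In the proof of Theorem \ref{GL4Decompo} it is noted that $\foamdeg(\alpha)=0$ for $N=4$, consistent with $\foamdeg(\beta\circ\alpha)=-2N+8=0$ from Lemma \ref{degcheck}. So both foams again have degree $0$.

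The closure $\overline{\Wzero}\Wone$ is a hexagon capped by three arcs. Each of these removes a bigon via the relations \eqref{NeededSkein}, and the evaluation should collapse to a product of the form $[N-1]\cdot(\text{stuff})\cdot[N]$. We need its lowest term to be $q^{-9}$ with coefficient $1$. I would verify this by the same web reduction used for the entries $B(q)$ in Proposition \ref{CartanMatrix}. I expect $\langle\overline{\Wone}\Wzero\rangle$ to be the $\Wsix$-free part of $B(q)$ plus a $[N-3]$ correction, and for $N=4$ this should have lowest term $q^{-9}$ with coefficient $1$. Given that, the degree-$0$ part of $\Hom(\Wzero,\Wone)$ is one-dimensional, and $\alpha\circ\beta\circ\alpha=k'\alpha$.

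\emph{Main obstacle.} The main difficulty is this second evaluation, together with careful bookkeeping of degree conventions for cornered foams, meaning the shift by $\boundarydeg(\varepsilon)$ in Corollary \ref{cor:degree foam with corner alt}. If the lowest coefficient turns out not to be $1$, I would fall back on a coloring computation. Using the bottom-coloring types from Proposition \ref{orthogonalityprop}, I would evaluate $\alpha\circ\beta\circ\alpha$ and $\alpha$ against a spanning set of degree-$0$ closing foams and compare the results directly.
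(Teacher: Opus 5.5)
Your argument is correct, but it takes a different route from the paper.

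\textbf{The paper's route.} It works in $\Hom(\Wzero,\Wzero)$. Since $\langle\overline{\Wzero}\Wzero\rangle=2q^{-9}+\cdots$, the degree-$0$ part of this space is two-dimensional. So the three degree-$0$ foams $\idfoam_{\Wzero}$, $\beta\circ\alpha$, $\nu\circ\mu$ satisfy a relation $a\,\idfoam_{\Wzero}+b\,\beta\circ\alpha+c\,\nu\circ\mu=0$. Composing it with $\alpha$ on the left and with $\nu$ on the right, and using $\alpha\circ\nu=0$ from Proposition~\ref{orthogonalityprop}, gives $a\alpha+b\,\alpha\circ\beta\circ\alpha=0$ and $a\nu+c\,\nu\circ\mu\circ\nu=0$.

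\textbf{Your route.} You use that the degree-$0$ parts of $\Hom(\Wzeroa,\Wzero)$ and $\Hom(\Wzero,\Wone)$ are each one-dimensional, which is the same mechanism as in Theorem~\ref{GLNIndecompo}. The evaluation you left open is immediate. Each arc of $\Wone$ closes a single/double bigon of the hexagon, so $\langle\overline{\Wone}\Wzero\rangle=[N-1]^3[N]$, which indeed equals $B(q)+[N-3][N]^3$. Its lowest term is $q^{7-4N}=q^{-9}$ with coefficient $1$ at $N=4$, so no coloring fallback is needed.

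\textbf{What each route buys.} Yours is more direct and needs no orthogonality relation. The paper's route produces both dependences from a single relation in $\Hom(\Wzero,\Wzero)$ with a shared coefficient $a$. The triple $(a,b,c)$ is exactly what Remark~\ref{abcglobalremark} and the proof of Proposition~\ref{idempotencyprop} use to turn $\alpha\circ\beta\circ\alpha=\alpha$ and $\nu\circ\mu\circ\nu=\nu$ into $\idfoam_{\Wzero}=\beta\circ\alpha+\nu\circ\mu$. With your unrelated scalars $k,k'$, that downstream step needs a separate argument. For example: $\beta\circ\alpha$ and $\nu\circ\mu$ are nonzero orthogonal idempotents (using both $\alpha\circ\nu=0$ and $\mu\circ\beta=0$). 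Hence they form a basis of the two-dimensional degree-$0$ part of $\Hom(\Wzero,\Wzero)$. Composing $\idfoam_{\Wzero}=x\,\beta\circ\alpha+y\,\nu\circ\mu$ with each of them then gives $x=y=1$.
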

\begin{proof}
    Observe that the total boundary of $\idfoam_{\Wzero}$, $\beta \circ \alpha$, and $\nu \circ \mu$ are all given by $\BenzeneBound$, which by Proposition \ref{prop:closed sixvertex evaluation} evaluates to 
    \[2[2]^2[3]^2[4]+[2]^2[3][4] = 2q^{-9} + \sum_{k>-9} a_k q^k. \]

    On the other hand, Lemma \ref{degcheck} shows that the degrees of the three foams above are all equal to $0$. Since the degree of the boundary of $\Wzero$ equals $-9$ by Definition \ref{def:deg of web with boundary}, the discussion in Section \ref{foam with boundary} implies that there is a linear relation between the foams. That is, there exist some $a,b,c \in \Q$, such that
    \[ a\cdot \idfoam_{\Wzero} + b \cdot \beta \circ \alpha + c \cdot \nu \circ \mu=0. \]

    Precomposing with $\alpha$ completes the proof. 
    Explicitly,
    \[ 
        \alpha  \circ (a\cdot \idfoam_{\Wzero} + b \cdot \beta \circ \alpha + c \cdot \nu \circ \mu) = 0. 
    \]
     Since $ \alpha \circ \nu=0$ by Proposition \ref{orthogonalityprop}, the following must hold
    \[
        a \cdot \alpha + b \cdot \alpha  \circ  \beta \circ \alpha =0. 
    \]
     Similarly, 
     \[ 
        (a\cdot \idfoam_{\Wzero} + b \cdot \beta \circ \alpha + c \cdot \nu \circ \mu) \circ \nu = 0, 
    \]
     and again by Proposition \ref{orthogonalityprop}, the last equality needed to complete the proof follows
     \[
        a \cdot \nu + c \cdot \nu \circ \mu \circ \nu =0. 
    \]
\end{proof}
\begin{remark}\label{abcglobalremark}
    We preserve the variables $a,b,c$ in what follows. The next lemmas will establish that $-a=b=c=1$, completing the proof of Proposition \ref{idempotencyprop}.
\end{remark}

\begin{notation}\label{notation:table coloring contents}
    A few distinct surfaces appear in the proofs of the following lemmas.
    The \emph{saddle} is the usual neighborhood of an index-1 critical point, viewed as a disk embedded in $\mathbb{D}^2 \times [0,1]$. We can depict this disk by its projection onto the $\mathbb{D}^2$ factor as 
            \begin{tikzpicture}[scale=.5, baseline=-.5ex]
                \filldraw[white!90!black] (0,0) circle (1cm);
                \draw[dotted] (0,0) circle (1cm);
                \draw[ultra thick] (1,0) arc (0:90:1cm);
                \draw[ultra thick] (-1,0) arc (180:270:1cm);
            \end{tikzpicture},
        where the thick arcs are projections of boundary arcs embedded in $S^1 \times \{1\}$ and the dotted arcs are projections of boundary arcs embedded in $S^1 \times \{0\}$.
     The \emph{monkey saddle} (a.k.a. `triple saddle') is a disk embedded in $\mathbb{D}^2\times I$ that projects to 
            \begin{tikzpicture}[scale=.5, baseline=-.5ex]
                \filldraw[white!90!black] (0,0) circle (1cm);
                \draw[dotted] (0,0) circle (1cm);
                \draw[ultra thick] (0:1cm) arc (0:60:1cm);
                \draw[ultra thick] (120:1cm) arc (120:180:1cm);
                \draw[ultra thick] (240:1cm) arc (240:300:1cm);
            \end{tikzpicture}.
      The \emph{doubled monkey saddle} is obtained by stacking two monkey saddles vertically, resulting in a surface homeomorphic to a thrice-punctured sphere. 
\end{notation}

We will use both Notations \ref{notation:table coloring contents} and \ref{flapcolor on web} extensively in what follows.

\begin{lemma}\label{hexagonidempotent}
    The following equality holds:  $ \nu \circ \mu \circ \nu = \nu $ (see Figure \ref{gl4foamdeef}).
\end{lemma}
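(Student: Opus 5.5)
By Lemma \ref{lindependence} we already have $a\cdot \nu + c\cdot \nu\circ\mu\circ\nu = 0$. So it suffices to show that $\nu\circ\mu\circ\nu$ and $\nu$ agree after a nondegenerate pairing. Concretely, I would close both foams up by a fixed test foam and compare evaluations. The natural test foam is $\mu$ itself, composed on the appropriate side, with the result closed off by the identity foam on a web with boundary $\HexaBound$. Since $\langle \overline{\Wzero}\Wzeroa\rangle$ has leading coefficient $1$ in degree $-\boundarydeg(\HexaBound)$ (Proposition \ref{prop:closed sixvertex evaluation}), the degree-$0$ part of $\Hom(\Wzero,\Wzeroa)$ is one-dimensional and spanned by $\mu$. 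Likewise $\Hom(\Wzeroa,\Wzero)$ in degree $0$ is spanned by $\nu$. Hence $\nu\circ\mu\circ\nu = \lambda \nu$ for a scalar $\lambda\in\Q$, and the task reduces to showing $\lambda=1$, equivalently $-a=c$ after normalizing.

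To compute $\lambda$, I would evaluate a single closed foam. Take $T:=\mu\circ\nu$ viewed in $\Hom(\Wzeroa,\Wzeroa)$, and close up $\mu\circ\nu\circ\mu\circ\nu$ and $\mu\circ\nu$ by gluing the two hexagon boundaries along the identity on the outer arcs, that is, a trace-type closure. This gives the identity $\langle \mathrm{cl}(\mu\nu\mu\nu)\rangle = \lambda\,\langle \mathrm{cl}(\mu\nu)\rangle$. Both sides are degree-$0$ closed foams, so their evaluations are rational numbers. I would compute them with the Robert--Wagner formula, summing over colorings organized by the boundary colorings \flapcolor[i_1][i_2][i_3][i_4][i_5][i_6], as in the proof of Proposition \ref{orthogonalityprop}. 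The admissible boundary types are \flapcolor[i][i][i][i][i][i], \flapcolor[i][i][j][j][i][i] (and its rotations), \flapcolor[i][i][j][j][k][k], \flapcolor[i][i][k][j][j][k], and \flapcolor[i][j][k][i][j][k]. For each type I would list the colorings of the interior $3$-facets (the choices among the remaining pigments, as in Figures \ref{iiiiii gl4 figure} and \ref{iijjiisl4 foam}). For each coloring I would then identify the monochrome surfaces, which are built from saddles, monkey saddles and doubled monkey saddles (Notation \ref{notation:table coloring contents}), and the bichrome surfaces, recording $\chi$ and the $\theta^+$ counts.

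Since degree-$0$ evaluations are constants, I would use Lemma \ref{rwlemma2.19} to pair colorings related by Kempe moves and cancel or combine them, as in Proposition \ref{orthogonalityprop}. This should leave only the \flapcolor[i][j][k][i][j][k] and \flapcolor[i][i][i][i][i][i]-type contributions. The claim is that the sum of their signs and rational factors over the first closure is exactly the corresponding sum for the second, after accounting for the extra doubled monkey saddle in the middle. Note that each $\mu\nu$ sandwich replaces a $3$-facet disk pair with a doubled monkey saddle whose Euler characteristic is $-1$. Equivalently, and probably cleaner, I would show directly that for each boundary coloring, the local contribution of the middle $\mu\circ\nu$ inside $\nu\circ\mu\circ\nu$ equals $1$ on the nonvanishing colorings.

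The main obstacle is the sign bookkeeping: getting $s(F,c)$ right for each coloring, especially the $\theta^+_{ij}$ circles through the singular vertices of the monkey saddles, since a sign error would produce $\lambda=-1$. I would first do the \flapcolor[i][j][k][i][j][k] case carefully, because there the coloring is forced and rigid. I would then confirm consistency using the decategorified check $\langle\Wsix\text{-pairing}\rangle$: $\lambda$ must be positive because $\mathcal{E}(\nu\mu)$ decategorifies to $\Wsix$, whose self-pairing $\HexaPairing$ has leading coefficient $+1$. Finally, since $\nu\ne0$ (some coloring gives a nonzero pairing), $\lambda=1$ gives $\nu\circ\mu\circ\nu=\nu$.
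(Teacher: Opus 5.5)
Your reduction to a scalar is sound. Since $q^{\boundarydeg(\HexaBound)}\langle\overline{\Wzeroa}\Wzero\rangle$ has constant term $1$, the degree-$0$ part of $\Hom(\Wzeroa,\Wzero)$ is spanned by $\nu$, so $\nu\circ\mu\circ\nu=\lambda\nu$; this is the same argument as in Theorem \ref{GLNIndecompo}.

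The gap is in how you determine $\lambda$. The trace-type closures are not degree-$0$ foams. By Corollary \ref{cor:degree foam with corner alt}, the bent identity foam has degree $-\boundarydeg(\HexaBound)$. Closing a degree-$0$ endomorphism this way therefore gives a closed foam of degree $-2\boundarydeg(\HexaBound)=6-6N<0$, whose evaluation is identically $0$ (compare: closing the identity on a single arc gives an undotted sphere). Your equation $\langle \mathrm{cl}(\mu\nu\mu\nu)\rangle=\lambda\langle\mathrm{cl}(\mu\nu)\rangle$ is thus $0=\lambda\cdot 0$. A degree-$0$ morphism is only detected by pairing against morphisms of degree at least $2\boundarydeg(\HexaBound)=6N-6$, and evaluating those against $\nu\circ\mu\circ\nu$ is no easier than the direct computation.

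The decategorified sign check does not help either. If $\lambda\neq 0$, then $\lambda^{-1}\nu\circ\mu$ is an idempotent whose image decategorifies to $\Wsix$ whatever the sign of $\lambda$. So decategorification cannot distinguish $\lambda=1$ from $\lambda=-1$, and it presupposes $\lambda\neq0$.

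The argument that works is the one you mention only in passing, and it is what the paper does. It is not equivalent to a trace computation; it is a local comparison, valid for every closure at once. Write $F_1=\nu\circ\mu\circ\nu$ and $F_2=\nu$. Fix a coloring $c$ of $F_2$ and list the colorings of $F_1$ that agree with $c$ away from the middle. These are $c_0$ together with the colorings obtained from it by Kempe moves along the internal monochrome spheres: three colorings for \flapcolor[i][i][i][i][i][i], two for \flapcolor[j][i][i][i][i][j], and one in rigid cases such as \flapcolor[i][j][k][i][j][k].

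No single coloring contributes $1$, and different boundary types do not cancel against one another. Instead, for each $c$ separately, the sum over this Kempe orbit equals $\langle F_2,c\rangle$. In the \flapcolor[i][i][i][i][i][i] case with $(i,j,k,l)=(1,2,3,4)$, this reduces to
\[
\frac{(\vara-\varc)^2}{(\varb-\varc)(\varc-\vard)}-\frac{(\vara-\varb)^2}{(\varb-\varc)(\varb-\vard)}-\frac{(\vara-\vard)^2}{(\varc-\vard)(\varb-\vard)}=-1,
\]
combined with the relative sign $-1$ between $(-1)^{s(F_1,c_0)}$ and $(-1)^{s(F_2,c)}$. That sign is read off from the tables of monochrome surfaces and $\theta^+$ circles, with Lemma \ref{rwlemma2.19} handling the Kempe-moved colorings.

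Because all surfaces outside the middle are unchanged, this identity survives any closure. It yields $\nu\circ\mu\circ\nu=\nu$ directly, with no need for Lemma \ref{lindependence} or a separate determination of $\lambda$.
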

\begin{proof}

This is a direct foam evaluation. We are going to first analyze the most involved case in detail. This is the case of $\nu \circ \mu \circ \nu = \nu$ with the boundary points of the bottom web all colored with the same color \flapcolor[i][i][i][i][i][i], as depicted in Figure \ref{fig iiiiii gl4}.

\def\HexagonIdempA{
\begin{tikzpicture}[draw=darkblue, scale=.4, xscale=-1, yscale=1.5,
    mid arrow/.style={
            postaction={
                decorate,
                decoration={
                    markings,
                    mark=at position 0.5 with {      
                        \arrow[##1]{>}
                    }
                }
            }
        },
        mid arrow/.default=black, 
        baseline={([yshift=-.8ex]current bounding box.center)}
    ]
\def \r{2}; 
\def \R{4}; 
\def\t{-7}; 
\begin{scope}[yscale=.5, yshift=-10cm]
    \foreach \a in {0,2,4}{
        \draw[postaction={mid arrow}] (60*\a+60+\t:\r) -- (60*\a+\t:\r)  ;
        \draw[postaction={mid arrow}] (60*\a+\t:\R) -- (60*\a+\t:\r);
    }
    \foreach \a in {1,3,5}{
        \draw[postaction={mid arrow}, double] (60*\a+60+\t:\r) -- (60*\a+\t:\r) ;
        \draw[postaction={mid arrow}] (60*\a+\t:\r) -- (60*\a+\t:\R);
    }
\draw (60*3+58+\t:0.8*\R) node[above,scale=0.7]{$i$};  
\draw (60*4+62+\t:0.8*\R) node[above,scale=0.7]{$i$};  
\draw (210+60+\t:0.8*\r) node[above,scale=0.7]{$l$};  
\end{scope}
\begin{scope}[yshift=-2.5 cm, yscale=.5]
    \foreach \a in {0,1,2,3,4,5}{
        \draw (60*\a+\t:\r) --  (60*\a+60+\t:\r);
        \draw[dotted] (60*\a+\t:\R) -- (60*\a+\t:\r);
    }
   \draw (0,0.15) node[scale=0.7]{$i,j,l$};
\end{scope}
\begin{scope}[yscale=.5]
    \foreach \a in {0,2,4}{
        \draw[double, postaction={mid arrow}] (60*\a+\t:\r) --  (60*\a+60+\t:\r);
        \draw[postaction={mid arrow}] (60*\a+\t:\R) -- (60*\a+\t:\r);
    }
    \foreach \a in {1,3,5}{
        \draw[postaction={mid arrow}] (60*\a+\t:\r) -- (60*\a+60+\t:\r);
        \draw[postaction={mid arrow}] (60*\a+\t:\r) -- (60*\a+\t:\R);
    }
    \foreach \a in {0,1,2,3,4,5}{
        \draw[darkgreen, very thin] (60*\a+\t:\R) -- ($(60*\a+\t:\R) + (0,-10)$);
        \draw[darkgreen, very thin] (60*\a+\t:\r) -- ($(60*\a+\t:\r) + (0,-10)$);
    }
\end{scope}

\def \r{2}; 
\def \R{4}; 
\def\t{180-7}; 

\begin{scope}[ yscale=.5, yshift=5 cm]
    \foreach \a in {0,1,2,3,4,5}{
        \draw (60*\a+\t:\r) --  (60*\a+60+\t:\r);
        \draw[dotted] (60*\a+\t:\R) -- (60*\a+\t:\r);
    }
   \draw (0,0.15) node[scale=0.7]{$i,j,l$};
\end{scope}
\begin{scope}[yscale=.5,yshift=10 cm]
    \foreach \a in {0,2,4}{
        \draw[double, postaction={mid arrow}] (60*\a+\t:\r) --  (60*\a+60+\t:\r);
        \draw[postaction={mid arrow}] (60*\a+\t:\R) -- (60*\a+\t:\r);
    }
    \foreach \a in {1,3,5}{
        \draw[postaction={mid arrow}] (60*\a+\t:\r) -- (60*\a+60+\t:\r);
        \draw[postaction={mid arrow}] (60*\a+\t:\r) -- (60*\a+\t:\R);
    }
    \foreach \a in {0,1,2,3,4,5}{
        \draw[darkgreen, very thin] (60*\a+\t:\R) -- ($(60*\a+\t:\R) + (0,-10)$);
        \draw[darkgreen, very thin] (60*\a+\t:\r) -- ($(60*\a+\t:\r) + (0,-10)$);
    }
\end{scope}


\def \r{2}; 
\def \R{4}; 
\def\t{-7}; 

\begin{scope}[ yscale=.5, yshift=15cm]
    \foreach \a in {0,1,2,3,4,5}{
        \draw (60*\a+\t:\r) --  (60*\a+60+\t:\r);
        \draw[dotted] (60*\a+\t:\R) -- (60*\a+\t:\r);
    }
   \draw (0,0.15) node[scale=0.7]{$i,j,l$};
\end{scope}
\begin{scope}[yscale=.5, yshift=20cm]
    \foreach \a in {0,2,4}{
        \draw[double, postaction={mid arrow}] (60*\a+\t:\r) --  (60*\a+60+\t:\r);
        \draw[postaction={mid arrow}] (60*\a+\t:\R) -- (60*\a+\t:\r);
    }
    \foreach \a in {1,3,5}{
        \draw[postaction={mid arrow}] (60*\a+\t:\r) -- (60*\a+60+\t:\r);
        \draw[postaction={mid arrow}] (60*\a+\t:\r) -- (60*\a+\t:\R);
    }
    \foreach \a in {0,1,2,3,4,5}{
        \draw[darkgreen, very thin] (60*\a+\t:\R) -- ($(60*\a+\t:\R) + (0,-10)$);
        \draw[darkgreen, very thin] (60*\a+\t:\r) -- ($(60*\a+\t:\r) + (0,-10)$);
    }
\draw (\t:0.8*\R) node[below,scale=0.7]{$i$};  
\draw (60*0+58+\t:0.8*\R) node[below,scale=0.7]{$i$};  
\draw (60*1+61+\t:0.8*\R) node[below,scale=0.7]{$i$};  
\draw (60*2+60+\t:0.8*\R) node[below,scale=0.7]{$i$};  
\draw (30+60+\t:0.8*\r) node[below,scale=0.7]{$j$};  
\end{scope}

\end{tikzpicture}
}

\def\HexagonIdempAO{
\begin{tikzpicture}[draw=darkblue, scale=.4, xscale=-1, yscale=1.5,
    mid arrow/.style={
            postaction={
                decorate,
                decoration={
                    markings,
                    mark=at position 0.5 with {      
                        \arrow[##1]{>}
                    }
                }
            }
        },
        mid arrow/.default=black, 
        baseline={([yshift=-.8ex]current bounding box.center)}
    ]
\def \r{2}; 
\def \R{4}; 
\def\t{-7}; 
\begin{scope}[yscale=.5, yshift=-10cm]
    \foreach \a in {0,2,4}{
        \draw[postaction={mid arrow}] (60*\a+60+\t:\r) -- (60*\a+\t:\r)  ;
        \draw[postaction={mid arrow}] (60*\a+\t:\R) -- (60*\a+\t:\r);
    }
    \foreach \a in {1,3,5}{
        \draw[postaction={mid arrow}, double] (60*\a+60+\t:\r) -- (60*\a+\t:\r) ;
        \draw[postaction={mid arrow}] (60*\a+\t:\r) -- (60*\a+\t:\R);
    }
\draw (60*3+58+\t:0.8*\R) node[above,scale=0.7]{$i$};  
\draw (60*4+62+\t:0.8*\R) node[above,scale=0.7]{$i$};  
\draw (210+60+\t:0.8*\r) node[above,scale=0.7]{$l$};  
\end{scope}
\begin{scope}[yshift=-2.5 cm, yscale=.5]
    \foreach \a in {0,1,2,3,4,5}{
        \draw (60*\a+\t:\r) --  (60*\a+60+\t:\r);
        \draw[dotted] (60*\a+\t:\R) -- (60*\a+\t:\r);
    }
   \draw (0,0.15) node[scale=0.7]{$i,k,l$};
\end{scope}
\begin{scope}[yscale=.5]
    \foreach \a in {0,2,4}{
        \draw[double, postaction={mid arrow}] (60*\a+\t:\r) --  (60*\a+60+\t:\r);
        \draw[postaction={mid arrow}] (60*\a+\t:\R) -- (60*\a+\t:\r);
    }
    \foreach \a in {1,3,5}{
        \draw[postaction={mid arrow}] (60*\a+\t:\r) -- (60*\a+60+\t:\r);
        \draw[postaction={mid arrow}] (60*\a+\t:\r) -- (60*\a+\t:\R);
    }
    \foreach \a in {0,1,2,3,4,5}{
        \draw[darkgreen, very thin] (60*\a+\t:\R) -- ($(60*\a+\t:\R) + (0,-10)$);
        \draw[darkgreen, very thin] (60*\a+\t:\r) -- ($(60*\a+\t:\r) + (0,-10)$);
    }
\end{scope}

\def \r{2}; 
\def \R{4}; 
\def\t{180-7}; 

\begin{scope}[ yscale=.5, yshift=5 cm]
    \foreach \a in {0,1,2,3,4,5}{
        \draw (60*\a+\t:\r) --  (60*\a+60+\t:\r);
        \draw[dotted] (60*\a+\t:\R) -- (60*\a+\t:\r);
    }
   \draw (0,0.15) node[scale=0.7]{$i,k,l$};
\end{scope}
\begin{scope}[yscale=.5,yshift=10 cm]
    \foreach \a in {0,2,4}{
        \draw[double, postaction={mid arrow}] (60*\a+\t:\r) --  (60*\a+60+\t:\r);
        \draw[postaction={mid arrow}] (60*\a+\t:\R) -- (60*\a+\t:\r);
    }
    \foreach \a in {1,3,5}{
        \draw[postaction={mid arrow}] (60*\a+\t:\r) -- (60*\a+60+\t:\r);
        \draw[postaction={mid arrow}] (60*\a+\t:\r) -- (60*\a+\t:\R);
    }
    \foreach \a in {0,1,2,3,4,5}{
        \draw[darkgreen, very thin] (60*\a+\t:\R) -- ($(60*\a+\t:\R) + (0,-10)$);
        \draw[darkgreen, very thin] (60*\a+\t:\r) -- ($(60*\a+\t:\r) + (0,-10)$);
    }
\end{scope}


\def \r{2}; 
\def \R{4}; 
\def\t{-7}; 

\begin{scope}[ yscale=.5, yshift=15cm]
    \foreach \a in {0,1,2,3,4,5}{
        \draw (60*\a+\t:\r) --  (60*\a+60+\t:\r);
        \draw[dotted] (60*\a+\t:\R) -- (60*\a+\t:\r);
    }
   \draw (0,0.15) node[scale=0.7]{$i,j,l$};
\end{scope}
\begin{scope}[yscale=.5, yshift=20cm]
    \foreach \a in {0,2,4}{
        \draw[double, postaction={mid arrow}] (60*\a+\t:\r) --  (60*\a+60+\t:\r);
        \draw[postaction={mid arrow}] (60*\a+\t:\R) -- (60*\a+\t:\r);
    }
    \foreach \a in {1,3,5}{
        \draw[postaction={mid arrow}] (60*\a+\t:\r) -- (60*\a+60+\t:\r);
        \draw[postaction={mid arrow}] (60*\a+\t:\r) -- (60*\a+\t:\R);
    }
    \foreach \a in {0,1,2,3,4,5}{
        \draw[darkgreen, very thin] (60*\a+\t:\R) -- ($(60*\a+\t:\R) + (0,-10)$);
        \draw[darkgreen, very thin] (60*\a+\t:\r) -- ($(60*\a+\t:\r) + (0,-10)$);
    }
\draw (\t:0.8*\R) node[below,scale=0.7]{$i$};  
\draw (60*0+58+\t:0.8*\R) node[below,scale=0.7]{$i$};  
\draw (60*1+61+\t:0.8*\R) node[below,scale=0.7]{$i$};  
\draw (60*2+60+\t:0.8*\R) node[below,scale=0.7]{$i$};  
\draw (30+60+\t:0.8*\r) node[below,scale=0.7]{$j$};  
\end{scope}

\end{tikzpicture}
}

\def\HexagonIdempAOO{
\begin{tikzpicture}[draw=darkblue, scale=.4, xscale=-1, yscale=1.5,
    mid arrow/.style={
            postaction={
                decorate,
                decoration={
                    markings,
                    mark=at position 0.5 with {      
                        \arrow[##1]{>}
                    }
                }
            }
        },
        mid arrow/.default=black, 
        baseline={([yshift=-.8ex]current bounding box.center)}
    ]
\def \r{2}; 
\def \R{4}; 
\def\t{-7}; 
\begin{scope}[yscale=.5, yshift=-10cm]
    \foreach \a in {0,2,4}{
        \draw[postaction={mid arrow}] (60*\a+60+\t:\r) -- (60*\a+\t:\r)  ;
        \draw[postaction={mid arrow}] (60*\a+\t:\R) -- (60*\a+\t:\r);
    }
    \foreach \a in {1,3,5}{
        \draw[postaction={mid arrow}, double] (60*\a+60+\t:\r) -- (60*\a+\t:\r) ;
        \draw[postaction={mid arrow}] (60*\a+\t:\r) -- (60*\a+\t:\R);
    }
\draw (60*3+58+\t:0.8*\R) node[above,scale=0.7]{$i$};  
\draw (60*4+62+\t:0.8*\R) node[above,scale=0.7]{$i$};  
\draw (210+60+\t:0.8*\r) node[above,scale=0.7]{$l$};  
\end{scope}
\begin{scope}[yshift=-2.5 cm, yscale=.5]
    \foreach \a in {0,1,2,3,4,5}{
        \draw (60*\a+\t:\r) --  (60*\a+60+\t:\r);
        \draw[dotted] (60*\a+\t:\R) -- (60*\a+\t:\r);
    }
   \draw (0,0.15) node[scale=0.7]{$i,j,l$};
\end{scope}
\begin{scope}[yscale=.5]
    \foreach \a in {0,2,4}{
        \draw[double, postaction={mid arrow}] (60*\a+\t:\r) --  (60*\a+60+\t:\r);
        \draw[postaction={mid arrow}] (60*\a+\t:\R) -- (60*\a+\t:\r);
    }
    \foreach \a in {1,3,5}{
        \draw[postaction={mid arrow}] (60*\a+\t:\r) -- (60*\a+60+\t:\r);
        \draw[postaction={mid arrow}] (60*\a+\t:\r) -- (60*\a+\t:\R);
    }
    \foreach \a in {0,1,2,3,4,5}{
        \draw[darkgreen, very thin] (60*\a+\t:\R) -- ($(60*\a+\t:\R) + (0,-10)$);
        \draw[darkgreen, very thin] (60*\a+\t:\r) -- ($(60*\a+\t:\r) + (0,-10)$);
    }
\end{scope}

\def \r{2}; 
\def \R{4}; 
\def\t{180-7}; 

\begin{scope}[ yscale=.5, yshift=5 cm]
    \foreach \a in {0,1,2,3,4,5}{
        \draw (60*\a+\t:\r) --  (60*\a+60+\t:\r);
        \draw[dotted] (60*\a+\t:\R) -- (60*\a+\t:\r);
    }
   \draw (0,0.15) node[scale=0.7]{$i,j,k$};
\end{scope}
\begin{scope}[yscale=.5,yshift=10 cm]
    \foreach \a in {0,2,4}{
        \draw[double, postaction={mid arrow}] (60*\a+\t:\r) --  (60*\a+60+\t:\r);
        \draw[postaction={mid arrow}] (60*\a+\t:\R) -- (60*\a+\t:\r);
    }
    \foreach \a in {1,3,5}{
        \draw[postaction={mid arrow}] (60*\a+\t:\r) -- (60*\a+60+\t:\r);
        \draw[postaction={mid arrow}] (60*\a+\t:\r) -- (60*\a+\t:\R);
    }
    \foreach \a in {0,1,2,3,4,5}{
        \draw[darkgreen, very thin] (60*\a+\t:\R) -- ($(60*\a+\t:\R) + (0,-10)$);
        \draw[darkgreen, very thin] (60*\a+\t:\r) -- ($(60*\a+\t:\r) + (0,-10)$);
    }
\end{scope}


\def \r{2}; 
\def \R{4}; 
\def\t{-7}; 

\begin{scope}[ yscale=.5, yshift=15cm]
    \foreach \a in {0,1,2,3,4,5}{
        \draw (60*\a+\t:\r) --  (60*\a+60+\t:\r);
        \draw[dotted] (60*\a+\t:\R) -- (60*\a+\t:\r);
    }
   \draw (0,0.15) node[scale=0.7]{$i,j,k$};
\end{scope}
\begin{scope}[yscale=.5, yshift=20cm]
    \foreach \a in {0,2,4}{
        \draw[double, postaction={mid arrow}] (60*\a+\t:\r) --  (60*\a+60+\t:\r);
        \draw[postaction={mid arrow}] (60*\a+\t:\R) -- (60*\a+\t:\r);
    }
    \foreach \a in {1,3,5}{
        \draw[postaction={mid arrow}] (60*\a+\t:\r) -- (60*\a+60+\t:\r);
        \draw[postaction={mid arrow}] (60*\a+\t:\r) -- (60*\a+\t:\R);
    }
    \foreach \a in {0,1,2,3,4,5}{
        \draw[darkgreen, very thin] (60*\a+\t:\R) -- ($(60*\a+\t:\R) + (0,-10)$);
        \draw[darkgreen, very thin] (60*\a+\t:\r) -- ($(60*\a+\t:\r) + (0,-10)$);
    }
\draw (\t:0.8*\R) node[below,scale=0.7]{$i$};  
\draw (60*0+58+\t:0.8*\R) node[below,scale=0.7]{$i$};  
\draw (60*1+61+\t:0.8*\R) node[below,scale=0.7]{$i$};  
\draw (60*2+60+\t:0.8*\R) node[below,scale=0.7]{$i$};  
\draw (30+60+\t:0.8*\r) node[below,scale=0.7]{$j$};  
\end{scope}

\end{tikzpicture}
}

\def\HexagonIdempB{
\begin{tikzpicture}[draw=darkblue, scale=.4, xscale=-1, yscale=1.5,
    mid arrow/.style={
            postaction={
                decorate,
                decoration={
                    markings,
                    mark=at position 0.5 with {      
                        \arrow[##1]{>}
                    }
                }
            }
        },
        mid arrow/.default=black, 
        baseline={([yshift=-.8ex]current bounding box.center)}
    ]
\def \r{2}; 
\def \R{4}; 
\def\t{-7}; 
\begin{scope}[yscale=.5, yshift=-10cm]
    \foreach \a in {0,2,4}{
        \draw[postaction={mid arrow}] (60*\a+60+\t:\r) -- (60*\a+\t:\r)  ;
        \draw[postaction={mid arrow}] (60*\a+\t:\R) -- (60*\a+\t:\r);
    }
    \foreach \a in {1,3,5}{
        \draw[postaction={mid arrow}, double] (60*\a+60+\t:\r) -- (60*\a+\t:\r) ;
        \draw[postaction={mid arrow}] (60*\a+\t:\r) -- (60*\a+\t:\R);
    }
\draw (60*3+58+\t:0.8*\R) node[above,scale=0.7]{$i$};  
\draw (60*4+62+\t:0.8*\R) node[above,scale=0.7]{$i$};  
\draw (210+60+\t:0.8*\r) node[above,scale=0.7]{$l$};  
\end{scope}
\begin{scope}[yshift=-2.5 cm, yscale=.5]
    \foreach \a in {0,1,2,3,4,5}{
        \draw (60*\a+\t:\r) --  (60*\a+60+\t:\r);
        \draw[dotted] (60*\a+\t:\R) -- (60*\a+\t:\r);
    }
   
   \draw (0,0.15) node[scale=0.7]{$i,j,l$};
\end{scope}
\begin{scope}[yscale=.5]
    \foreach \a in {0,2,4}{
        \draw[double, postaction={mid arrow}] (60*\a+\t:\r) --  (60*\a+60+\t:\r);
        \draw[postaction={mid arrow}] (60*\a+\t:\R) -- (60*\a+\t:\r);
    }
    \foreach \a in {1,3,5}{
        \draw[postaction={mid arrow}] (60*\a+\t:\r) -- (60*\a+60+\t:\r);
        \draw[postaction={mid arrow}] (60*\a+\t:\r) -- (60*\a+\t:\R);
    }
    \foreach \a in {0,1,2,3,4,5}{
        \draw[darkgreen, very thin] (60*\a+\t:\R) -- ($(60*\a+\t:\R) + (0,-10)$);
        \draw[darkgreen, very thin] (60*\a+\t:\r) -- ($(60*\a+\t:\r) + (0,-10)$);
    }
\draw (\t:0.8*\R) node[below,scale=0.7]{$i$};  
\draw (60*0+58+\t:0.8*\R) node[below,scale=0.7]{$i$};  
\draw (60*1+61+\t:0.8*\R) node[below,scale=0.7]{$i$};  
\draw (60*2+60+\t:0.8*\R) node[below,scale=0.7]{$i$};  
\draw (30+60+\t:0.8*\r) node[below,scale=0.7]{$j$};  
\end{scope}
\end{tikzpicture}
}

\begin{figure}[htbp]
    \centering
\[
\scalebox{0.85}{$\displaystyle
\HexagonIdempA + \HexagonIdempAO + \HexagonIdempAOO =\HexagonIdempB
$}
\] 
    \caption{\flapcolor[i][i][i][i][i][i] coloring of $\nu $ (on the right side of the equals sign) and the induced colorings of $\nu \circ \mu \circ \nu $ (on the left).}
    \label{fig iiiiii gl4}
\end{figure}

Denote the foam appearing on the left side of the above equality by $F_1$ and the foam on the right side by $F_2$. 
Notice that $F_1=\nu \circ \mu \circ \nu $ and $F_2=\nu$. Notice that in Figure \ref{fig iiiiii gl4} any coloring $c$ of $F_2$ on the right induces a coloring of $F_1$. 

Denote the presented colorings in Figure \ref{fig iiiiii gl4} of $F_1$ by $c_0, c_1, c_2$ from left to right; these colorings are induced by the coloring $c$ of $F_2$. In particular, in $c_0$, the bottom benzene web is colored by $i$ and $l$, and the horizontal hexagons are all colored by $i,j,l$. It is evident that $c_0, c_1$ and $c_2$ are the only colorings induced by $c$. Using the figure we obtain Table \ref{[i][i][i][i][i][i] coloring of nu gl4}.

    \begin{table}[h!] 
\centering
\arrayrulecolor{black} 
\begin{tabular}{|l|l|l|}
\hline
{\ul }           & $(F_1, c_0)$                                                            & $(F_2, c)$              \\
\hline 
$F_{ij}$         & $\mathbb{D}^2 \sqcup \mathbb{D}^2 \sqcup \mathbb{D}^2$ & $\mathbb{D}^2 \sqcup \mathbb{D}^2 \sqcup \mathbb{D}^2$ \\
\arrayrulecolor[gray]{0.8} \hline
$F_{ik}$         & doubled monkey saddle & monkey saddle \\
\arrayrulecolor[gray]{0.8} \hline
$F_{il}$         & $\mathbb{D}^2 \sqcup \mathbb{D}^2 \sqcup \mathbb{D}^2$ & $\mathbb{D}^2 \sqcup \mathbb{D}^2 \sqcup \mathbb{D}^2$ \\
\arrayrulecolor[gray]{0.8} \hline
$F_{jk}$         & $\mathbb{D}^2$ (cup) $\sqcup \mathbb{S}^2$ & $\mathbb{D}^2$ (cup)  \\
\arrayrulecolor[gray]{0.8} \hline
$F_{jl}$         & $\mathbb{S}^1 \times \mathbb{D}^1$ & $\mathbb{S}^1 \times \mathbb{D}^1$ \\
\arrayrulecolor[gray]{0.8} \hline
$F_{kl}$         & $\mathbb{D}^2$ (cup) $\sqcup \mathbb{S}^2$ & $\mathbb{D}^2$ (cup)  \\
\arrayrulecolor{black} \hline
$ {F_i}$ & doubled monkey saddle & monkey saddle \\
\arrayrulecolor[gray]{0.8} \hline
$ {F_j}$ & $\mathbb{D}^2$ (cup) $\sqcup \mathbb{S}^2$ & $\mathbb{D}^2$ (cup)  \\
\arrayrulecolor[gray]{0.8} \hline
$ {F_k}$ & $\emptyset$ & $\emptyset$ \\
\arrayrulecolor[gray]{0.8}\hline
$ {F_l}$ & $\mathbb{D}^2$ (cap) $\sqcup \mathbb{S}^2$ & $\mathbb{D}^2$ (cap) \\
\arrayrulecolor{black} \hline
$\theta_{ij}$    & three cups $\sqcup  \mathbb{S}^1 \sqcup \mathbb{S}^1 \sqcup \mathbb{S}^1$ (positive if $j > i$) & three cups \\
\arrayrulecolor[gray]{0.8} \hline
$\theta_{ik}$    & $\emptyset$ & $\emptyset$ \\
\arrayrulecolor[gray]{0.8} \hline
$\theta_{il}$    & three caps $\sqcup  \mathbb{S}^1 \sqcup \mathbb{S}^1 \sqcup \mathbb{S}^1$ (positive if $i > l$) & three caps \\
\arrayrulecolor[gray]{0.8} \hline
$\theta_{jk}$    & $\emptyset$ & $\emptyset$ \\
\arrayrulecolor[gray]{0.8}\hline
$\theta_{jl}$    & $\mathbb{S}^1 \sqcup \mathbb{S}^1 \sqcup \mathbb{S}^1$ & $\mathbb{S}^1$ \\
\arrayrulecolor[gray]{0.8} \hline
$\theta_{kl}$    & $\emptyset$ & $\emptyset$ \\
\arrayrulecolor{black} \hline 
\end{tabular}
\caption{\flapcolor[i][i][i][i][i][i] coloring of $\nu$}\label{[i][i][i][i][i][i] coloring of nu gl4}
\end{table}

As explained in the above paragraph (see also Figure \ref{fig iiiiii gl4}), for a fixed coloring $c$ of the foam $F_2$, there are three induced colorings of the foam $F_1$. 
The coloring $c_1$ is obtained from $c_0$ by a Kempe move along the \kempe[k][l] sphere and $c_2$ by a Kempe move along the  \kempe[j][k] sphere. As noted, $F_{jk}$ and $F_{kl}$ are a disjoint union of a hemisphere and a sphere; we are performing the Kempe move only along the sphere in each of the two cases. 
It is straightforward to check that the described  \kempe[j][k] Kempe move simply swaps the $j$ and $k$ indices of bichrome surfaces. The argument is similar for the \kempe[k][l] Kempe move. This tells us how to modify Table \ref{[i][i][i][i][i][i] coloring of nu gl4} for $(F_1, c_1)$ and $(F_1, c_2)$.

In order to make the analysis clearer, set $i = 1$, $j = 2$, $k = 3$, $l = 4$. Other cases follow analogously. 
By the above discussion and Table \ref{[i][i][i][i][i][i] coloring of nu gl4}, using Lemma \ref{rwlemma2.19}, we compute

\begin{align*}
    &\frac{\langle F_1,c_0 \rangle + \langle F_1,c_1 \rangle  +  \langle F_1,c_2 \rangle }{\langle F_2, c \rangle}=\\
    &= \frac{s(F_1,c_0)}{s(F_2,c)} \cdot \left(\frac{(\vara  - \varc )^2}{(\varb  - \varc )(\varc  - \vard )} - \frac{(\vara  - \varb )^2}{(\varb  - \varc )(\varb  - \vard )} - \frac{(\vara  - \vard )^2}{(\varc  - \vard )(\varb  - \vard )}\right) \\
    &= \frac{s(F_1,c_0)}{s(F_2,c)} (-1) \\
    &= 1,
\end{align*}
where the last equality follows by computing $s(F_1, c_0) - s(F_2, c)$ using Table \ref{[i][i][i][i][i][i] coloring of nu gl4}.

Next, we analyze the case where the boundary points of the bottom web are colored as \flapcolor[j][i][i][i][i][j]. Assume that the horizontal hexagons in the coloring $c_0$ of $F_1$ are all colored by $i,j,l$ and that the color $k$ appears on the bottom benzene web (but not on the top one). 
Once again, call the foam on the right side of the equality $F_2 = \nu$ and the one on the left $F_1 = \nu \circ \mu \circ \nu$. We obtain Table \ref{[j][i][i][i][i][j] color nu gl4}.

\begin{table}[h!]
\centering
\arrayrulecolor{black} 
\begin{tabular}{|l|l|l|}
\hline
{\ul }           & $(F_1, c_0)$                                                            & $(F_2, c)$              \\
\hline 
$F_{ij}$         & $\mathbb{D}^2 \sqcup \mathbb{D}^2 \sqcup \mathbb{D}^2$ & $\mathbb{D}^2 \sqcup \mathbb{D}^2 \sqcup \mathbb{D}^2$ \\
\arrayrulecolor[gray]{0.8} \hline
$F_{ik}$         & connected sum of a saddle &  saddle \\
\arrayrulecolor[gray]{0.8} \hline
$F_{il}$         & $\mathbb{D}^2  \sqcup \mathbb{D}^2 $ & $\mathbb{D}^2  \sqcup \mathbb{D}^2 $ \\
\arrayrulecolor[gray]{0.8} \hline
$F_{jk}$         & $\mathbb{D}^2$ & $\mathbb{D}^2$ \\
\arrayrulecolor[gray]{0.8} \hline
$F_{jl}$         &  $\mathbb{D}^2$ & $\mathbb{D}^2$ \\
\arrayrulecolor[gray]{0.8} \hline
$F_{kl}$         & $\mathbb{D}^2$ (cap) $\sqcup \mathbb{S}^2$ & $\mathbb{D}^2$ (cap)\\
\arrayrulecolor{black} \hline
$ {F_i}$ & connected sum of a saddle &  saddle \\
\arrayrulecolor[gray]{0.8} \hline
$ {F_j}$ & $\mathbb{D}^2$ & $\mathbb{D}^2$ \\
\arrayrulecolor[gray]{0.8} \hline
$ {F_k}$ & $\emptyset$ & $\emptyset$ \\
\arrayrulecolor[gray]{0.8}\hline
$ {F_l}$ & $\mathbb{D}^2$ (cap) $\sqcup \mathbb{S}^2$ & $\mathbb{D}^2$ (cap) \\
\arrayrulecolor{black} \hline
$\theta_{ij}$    & three cups $\sqcup  \mathbb{S}^1 \sqcup \mathbb{S}^1 \sqcup \mathbb{S}^1$ (positive if $j > i$) & three cups \\
\arrayrulecolor[gray]{0.8} \hline
$\theta_{ik}$    & $\emptyset$ & $\emptyset$ \\
\arrayrulecolor[gray]{0.8} \hline
$\theta_{il}$    & two caps $\sqcup  \mathbb{S}^1 \sqcup \mathbb{S}^1 $ & two caps \\
\arrayrulecolor[gray]{0.8} \hline
$\theta_{jk}$    & $\emptyset$ & $\emptyset$ \\
\arrayrulecolor[gray]{0.8}\hline
$\theta_{jl}$    & one cap $\sqcup  \mathbb{S}^1 $ (positive if $l > j$) & one cap  \\
\arrayrulecolor[gray]{0.8} \hline
$\theta_{kl}$    & $\emptyset$ & $\emptyset$ \\
\arrayrulecolor{black} \hline 
\end{tabular}
\caption{\flapcolor[j][i][i][i][i][j] coloring of $\nu$}\label{[j][i][i][i][i][j] color nu gl4}
\end{table}

There is one extra coloring $c_1$ of $F_1$ coming from performing a  \kempe[k][l] Kempe move along the sphere. Taking that into account, the sum of foam evaluations for these colorings is (again only considering the case $i =1, j =2, k =3, l =4$)
\begin{align*}
    \frac{\langle F_1,c_0 \rangle + \langle F_1,c_1 \rangle }{\langle F_2, c \rangle} &= \frac{s(F_1,c_0)}{s(F_2,c)} \cdot \left(\frac{(\vara  - \vard )}{(\varc  - \vard )} - \frac{(\vara  - \varc )}{(\varc  - \vard )}\right) \\
    &= \frac{s(F_1,c_0)}{s(F_2,c)} \\
    &= 1.
\end{align*}

Lastly, consider the case where the boundary points of the bottom web are colored as \flapcolor[i][j][k][i][j][k]. Here, the horizontal hexagons must be colored by $i,j,k$ (the same holds for the two cases of the next sentence). The case \flapcolor[i][i][j][j][k][k] is almost identical to the case \flapcolor[i][j][k][i][j][k], so we omit it. 

\begin{table}[h!]
\centering
\arrayrulecolor{black} 
\begin{tabular}{|l|l|l|}
\hline
{\ul }           & $(F_1, c_0)$                                                            & $(F_2, c)$              \\
\hline 
$F_{ij}$         & $\mathbb{D}^2 \sqcup \mathbb{D}^2 $ & $\mathbb{D}^2 \sqcup \mathbb{D}^2 $ \\
\arrayrulecolor[gray]{0.8} \hline
$F_{ik}$         & $\mathbb{D}^2 \sqcup \mathbb{D}^2 $ & $\mathbb{D}^2 \sqcup \mathbb{D}^2 $ \\
\arrayrulecolor[gray]{0.8} \hline
$F_{il}$         & $\mathbb{D}^2 $ & $\mathbb{D}^2 $ \\
\arrayrulecolor[gray]{0.8} \hline
$F_{jk}$         & $\mathbb{D}^2 \sqcup \mathbb{D}^2 $ & $\mathbb{D}^2 \sqcup \mathbb{D}^2 $ \\
\arrayrulecolor[gray]{0.8} \hline
$F_{jl}$         &  $\mathbb{D}^2$ & $\mathbb{D}^2$ \\
\arrayrulecolor[gray]{0.8} \hline
$F_{kl}$         & $\mathbb{D}^2 $ & $\mathbb{D}^2 $ \\
\arrayrulecolor{black} \hline
$ {F_i}$ & $\mathbb{D}^2 $ & $\mathbb{D}^2 $  \\
\arrayrulecolor[gray]{0.8} \hline
$ {F_j}$ & $\mathbb{D}^2$ & $\mathbb{D}^2$ \\
\arrayrulecolor[gray]{0.8} \hline
$ {F_k}$ & $\mathbb{D}^2$ & $\mathbb{D}^2$ \\
\arrayrulecolor[gray]{0.8}\hline
$ {F_l}$ & $\emptyset$ & $\emptyset$ \\
\arrayrulecolor{black} \hline
$\theta_{ij}$    & two twisted strands & two strands \\
\arrayrulecolor[gray]{0.8} \hline
$\theta_{ik}$    & two caps $\sqcup  \mathbb{S}^1 \sqcup \mathbb{S}^1 $ & two caps  \\
\arrayrulecolor[gray]{0.8} \hline
$\theta_{il}$    & $\emptyset$ & $\emptyset$\\
\arrayrulecolor[gray]{0.8} \hline
$\theta_{jk}$    & two cups $\sqcup  \mathbb{S}^1 \sqcup \mathbb{S}^1 $ & two cups  \\
\arrayrulecolor[gray]{0.8}\hline
$\theta_{jl}$    & $\emptyset$ & $\emptyset$ \\
\arrayrulecolor[gray]{0.8} \hline
$\theta_{kl}$    & $\emptyset$ & $\emptyset$ \\
\arrayrulecolor{black} \hline 
\end{tabular}
\caption{\flapcolor[i][j][k][i][j][k] coloring of $\nu$}\label{[i][j][k][i][j][k] coloring of nu gl4}
\end{table}

This time, there is only one induced coloring $c_0$ of $F_1$ coming from $(F_2, c)$. A direct comparison of columns in Table \ref{[i][j][k][i][j][k] coloring of nu gl4} shows that the evaluation agrees. We have now shown that $ \nu \circ \mu \circ \nu = \nu$.
\end{proof}

\begin{lemma}
    The following equality holds: $\alpha  \circ  \beta \circ \alpha = \alpha$ (see Figure \ref{gl4foamdeef}).
\end{lemma}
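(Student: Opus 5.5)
The plan is to show the stronger statement $\alpha \circ \beta = \idfoam$ on the matching web, and then get $\alpha \circ \beta \circ \alpha = \alpha$ by postcomposing with $\alpha$. This is also one of the relations listed in Theorem \ref{gl_4 sixvalent seam}. Here $\alpha$ goes from the hexagon web $\Wzero$ to the matching web $\Wone$ and $\beta$ goes back, so $\alpha \circ \beta$ is an endomorphism of $\Wone$.

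The first step is a dimension argument. By Lemma \ref{degcheck}, $\foamdeg(\beta\circ\alpha) = -2N+8 = 0$ for $N=4$. Since degrees are additive under composition, $\foamdeg(\alpha\circ\beta) = 0$ as well. By Proposition \ref{CartanMatrix} and Corollary \ref{WebPairDim}, the degree-$0$ part of $\Hom_{\FoamFour^{\HexaBound}}(\Wone,\Wone)$ is one-dimensional over $\mathbb{Q}$; this is exactly the content of Lemma \ref{WebPairIndecompo} used in Theorem \ref{GLNIndecompo}. Hence $\alpha\circ\beta = k\,\idfoam_{\Wone}$ for some $k\in\mathbb{Q}$, and it remains to show $k=1$.

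To determine $k$, I will recognise $\alpha\circ\beta$ as the foam in Lemma \ref{dotsglngeneric}. That foam consists of three matching sheets cut from below and from above by the $2$-facets of $\alpha$ and $\beta$, with the hexagonal tube in between, and it carries a dot $x^{2N-8-k}$. For $N=4$ and $k=0$ there is no dot, so $\alpha\circ\beta$ is the undecorated foam of that lemma. The right-hand side then reduces to $e_{0}\,\idfoam_{\Wone} - e_{-1}(\cdots) = \idfoam_{\Wone}$, since $e_{-1}=0$. This gives $k=1$.

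The main obstacle is the bookkeeping in this identification. I need to check that the picture in Lemma \ref{dotsglngeneric} really is $\alpha\circ\beta$, with the same orientations of the $2$-facets and the same positions of the bindings, rather than a rotated or reflected variant. I also need to check that the signs from the neck-cutting in Proposition \ref{thickness1neckcut} and from the theta-foam and sphere evaluations combine to $+1$ rather than $-1$.

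If this identification is unclear, the fallback is to compute $k$ directly from colorings, in the style of Lemma \ref{hexagonidempotent}. Fix the coloring \flapcolor[i][i][i][i][i][i] of the boundary, with the matching arcs colored $i$. List the colorings of $\alpha\circ\beta$ that restrict to a given coloring of $\idfoam_{\Wone}$: the inner $1$-tube and the $2$-facets take colors $\{i,j\}$ with $j$ ranging over the other three pigments. Then sum the resulting rational functions using Lemma \ref{rwlemma2.19}, and check that the sum equals the evaluation of the identity, i.e.\ a ratio of $1$. Finally, postcompose with $\alpha$ to obtain $\alpha\circ\beta\circ\alpha = \alpha$.
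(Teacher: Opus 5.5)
Your proof is correct, but it takes a different route from the paper.

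The paper verifies $\alpha\circ\beta\circ\alpha=\alpha$ directly by colorings. For each of the three boundary coloring types of the hexagon that admit a coloring of $\alpha$, it lists the colorings of $\alpha\circ\beta\circ\alpha$ lying over a given coloring of $\alpha$; these are related by Kempe moves along the middle sphere. It then tabulates the monochrome and bichrome surfaces and the signed circles, and checks that the contributions add up to that of $\alpha$, in the same style as Lemma~\ref{hexagonidempotent}.

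You instead prove the stronger relation $\alpha\circ\beta=\idfoam_{\Wone}$ and then compose with $\alpha$. Your identification is sound. The foam $\alpha\circ\beta$ is the identity foam on the three arcs in which each sheet is punctured and glued, along a binding circle, to a $2$-facet disk and to a central three-holed sphere. That is exactly the undecorated foam of Lemma~\ref{dotsglngeneric}.

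The paper itself applies that lemma to the undecorated composite $\alpha_1\circ\beta_2=\alpha\circ\beta$ for $N=5$ (the case $k=2$ in Lemma~\ref{orthogonalidempotentsgl5}). So your worry about a rotated or reflected variant, and about the signs, is settled by the lemma as stated. For $N=4$ and $k=0$ it gives $\alpha\circ\beta=\idfoam_{\Wone}$ outright, which makes your dimension count reducing to $k\,\idfoam_{\Wone}$ harmless but unnecessary.

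What your route buys:
\begin{itemize}
\item It takes a few lines instead of three tables.
\item It is uniform in $N$.
\item It also supplies the relation $\alpha\circ\beta=\idfoam$ listed in Theorem~\ref{gl_4 sixvalent seam}, which the $\GL_4$ section does not prove explicitly.
\end{itemize}
Its price is that every sign is delegated to the neck-cutting formula of Proposition~\ref{thickness1neckcut} and to the theta and sphere evaluations inside Lemma~\ref{dotsglngeneric}.

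The paper's computation is independent of those sign conventions. It also uses the same machinery the paper needs anyway for $\nu\circ\mu\circ\nu=\nu$. There, no identity like $\alpha\circ\beta=\idfoam$ is available, since $\mu\circ\nu$ is a nontrivial idempotent.
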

\begin{remark}
     One could alternatively show that $\beta \circ \alpha$ is an idempotent. This would follow essentially the same computation; in the tables below, some additional semicircles would appear and some of the sheets would change into saddles. 
     We instead prove that $\alpha  \circ  \beta \circ \alpha = \alpha$, as there are fewer colorings on $\alpha \circ \beta \circ \alpha$: there is only one choice of the additional color at the matching web on the bottom instead of a choice both at the bottom and top. 
\end{remark}
\begin{proof}
For simplicity, write  $F_1 = \alpha \circ \beta \circ \alpha$ and $F_2 = \alpha$. There are three boundary colorings that give a well defined coloring of $F_2$. We present the tables for those.

First, consider the colorings where the boundary points of the bottom web of $F_2$ are colored by \flapcolor[i][i][i][i][i][i]. Assume that the sphere appearing in the middle is colored by $k$ and that the additional color on the bottom hemisphere is $l$. We obtain Table \ref{[i][i][i][i][i][i] coloring of alpha gl4}. 

\begin{table}[h!]
\centering
\arrayrulecolor{black} 
\begin{tabular}{|l|l|l|}
\hline
{\ul }           & $(F_1, c_0)$                                                            & $(F_2, c)$              \\
\hline 
$F_{ij}$         & $\mathbb{D}^2 \sqcup \mathbb{D}^2  \sqcup \mathbb{D}^2 $ & $\mathbb{D}^2 \sqcup \mathbb{D}^2  \sqcup \mathbb{D}^2 $ \\
\arrayrulecolor[gray]{0.8} \hline
$F_{ik}$         & two stacked monkey saddles met at the saddle & $\mathbb{D}^2 \sqcup \mathbb{D}^2 \sqcup \mathbb{D}^2  $ \\
\arrayrulecolor[gray]{0.8} \hline
$F_{il}$         &   monkey saddle  &  monkey saddle \\
\arrayrulecolor[gray]{0.8} \hline
$F_{jk}$         & $\mathbb{S}^2 $ & $\emptyset$ \\
\arrayrulecolor[gray]{0.8} \hline
$F_{jl}$         &   $\mathbb{D}^2$ (cap) &  $\mathbb{D}^2$ (cap) \\
\arrayrulecolor[gray]{0.8} \hline
$F_{kl}$         & $\mathbb{S}^2 \sqcup $  $\mathbb{D}^2$ (cap) &  $\mathbb{D}^2$ (cap)  \\
\arrayrulecolor{black} \hline
$ {F_i}$ & $\mathbb{D}^2 \sqcup \mathbb{D}^2 \sqcup  \mathbb{D}^2 $ & $\mathbb{D}^2 \sqcup \mathbb{D}^2 \sqcup  \mathbb{D}^2  $  \\
\arrayrulecolor[gray]{0.8} \hline
$ {F_j}$ &$\emptyset$ & $\emptyset$ \\
\arrayrulecolor[gray]{0.8} \hline
$ {F_k}$ & $\mathbb{S}^2$ & $\emptyset$ \\
\arrayrulecolor[gray]{0.8}\hline
$ {F_l}$ &  $\mathbb{D}^2$ (cap) &  $\mathbb{D}^2$ (cap)  \\
\arrayrulecolor{black} \hline
$\theta_{ij}$    &$\emptyset$ & $\emptyset$\\
\arrayrulecolor[gray]{0.8} \hline
$\theta_{ik}$    & $\mathbb{S}^1 \sqcup \mathbb{S}^1 \sqcup  \mathbb{S}^1 $ positive if $k > i$ & $\emptyset$  \\
\arrayrulecolor[gray]{0.8} \hline
$\theta_{il}$    & three cups  & three cups \\
\arrayrulecolor[gray]{0.8} \hline
$\theta_{jk}$    & $\emptyset$ & $\emptyset$  \\
\arrayrulecolor[gray]{0.8}\hline
$\theta_{jl}$    & $\emptyset$ & $\emptyset$ \\
\arrayrulecolor[gray]{0.8} \hline
$\theta_{kl}$    & $\emptyset$ & $\emptyset$ \\
\arrayrulecolor{black} \hline 
\end{tabular}
\caption{\flapcolor[i][i][i][i][i][i] coloring of $\alpha$}\label{[i][i][i][i][i][i] coloring of alpha gl4}
\end{table}

There are three induced colorings of $F_1$ for each coloring $c$ of $F_2$. We obtain $c_1$ from $c_0$ by a \kempe[k][l] Kempe move along the sphere and $c_2$ by a  \kempe[j][k] Kempe move along the sphere. 
Summing them up as in the previous lemma, the desired result follows. 
In that computation we use the fact that the Euler characteristic change from $F_{ik}$ in $(F_1, c_0)$ and $F_{ik}$ in $(F_2, c)$ is $4$ and remains $4$ if $c_0$ in the previous sentence is replaced by $c_1$ or $c_2$. 
This is because both (a) connect-summing two surfaces and (b) attaching a tube to a surface change the Euler characteristic by $-2$.  
Hence it does not matter if $\mathbb{D}^2 \sqcup \mathbb{D}^2 \sqcup \mathbb{D}^2  $ in $F_{ik}$ of $(F_2, c)$ are connected or not.

Next, consider the colorings where the boundary points of the bottom web of $F_2$ are colored as  \flapcolor[j][i][i][i][i][j]. 
Assume that in the coloring $c_0$ of $F_1$, the sphere appearing in the middle is colored by $k$ and that the additional color on the bottom hemisphere is $l$. This gives us Table \ref{[j][i][i][i][i][j] coloring of alpha gl4}.

\begin{table}[h!]
\centering
\arrayrulecolor{black} 
\begin{tabular}{|l|l|l|}
\hline
{\ul }           & $(F_1, c_0)$                                                            & $(F_2, c)$              \\
\hline 
$F_{ij}$         & $\mathbb{D}^2 \sqcup \mathbb{D}^2  \sqcup \mathbb{D}^2 $ & $\mathbb{D}^2 \sqcup \mathbb{D}^2  \sqcup \mathbb{D}^2 $ \\
\arrayrulecolor[gray]{0.8} \hline
$F_{ik}$         & two stacked saddles met at the saddle & $\mathbb{D}^2 \sqcup \mathbb{D}^2 $ \\
\arrayrulecolor[gray]{0.8} \hline
$F_{il}$         &  saddle  &  saddle \\
\arrayrulecolor[gray]{0.8} \hline
$F_{jk}$         &  $\mathbb{D}^2 $ & $\mathbb{D}^2$ \\
\arrayrulecolor[gray]{0.8} \hline
$F_{jl}$         &  $\mathbb{D}^2 $ & $\mathbb{D}^2$  \\
\arrayrulecolor[gray]{0.8} \hline
$F_{kl}$         & $\mathbb{S}^2 \sqcup$  $\mathbb{D}^2$ (cap)  &  $\mathbb{D}^2$ (cap)  \\
\arrayrulecolor{black} \hline
$ {F_i}$ & $\mathbb{D}^2 \sqcup \mathbb{D}^2$ & $\mathbb{D}^2 \sqcup \mathbb{D}^2 $  \\
\arrayrulecolor[gray]{0.8} \hline
$ {F_j}$ &  $\mathbb{D}^2 $ & $\mathbb{D}^2$ \\
\arrayrulecolor[gray]{0.8} \hline
$ {F_k}$ & $\mathbb{S}^2$ & $\emptyset$ \\
\arrayrulecolor[gray]{0.8}\hline
$ {F_l}$ &  $\mathbb{D}^2$ (cap)  &  $\mathbb{D}^2$ (cap)   \\
\arrayrulecolor{black} \hline
$\theta_{ij}$    & cup & cup \\
\arrayrulecolor[gray]{0.8} \hline
$\theta_{ik}$    & $\mathbb{S}^1 \sqcup \mathbb{S}^1 $ & $\emptyset$  \\
\arrayrulecolor[gray]{0.8} \hline
$\theta_{il}$    & two cups  & two cups \\
\arrayrulecolor[gray]{0.8} \hline
$\theta_{jk}$    & $\mathbb{S}^1 $ positive if $k > j$  & $\emptyset$  \\
\arrayrulecolor[gray]{0.8}\hline
$\theta_{jl}$    & $\emptyset$ & $\emptyset$ \\
\arrayrulecolor[gray]{0.8} \hline
$\theta_{kl}$    & $\emptyset$ & $\emptyset$ \\
\arrayrulecolor{black} \hline 
\end{tabular}
\caption{\flapcolor[j][i][i][i][i][j] coloring of $\alpha$}\label{[j][i][i][i][i][j] coloring of alpha gl4}
\end{table}

For each coloring $c$ of $F_2$, there are two colorings of $F_1$, depending on how we color the $2$-facets. We present a coloring $c_0$ in Table \ref{[j][i][i][i][i][j] coloring of alpha gl4}. The other coloring is obtained from $c_0$ by a \kempe[k][l] Kempe move along the sphere. Summing up their evaluations gives the desired result.

Lastly, consider the coloring where the boundary points of the bottom web of $F_2$ are colored as \flapcolor[i][i][j][j][k][k]. 
In this case, the coloring of $F_1$ is uniquely determined by the coloring $c$ of $F_2$: the middle sphere and the bottom hemisphere must both be colored by $l$, giving us Table \ref{[i][i][j][j][k][k] coloring of alpha gl4}. 
\begin{table}[h!]
\centering
\arrayrulecolor{black} 
\begin{tabular}{|l|l|l|}
\hline
{\ul }           & $(F_1, c_0)$                                                            & $(F_2, c)$              \\
\hline 
$F_{ij}$         & $\mathbb{D}^2 \sqcup \mathbb{D}^2  $ & $\mathbb{D}^2 \sqcup \mathbb{D}^2  $ \\
\arrayrulecolor[gray]{0.8} \hline
$F_{ik}$         & $\mathbb{D}^2 \sqcup \mathbb{D}^2  $ & $\mathbb{D}^2 \sqcup \mathbb{D}^2  $\\
\arrayrulecolor[gray]{0.8} \hline
$F_{il}$         & $\mathbb{D}^2 $ & $\mathbb{D}^2$ \\
\arrayrulecolor[gray]{0.8} \hline
$F_{jk}$         &  $\mathbb{D}^2 \sqcup \mathbb{D}^2  $ & $\mathbb{D}^2 \sqcup \mathbb{D}^2  $\\
\arrayrulecolor[gray]{0.8} \hline
$F_{jl}$         &  $\mathbb{D}^2 $ & $\mathbb{D}^2$  \\
\arrayrulecolor[gray]{0.8} \hline
$F_{kl}$         & $\mathbb{D}^2 $ & $\mathbb{D}^2$  \\
\arrayrulecolor{black} \hline
$ {F_i}$ & $\mathbb{D}^2 $ & $\mathbb{D}^2$  \\
\arrayrulecolor[gray]{0.8} \hline
$ {F_j}$ &  $\mathbb{D}^2 $ & $\mathbb{D}^2$ \\
\arrayrulecolor[gray]{0.8} \hline
$ {F_k}$ &$\mathbb{D}^2 $ & $\mathbb{D}^2$ \\
\arrayrulecolor[gray]{0.8}\hline
$ {F_l}$ &  $\mathbb{D}^2$ (cap) $\sqcup$ $\mathbb{S}^2$  &  $\mathbb{D}^2$ (cap)  \\
\arrayrulecolor{black} \hline
$\theta_{ij}$    & cup & cup \\
\arrayrulecolor[gray]{0.8} \hline
$\theta_{ik}$    & $\mathbb{S}^1 $ positive if $l > i$  & $\emptyset$  \\
\arrayrulecolor[gray]{0.8} \hline
$\theta_{il}$    & two cups  & two cups \\
\arrayrulecolor[gray]{0.8} \hline
$\theta_{jk}$    & $\emptyset$ & $\emptyset$  \\
\arrayrulecolor[gray]{0.8}\hline
$\theta_{jl}$    & $\mathbb{S}^1 $ positive if $l > j$  & $\emptyset$ \\
\arrayrulecolor[gray]{0.8} \hline
$\theta_{kl}$    & $\mathbb{S}^1 $ positive if $l > k$  & $\emptyset$  \\
\arrayrulecolor{black} \hline 
\end{tabular}
\caption{\flapcolor[i][i][j][j][k][k] coloring of $\alpha$}\label{[i][i][j][j][k][k] coloring of alpha gl4}
\end{table}

Summarizing, we have shown $\alpha \circ \beta \circ \alpha = \alpha$. 
\end{proof}

 \begin{proof}[Proof of Proposition \ref{idempotencyprop}.]
    Recall the notation of Remark \ref{abcglobalremark} and Lemma \ref{lindependence}. 
    The last two lemmas imply $(a+b)\cdot \alpha =0$ and $(a+c) \cdot \nu=0$. Since $\alpha, \nu \ne 0$, it follows that $a+b=a+c=0$. So, 
    \[ 
        \idfoam_{\Wzero} =  \beta \circ \alpha +    \nu \circ \mu. 
    \]
     We have thus proven $\beta \circ \alpha +  \nu \circ \mu = \idfoam_{\Wzero}$. 
     A similar argument shows $\xi \circ \eta + \mu \circ \nu =\idfoam_{\Wzeroa}$.
 \end{proof}

\begin{proof}[Proof of Theorem \ref{gl_4 sixvalent seam}]
    This is the direct consequence of Proposition \ref{orthogonalityprop} and Proposition \ref{idempotencyprop}.
\end{proof}

\def\HexaPairing
{\begin{tric}
\draw [scale=0.7, decoration={markings,mark=at position 0.5 with {\arrow{>}}},postaction={decorate}] 
      (0,0)..controls(1.7,-0.6)and(2,-1.4)..(2,-2)..controls(2,-2.6)and(1.7,-3.4)..(0,-4);
\draw [scale=0.7, decoration={markings,mark=at position 0.5 with {\arrow{>}}},postaction={decorate}]      
     (0,-4)..controls(0.8,-3.4)and(1.2,-2.6).. (1.2,-2)..controls(1.2,-1.4)and(0.8,-0.6)..(0,0);
\draw [scale=0.7, decoration={markings,mark=at position 0.5 with {\arrow{>}}},postaction={decorate}]       
      (0,0)..controls(-0.8,-0.6)and(-1.2,-1.4)..(-1.2,-2)..controls(-1.2,-2.6)and(-0.8,-3.4)..(0,-4);
\draw [scale=0.7, decoration={markings,mark=at position 0.5 with {\arrow{>}}},postaction={decorate}]       
     (0,-4)..controls(-1.7,-3.4)and(-2,-2.6)..(-2,-2)..controls(-2,-1.4)and(-1.7,-0.6)..(0,0);
\draw [scale=0.7, decoration={markings,mark=at position 0.5 with {\arrow{>}}},postaction={decorate}]       
      (0,0)..controls(0.25,-0.6)and(0.4,-1.4)..(0.4,-2)..controls(0.4,-2.6)and(0.25,-3.4)..(0,-4);
\draw [scale=0.7, decoration={markings,mark=at position 0.5 with {\arrow{>}}},postaction={decorate}]      
     (0,-4)..controls(-0.25,-3.4)and(-0.4,-2.6)..(-0.4,-2) ..controls(-0.4,-1.4)and(-0.25,-0.6)..(0,0);
     
\end{tric}
}

\subsection{Categorifying the \texorpdfstring{$\GL_5$}{GL_5} \texorpdfstring{$6$}{6}-valent vertex with \texorpdfstring{$\GL_5$}{GL_5} foams}

In this subsection we will modify the results of the previous subsection to the $\GL_5$ foams.   

\begin{figure}[htbp]
    \centering
\[
\scalebox{0.75}{$\displaystyle
\alpha_1 = \alpha, \ \beta_2 = \beta, \ \eta_1 =  \eta, \  \xi_2=  \xi,  \ \beta_1 =  \BetaOne, \ \xi_1=  \TBetaOne,
$}
\] 
\[
\scalebox{0.75}{$\displaystyle
\alpha_2 =  \AlphaTwo + \AlphaTwoA + \AlphaTwoB + \AlphaTwoC - e_1\AlphaO,  
$}
\] 
\[
\scalebox{0.75}{$\displaystyle
 \eta_2 =  \TAlphaTwo + \TAlphaTwoA + \TAlphaTwoB + \TAlphaTwoC - e_1\TAlphaO.
$}
\] 
    \caption{The $\GL_5$ foams used to demonstrate the decomposition of the $6$-valent vertex; compare with Figure \ref{gl4foamdeef}.
    The horizontal hexagons in $\mu, \nu$ are $3$-facets. The highlighted sections of the foams $\alpha_i, \beta_i, \xi_i, \eta_i$ for $i = 1, 2$ are $2$-facets.}
    \label{gl5foamdeef}
\end{figure}

\begin{figure}[htbp]
    \centering
    \begin{tikzcd}
\BenzenewebM  &
 &
 & \BenzenewebaM  
\\
&
 \Benzeneweb   
 \ar[ul,leftarrow, "\beta_1",shift left] 
 \ar[ul,rightarrow, "\alpha_1"',shift right] 
 \ar[dl,leftarrow, "\beta_2",shift left] 
 \ar[dl,rightarrow, "\alpha_2"',shift right] 
 \ar[r,rightarrow, "\mu",shift left] 
 \ar[r,leftarrow, "\nu"',shift right] & 
\Benzeneweba 
\ar[ur,rightarrow, "\eta_1",shift left] 
 \ar[ur,leftarrow, "\xi_1"',shift right] 
 \ar[dr,rightarrow, "\eta_2",shift left] 
 \ar[dr,leftarrow, "\xi_2"',shift right] &
\\
\BenzenewebM &
&
& \BenzenewebaM  
\end{tikzcd}
\caption{Morphisms among $\GL_5$ webs in Theorem \ref{gl_5 sixvalent seam}.}
\label{fig:gl5 tikz commutative diagram between webs}
\end{figure}

\begin{thm}\label{gl_5 sixvalent seam} 
The $\GL_5$ foams $\alpha_i, \beta_i, \eta_i, \xi_i, \mu$, and $\nu$ ($i=1,2$) shown in Figure \ref{gl5foamdeef} fit into the commutative diagram in Figure \ref{fig:gl5 tikz commutative diagram between webs} and satisfy the following relations in $\FoamFive^{\HexaBound}$:

\begin{align*}
    \alpha_i \circ \nu =0 , \quad 
    \mu \circ \beta_i = 0, \quad  
    &\eta_i \circ \mu=0, \quad   
     \nu \circ \xi_i =0, \quad  i=1,2,  \\
     \alpha_1 \circ \beta_2 = 0, \quad \alpha_2 \circ \beta_1 = 0, \quad
     & \eta_1 \circ \xi_2 = 0, \quad  \eta_2 \circ \xi_1 =0, \\ 
     \mu \circ \nu \circ \mu = \mu , \quad 
     &\nu \circ \mu \circ \nu = \nu , \\
    \alpha_i \circ \beta_i = \idfoam_{\Wtwo}, \quad
    &\eta_i \circ \xi_i = \idfoam_{\Wthree},  \quad  i=1,2, \\
     \beta_1 \circ \alpha_1 + \beta_2 \circ \alpha_2  + \nu \circ \mu = \idfoam_{\Wzero}, 
      \quad
    &\xi_1 \circ \eta_1 + \xi_2 \circ \eta_2 + \mu \circ \nu = \idfoam_{\Wzeroa}.
\end{align*}
\end{thm}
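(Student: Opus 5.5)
The proof follows the $\GL_4$ proof of Theorem \ref{gl_4 sixvalent seam}, with the extra summand handled by the dot-decorated foams $\beta_1,\alpha_2$ (and their rotations $\xi_1,\eta_2$). By the rotational symmetry relating $(\alpha_i,\beta_i,\mu)$ to $(\eta_i,\xi_i,\nu)$, it suffices to treat the relations on the $\Wzero$ side.

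\textbf{Step 1: degrees.} By the count in Lemma \ref{degcheck}, $\foamdeg(\beta\circ\alpha)=-2N+8=-2$ for $N=5$. So the dotted composites $\beta_1\circ\alpha_1$ and $\beta_2\circ\alpha_2$ have degree $0$, as does $\nu\circ\mu$.

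\textbf{Step 2: the relations $\alpha_i\circ\beta_j=\delta_{ij}\idfoam_{\Wtwo}$.} Each composite $\alpha_i\circ\beta_j$ is the matching bubble of Lemma \ref{dotsglngeneric} with extra dots. I expand it with that lemma at $N=5$, where $e_{N-4-k}=e_{1-k}$ and $e_{N-5-k}=e_{-k}$.
\begin{itemize}
\item For $\alpha_1\circ\beta_1$ (one dot, $k=1$ in the lemma's normalization), the expansion gives $\idfoam_{\Wtwo}$.
\item For $\alpha_1\circ\beta_2$ (no dot, $k=0$), it gives $e_1\,\idfoam$ minus the three one-dot identity foams.
\item For $\alpha_2\circ\beta_1$, the four dotted terms and the $-e_1$ correction were designed so that the total cancels to $0$.
\item For $\alpha_2\circ\beta_2$, the combination should return $\idfoam_{\Wtwo}$.
\end{itemize}
Dot migration across the $(1,1,2)$ bindings will be needed here. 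I must also check carefully which facets each dot of $\alpha_2$ sits on relative to the neck cuts of Lemma \ref{dotsglngeneric}. A dot on the outer boundary $1$-facet passes through to the identity foam, while a dot on an inner facet contributes to the sphere and theta evaluation.

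\textbf{Step 3: orthogonality $\alpha_i\circ\nu=0$ and $\mu\circ\beta_i=0$.} I follow Proposition \ref{orthogonalityprop}, enumerating boundary colorings \flapcolor[i][i][i][i][i][i], \flapcolor[i][i][j][j][i][i], and so on. The new feature for $N=5$ is a fifth pigment, which gives more colorings of the $3$-facet hexagon. I expect these to pair off via Kempe moves (Lemma \ref{rwlemma2.19}) with opposite signs, and the dot polynomials should be symmetric under the swap, so they should not spoil the cancellation. The relations $\nu\circ\mu\circ\nu=\nu$ and $\mu\circ\nu\circ\mu=\mu$ follow as in Lemma \ref{hexagonidempotent}. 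With more Kempe-related colorings, the rational-function identity becomes a five-variable partial-fraction identity that should again sum to $\pm1$.

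\textbf{Step 4: completeness.} By Proposition \ref{prop:closed sixvertex evaluation}, for $N\ge5$ the boundary web of $\Wzero$ evaluates to $q^{11-5N}+\cdots$. That is a single copy of the lowest degree, not enough room to match the $\GL_4$ rank-counting argument. So instead I count graded dimensions: $\langle\overline{\Wzero}\Wzero\rangle=[N-2]^2[N-1]^2[N]+\cdots$, and the composite already computed shows that $\idfoam_{\Wzero}$, $\beta_1\alpha_1$, $\beta_2\alpha_2$ and $\nu\mu$ are the expected summands. Alternatively, and this is what I will try first, I set $E=\idfoam_{\Wzero}-\beta_1\alpha_1-\beta_2\alpha_2-\nu\mu$. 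By Steps 2--3, $E$ is an idempotent orthogonal to all three projections, so $\mathcal{E}(\Wzero)\cong\mathcal{E}(\Wtwo)^{\oplus 2}\oplus\mathcal{E}(P_0)\oplus\mathcal{E}(E)$. Decategorifying, $\Wzero=[2]\Wtwo+\Wsix+[E]$ holds with $[N-3]=[2]$ at $N=5$, so $[E]=0$ in the Grothendieck group. Graded-rank positivity then forces $\mathcal{E}(E)=0$, hence $E=0$.

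\textbf{Main obstacle.} The main obstacle is Step 2 for $\alpha_2\circ\beta_2$: the signed dot bookkeeping in the neck-cutting expansion is error-prone.
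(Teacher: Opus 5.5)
Steps 1--3 follow the paper's route: Lemma~\ref{dotsglngeneric} for the composites $\alpha_i\circ\beta_j$, and coloring-by-coloring Kempe-move evaluations for orthogonality and idempotency. Step 4, however, is circular. When you decategorify to $\Wzero=[2]\Wone+\Wsix+[E]$ you are using $[\mathcal{E}(P_0)]=\Wsix$. Nothing before this theorem gives that; the paper deduces it in Theorem~\ref{GL5Decompo} from the very completeness relation you are trying to prove. A priori you only have $[\mathcal{E}(P_0)]+[\mathcal{E}(E)]=\Wsix$. Repeating the argument on the $\Wzeroa$ side, using $\mathcal{E}(\nu\mu)\cong\mathcal{E}(\mu\nu)$ and the hexagon relation, yields only $[\mathcal{E}(E)]=[\mathcal{E}(E')]$ for the analogous complement $E'$.

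The paper instead proves $\beta_1\alpha_1+\beta_2\alpha_2+\nu\mu=\idfoam_{\Wzero}$ by direct evaluation. For the boundary coloring \flapcolor[i][i][i][i][i][i], $\nu\mu$ has three induced colorings, related by \kempe[k][l] and \kempe[k][m] Kempe moves along a sphere. The two $\beta\alpha$ composites share a single coloring weighted by the decoration factor $X_j+2X_i-X_k-X_l-X_m$, and the five contributions sum to $1$.

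Alternatively, the counting idea you dropped does work if you count $\mathbb{Q}$-dimensions in degree $0$ rather than looking at the lowest coefficient. At $N=5$, $\Hom(\Wzero,\Wzero)$ is free over $\Symf$ of graded rank $q^{12}\langle\overline{\Wzero}\Wzero\rangle=q^{-2}+6+\cdots$. So its degree-$0$ part has dimension $6+1=7$, the extra element being $e_1\,\beta\alpha$. In $\mathcal{E}(\Wone)\{1\}\oplus\mathcal{E}(\Wone)\{-1\}\oplus\mathcal{E}(\nu\mu)\oplus\mathcal{E}(E)$, the two copies of $\Wone$ already account for $1+1+\dim_{\mathbb{Q}}\Hom^{2}(\Wone,\Wone)=1+1+4=6$, using graded rank $q^{12}[5]^3=1+3q^2+\cdots$. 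Since $\nu\mu\neq0$ accounts for one more, $\operatorname{End}(\mathcal{E}(E))=0$ and hence $E=0$.

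Step 2 also contains a concrete mistake. In Lemma~\ref{dotsglngeneric} the inner facet carries $x^{2N-8-k}$, so for $N=5$:
the undecorated bubble is $k=2$ and vanishes;
one inner dot is $k=1$ and gives $\idfoam$;
two inner dots is $k=0$ and gives $e_1\idfoam$ minus the three one-dot identity foams.
Your second bullet assigns $k=0$ to the undecorated bubble, so it asserts $\alpha_1\circ\beta_2\neq0$. That contradicts the relation you are proving and also your own Step 1: this composite has degree $-2$, while your answer has degree $+2$. With that assignment your last two bullets fail as well.

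With the correct assignment all four relations are immediate. The three terms of $\alpha_2$ with a dot on an outer sheet slide that dot onto the identity sheets, and the remaining dotted term has its dot on the inner facet. Writing $\Sigma$ for the sum of the one-dot identity foams, $\alpha_2\circ\beta_1=(e_1\idfoam-\Sigma)+\Sigma-e_1\idfoam=0$ and $\alpha_2\circ\beta_2=\idfoam+0-0$.

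In Step 3, the extra colorings do not pair off. In the \flapcolor[i][i][i][i][i][i] case the relevant facet can carry $k$, $l$ or $m$. The inner dot of $\beta_1$ (or of $\alpha_2$) lies on the Kempe-moved surface, so it is not invariant: it becomes $X_k$, $X_l$, $X_m$ in the three terms. What makes the sum vanish are the three-term identities $1-\frac{X_k-X_m}{X_l-X_m}+\frac{X_k-X_l}{X_l-X_m}=0$ and $X_k-\frac{X_k-X_m}{X_l-X_m}X_l+\frac{X_k-X_l}{X_l-X_m}X_m=0$. Similarly, the seven-term sum for $\nu\mu\nu=\nu$ must come out to exactly $1$, not $\pm1$.
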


The foams $\nu, \mu, \beta_2, \xi_2, \alpha_1, \eta_1$ are the same as the foams in Theorem \ref{gl_4 sixvalent seam} when viewed only as \textit{pictures}. The difference comes from the fact that these are now $\GL_5$ foams, whereas they were $\GL_4$ foams in the previous subsection.

We have the analogous statement to Proposition \ref{prop:gl4-kar-isom-objects}, as a consequence of Theorem \ref{gl_5 sixvalent seam}:

\begin{proposition}
The foams $P_1=\nu \circ \mu$ 
and $P_2=\mu \circ \nu$ are projections in $\FoamFive^{\HexaBound}$. Furthermore, the associated objects $\mathcal{E}(P_1)$ and $\mathcal{E}(P_2)$ in $Kar(\FoamFive^{\HexaBound})$ are isomorphic via the inverse morphisms $\mu$ and $\nu$. 
\end{proposition}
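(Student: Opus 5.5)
The plan is to derive everything formally from the relations of Theorem \ref{gl_5 sixvalent seam}, mirroring Proposition \ref{prop:gl4-kar-isom-objects} in the $\GL_4$ case. First, idempotency: by the relation $\nu\circ\mu\circ\nu=\nu$ we get
\[
P_1\circ P_1=(\nu\circ\mu\circ\nu)\circ\mu=\nu\circ\mu=P_1,
\]
and by $\mu\circ\nu\circ\mu=\mu$ we get $P_2\circ P_2=(\mu\circ\nu\circ\mu)\circ\nu=\mu\circ\nu=P_2$. Lemma \ref{degcheck} gives $\foamdeg(\mu)=\foamdeg(\nu)=0$ (its count is $N$-independent in outcome, so it applies for $N=5$), hence $P_1,P_2$ are degree-zero idempotents. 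This is consistent with Definition \ref{def:Karoubi of cornered foams}, so $\mathcal{E}(P_1)=(\Wzero,P_1)$ and $\mathcal{E}(P_2)=(\Wzeroa,P_2)$ are objects of $Kar(\FoamFive^{\HexaBound})$.

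Next, I check that $\mu$ and $\nu$ are morphisms between these objects. For $\mu\colon \mathcal{E}(P_1)\to\mathcal{E}(P_2)$ we need $P_2\circ\mu=\mu=\mu\circ P_1$. Both equalities reduce to $\mu\circ\nu\circ\mu=\mu$. Similarly, for $\nu\colon\mathcal{E}(P_2)\to\mathcal{E}(P_1)$ we need $P_1\circ\nu=\nu\circ\mu\circ\nu=\nu$ and $\nu\circ P_2=\nu\circ\mu\circ\nu=\nu$. Since both have degree $0$, they are morphisms without grading shift.

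Finally, the compositions are the identities in the Karoubi envelope. The identity of $\mathcal{E}(P_1)$ is $P_1$ itself, and $\nu\circ\mu=P_1$ by definition. Likewise $\mu\circ\nu=P_2=\idfoam_{\mathcal{E}(P_2)}$. Hence $\mu$ and $\nu$ are mutually inverse isomorphisms $\mathcal{E}(P_1)\cong\mathcal{E}(P_2)$.

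There is essentially no obstacle once Theorem \ref{gl_5 sixvalent seam} is granted. The only substantive input is the pair of relations $\mu\circ\nu\circ\mu=\mu$ and $\nu\circ\mu\circ\nu=\nu$ for $\GL_5$ foams. If one wanted to verify these independently, I would redo the coloring-by-coloring evaluation of Lemma \ref{hexagonidempotent} with five pigments. The extra pigment produces additional Kempe-related colorings (one more sphere recoloring in the monochrome-boundary case), and the rational-function sum must again collapse to $1$. That collapse is the step where a computational error is most likely.
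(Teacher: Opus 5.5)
Your proposal is correct and follows the paper's approach: the paper states this proposition as a formal consequence of the relations $\mu\circ\nu\circ\mu=\mu$ and $\nu\circ\mu\circ\nu=\nu$ in Theorem \ref{gl_5 sixvalent seam}, and idempotency, the Karoubi morphism conditions, and $\nu\circ\mu=P_1$, $\mu\circ\nu=P_2$ follow from them exactly as you wrote (and $\foamdeg(\mu)=\foamdeg(\nu)=0$ does hold for every $N$). One small correction that does not affect the proof: your side remark undercounts the extra work in the $\GL_5$ verification, since the paper's monochrome-boundary case of that relation involves seven colorings, including ones obtained by two consecutive Kempe moves, not just one extra sphere recoloring.
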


\begin{thm} \label{GL5Decompo}
In $Kar(\FoamFive^{\HexaBound})$, with $i\in\{1,2\}$, 
\[ \mathcal{E}\left( \Benzeneweb \right) \cong \mathcal{E}\left( \BenzenewebM \right)\{-1\}\oplus \mathcal{E}\left( \BenzenewebM \right)\{1\}\oplus \mathcal{E}\left( P_i \right). \]

So $\mathcal{E}\left( P_i \right)$ decategorifies to $\Wsix$, since 
\[
    \Benzeneweb = [2]\BenzenewebM + \SixVertex.
\]
  Furthermore, this is a decomposition into indecomposable summands due to Theorem \ref{CartanMatrix}.
\end{thm}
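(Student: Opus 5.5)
The plan is to read Theorem \ref{GL5Decompo} off the relations of Theorem \ref{gl_5 sixvalent seam}, exactly as Theorem \ref{GL4Decompo} was read off Theorem \ref{gl_4 sixvalent seam}. Set the projections out of $\mathcal{E}(\Wzero)$ to be $\alpha_1,\alpha_2,\mu$ and the inclusions to be $\beta_1,\beta_2,\nu$.

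The relations supply everything needed for a three-term direct sum.
\begin{itemize}
\item The composites $\alpha_i\circ\beta_i=\idfoam_{\Wtwo}$ and $\mu\circ\nu\circ\mu=\mu$ give the diagonal identities. For the third summand the identity is $P_i$ itself, since $\mu\circ(\nu\circ\mu)\circ\nu=P_2$ after inserting $\nu\circ\mu\circ\nu=\nu$.
\item The cross terms $\alpha_1\circ\beta_2$, $\alpha_2\circ\beta_1$, $\alpha_i\circ\nu$ and $\mu\circ\beta_i$ all vanish.
\item The completeness relation $\beta_1\circ\alpha_1+\beta_2\circ\alpha_2+\nu\circ\mu=\idfoam_{\Wzero}$ holds.
\end{itemize}
These are precisely the data of Definition \ref{def:indecomposable_Web}, extended to three summands. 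Together they exhibit $\mathcal{E}(\Wzero)\cong\mathcal{E}(\Wone)\oplus\mathcal{E}(\Wone)\oplus\mathcal{E}(P_2)$ up to grading shifts. The preceding proposition identifies $\mathcal{E}(P_1)\cong\mathcal{E}(P_2)$ via $\mu,\nu$, so the statement holds for both $i=1,2$.

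The next step is to pin down the grading shifts. I would compute foam degrees as in Lemma \ref{degcheck}, using Corollary \ref{cor:degree foam with corner alt} or the closure count. The undotted pair satisfies $\foamdeg(\beta\circ\alpha)=-2N+8=-2$ when $N=5$. Since $\alpha_1$ and $\beta_2$ are undotted and $\alpha_2$, $\beta_1$ carry one extra dot (degree $2$) or $e_1$, the degrees of $\alpha_1$ and $\alpha_2$ differ by $2$. By symmetry I expect $\foamdeg(\alpha_1)=\foamdeg(\beta_2)=-1$, hence $\foamdeg(\alpha_2)=\foamdeg(\beta_1)=+1$, consistent with $\alpha_i\circ\beta_i$ having degree $0$. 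Reading these as morphisms $\mathcal{E}(\Wzero)\to\mathcal{E}(\Wone)\{j\}$ of degree $-j$ gives the shifts $\{\pm1\}$. The exact split of $-2$ between $\alpha$ and $\beta$ is the step I expect to require care, because the sign convention for shifts must be matched. I would compute $\foamdeg(\alpha_1)$ directly by closing with $\overline{\idfoam_{\Wtwo}}$ and counting facets and bindings. The count is three $2$-facet disks, six $(1,1,2)$ bindings, and so on. Both $\mu$ and $\nu$ have degree $0$ by Lemma \ref{degcheck}.

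Decategorifying, the shifts $\{-1\}\oplus\{1\}$ give $[2]\Wone$. Comparing with \eqref{GLN6VertexDef} at $N=5$, where $[N-3]=[2]$, shows that $\mathcal{E}(P_i)$ decategorifies to $\Wsix$.

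Indecomposability then follows from earlier results. $\mathcal{E}(\Wone)$ is indecomposable by Proposition \ref{CartanMatrix} and Lemma \ref{WebPairIndecompo}. $\mathcal{E}(P_i)$ is indecomposable by Theorem \ref{GLNIndecompo}, which applies now that $P_i$ is known to be a projection; it uses that $\langle\overline{\Wzero}\Wzeroa\rangle$ has leading coefficient $1$ by Proposition \ref{prop:closed sixvertex evaluation}. An alternative route is Proposition \ref{CartanMatrix}: the $(6,6)$ entry has leading term $q^{3-3N}$ with coefficient $1$.
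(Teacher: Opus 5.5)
Your proposal follows the paper's proof. The decomposition is read off from the relations of Theorem \ref{gl_5 sixvalent seam}, with projections $\alpha_1,\alpha_2,\mu$ and inclusions $\beta_1,\beta_2,\nu$. The shifts come from $\foamdeg(\alpha_1)=\foamdeg(\beta_2)=-1$ and $\foamdeg(\alpha_2)=\foamdeg(\beta_1)=1$. Your reflection-symmetry argument is exactly what splits the $-2$ from Lemma \ref{degcheck} evenly between $\alpha_1$ and $\beta_2$; the paper leaves that step implicit. Indecomposability comes from Theorem \ref{GLNIndecompo} and Proposition \ref{CartanMatrix}, as you say.

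One small slip is in your proposed direct count for $\overline{\idfoam_{\Wtwo}}\ast\alpha_1$. There are only three $(1,1,2)$ bindings there; six is the count for $\beta\circ\alpha$. With three $2$-disks, one inner $1$-disk and an outer $1$-annulus, the count gives $\foamdeg(\alpha_1)=4-N=-1$, confirming your value.
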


\begin{proof}
    First, we know that the direct sum decomposition holds by the equations in Theorem \ref{gl_5 sixvalent seam}. We then verify the formal grading of the objects by checking the degrees of the morphisms giving the decomposition. By Lemma \ref{degcheck} for $\GL_5$, we know that $\foamdeg(\alpha_1) = \foamdeg(\beta_2) = -1$ and $\foamdeg(\mu)= \foamdeg(\nu)= 0$. By definition of $\alpha_2$ and $\beta_1$, $\foamdeg(\alpha_2) = \foamdeg(\alpha_1) + 2=1$ and  $\foamdeg(\beta_1) = \foamdeg(\beta_2) + 2=1$.
\end{proof}

\begin{remark}\label{GL5Indecompo}
The components the decomposition of the hexagon web in the above theorem are in fact indecomposable, due to 
Theorem \ref{GLNIndecompo}. 
\end{remark}

\begin{corollary}\label{cor:single hexagon gl5}
 For any given web $W$ with boundary $\varepsilon$, we have the following direct sum decomposition of the state space of $\overline{W} \Wzero$:
\[ \mathcal{F}(\overline{W} \Wzero ) \cong 
\mathcal{F}(\overline{W} \Wone )\{ -1 \}
\oplus \mathcal{F}(\overline{W} \Wone )\{ 1 \} 
\oplus \left( \mathcal{F}(\overline{\idfoam_W} \ast P_2  )(\mathcal{F}( \overline{W} \Wzeroa))  \right)  \]
\end{corollary}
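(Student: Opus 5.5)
The plan is to deduce Corollary \ref{cor:single hexagon gl5} from Theorem \ref{gl_5 sixvalent seam} by gluing the identity foam of $W$ onto every foam there. This mirrors Corollary \ref{cor:single hexagon closed gl4}. For each foam $X$ between webs with boundary $\HexaBound$, I set $\widetilde{X}:=\overline{\idfoam_W}\ast X$, a foam between closed webs. It induces a module map $\mathcal{F}(\widetilde{X})$ of degree $\foamdeg(X)$, using Definition \ref{def:degree foam with corners}.

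The first step is to show that $X\mapsto\widetilde{X}$ is compatible with composition and $\Symf$-linear combinations. Concretely, $\widetilde{Y\circ X}=\widetilde{Y}\circ\widetilde{X}$ and $\widetilde{\idfoam_{W_i}}=\idfoam_{\overline{W}W_i}$, which is immediate from stacking. I also need that a relation in $\FoamFive^{\HexaBound}$ pushes forward to a relation between the induced maps on state spaces. This holds because the relations in $\FoamFive^{\HexaBound}$ are defined by vanishing of all closed evaluations $\langle V M_i\rangle$. Any closure of $\widetilde{M_i}$ composed with a state vector is such a closed foam, so $\sum a_i M_i=0$ implies $\sum a_i\mathcal{F}(\widetilde{M_i})=0$.

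The second step applies this to the relations of Theorem \ref{gl_5 sixvalent seam}. I would take projections $\mathcal{F}(\widetilde{\alpha_1})$, $\mathcal{F}(\widetilde{\alpha_2})$, $\mathcal{F}(\widetilde{\mu})$ and inclusions $\mathcal{F}(\widetilde{\beta_1})$, $\mathcal{F}(\widetilde{\beta_2})$, $\mathcal{F}(\widetilde{\nu})$. The targets are $\mathcal{F}(\overline{W}\Wone)$, $\mathcal{F}(\overline{W}\Wone)$ and the image of the idempotent $\mathcal{F}(\widetilde{P_2})$ on $\mathcal{F}(\overline{W}\Wzeroa)$. The relations give that $\alpha_i\beta_j=\delta_{ij}\idfoam$, and that $\mu$ kills $\beta_i$ while each $\alpha_i$ kills $\nu$. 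Since $\mu\nu\mu=\mu$ and $\nu\mu\nu=\nu$, the composite $\mu\circ\nu=P_2$ acts as the identity on $\operatorname{im}\mathcal{F}(\widetilde{P_2})$. Finally $\beta_1\alpha_1+\beta_2\alpha_2+\nu\mu=\idfoam_{\Wzero}$. Together these give a direct sum decomposition of $\mathcal{F}(\overline{W}\Wzero)$ as an $\Symf$-module.

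The last step is the grading shifts, and this is where I expect any real care to be needed. By the proof of Theorem \ref{GL5Decompo}, $\foamdeg(\alpha_1)=-1$, $\foamdeg(\alpha_2)=1$ and $\foamdeg(\mu)=\foamdeg(\nu)=0$. Since the projection $\mathcal{F}(\widetilde{\alpha_i})$ raises degree by $\foamdeg(\alpha_i)$, its target summand carries the shift $\{\mp 1\}$ in the convention $\mathcal{F}(\Gamma)\{i\}_k=\mathcal{F}(\Gamma)_{k-i}$. I would match this against the convention in Theorem \ref{GL5Decompo} so that $\alpha_1$ corresponds to $\{-1\}$ and $\alpha_2$ to $\{1\}$. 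The degree-zero maps $\mu,\nu$ give an unshifted summand.
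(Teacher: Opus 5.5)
Your proposal is correct and follows the paper's argument: apply $\overline{\idfoam_W}\ast(-)$ to the relations of Theorem~\ref{gl_5 sixvalent seam}, with $\alpha_1,\alpha_2,\mu$ as projections and $\beta_1,\beta_2,\nu$ as inclusions, and your functoriality and locality remarks spell out what the paper leaves implicit. One small slip: with the paper's convention $\mathcal{F}(V)\colon\mathcal{F}(\Gamma_0)\to\mathcal{F}(\Gamma_1)\{-\foamdeg(V)\}$, the $\alpha_1$-summand (degree $-1$) is $\mathcal{F}(\overline{W}\Wone)\{1\}$ and the $\alpha_2$-summand is $\mathcal{F}(\overline{W}\Wone)\{-1\}$, which is the reverse of your matching, but this is harmless since both shifts occur in the statement.
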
   

\begin{proof}
    This is a direct sum decomposition of the module 
    $\mathcal{F}(\overline{W} W_0 )$. The projection maps are given by $\overline{ \idfoam_W} \ast \alpha_1 $, $\overline{ \idfoam_W} \ast  \alpha_2$, and $\overline{ \idfoam_W} \ast \mu $, and the inclusion maps are given by $\overline{ \idfoam_W} \ast \beta_1  $, $ \overline{ \idfoam_W} \ast \beta_2 $, and $\overline{ \idfoam_W}\ast  \nu  $.
\end{proof}

The rest of this section is dedicated to the proof of Theorem \ref{gl_5 sixvalent seam}. The proof is split into multiple lemmas, and we preserve the notation of Theorem \ref{gl_5 sixvalent seam} throughout the section. Note that some of the relations are not proven as they follow directly from Lemma \ref{dotsglngeneric}.

\begin{lemma}\label{orthogonalidempotentsgl5}
The following two pairs equations from Theorem \ref{gl_5 sixvalent seam} hold:
\begin{align*}
     \alpha_1 \circ \beta_2 = 0, \quad \alpha_2 \circ \beta_1 = 0
\end{align*}
\end{lemma}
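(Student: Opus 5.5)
The plan is to reduce both identities to the bubble evaluation of Lemma \ref{dotsglngeneric}. Each composite $\alpha_a\circ\beta_b$ is a foam from $\Wone$ to $\Wone$. Undecorated, it is exactly the foam $\alpha\circ\beta$ appearing on the left of Lemma \ref{dotsglngeneric}: the identity on the three arcs of the matching web, with a thickness-$1$ tube and three $(1,1,2)$ theta-like pieces in the middle. The only differences between the composites are the dots.

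\emph{The identity $\alpha_1\circ\beta_2=0$.} Here $\alpha_1=\alpha$ and $\beta_2=\beta$ carry no dots, so we apply Lemma \ref{dotsglngeneric} with $N=5$ and with the number of dots on the bubble equal to the undotted value. The statement of the lemma has $x^{2N-8-k}$ on the bubble, so the undotted foam corresponds to $2N-8-k=0$, i.e.\ $k=2$. Then $e_{N-4-k}=e_{-1}=0$ and $e_{N-5-k}=e_{-2}=0$, so the foam vanishes. The degree is consistent with this: $\foamdeg(\beta\circ\alpha)=-2N+8=-2<0$, while the $q^{-\boundarydeg}$ coefficient of $\langle\overline{\Wone}\Wone\rangle$ is $1$ (Proposition \ref{CartanMatrix}), so there are no nonzero endomorphisms of $\Wone$ in negative degree. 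This grading argument already gives a second, independent proof that $\alpha_1\circ\beta_2=0$.

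\emph{The identity $\alpha_2\circ\beta_1=0$.} This is the substantive case. Since $\alpha_2$ is a signed sum of dotted foams and $\beta_1$ carries one dot, $\alpha_2\circ\beta_1$ expands into five foams. Each is the same bubble foam, but now with two dots, placed on various facets.

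\begin{enumerate}
\item First, use dot migration across the $(1,1,2)$ bindings ($x_a+x_b=e_1$ on the $2$-facet) and across the identity sheets, to move every dot either onto the bubble or onto one of the three outer matching sheets.
\item Then apply Lemma \ref{dotsglngeneric} with $N=5$ to each term. With $k=0$ (two dots on the bubble) the foam equals $e_1\cdot\mathrm{id}-(\text{sum of the three one-dot identity foams})$; with $k=1$ (one dot) it equals $-\mathrm{id}$; and with zero dots it vanishes, as shown above.
\item Finally, collect terms. The combination defining $\alpha_2$ (four dotted placements minus $e_1$ times the undotted foam) should make the identity-with-dots terms cancel exactly.
\end{enumerate}

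The main obstacle is the bookkeeping in the third step: determining which facet of the composite each dot of $\alpha_2$ lands on, and getting the signs coming from the bubble evaluation (sphere with $N-1$ dots giving $-1$) right. If the direct migration argument becomes messy, the fallback is the degree argument. The composite $\alpha_2\circ\beta_1$ has degree $+2$. By Corollary \ref{WebPairDim} the degree-$2$ part of $\mathrm{End}(\Wone)$ is spanned by the identity with one dot on each of the three sheets, together with $e_1\cdot\mathrm{id}$. One can then detect vanishing by closing up against these basis elements and evaluating colorings, in the style of the computations in Lemma \ref{hexagonidempotent}.
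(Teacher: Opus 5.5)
Your route is the paper's: both identities come from Lemma \ref{dotsglngeneric} at $N=5$, using $k=2$ for $\alpha_1\circ\beta_2$ and $k=0,1$ for $\alpha_2\circ\beta_1$. Your extra degree argument for $\alpha_1\circ\beta_2$ is also valid. The composite has degree $-2$, while $q^{\boundarydeg(\HexaBound)}\langle\overline{\Wone}\Wone\rangle=q^{3N-3}[N]^3$ has no negative powers, and the Hom space is free over the nonnegatively graded $\Symf$.

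The gap is in the one evaluation the second identity depends on. For $k=1$ the lemma gives $e_{0}\idfoam_{\Wone}-e_{-1}(\cdots)=+\idfoam_{\Wone}$, not $-\idfoam_{\Wone}$. The relation $\alpha_1\circ\beta_1=\idfoam_{\Wone}$ in Theorem \ref{gl_5 sixvalent seam} forces this too, since $\alpha_1\circ\beta_1$ is exactly the one-dot bubble. With your sign, the collection in your third step does not cancel. It yields $2e_1\idfoam_{\Wone}-2\sum_s\idfoam^{(s)}_{\Wone}\neq 0$, so as written the argument does not prove $\alpha_2\circ\beta_1=0$.

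With the sign corrected, the bookkeeping you were worried about is immediate and needs no dot migration. In Figure \ref{gl5foamdeef}, the dot of $\beta_1$ and the dot of the first summand of $\alpha_2$ already lie on the inner sheet (the bubble). The next three summands of $\alpha_2$ carry their dot on the three outer sheets. Use the following notation:
\begin{itemize}
\item $B_d$ is the foam of Lemma \ref{dotsglngeneric} with $d$ dots on the bubble;
\item $B_1^{(s)}$ is $B_1$ with an extra dot on outer sheet $s$;
\item $\idfoam^{(s)}_{\Wone}$ is the identity of $\Wone$ with one dot on sheet $s$.
\end{itemize}
The outer-sheet dot can be slid to the top boundary, so $B_1^{(s)}=\idfoam^{(s)}_{\Wone}\circ B_1=\idfoam^{(s)}_{\Wone}$. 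Hence
\[
\alpha_2\circ\beta_1=B_2+\sum_{s=1}^{3}B_1^{(s)}-e_1B_1=\Bigl(e_1\idfoam_{\Wone}-\sum_{s=1}^{3}\idfoam^{(s)}_{\Wone}\Bigr)+\sum_{s=1}^{3}\idfoam^{(s)}_{\Wone}-e_1\idfoam_{\Wone}=0 .
\]
No zero-dot bubble occurs in $\alpha_2\circ\beta_1$, because $\beta_1$ always puts one dot on the bubble.
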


\begin{proof}
    This follows from an application of Lemma \ref{dotsglngeneric} to the case where $k = 0$ and $k = 1$ for $\alpha_2 \circ \beta_1$, and $k = 2$ for $\alpha_1 \circ \beta_2$.
\end{proof}

\begin{lemma}
    For $i \in \{1,2\}$, we have the following  orthogonality conditions from Theorem \ref{gl_5 sixvalent seam} among foams: $\alpha_i \circ \nu =0 $, $\mu \circ \beta_i = 0$,  $\eta_i \circ \mu=0$, $\nu \circ \xi_i =0$. 
\end{lemma}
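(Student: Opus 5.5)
The plan is to reduce the eight $\GL_5$ relations to the four relations $\alpha_1\circ\nu=0$, $\mu\circ\beta_2=0$, $\eta_1\circ\mu=0$, $\nu\circ\xi_2=0$. The foams $\alpha_1,\beta_2,\eta_1,\xi_2,\mu,\nu$ have the same underlying pictures as $\alpha,\beta,\eta,\xi,\mu,\nu$ in Theorem~\ref{gl_4 sixvalent seam}. So for these four relations I would rerun the coloring argument of Proposition~\ref{orthogonalityprop} with palette $\{\colora,\dots,\colorn\}$, $N=5$. The remaining four relations, $\alpha_2\circ\nu=0$, $\mu\circ\beta_1=0$, $\eta_2\circ\mu=0$ and $\nu\circ\xi_1=0$, should then follow from these by locality of dots, as explained in the last paragraph.

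For $\alpha_1\circ\nu$, I would sort colorings by the boundary coloring of the bottom web, using the notation \flapcolor[i_1][i_2][i_3][i_4][i_5][i_6]. When the boundary pattern is \flapcolor[i][i][j][j][k][k], \flapcolor[i][i][k][j][j][k] or \flapcolor[i][j][k][i][j][k], the $2$-facets of $\alpha_1$ already force incompatible pigments against the $3$-facet of $\nu$, so no coloring of $\alpha_1\circ\nu$ exists. This is the same argument as for $\GL_4$. When the pattern is \flapcolor[i][i][i][i][i][i] or \flapcolor[i][i][j][j][i][i], the colorings of $\alpha_1\circ\nu$ differ only in the pigment $m$ placed on the horizontal single facet of $\nu$. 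In the $\GL_4$ case there were exactly two choices, $k$ and $l$. Now there are three unused pigments, so these colorings form an orbit under Kempe moves along a single closed bichrome sphere. The key point is that each such Kempe move changes $s$ by an odd amount, by Lemma~\ref{rwlemma2.19}, and by the same argument as for $\GL_4$ it also flips the sign of $P/Q$ in the ratio. Summing over the three choices of $m$, the contributions should telescope to $0$.

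I expect the main obstacle to be this three-term cancellation. In $\GL_4$ it was a pairwise cancellation; here it becomes a rational-function identity in three variables $\varu{m}$. My first attempt would be to factor out the common part and show that $\sum_m \pm\,(\text{linear in }\varu{m})/\prod_{m'\neq m}(\varu{m}-\varu{m'})$ vanishes, by the usual Lagrange-interpolation identity $\sum_m \varu{m}^r/\prod_{m'\ne m}(\varu{m}-\varu{m'})=0$ for $r<2$. The relations $\mu\circ\beta_2=0$, $\eta_1\circ\mu=0$ and $\nu\circ\xi_2=0$ are obtained from this case by reflection and rotation.

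For the dotted foams I would use that dots are local and can be slid. $\beta_1$ differs from $\beta_2$ by a single dot on the outer $1$-facet, and $\alpha_2$ is a sum of dotted copies of $\alpha_1$ minus $e_1\alpha_1$. The dot in $\beta_1$, and each dot appearing in $\alpha_2$, sits on a $1$-facet that in the composite with $\nu$ or $\mu$ lies away from the hexagon. Such a dot can therefore be pulled through, which expresses each composite as a dot on a boundary facet times $\alpha_1\circ\nu$ or $\mu\circ\beta_2$, hence $0$. If some dot lies on a facet meeting the region where the cancellation happens, the evaluation computation above already accommodates an extra factor $\varu{m}$ for that facet's pigment, so the interpolation identity with $r\le 1$ still gives $0$. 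The $\eta_2,\xi_1$ cases follow by symmetry.
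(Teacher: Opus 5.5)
Your core argument is the paper's. For the all-$i$ and the two-color boundary patterns, the colorings of the composite differ only in the pigment on one closed monochrome sphere, and they are related by the $k$-$l$ and $k$-$m$ Kempe moves. The three contributions then cancel by three-node Lagrange interpolation, which is exactly the paper's identity $1-\frac{X_k-X_m}{X_l-X_m}+\frac{X_k-X_l}{X_l-X_m}=0$.

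Your per-move sign description should be replaced, though. With three colorings, the $Q$-denominators of different colorings differ by more than a sign, and three terms cannot cancel pairwise, so ``telescoping'' is not the mechanism. What you need is that $(-1)^{s}$, together with the ordering of the factors in $Q$, puts all three terms in the form $\sum_a g(X_a)/\prod_{b\neq a}(X_a-X_b)$ with one common sign. Your ``$\pm$'' has to be shown to be uniform.

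The genuine error is in the reduction of the dotted relations by sliding dots.
\begin{itemize}
\item The dot of $\beta_1$ is not on an outer $1$-facet. It sits on the inner $1$-facet, the hemisphere bounded by the three inner single edges of the hexagon. The same holds for the first summand of $\alpha_2$, and likewise for $\xi_1$ and $\eta_2$.
\item It cannot be on an outer sheet. Otherwise $\alpha_1\circ\beta_1$ would be a dotted copy of $\alpha_1\circ\beta_2=0$ (Lemma~\ref{orthogonalidempotentsgl5}), contradicting $\alpha_1\circ\beta_1=\mathrm{id}$.
\item In $\mu\circ\beta_1$ and $\alpha_2\circ\nu$, this hemisphere is part of the very sphere on which the Kempe moves act. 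So the dot cannot be slid to the boundary. Its pigment changes with the coloring, and it contributes $X_k,X_l,X_m$ to the three terms.
\end{itemize}

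So your fallback is not a side case. It is the whole proof of the dotted relations, and it is precisely the paper's second displayed identity. It works only because the undotted numerator is constant in the varying pigment, so the dot raises it to $r=1$. If that numerator were linear, as you wrote, the dotted sum would be $\sum_a X_a^2/\prod_{b\neq a}(X_a-X_b)=1\neq 0$.

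Sliding does correctly dispose of the outer-sheet summands of $\alpha_2$, and the $e_1\alpha_1$ summand is immediate.
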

\begin{proof}
    This is a modification of the case of $\GL_4$ foams of Proposition \ref{orthogonalityprop} to $\GL_5$ foams. Adopt the colorings from that proposition. 
    We will comment on the proof of $\mu \circ \beta_i = 0$; the other relations follow from this one. First, focus on the case of $\mu \circ \beta_1 = 0$. The difference when compared to the proof of Proposition \ref{orthogonalityprop} is that we can now perform two Kempe moves when the boundary points of the bottom web are colored as \flapcolor[i][i][i][i][i][i]. 
    In addition to the  \kempe[k][l] Kempe move along the $k$ sphere, we can also perform the  \kempe[k][m] Kempe move. In summary,
\begin{align*}
    F\left( 1 -\frac{(\varu{k} - \varu{m})}{(\varu{l}- \varu{m})}+ \frac{(\varu{k} - \varu{l})}{(\varu{l}- \varu{m})} \right)&= 0 .
\end{align*}
Here $F$ represents the contribution from the part of the foam that remains unchanged. 
Checking that the Kempe move only changes this part of the equations follows from similar computations to those in the previous two sections. 
The signs follow from Lemma \ref{rwlemma2.19}. Note that there is an additional case in the evaluation of $\mu \circ \beta_2$ where there is a dot on the hemisphere. In that case the above equation transforms to:
\begin{align*}
    F\left( \varu{k} -\frac{(\varu{k} - \varu{m})}{(\varu{l}- \varu{m})} \varu{l} + \frac{(\varu{k} - \varu{l})}{(\varu{l}- \varu{m})} \varu{m} \right)&= 0 .
\end{align*}
In the first term the additional dot contributes $\varu{k}$, since the other two terms are obtained by performing a Kempe move involving the surface where the dot lies, $\varu{k}$ transforms to $\varu{l}$ or $\varu{m}$.

The evaluation for other colorings can be generalized from the $\GL_4$ case in a similar manner. 
\end{proof}

\begin{lemma}
    The following two foam equalities hold: $\mu \circ \nu \circ \mu = \mu$ and $\nu \circ \mu \circ \nu = \nu$. This can be interpreted as the proof of idempotency of $\mu \circ \nu $ and $\nu \circ \mu $.
\end{lemma}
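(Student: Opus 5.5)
We will prove $\nu \circ \mu \circ \nu = \nu$ by a direct coloring-by-coloring comparison of evaluations, following the proof of Lemma \ref{hexagonidempotent} for $\GL_4$. The identity $\mu \circ \nu \circ \mu = \mu$ then follows by the same argument after a $\frac{\pi}{3}$-rotation, since $\mu$ and $\nu$ are interchanged by rotating the picture.

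Two foams with the same boundary are equal in $\FoamFive^{\HexaBound}$ if they have equal evaluation after capping off by any foam $V$. So it suffices to show the following. For each coloring $c$ of $F_2=\nu$, the sum of $\langle F_1,c'\rangle$ over colorings $c'$ of $F_1=\nu\circ\mu\circ\nu$ that agree with $c$ on the boundary and away from the middle region equals $\langle F_2,c\rangle$. This local identity survives capping, because the capping foam contributes identically to both sides. We then split by the boundary coloring of the bottom web, using the types \flapcolor[i][i][i][i][i][i], \flapcolor[j][i][i][i][i][j], \flapcolor[i][i][j][j][k][k] and \flapcolor[i][j][k][i][j][k] as in the $\GL_4$ case. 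The bichrome and monochrome surface tables (Tables \ref{[i][i][i][i][i][i] coloring of nu gl4}, \ref{[j][i][i][i][i][j] color nu gl4}, \ref{[i][j][k][i][j][k] coloring of nu gl4}) carry over unchanged for the colors actually used. With $N=5$ there is one additional pigment $m$, which appears nowhere on the foam, so it contributes only disjoint bichrome surfaces $F_{xm}$ that coincide with $F_x$ and thus change Euler characteristics uniformly.

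\textbf{Case \flapcolor[i][i][i][i][i][i].} The new feature is that the middle sphere region of $F_1$, colored $k$ in $c_0$, can be Kempe-moved to any unused pigment. The induced colorings are therefore $c_0$ and its Kempe images along \kempe[k][l], \kempe[j][k] and \kempe[k][m]. The ratio $\sum\langle F_1,c'\rangle/\langle F_2,c\rangle$ becomes a sum of four rational functions in $X_1,\dots,X_5$, with signs supplied by Lemma \ref{rwlemma2.19}. We must show this sum equals $-1$, so that combined with the sign factor $(-1)^{s(F_1,c_0)-s(F_2,c)}=-1$ computed from the table it yields $1$. This partial-fraction identity generalizes the three-term identity in the $\GL_4$ proof. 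It should follow from a Lagrange interpolation identity, namely that $\sum_x (X_i-X_x)^2/\prod_{y\neq x}(X_x-X_y)$ over the free pigments equals the leading-coefficient term. We will verify it by checking the residues and degree at infinity.

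\textbf{Case \flapcolor[j][i][i][i][i][j].} The Kempe moves now range over two free pigments instead of one. The resulting three-term sum of linear fractions collapses to $1$ by the same interpolation argument.

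\textbf{Remaining cases.} For \flapcolor[i][i][j][j][k][k] and \flapcolor[i][j][k][i][j][k], the hexagons are forced to be colored $\{i,j,k\}$. The only new freedom is possible Kempe moves involving $m$, and these are absent because no closed component is free there. The evaluations therefore agree by direct table comparison, as in $\GL_4$. All boundary colorings not of these types admit no coloring of either foam.

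The main obstacle is the \flapcolor[i][i][i][i][i][i] case. There we must correctly identify all induced colorings, since the sphere can now carry $k$ or $m$ relative to the choice of horizontal colors. We must also track the Kempe signs from Lemma \ref{rwlemma2.19} together with the $\theta^+$ contributions, and verify the four-term rational identity. We will attempt it first by clearing denominators symbolically, and cross-check it by specializing numerical values for $X_1,\dots,X_5$.
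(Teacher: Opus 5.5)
There is a genuine gap, and it is in the step you flag as the main obstacle. Your list of colorings of $F_1=\nu\circ\mu\circ\nu$ induced by a \flapcolor[i][i][i][i][i][i] coloring $c$ of $F_2=\nu$ is wrong.

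Take the reference coloring $c_0$, in which all three horizontal hexagons are colored $\{i,j,l\}$ (Figure \ref{fig iiiiii gl4}). In $c_0$ the pigment $k$ does not occur at all: $F_k=\emptyset$ in Table \ref{[i][i][i][i][i][i] coloring of nu gl4}. The closed components are a $j$-colored sphere and an $l$-colored sphere, namely the $\mathbb{S}^2$ summands of $F_j$ and $F_l$. The two extra $\GL_4$ colorings arise by recoloring the $l$-sphere by $k$, respectively the $j$-sphere by $k$. So there is no ``middle sphere colored $k$'', and a \kempe[k][m] move on $c_0$ is vacuous.

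A clean way to keep track: let $s_0,s_1,s_2,s_3$ be the colors of the inner single edges of the four benzene levels of $F_1$. Then the hexagons are colored $\{i,s_{t-1},s_t\}$, the values $s_0=j$ and $s_3=l$ are fixed by $c$, and the only constraints are $s_1\notin\{i,j\}$ and $s_2\notin\{i,s_1,l\}$. For $N=5$ this gives $3+2+2=7$ colorings:
\begin{itemize}
\item $c_0=(j,l,j,l)$;
\item the two $\GL_4$ ones, $(j,k,j,l)$ and $(j,l,k,l)$;
\item the \kempe[l][m] and \kempe[j][m] images, $(j,m,j,l)$ and $(j,l,m,l)$;
\item the two in which both spheres are recolored with distinct new pigments, $(j,k,m,l)$ and $(j,m,k,l)$.
\end{itemize}
These are exactly the seven terms the paper sums: three $\GL_4$ terms with extra $m$-factors, two single Kempe moves involving $m$, and two consecutive Kempe moves.

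Consequently the four-term identity you set out to verify is false, and your numerical cross-check would expose this. The three genuine terms on your list now carry factors $(X_i-X_m)^2/((X_x-X_m)(X_y-X_m))$, where $x,y$ are the colors of the two monochrome spheres. At $(X_1,\dots,X_5)=(0,1,2,3,4)$ their sum has absolute value $16/3$, whereas all seven terms together sum to $\pm1$.

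Two related points. Your remark that $m$ ``appears nowhere on the foam'' holds for only three of the seven colorings. Also, a single Lagrange interpolation over the free pigments cannot capture the sum. It is a double sum over $(s_1,s_2)$, coupled through the hexagon $\{i,s_1,s_2\}$ (this coupling is the source of the constraint $s_1\neq s_2$). You could handle it by iterating your residue/interpolation argument in $s_1$ and then $s_2$.

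The rest of your plan agrees with the paper: the reduction to a fixed coloring of $F_2$, the three-term interpolation identity in the \flapcolor[j][i][i][i][i][j] case, the remaining boundary types, and obtaining $\mu\circ\nu\circ\mu=\mu$ by rotation.
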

\begin{proof}
For the $\mu \circ \nu \circ \mu = \mu$ this is a generalization of Lemma \ref{hexagonidempotent} to $\GL_5$ foams (the other relation is just a rotation). The difference between the $\GL_5$ and $\GL_4$ settings once again boils down to additional Kempe moves. The addtional Kempe moves balance out the contribution from the bichrome surfaces that involve the fifth color $m$ not seen in the $\GL_4$ case. 

In what follows we use the notation of Lemma \ref{hexagonidempotent} and additionally set $\varu{m} = \vare$. We will analyze the case when the boundary points of the bottom web are colored as \flapcolor[i][i][i][i][i][i]; see Figure \ref{fig iiiiii gl4} for more details.  The signs follow from Table \ref{[i][i][i][i][i][i] coloring of nu gl4} and Lemma \ref{rwlemma2.19}.

First, we can modify the three terms from Lemma \ref{hexagonidempotent} to include the color $m = 5$. As before $i = 1$, $j = 2$, $k = 3$, and $l =4$. We obtain 
\begin{align*}
    & F \left(\frac{-(\vara  - \varc )^2(\vara - \vare)^2}{(\varb  - \varc )(\varc  - \vard )(\varb - \vare)(\vard - \vare)} \right. + \\ & + \left. \frac{(\vara  - \varb )^2(\vara - \vare)^2}{(\varb  - \varc )(\varb  - \vard )(\varc - \vare)(\vard - \vare)}  + \frac{(\vara  - \vard )^2(\vara - \vare)^2}{(\varc  - \vard )(\varb  - \vard )(\varc - \vare)(\varb - \vare)}\right),
\end{align*}
where the second and third terms come from  \kempe[j][k] and \kempe[l][k] Kempe moves along spheres, as before.

Next, we may perform \kempe[j][m] and \kempe[l][m]  Kempe moves along the $j$ and $l$ monochrome spheres appearing under coloring $c_0$ in Table \ref{[i][i][i][i][i][i] coloring of nu gl4}. This gives us two additional terms
\begin{align*}
    F\left(\frac{(\vara  - \varc )^2(\vara - \varb)^2}{(\varb  - \vare )(\varc  - \vare )(\varb- \vard)(\varc - \vard)} + \frac{(\vara  - \varc )^2(\vara - \vard)^2}{(\varc  - \vare )(\vard  - \vare )(\varb - \varc)(\varb - \vard)} \right).
\end{align*}

Lastly, we can perform two consecutive Kempe moves along the spheres. There are two such instances originating from the coloring $c_0$  (see Table \ref{[i][i][i][i][i][i] coloring of nu gl4}). Namely, we may first perform a \kempe[j][k] Kempe move followed by an \kempe[l][m] Kempe move. Or, we may first perform an \kempe[l][k] Kempe move followed by a \kempe[k][m] Kempe move. This gives us two additional terms
\begin{align*}
    -F\left( \frac{(\vara-\varb)^2(\vara-\vard)^2}{(\varb-\varc)(\varb-\vard)(\vard-\vare)(\varc-\vare)} + \frac{(\vara-\varb)^2(\vara-\vard)^2}{(\varb-\vard)(\varb-\vare)(\varc-\vard)(\varc-\vare)}\right).
\end{align*}

Summing up all seven terms returns $1 \cdot F$, proving the desired identity for the boundary coloring \flapcolor[i][i][i][i][i][i]. Other colorings can be generalized from $\GL_4$ to $\GL_5$ analogously.
\end{proof}

\begin{remark}
    The authors believe that the above lemma holds when the foams are interpreted as $\GL_N$ foams in general. This should be provable by induction, or by using an action of the symmetric group on $N$ variables, along with some symmetric function theory. This was also communicated to the authors by E.~Wagner. 
\end{remark}

\begin{lemma}
    The following equalities hold: $\beta_1 \circ \alpha_1 + \beta_2 \circ \alpha_2  + \nu \circ \mu = \idfoam_{\Wzero}$ and  $\xi_1 \circ \eta_1 + \xi_2 \circ \eta_2 + \mu \circ \nu = \idfoam_{\Wzeroa}$ (see Figure \ref{gl5foamdeef}). 
\end{lemma}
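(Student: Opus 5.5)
I will mirror the $\GL_4$ argument (Lemma \ref{lindependence} and the proof of Proposition \ref{idempotencyprop}). First I compute the lowest-degree part of the relevant graded Hom space. The three kinds of endomorphisms of $\Wzero$ in play are $\idfoam_{\Wzero}$, $\beta_1\circ\alpha_1$, $\beta_2\circ\alpha_2$ and $\nu\circ\mu$. For $N=5$ these are degree-$0$ foams: $\foamdeg(\mu)=\foamdeg(\nu)=0$, and $\foamdeg(\beta_1)+\foamdeg(\alpha_1)=1-1=0$, $\foamdeg(\beta_2)+\foamdeg(\alpha_2)=-1+1=0$ by Lemma \ref{degcheck} and the degree shifts recorded in the proof of Theorem \ref{GL5Decompo}.

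By Corollary \ref{WebPairDim} and Proposition \ref{prop:closed sixvertex evaluation}, the degree-$0$ part of $\Hom_{\FoamFive^{\HexaBound}}(\Wzero,\Wzero)$ is controlled by the coefficients of $\langle \overline{\Wzero}\Wzero\rangle$ near $q^{-\boundarydeg(\HexaBound)}$. Since $\boundarydeg(\HexaBound)=12$ for $N=5$, I will expand $[3]^2[4]^2[5]+[2]^2[4]^2[5]+[3][2][4][5]$ and read off the coefficient in degree $0$ after the shift. I expect it to be $3$ (this is where $[2]^2$ contributes $q^{\pm1}$ pieces). That leaves no room for a relation among four elements a priori, so I will not rely on a pure dimension count. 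Instead I will argue that $\idfoam_{\Wzero}$ lies in the span of the three composites.

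I plan to do this via the nondegenerate pairing on Hom spaces. The already-established relations in Theorem \ref{gl_5 sixvalent seam} show that $e_1:=\beta_1\circ\alpha_1$, $e_2:=\beta_2\circ\alpha_2$ and $e_3:=\nu\circ\mu$ are pairwise orthogonal idempotents. The relevant relations are $\alpha_i\circ\beta_j=\delta_{ij}\idfoam_{\Wtwo}$ (Lemma \ref{orthogonalidempotentsgl5} together with Lemma \ref{dotsglngeneric}), the orthogonality lemma $\alpha_i\circ\nu=0$, $\mu\circ\beta_i=0$, and $\nu\mu\nu=\nu$. Hence $e_1,e_2,e_3$ are linearly independent in the degree-$0$ space: they are nonzero, since $\alpha_i\ne0$ and $\nu\ne0$, and mutually annihilating. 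If that space is $3$-dimensional, then $\idfoam_{\Wzero}=\sum_t a_t e_t$. Composing with each $e_s$ gives $e_s=a_se_s$, so $a_s=1$, which is the claim.

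The main obstacle is justifying that the degree-$0$ endomorphism space is exactly spanned by these three. I will get this from the graded rank being $q^{-12}(3+\dots)$ together with the absence of negative-degree endomorphisms below it. Concretely, the lowest term of $\langle\overline{\Wzero}\Wzero\rangle$ is $q^{-13}+3q^{-12}$ or similar, and I will carefully check whether degree $-1$ appears. If a degree-$(-1)$ endomorphism exists (e.g. $\beta_2\circ\alpha_1$), it cannot contribute to degree $0$, so the count stands. If the coefficient turns out larger than $3$, I will fall back to direct coloring evaluation, as in Lemma \ref{hexagonidempotent}. In that approach, for each boundary coloring (\flapcolor[i][i][i][i][i][i], \flapcolor[j][i][i][i][i][j], \flapcolor[i][i][j][j][k][k], \flapcolor[i][j][k][i][j][k]), I sum the Kempe-move contributions of the three composites and match them to the identity, using Lemma \ref{rwlemma2.19} for signs. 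The second equality follows by the $\frac{2\pi}{3}$-rotation symmetry exchanging $\Wzero$ with $\Wzeroa$.
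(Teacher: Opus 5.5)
Your main route depends on a dimension count that is wrong. With the correct count, the argument does not go through as you state it.

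For $N=5$ we have $\boundarydeg(\HexaBound)=12$ and $\langle\overline{\Wzero}\Wzero\rangle=[3]^2[4]^2[5]+[2]^2[4]^2[5]+[3][2][4][5]=q^{-14}+6q^{-12}+\cdots$. The paper already records the lowest term $q^{11-5N}$ for $N\ge 5$. This differs from $N=4$, where the lowest term $2q^{-9}$ sits exactly in degree $0$. Since these $\Hom$ spaces are free over $\Symf$, $\Hom(\Wzero,\Wzero)$ has graded rank $q^{-2}+6+\cdots$. Its degree-$0$ part is therefore $7$-dimensional over $\Q$, not $3$-dimensional.

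The specific false step is your claim that a negative-degree endomorphism such as $\beta_2\circ\alpha_1$ ``cannot contribute to degree $0$''. In fact $\beta_2\circ\alpha_1$ has degree $-2$; everything here lives in even degrees, so nothing sits in degree $-1$. The foam $e_1\cdot(\beta_2\circ\alpha_1)$ with $e_1\in\Symf$ is a degree-$0$ endomorphism, and so is $\beta_2\circ g\circ\alpha_1$ for $g$ a once-dotted identity foam on $\Wone$. Both lie outside the span of $\beta_1\circ\alpha_1$, $\beta_2\circ\alpha_2$, $\nu\circ\mu$. So the hypothesis ``if that space is $3$-dimensional'' fails, and linear independence of your three idempotents says nothing about $\idfoam_{\Wzero}$. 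Separately, the $\frac{2\pi}{3}$-rotation preserves $\Wzero$ and $\Wzeroa$ individually, so it cannot carry the first identity to the second.

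What survives is your fallback, direct evaluation coloring by coloring. That is exactly the paper's proof, but you only announce it. Its content is the \flapcolor[i][i][i][i][i][i] case:
\begin{itemize}
\item $\nu\circ\mu$ contributes three colorings: the one with a $k$-colored sphere, and its \kempe[k][l] and \kempe[k][m] Kempe moves.
\item $\beta_1\circ\alpha_1$ and $\beta_2\circ\alpha_2$ share one underlying foam, whose decorations produce the factor $X_j+2X_i-X_k-X_l-X_m$.
\item The five resulting rational functions sum to $1$.
\item Colorings of the composites not induced from $\idfoam_{\Wzero}$ must be shown to cancel in the sum.
\end{itemize}

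Alternatively, your dimension idea can be repaired by a finer count. Let $\pi_0=\idfoam_{\Wzero}-\beta_1\circ\alpha_1-\beta_2\circ\alpha_2-\nu\circ\mu$, and split the degree-$0$ endomorphisms into blocks between the four orthogonal idempotents.
\begin{itemize}
\item Via $f\mapsto\alpha_b\circ f\circ\beta_a$, the four blocks among $\beta_1\circ\alpha_1$ and $\beta_2\circ\alpha_2$ are $\Hom^d(\Wone,\Wone)$ with $d=\foamdeg(\alpha_b)+\foamdeg(\beta_a)\in\{0,0,-2,2\}$. Since $\langle\overline{\Wone}\Wone\rangle=[5]^3=q^{-12}+3q^{-10}+\cdots$, these have dimensions $1,1,0,4$.
\item The $\nu\circ\mu$ block is $\Q\,\nu\circ\mu$, because $\Hom^0(\Wzero,\Wzeroa)=\Q\mu$ (lowest term $q^{3-3N}$).
\item The mixed blocks embed into $\Hom^{\pm1}$ between $\Wone$ and $\Wzeroa$. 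These vanish because $\langle\overline{\Wone}\Wzeroa\rangle=[2]^2[4][5]$ starts at $q^{-9}$, so the Hom spaces start in degree $3$.
\end{itemize}
These blocks already account for all $7$ dimensions. Hence $\pi_0=\pi_0\circ\idfoam_{\Wzero}\circ\pi_0=0$.
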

\begin{proof}
    We will show that $\beta_1 \circ \alpha_1 + \beta_2 \circ \alpha_2  + \nu \circ \mu = \idfoam_{\Wzero}$. This is a direct foam evaluation. As we have already presented most of the foam evaluations involving these foams in the $\GL_4$ case, we only present the most involved case in detail. The rest follows by a similar, but easier computation. Denote the fifth color by $m$, so that our colors are $i,j,k,l,m$. 

    Denote by $G_1$ the foam used in $\beta_2 \circ \alpha_1$ (note the change of indices in order to momentarily forget about the decorations), and let $G_2 =  \nu \circ \mu $. 
    Color the boundary points of the bottom web of $I$ with the color as \flapcolor[i][i][i][i][i][i], and denote this coloring by $c$. Let $c_1$ be the induced coloring of $G_1$ and $c_2$ the induced coloring of $G_2$, where the color appearing in the middle of $G_2$ is chosen to be $k$. 
    That is, assume that the foam $G_2$ carries the colors $i,j,k$ on the horizontal hexagons. We then obtain Table \ref{[i][i][i][i][i][i] main coloring gl5}. 

    We note that there are colorings of $G_1$ and $G_2$ that are not induced by a coloring of $\idfoam_{\Wzero}$. The contributions of those colorings with the same boundary coloring cancel each other out in the sum $\beta_1 \circ \alpha_1 + \beta_2 \circ \alpha_2  + \nu \circ \mu $.

\begin{table}[h!]
\centering
\arrayrulecolor{black} 
\begin{tabular}{|l|l|l|l|}
\hline
{\ul }           & $(\idfoam_{\Wzero},c)$   & $(G_1, c_1)$    & $(G_2, c_2)$         \\
\hline 
$F_{ij}$         & $\mathbb{D}^2 \sqcup \mathbb{D}^2 \sqcup \mathbb{D}^2 $& doubled monkey saddle& $\mathbb{D}^2 \sqcup \mathbb{D}^2 \sqcup \mathbb{D}^2 $ \\
\arrayrulecolor[gray]{0.8} \hline
$F_{ik}$         & $\mathbb{D}^2 \sqcup \mathbb{D}^2 \sqcup \mathbb{D}^2 $ & $\mathbb{D}^2 \sqcup \mathbb{D}^2 \sqcup \mathbb{D}^2 $ & $\mathbb{D}^2 \sqcup \mathbb{D}^2 \sqcup \mathbb{D}^2 $\\
\arrayrulecolor[gray]{0.8} \hline
$F_{il}$         & $\mathbb{D}^2 \sqcup \mathbb{D}^2 \sqcup \mathbb{D}^2 $ & $\mathbb{D}^2 \sqcup \mathbb{D}^2 \sqcup \mathbb{D}^2 $ & doubled monkey saddle\\
\arrayrulecolor[gray]{0.8} \hline
$F_{im}$         & $\mathbb{D}^2 \sqcup \mathbb{D}^2 \sqcup \mathbb{D}^2 $ & $\mathbb{D}^2 \sqcup \mathbb{D}^2 \sqcup \mathbb{D}^2 $ & doubled monkey saddle\\
\arrayrulecolor[gray]{0.8} \hline
$F_{jk}$         & $\mathbb{S}^1 \times [0,1]$ & $\mathbb{D}^2$ (cup) $\sqcup$ $\mathbb{D}^2$ (cap) & $\mathbb{S}^1 \times [0,1]$  \\
\arrayrulecolor[gray]{0.8} \hline
$F_{jl}$         & $\mathbb{S}^1 \times [0,1]$ & $\mathbb{D}^2$ (cup) $\sqcup$ $\mathbb{D}^2$ (cap) & $\mathbb{D}^2$ (cup) $\sqcup$ $\mathbb{D}^2$ (cap) \\
\arrayrulecolor[gray]{0.8} \hline
$F_{jm}$         & $\mathbb{S}^1 \times [0,1]$ & $\mathbb{D}^2$ (cup) $\sqcup$ $\mathbb{D}^2$ (cap) & $\mathbb{D}^2$ (cup) $\sqcup$ $\mathbb{D}^2$ (cap) \\
\arrayrulecolor[gray]{0.8} \hline
$F_{kl}$         &$\emptyset$ & $\emptyset$ & $\mathbb{S}^2$  \\
\arrayrulecolor[gray]{0.8} \hline
$F_{km}$         &$\emptyset$ & $\emptyset$ & $\mathbb{S}^2$  \\
\arrayrulecolor{black} \hline
$F_i$ & $\mathbb{D}^2 \sqcup \mathbb{D}^2 \sqcup \mathbb{D}^2 $ & $\mathbb{D}^2 \sqcup \mathbb{D}^2 \sqcup \mathbb{D}^2 $ & doubled monkey saddle \\
\arrayrulecolor[gray]{0.8} \hline
$F_j$ & $\mathbb{S}^1 \times [0,1]$ & $\mathbb{D}^2$ (cup) $\sqcup$ $\mathbb{D}^2$ (cap) & $\mathbb{D}^2$ (cup) $\sqcup$ $\mathbb{D}^2$ (cap)  \\
\arrayrulecolor[gray]{0.8} \hline
$F_k$ & $\emptyset$ & $\emptyset$ & $\mathbb{S}^2$ \\
\arrayrulecolor[gray]{0.8}\hline
$F_l$ & $\emptyset$ & $\emptyset$ & $\emptyset$\\
\arrayrulecolor[gray]{0.8}\hline
$F_m$ & $\emptyset$ & $\emptyset$ & $\emptyset$\\
\arrayrulecolor{black} \hline
$\theta_{ij}$    & six strands & \makecell{three cups and caps \\ positive if $i > j$} & \makecell{three cups and caps \\ positive if $i > j$} \\ 
\arrayrulecolor[gray]{0.8} \hline
$\theta_{ik}$    & $\emptyset$ & $\emptyset$ & $\mathbb{S}^1 \sqcup \mathbb{S}^1 \sqcup  \mathbb{S}^1  $ positive if $k > i$\\
\arrayrulecolor[gray]{0.8} \hline
$\theta_{il}$    & $\emptyset$ & $\emptyset$ & $\emptyset$\\
\arrayrulecolor[gray]{0.8} \hline
$\theta_{im}$    & $\emptyset$ & $\emptyset$ & $\emptyset$\\
\arrayrulecolor[gray]{0.8} \hline
$\theta_{jk}$    & $\emptyset$ & $\emptyset$ & $\mathbb{S}^1 \sqcup \mathbb{S}^1$ \\
\arrayrulecolor[gray]{0.8}\hline
$\theta_{jl}$    & $\emptyset$ & $\emptyset$ & $\emptyset$ \\
\arrayrulecolor[gray]{0.8} \hline
$\theta_{jm}$    & $\emptyset$ & $\emptyset$ & $\emptyset$ \\
\arrayrulecolor[gray]{0.8} \hline
$\theta_{kl}$    & $\emptyset$ & $\emptyset$ & $\emptyset$ \\
\arrayrulecolor[gray]{0.8} \hline
$\theta_{km}$    & $\emptyset$ & $\emptyset$ & $\emptyset$ \\
\arrayrulecolor[gray]{0.8} \hline
$\theta_{lm}$    & $\emptyset$ & $\emptyset$ & $\emptyset$ \\
\arrayrulecolor{black} \hline 
\end{tabular}
\caption{\flapcolor[i][i][i][i][i][i] coloring of $\beta_1 \circ \alpha_1 + \beta_2 \circ \alpha_2  + \nu \circ \mu = \idfoam_{\Wzero}$ as a $\GL_5$ foam. 
} \label{[i][i][i][i][i][i] main coloring gl5}
\end{table}

For $G_2$, there are two additional colorings induced by the coloring $c$ of $\idfoam_{\Wzero}$, corresponding to a  \kempe[k][l] or  \kempe[k][m] Kempe move along the spheres that appear. Denote them by $c_{2l}$ and $c_{2m}$ respectively. We restrict ourselves to the case where $i = 1$, $j = 2$, $k = 3$, $l= 4$, and $m = 5$; the other cases can be checked analogously. Using considerations as in the previous section and adding the contribution from decorations to $G_1$ we compute that
\begin{align*}
    & \frac{\langle G_2, c_2 \rangle + \langle G_2, c_{2l} \rangle + \langle G_2, c_{2m} \rangle + \langle \beta_1 \circ \alpha_1, c_1 \rangle + \langle \beta_2 \circ \alpha_2, c_1 \rangle}{\langle \idfoam_{\Wzero}, c \rangle} = \\
    &\frac{(\vari - \varu{m})^2 (\vari - \varu{l})^2}
     {(\varu{j} - \varu{l})(\varu{k} - \varu{m})(\varu{k} - \varu{l})(\varu{j} - \varu{m})}
-
\frac{(\vari - \varu{m})^2 (\vari - \varu{k})^2}
     {(\varu{j} - \varu{k})(\varu{l} - \varu{m})(\varu{k} - \varu{l})(\varu{j} - \varu{m})}
 - \\
&-
\frac{(\vari - \varu{k})^2 (\vari - \varu{l})^2}
     {(\varu{j} - \varu{l})(\varu{k} - \varu{m})(\varu{m} - \varu{l})(\varu{j} - \varu{k})}
+
\frac{(\varu{j} + 2\vari - \varu{k} - \varu{l} - \varu{m})(\vari - \varu{j})^2}
     {(\varu{j} - \varu{m})(\varu{j} - \varu{k})(\varu{j} - \varu{l})} \\
     &= 1,
\end{align*}
where the factor $(\varu{j} + 2\vari - \varu{k} - \varu{l} - \varu{m})$ comes from the decorations of $\beta_1 \circ \alpha_1$ and $ \beta_2 \circ \alpha_2$ when compared to $G_1$ (which has no decorations). We have also dropped the sign factor; it is easy to check that that factor is $1$ for the choice of values $(i,j,k,l,m) = (1,2,3,4,5)$. 
\end{proof}

\section*{Future directions}

As a natural generalization of the results from the previous sections, we propose the following categorification of the $\GL_N$ $6$-valent vertex with $\GL_N$ foams for any $N \ge 4$. 

\begin{conjecture} \label{gl_n sixvalent seam} 
For $N \ge 4$, there exists a direct sum decomposition of the hexagon web in $Kar(\Foam^N)$ as 
\[
 \mathcal{E}(\Wzero) \cong \mathcal{E}(\Wone)\{4-N\} \oplus \mathcal{E}(\Wone)\{6-N\}  \oplus \dots \oplus \mathcal{E}(\Wone)\{N-4\} \oplus \mathcal{E}(\mu \circ \nu).
\]

Hence $\mathcal{E}(\mu \circ \nu)$ categorifies the $6$-valent vertex $\Wsix$. 
\end{conjecture}

If the conjecture holds, the hexagon web is decomposed into indecomposable objects in $Kar(\Foam^N)$ due to Theorem \ref{GLNIndecompo}.
We have proved Conjecture \ref{gl_n sixvalent seam} when $N= 4, 5$ in this paper, and a similar construction with $\GL_N$ foams is expected when proving the conjecture for $N \ge 6$. 
In~\cite{Robert2015}, Robert gave a characterization of the $\SL_3$ webs which are decomposable as objects in the $\SL_3$ foam category. He further proposed initial steps on decomposing such $\SL_3$ webs into indecomposable summands. Conjecture \ref{gl_n sixvalent seam} can be seen as the first step for the program of studying $\GL_N$ webs which are decomposable in the $\GL_N$ foam category as well as their decomposition when $N>3$. 
In addition, the conjecture also hints on the construction of a unique $\GL_N$ web basis in the Karoubi completion of the corresponding $\GL_N$ foam category.

\bibliographystyle{amsplain}
\bibliography{foameval}

\end{document}